\documentclass[12pt]{amsart}
\usepackage{blindtext}
\usepackage{graphicx}
\usepackage{latexsym}
\usepackage[T1]{fontenc}      
\usepackage{amscd}
\usepackage{fancyhdr}
\usepackage{amsmath, amssymb}
\usepackage[ansinew]{inputenc}
\usepackage[all]{xy}
\usepackage{pdflscape}
\usepackage{longtable}
\usepackage{rotating}
\usepackage{verbatim}
\usepackage{hyperref}
\usepackage{subfigure}
\usepackage{mathrsfs}
\usepackage{tensor}
\usepackage{enumerate}
\usepackage{hyperref}
\usepackage[textsize=small]{todonotes}
\usepackage{blindtext}
\usepackage[normalem]{ulem}

\usepackage{color}

\setlength{\oddsidemargin}{0in} \setlength{\evensidemargin}{0.1in}
\setlength{\textwidth}{6.5in} \setlength{\topmargin}{-0.2in}
\setlength{\textheight}{8.5in} \setlength{\parindent}{1pc}

\vfuzz2pt 
\hfuzz2pt 

\theoremstyle{plain}
\newtheorem{thm}{Theorem}[section]
\newtheorem*{thm*}{Theorem}
\newtheorem{cor}[thm]{Corollary}

\newtheorem*{conjecture*}{Conjecture}
\newtheorem{lem}[thm]{Lemma} 
\newtheorem{prop}[thm]{Proposition}

\theoremstyle{definition}
\newtheorem{defi}[thm]{Definition}
\theoremstyle{remark}
\newtheorem{rem}[thm]{Remark}
\numberwithin{equation}{section}

\newtheorem{example}[thm]{Example}

\newcommand{\AS}{\mathcal{AS}}

\newcommand{\C}{\mathbb{C}}
\newcommand{\N}{\mathbb{N}}
\newcommand{\R}{\mathbb{R}}
\newcommand{\K}{\mathbb{K}}

\newcommand{\Z}{\mathbb{Z}}

\newcommand{\proj}{\mathbb{P}}

\let\mathscr\mathcal

\newcommand{\ps}{\psi}
\newcommand{\ph}{\phi}
\newcommand{\Ps}{\Psi}
\newcommand{\Ph}{\Phi}
\newcommand{\f}{f}

\newcommand{\changes}{}

\newcommand{\aaa}{arc-wise analytic}
\newcommand{\Aaa}{Arc-wise analytic}

\newcommand{\aab}{arc-wise analytically}
\newcommand{\abb}{arc-wise analyticity}

\DeclareMathOperator{\re}{Re{}}
\DeclareMathOperator{\im}{Im{}}
\DeclareMathOperator{\mult}{mult }
\DeclareMathOperator{\grad}{ grad }

\vfuzz2pt 
\hfuzz2pt 

\begin{document}
\title
[Arcwise Analytic Stratification]{Arc-wise Analytic Stratification, Whitney Fibering Conjecture and Zariski Equisingularity}
\author{Adam Parusi\'nski and Lauren\c tiu P\u aunescu}
\address {Univ. Nice Sophia Antipolis, CNRS,  LJAD, UMR 7351, 06108 Nice, France}

\email{adam.parusinski@unice.fr}
\address{School of Mathematics, University of Sydney,
  Sydney, NSW, 2006, Australia }%
\email{laurent@maths.usyd.edu.au}%

\thanks{Partially supported  by ANR project STAAVF (ANR-2011 BS01 009), by ARC DP150103113, by Sydney University BSG2014 and by Faculty of Science PSSP}
\keywords{Stratifications, Zariski Equisingularity, Whitney's Conditions, Whitney Fibering Conjecture, Arc-analytic Mappings}
\subjclass[2010]{
32Sxx	
32S15  
32S60  	
14P25  	
32B10  	
}

\begin{abstract}

In this paper we show Whitney's fibering conjecture  in the real and complex, local analytic and global algebraic cases.

For a given germ of complex or real analytic set, we show  the existence of a stratification satisfying  a strong
 (real arc-analytic with respect to all variables and analytic
 with respect to the parameter space) trivialization property along each stratum.
 We call such a trivialization arc-wise analytic and we show that it can be constructed under the classical 
Zariski algebro-geometric equisingularity assumptions.  Using a slightly  stronger version
 of the Zariski equisingularity, we  show the existence of Whitney's stratified fibration,  satisfying the conditions (b) of Whitney and (w) of Verdier.
 Our construction is based on the Puiseux with parameter theorem and a generalization of Whitney's interpolation.  For algebraic sets our construction gives a global stratification.

We also present several applications of the arc-wise analytic trivialization, mainly to the stratification theory and the equisingularity of analytic set and function germs.
 In the real algebraic case, for an algebraic family of projective varieties, we show that  the
 Zariski equisingularity implies local constancy of the associated weight filtration.

 \end{abstract}


\maketitle

\tableofcontents


{\Large \part*{Introduction and statement of results.}}
 \medskip

In 1965 Whitney stated the following conjecture.  
\medskip

\begin{conjecture*}\label{conjecture}
 {\rm [Whitney fibering conjecture, \cite{whitney} section 9, p.230]}
Any analytic subvariety $V\subset U$ ($U$ open in $\C^n$) 
has a stratification such that each point $p_0\in V$ has a neighborhood $U_0$ with a semi-analytic fibration. 
\end{conjecture*}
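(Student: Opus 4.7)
The plan is to build, for each stratum $S$ of a suitably chosen stratification of $V$, a local trivialization along $S$ that is analytic in the parameters tangent to $S$ and real arc-analytic in the transverse directions. Such an arc-wise analytic trivialization has semi-analytic graph, so it produces exactly the semi-analytic fibration demanded by Whitney. The backbone of the argument is Zariski's algebro-geometric equisingularity combined with a parametric Puiseux theorem and a parametric version of Whitney's interpolation formula.

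First I would fix $p_0 \in V$ and, after a generic linear change of coordinates $(x_1,\dots,x_n)$ on a neighborhood $U_0$ chosen so that the last coordinates parametrize a smooth candidate stratum $S$ through $p_0$, apply Zariski's inductive discriminant construction to project $V$ successively onto $\C^{n-1}, \C^{n-2},\dots$. This produces a nested family of discriminant loci and a finite stratification of $U_0$ into smooth analytic strata along which all these loci are Zariski equisingular in the parameters of $S$.

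Next I would construct the trivialization variable by variable using Puiseux-with-parameter: at each stage, the roots of the relevant discriminant, viewed as a one-variable polynomial over the parameter space, admit local Puiseux expansions whose coefficients are analytic on $S$. Concatenating these roots yields continuous sections over $S$ that can be read as the building blocks of the trivialization in each coordinate direction, and by construction they are arc-analytic because along any real analytic arc the ordering of the roots is ultimately fixed.

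The main obstacle is that the Puiseux branches permute around the discriminant locus, so the naive assembly is merely continuous and fails to be a bijective fibration of $U_0$ over $S\times F$ in a canonical way. To upgrade it I would adapt Whitney's interpolation: given the Puiseux sections at the current level, take a weighted average of them with carefully chosen weights vanishing to the right order on the branch locus, so that the resulting map is a homeomorphism, arc-analytic in all variables and analytic in the $S$-direction. Verifying that the interpolated map preserves arc-analyticity in every transverse direction simultaneously, and that the inductive step respects the lower-dimensional trivializations already built, is the delicate point and will require controlling how weights, Puiseux data, and parameter-Puiseux expansions combine.

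Finally I would check that the construction can be performed compatibly on every stratum of the Zariski-equisingular decomposition, producing a single stratified fibration on $U_0$; and that its graph, being a finite combination of Puiseux sections with analytic-in-parameter coefficients glued by semi-analytic weights, is semi-analytic. This yields the semi-analytic fibration of Whitney's conjecture.
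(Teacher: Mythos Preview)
Your proposal is correct and follows essentially the same route as the paper: Zariski equisingularity via iterated discriminants, Puiseux with parameter to obtain analytic-in-$t$ root sections, and a generalized Whitney interpolation to lift the trivialization one variable at a time into an arc-wise analytic trivialization along each stratum. One terminological caution: Whitney's ``semi-analytic fibration'' only demands that $\phi$ be analytic in the stratum parameter and continuous overall (it is unrelated to \L ojasiewicz's semi-analytic sets), so you need not argue that the graph is semi-analytic in the latter sense---the arc-wise analytic trivialization already delivers what Whitney asked for.
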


By a semi-analytic fibration Whitney meant the following (it has nothing to do with the notion of semi-analytic set introduced about the same time by \L ojasiewicz in \cite{lojasiewicz}).   Let $p_0$ belong to a stratum $M$ and let 
$M_0=M\cap U_0$.  Let $N$ be the analytic plane orthogonal to $M$ at $p_0$ and let 
$N_0=N\cap U_0$.  Then Whitney requires that there exist a homeomorphism  
$$\phi (p,q) : M_0\times N_0 \to U_0,$$ 
 complex analytic in $p$, such that $\phi (p,p_0)= p$ 
($p\in M_0$) and $\phi (p_0,q) =q$ ($q\in N_0$), and preserving the strata.  
He also assumes that for each $q\in N_0$ fixed, $\phi (\cdot,q) : M_0\to U_0$ is a complex analytic embedding onto an analytic submanifold $L(q)$ called the fiber (or the leaf) at $q$, and thus $U_0$ is fibered continuously into submanifolds complex analytically diffeomorphic to $M_0$.   
Note that due to the existence of continuous moduli it is in general impossible to find $\phi (p,q)$ complex analytic in both variables, see \cite{whitney}.  

Whitney stated his conjecture in the context of his regularity conditions (a) and (b) for  stratifications 
introduced in  \cite{whitneyannals}.   These  conditions imply the topological triviality (equisingularity) along each stratum.  This trivialization is obtained by the flow  of some  "controlled" vector fields and does not imply the existence of a  fibration as required in Whitney's conjecture.  Thus Whitney conjectured the existence of a better trivialization, given by his fibration, that should, 
moreover, imply  the regularity conditions (a) and (b).  As Whitney claims in \cite{whitney} a semi-analytic (in his sense) fibration ensures the continuity of the tangent spaces to the leaves of the fibration and hence Whitney's condition (a) for the stratification.  This seems not to be  obvious.  We recall Whitney's argument in Subsection \ref{remark}, but to complete it we need an extra assumption.  To have the condition (b),  quoting Whitney,  
"one should probably require more than just the continuity of $\phi$ in the second variable".    

 Whitney's fibering conjecture as stated above  was proven by Hardt and Sullivan in the local analytic and global projective cases,  in Theorem 6.1 of \cite{hardtsullivan}.   But it is not clear to us whether $\phi$ of \cite{hardtsullivan} ensures the continuity of the tangent spaces   to the leaves or  the condition (b).  In the real algebraic case an analog of 
 Whitney's conjecture was proven in \cite{hardt1983}.  In this case  the continuity of the tangent 
 spaces is not clear either.  
 
 Whitney's fibering conjecture  has  been studied in the context of abstract $C^\infty$ stratified spaces and 
 topological equisingularity, cf. \cite{murolotrotman1999}, \cite{murolotrotman2000}, \cite{murolotrotman2006}. 
Assuming that the conjecture is true, Murolo and Trotman have shown in \cite{murolotrotman2006} a horizontally-$C^1$ version of   Thom's first isotopy theorem.

\subsection{Ehresmann Theorem}  Whitney's conjecture is consistent with the following holomorphic version of the Ehresmann fibration theorem, see \cite{voisin}.  Let $\pi: \mathcal X\to B$ be a proper holomorphic 
submersion of complex analytic manifolds.  Then,  for every $b_0\in B$ there is a neighborhood $B_0$ of $b_0$ in $B$ and a 
$C^\infty $ trivialization $$\phi (p,q) : B_0\times X_0 \to \mathcal X_{B_0},$$ 
 holomorphic in $p$, where $X_0=\pi ^{-1} (b_0)$, $\mathcal X_{B_0}=\pi ^{-1} (B_0)$.  
Note that $\phi$ can be made real analytic but, in general, due to the presence of continuous moduli, not  holomorphic.  This version of Ehresmann's theorem is convenient to study the variation of Hodge 
structures in families of K\"ahler manifolds, see  \cite{voisin}.

As we show in this paper there are no continuous moduli for complex analytic families of singular 
complex analytic germs, nor for families of algebraic varieties, 
provided  $\varphi$ is assumed complex analytic in $p$ and real arc-analytic in $q$, see Theorems 
 \ref{theoremconjecture} and \ref{theoremequisingularity3} and Lemma  \ref{change}.


\subsection{Statement of main results}
In this paper we show Whitney's  fibering conjecture  in the real and complex,   local analytic and global algebraic cases.  
For this, for a given germ of complex or real analytic set, we show  the existence of a stratification 
that can locally  be trivialized by a map $\ph(p,q)$ that is 
not only real/complex analytic  (depending on the case)  in $p$, continuous in both variables, but also  {\aaa},  see Definition \ref{maindefinition2}. In particular,  both $\ph$ and $\ph^{-1}$ are analytic on real analytic arcs.  Moreover, for every real analytic arc $q(s)$ 
in $N_0$,  $(p,s) \to \phi (p,q(s))$  is  analytic.  As we show in Proposition \ref{continuitytangent} 
this ensures the continuity of tangent spaces to the fibers and hence Whitney's condition (a) on the stratification (both in the real and complex cases).  Then, by additionally requiring that the trivialization 
preserve the size of the distance to the stratum, we  show the existence of Whitney's fibration satisfying the conditions (b) of Whitney and (w) of Verdier \cite{verdier}.   We call such an   {\aaa}    trivialization  regular along the stratum, Definition \ref{maindefinition3}.

\begin{thm*}[Theorem \ref{theoremconjecture}]
Let $\mathcal X = \{X_i\}$ be a finite family of analytic subsets of an open $U\subset \K^N$, ($\K$ denotes $\R$ or $\C$).  Let 
$p_0\in U$.  Then there exist an open neighborhood $U'$ of $p_0$ and an analytic stratification of 
 $U'$  compatible with each $U'\cap X_i$ admitting regular {\aaa } trivialization along each stratum.   
\end{thm*}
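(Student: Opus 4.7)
The plan is to proceed by induction on the ambient dimension $N$, reducing to a Zariski-equisingular setting and then building the trivialization explicitly via Puiseux-with-parameter and a generalized Whitney interpolation, the two main tools highlighted in the abstract.

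First, I would replace $\mathcal X$ by a single analytic function $F$ cutting out the union of the $X_i$ together with the auxiliary discriminantal hypersurfaces needed to drive the recursion. After a generic linear change of coordinates at $p_0$, Weierstrass preparation makes $F$ a distinguished polynomial in $z_N$ over a polydisc around the projected point $p_0' \in \K^{N-1}$. Applying the inductive hypothesis to the discriminant of $F$ with respect to $z_N$ yields a stratification of a neighborhood $U'' \subset \K^{N-1}$ of $p_0'$, whose pullback under the projection and refinement by the graphs of the $z_N$-roots of $F$ produces the desired stratification of a neighborhood $U' \subset \K^N$ of $p_0$ compatible with $\mathcal X$. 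The choice of generic coordinates is dictated by the condition that this projection display Zariski equisingularity along each stratum of the base.

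Next, on a fixed stratum $M$ with parameter piece $M_0 = M \cap U_0$ and transverse slice $N_0$ (in Whitney's sense), I would construct the trivialization $\varphi(p,q) : M_0 \times N_0 \to U_0$ as follows. By Puiseux with parameter applied along $M_0$, each branch of the zero set of $F$ in $z_N$ extends to a multivalued analytic section depending analytically on $p \in M_0$, after a finite branched cover in the transverse variables. A generalization of Whitney interpolation, with these parametrized branches as interpolation data, then defines $\varphi(p,q)$: it restricts to the identity on $M_0$ and on $N_0$, respects the stratification, is analytic in $p$, and along any real analytic arc $q(s) \in N_0$ becomes jointly analytic in $(p,s)$ after a finite branched change of parameter in $s$. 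This is the \aaa\ condition of Definition \ref{maindefinition2}. Regularity along $M$, i.e.\ $\dist(\varphi(p,q),M)$ comparable to $\dist(q,p_0)$ uniformly in $p$, is built into the construction because each Puiseux branch has a prescribed weighted order in the transverse coordinates, controlled by the stratification of the discriminant.

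The hard part will be the simultaneous control of analyticity in $p$, arc-analyticity in $q$, and the distance estimate. Puiseux with parameter gives the $p$-analyticity cleanly, but preserving arc-analyticity through the interpolation forces a careful choice of interpolation weights so that the branched cover uniformizing the Puiseux expansions along one arc is compatible with those along all other arcs; this is precisely what the generalized Whitney interpolation must be designed to accomplish. Moreover, the inductive hypothesis must be strong enough to carry all the auxiliary discriminantal data through to the next recursion step, so the stratification constructed in the first reduction must encode more than just the $X_i$ themselves; getting the bookkeeping right at each level of the recursion is where I expect the most delicate work.
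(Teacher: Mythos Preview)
Your outline is close to the paper's approach: both proofs in the paper reduce to a single function $F$ (the product of generators of the $I(X_i)$), build a descending system of pseudopolynomials via successive discriminants after generic coordinate changes, and then invoke Puiseux with parameter plus the generalized Whitney interpolation formula \eqref{ourpsi} to lift the lower-dimensional trivialization one coordinate at a time (this is exactly Theorem \ref{theorem} and Proposition \ref{canonicalstratification}).  So the skeleton is right.

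There is, however, a real gap in your handling of \emph{regularity}.  Zariski equisingularity by itself only yields an {\aaa } trivialization satisfying (arc-a); to upgrade to (arc-w), i.e.\ to get the distance estimate \eqref{regularitybound}, the paper needs the system of pseudopolynomials to be at least \emph{partially transverse} (Definition \ref{partiallytransversedefinition}, Theorem \ref{theoremtransverse}).  A generic linear change of coordinates at $p_0$ does \emph{not} automatically propagate this transversality through the recursion: at each discriminant step you can lose it.  The paper's first proof fixes this by making each $F_i$ \emph{derivation complete} --- multiplying in all the $\partial/\partial x_i$-derivatives of $f_i$ --- which forces a degree-one factor and hence partial transversality (Example \ref{derivationcomplete}).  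The second proof instead uses a weighted rescaling trick (Step 2 in the proof of Lemma \ref{theoremequisingularity2}) to force $\mult_0 G_i(0,x)=d_i$ at each stage.  Your claim that regularity ``is built into the construction because each Puiseux branch has a prescribed weighted order'' is not correct without one of these devices; the Brian\c con--Speder example in Section \ref{examples} shows that non-transverse Zariski equisingularity can fail Whitney (b), hence cannot give (arc-w).

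A smaller point: the {\aaa } condition requires $\varphi(p,q(s))$ to be genuinely analytic in $(p,s)$, not merely after a branched reparametrization in $s$.  The whole purpose of the specific symmetric interpolation weights in \eqref{ourpsi} (Lemma \ref{analytic2}) is to absorb the Puiseux ramification so that no residual branching survives in the final formula; this is not automatic and your description slightly understates what has to be checked.
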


In Section \ref{functions} we extend these results to stratifications of analytic functions.  
Recall that a stratification of a $\K$-analytic function  $f:X\to \K$  is a  stratification of $X$ such that the zero set $V(f)$ of $f$ is a union of strata.  Theorem  \ref{a_f} together with Proposition 
\ref{complexregularity} implies the following result.  

\begin{thm*}[$\K=\C$]
If a stratification of $f$ admits  an {\aaa } trivialization along a stratum $S\subset V(f)$ then it satisfies  the Thom condition ($a_f$) 
along this stratum.  If such trivialization is, moreover, regular along $S,$ then it satisfies   the strict Thom condition ($w_f$) 
along $S$. 
\end{thm*}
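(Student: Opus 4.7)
The plan is to pull $f$ back through the arc-wise analytic trivialization $\phi$ and exploit the holomorphy of $\phi$ in the parameter $p$ together with the vanishing of $f$ on the stratum $S$ to derive Cauchy-type estimates that control the tangent spaces to the fibers of $f$.

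Fix $y \in S$ and a trivializing neighborhood $U_0 \cong S_0 \times N_0$ under $\phi$, and set $\tilde f(p,q) = f(\phi(p,q))$. Since $\phi$ preserves the stratification and $V(f)$ is a union of strata containing $S$, one has $\tilde f(p,q) = 0$ for all $p$ whenever $q \in V(f) \cap N_0$; in particular $\tilde f(p, y) \equiv 0$ on $S_0$. For each fixed $q \in N_0$, $p \mapsto \tilde f(p, q)$ is holomorphic on $S_0$, so for every compact $K \subset S_0$ contained in a slightly larger polydisk $\Delta$, Cauchy's inequalities give
\begin{equation*}
\sup_{p \in K} |\partial_p \tilde f(p, q)| \le C_K \sup_{p' \in \Delta} |\tilde f(p', q)|.
\end{equation*}

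For $(a_f)$ it suffices, by curve selection, to consider a real analytic arc $x(s) = \phi(p(s), q(s))$ in some stratum $S' \not\subset V(f)$ with $x(0) = y$. By the arc-wise analyticity of $\phi$, the map $(p, s) \mapsto \tilde f(p, q(s))$ is jointly analytic, and it vanishes identically at $s = 0$. Hence $\sup_{p \in \Delta} |\tilde f(p, q(s))| \to 0$, and the Cauchy bound above yields $\partial_p \tilde f(p(s), q(s)) \to 0$. Since $d\phi_{(p, p_0)}(T_p S_0 \oplus 0) = T_y S$, the kernel of $d\tilde f$ at $(p(s), q(s))$ contains a subspace whose image under $d\phi$ converges to $T_y S$. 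This gives $T_y S \subset \lim_{s \to 0} T_{x(s)} f^{-1}(f(x(s)))$, which is Thom's $(a_f)$.

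For $(w_f)$ under the regularity hypothesis, the quantitative bound $\delta(T_y S, T_x f^{-1}(f(x))) = O(\dist(x, S))$ is needed. The joint analyticity factorization $\tilde f(p, q(s)) = s^k h(p, s)$ with $k \ge 1$ refines the previous estimate to $\sup_p |\tilde f(p, q(s))| = O(|s|)$, so that $|\partial_p \tilde f| = O(|s|)$; since $|\partial_s \tilde f|$ stays bounded below on arcs transverse to $V(f)$, the slope of the fiber, and hence the relevant Grassmannian distance, is $O(|s|)$. The regularity of $\phi$ guarantees $|s| \simeq \dist(x(s), S)$ through the preservation of distance-orders, yielding the arc-wise bound $O(\dist(x(s), S))$. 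The main obstacle is upgrading this arc-by-arc estimate into a uniform neighborhood estimate: this amounts either to a \L ojasiewicz-type inequality for the subanalytic ratio $|\partial_p \tilde f(p, q)| / \dist(\phi(p, q), S)$, or equivalently to a uniform Cauchy estimate stemming from the Lipschitz bound $|\tilde f(p, q)| \le C_K \dist(\phi(p, q), S)$ that the regularity of $\phi$ should provide.
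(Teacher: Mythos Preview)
Your Cauchy-estimate approach captures the right intuition---holomorphy in $p$ should control $\partial_p \tilde f$---but there is a genuine gap in the $(a_f)$ argument. Showing $\partial_p \tilde f(p(s),q(s)) \to 0$ is not sufficient to conclude $T_y S \subset \lim T_{x(s)} f^{-1}(f(x(s)))$: the fiber tangent is $\ker df$, a \emph{direction}, so what you actually need is $|\partial_p \tilde f|/\|df_{x(s)}\| \to 0$ (equivalently, that the normalized covector $df/\|df\|$ annihilates $T_yS$ in the limit). Your Cauchy bound gives $|\partial_p \tilde f| \le C\sup_{p'}|\tilde f(p',q(s))| = O(|s|^{k_0})$ where $k_0$ is the \emph{generic} order of vanishing of $s\mapsto \tilde f(p,q(s))$; but the order at the specific base point $p(0)$ could be strictly larger, in which case $\|df_{x(s)}\|$ may vanish faster than $|s|^{k_0}$ and the ratio need not tend to zero.

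The missing ingredient is exactly what the paper isolates as the complex regularity proposition: because $\Phi$ preserves $V(f)$ and $\K=\C$, a Rouch\'e argument forces the leading coefficient $D_{k_0}(t_0)$ in $\tilde f(t,q(s))=\sum_{k\ge k_0}D_k(t)s^k$ to be nonzero. This is the \emph{pointwise} bound $|\partial_p\tilde f|\le C|\tilde f|$ (same point, not a supremum), which your Cauchy inequality does not deliver. Once this regularity of $f$ for $\Phi$ is in hand, the paper reparametrizes the wing so that $\tilde f(t,\tilde s)=\tilde s^{k_0}$; then the fiber tangents are \emph{exactly} the leaf tangents, whose continuity yields $(a_f)$, and whose bound $\|\partial_t\Psi\|\le C\|\Psi\|$ (from regularity of $\Phi$) yields $(w_f)$.

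Your $(w_f)$ sketch inherits the same gap and adds two inaccuracies: $|\partial_s\tilde f|$ is not bounded below along the arc (it behaves like $|s|^{k_0-1}$), and regularity of $\Phi$ does not give $|s|\simeq\dist(x(s),S)$ but rather $\dist(x(s),S)\simeq\|q(s)\|$, which is $\sim|s|^m$ for some $m\ge 1$. What regularity of $\Phi$ actually provides is $\|\partial_t\Psi\|\le C\|\Psi\|=C\dist(x,S)$, and it is this inequality---applied after the reparametrization---that gives the $O(\dist)$ angle estimate directly, without having to mediate through the arc parameter $s$ at all.
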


We also give  an analogous  result in the real case using the notion of  regularity of a function for 
an {\aaa } trivialization, defined in Subsection \ref{regularfunctions}.  
Thom's  regularity conditions are used to show topological triviality of functions along strata.  We discuss this in detail  
in Section \ref{functions}, where we develop three different constructions guaranteeing such triviality.

In Section \ref{algebraiccase} we treat the algebraic case.  By reduction to the homogeneous analytic case we show the following results. 

\begin{thm*}[Theorem \ref{algebraictheoremconjecture}]  
Let $ \{V_i\}$ be a finite family of algebraic subsets of  $\proj _\K^n$.    Then there exists an 
algebraic stratification of $\proj _\K^n$  compatible with each $ V_i$ and admitting semialgebraic regular {\aaa } trivializations along each stratum.  
\end{thm*}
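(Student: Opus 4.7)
The plan is to reduce the projective algebraic statement to the local analytic Theorem \ref{theoremconjecture} by passing to affine cones and exploiting homogeneity. Write $\pi : \K^{n+1} \setminus \{0\} \to \proj_\K^n$ for the canonical projection and, for each $V_i \subset \proj_\K^n$, let $\hat V_i = \pi^{-1}(V_i) \cup \{0\} \subset \K^{n+1}$ be the affine cone; each $\hat V_i$ is a homogeneous algebraic subset of $\K^{n+1}$, invariant under the scaling action $\lambda \cdot x = \lambda x$ of $\K^*$. Conversely, any $\K^*$-invariant stratification of $\K^{n+1} \setminus \{0\}$ descends via $\pi$ to a stratification of $\proj_\K^n$, and algebraicity as well as semialgebraicity are preserved by this descent because $\pi$ is an algebraic principal $\K^*$-bundle, semialgebraically trivialized over the standard affine charts of $\proj_\K^n$.

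First I would apply Theorem \ref{theoremconjecture} to the family $\{\hat V_i\}$ at the point $0 \in \K^{n+1}$, producing a local analytic stratification of a neighborhood $U'$ of $0$ compatible with each $\hat V_i$ and admitting regular {\aaa} trivializations along each stratum. The key point is that the construction underlying Theorem \ref{theoremconjecture} is driven by Zariski equisingularity, i.e.\ by successive generic linear projections together with the Puiseux-with-parameter roots of the discriminants of the defining equations. When the input equations are \emph{homogeneous}, the generic projections can be taken linear, all intermediate discriminants are again homogeneous, and the stratification produced is automatically $\K^*$-invariant on $U' \setminus \{0\}$; it then extends canonically under the $\K^*$-action to a $\K^*$-invariant algebraic stratification of all of $\K^{n+1} \setminus \{0\}$. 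In the same way, the local arc-wise analytic trivializations along each cone-stratum are built from Puiseux roots of the homogeneous discriminants and from the Whitney-type interpolation, and they can be chosen $\K^*$-equivariant, hence extend by homogeneity to trivializations on $\K^{n+1} \setminus \{0\}$; semialgebraicity of these trivializations follows since every step of the construction is algebraic in nature.

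Next I would descend to $\proj_\K^n$. The $\K^*$-invariant algebraic stratification of $\K^{n+1} \setminus \{0\}$ consists of algebraic cones that project to an algebraic stratification of $\proj_\K^n$ refining each $V_i$. The $\K^*$-equivariant regular {\aaa} trivializations descend through $\pi$ to regular {\aaa} trivializations along each projective stratum, and their semialgebraicity is verified chart-by-chart in the standard affine cover of $\proj_\K^n$, where $\pi$ admits explicit semialgebraic sections. Thus all three required properties in the statement are obtained simultaneously.

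The main obstacle is precisely the promotion from ``local analytic at $0$'' to ``$\K^*$-invariant on all of $\K^{n+1} \setminus \{0\}$''. Concretely, one has to verify that each ingredient of the proof of Theorem \ref{theoremconjecture} — the choice of generic linear projections, the taking of discriminants, the tracking of Puiseux roots with parameters, and the Whitney-type interpolation producing the trivialization $\phi$ — can be performed $\K^*$-equivariantly when the starting equations are homogeneous polynomials. Once this equivariance is in place, the globalization along $\K^*$-orbits and the descent through $\pi$ are essentially formal, and both the global algebraicity of the stratification and the semialgebraicity of the trivializations follow from the algebraic character of the steps carried out at the level of cones.
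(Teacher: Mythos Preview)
Your approach is correct and is essentially the one taken in the paper: pass to affine cones, observe that the entire Zariski-equisingularity construction (successive discriminants after linear coordinate changes, and the Whitney interpolation) respects homogeneity and is $\K^*$-equivariant, then descend to $\proj_\K^n$. The paper makes the equivariance of the trivialization explicit via Remark \ref{symmetric} (affine equivariance of the interpolation formula), and the homogeneity of the discriminants is immediate.

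One organizational difference worth noting: rather than applying the \emph{local} analytic Theorem \ref{theoremconjecture} at $0$ and then extending by the $\K^*$-action, the paper invokes the \emph{global} algebraic construction of Theorem \ref{canonicalstratification3} and Proposition \ref{canonicalstratification} (the canonical stratification associated to a derivation complete system of polynomials). Since for polynomial input this construction is already defined on all of $\K^{n+1}$, the ``extension from a neighborhood of $0$'' step disappears entirely, and globality of the stratification and semialgebraicity of the trivializations are automatic. Your version is not wrong, but it introduces an extra step (local-to-global via scaling) that the paper avoids by working directly with the global polynomial system from the outset.
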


\begin{thm*}[Theorem \ref{theoremequisingularity3}]  
Let $T$ be an algebraic variety  and let $\mathcal X =\{X_k\}$ be  a finite family of 
algebraic subsets $T \times \proj^{n-1}_\K$.  Then there exists an algebraic stratification  $\mathcal S$ of $T$ such that for every stratum $S$ and for 
every $t_0\in S$  there is a neighborhood $U$ of $t_0$ in $S$ and a semialgebraic, {\aaa } 
 trivialization  of $\pi$,   preserving the family $\mathcal X$ 
\begin{align}
\Ph  : U \times \proj^{n-1}_\K \to \pi^{-1} (U),\end{align}
$\Ph(t_0,x)= (t_0,x)$, 
where $\pi:T \times \proj^{n-1}_\K\to T$  denotes the projection.   
\end{thm*}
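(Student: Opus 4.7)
The theorem is a parametric, global-algebraic analogue of Theorem \ref{theoremconjecture} for the trivial fibration $\pi \colon T \times \proj^{n-1}_\K \to T$ carrying the extra structure of the family $\mathcal X$, and I would prove it by reducing to the local analytic Puiseux-with-parameter construction in finitely many affine charts, exactly as in the proof of Theorem \ref{algebraictheoremconjecture}. Embed $T$ in a projective space if necessary so that $T \times \proj^{n-1}_\K$ sits inside some $\proj^m_\K \times \proj^{n-1}_\K$, and cover $\proj^{n-1}_\K$ by its $n$ standard affine charts $V_0, \dots, V_{n-1}$, each isomorphic to $\K^{n-1}$. Add to $\mathcal X$ the finitely many hyperplanes at infinity of these charts (the complements $\proj^{n-1}_\K \setminus V_j$) so that any trivialization respecting the enlarged family will automatically preserve the chart decomposition. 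The stratification $\mathcal S$ of $T$ to be constructed will be the common refinement of the stratifications produced by the chart-by-chart construction.

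\textbf{Construction in each chart.} In the chart $T \times V_j$ the enlarged family $\mathcal X$ restricts to a finite family of algebraic subsets of $T \times \K^{n-1}$. I would now apply the semialgebraic/algebraic version of the main machinery of the paper (the same used in Theorem \ref{algebraictheoremconjecture}), with the parameter space taken to be $T$ itself rather than a stratum of a bigger ambient space: by Zariski's inductive procedure one produces discriminant loci which are algebraic subsets of $T$ (defined by resultants and discriminants of suitable generic Weierstrass-type equations for the $X_k \cap (T \times V_j)$ with respect to the coordinates on $V_j$), and away from them the family is Zariski equisingular in the sense required to apply Puiseux with parameter. Taking these discriminant loci over all $j$, all $X_k$, and iterating on the strict decrease of dimension gives a finite chain of algebraic closed subsets whose successive differences, refined into connected smooth pieces, form the desired algebraic stratification $\mathcal S$ of $T$. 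Along each stratum $S$, a neighborhood $U$ of $t_0$ in $S$ then carries a semialgebraic arc-wise analytic trivialization $\Ph_j \colon U \times V_j \to U \times V_j$ that is the identity over $t_0$, commutes with $\pi$, and preserves $\mathcal X \cap (U \times V_j)$.

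\textbf{Patching, and the main obstacle.} The principal difficulty is to assemble the chart-wise trivializations $\Ph_j$ into a single semialgebraic arc-wise analytic $\Ph$ on $U \times \proj^{n-1}_\K$. The $\Ph_j$ depend on the choice of affine coordinates on $V_j$ and cannot be expected to agree on overlaps $V_j \cap V_{j'}$, so a naive patching fails. The remedy, built into the algebraic version of the framework and used in the proof of Theorem \ref{algebraictheoremconjecture}, is precisely that the enlarged family contains each chart-boundary hyperplane, which forces every $\Ph_j$ to respect a fixed semialgebraic decomposition of $\proj^{n-1}_\K$ subordinate to $\{V_j\}$; one then selects, on each piece of this decomposition, a canonical $\Ph_j$ using a fixed ordering of the charts, and the resulting map $\Ph$ is well-defined and continuous because the restrictions to the lower-dimensional chart-boundary strata (where two choices could disagree) are forced to be the identity over $t_0$ and are arc-wise analytically trivial along $S$. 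Arc-wise analyticity of the glued $\Ph$ follows because arc-wise analyticity is a property local on real analytic arcs and stable under semialgebraic composition, and the semialgebraicity of $\Ph$ is immediate. Shrinking $U$ to ensure simultaneous convergence of the finitely many underlying Puiseux expansions completes the proof.
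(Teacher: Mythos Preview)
Your patching argument has a genuine gap. Even after enlarging $\mathcal X$ by the coordinate hyperplanes, the chart-wise trivializations $\Ph_j$ and $\Ph_{j'}$ will each preserve those hyperplanes as sets, but they have no reason to agree pointwise on the open overlap $V_j \cap V_{j'}$. Your remedy---choosing one $\Ph_j$ on each piece of a fixed partition---produces discontinuities along the boundaries of the pieces for $t \ne t_0$: the assertion that the restrictions to the lower-dimensional boundary strata are ``forced to be the identity over $t_0$'' is a statement only at the single parameter value $t_0$, and for general $t\in U$ the two candidate maps genuinely differ on those strata. Arc-wise analyticity along $S$ of each $\Ph_j$ separately does not force any compatibility between them. (Incidentally, the proof of Theorem \ref{algebraictheoremconjecture} does not proceed by chart patching either; it too goes through homogeneity.)

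The paper avoids patching altogether. One reduces to $T$ affine irreducible, takes the product $F_n(t,x)$ of homogeneous (in $x$) defining polynomials of the $X_k$, and builds the discriminant tower $F_{n-1}, F_{n-2},\ldots$ over the function field $\K(T)$; the bad locus $Z\subset T$ is cut out by the denominators of the coefficients and the numerators of the leading coefficients. Over $T\setminus Z$ one has a Zariski equisingular system along $(T\setminus Z)\times\{0\}\subset (T\setminus Z)\times\K^n$, and the crucial observation is that because the $F_i$ are homogeneous in $x$, the roots scale and hence, by the affine equivariance of the Whitney interpolation formula (Remark \ref{symmetric}), the trivialization $\Ph(t,x)=(t,\Ps(t,x))$ of Theorem \ref{theorem} is $\K^*$-equivariant in $x$. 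It therefore descends directly to a semialgebraic {\aaa} trivialization of $U\times\proj^{n-1}_\K$. The full stratification of $T$ then follows by induction on $\dim T$ applied to $Z$.
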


The {\aaa } triviality is particularly friendly to the curve selection lemma argument.  
Recall that in analytic geometry many properties can be proven by checking them  along real analytic arcs.  
We use this argument many times in this paper.  For precise statement and a proof of the curve selection lemma 
we  refer the reader to  \cite{bruhatcartan}, \cite{wallace}, \cite{lojasiewicz}, \cite{Hironaka73} or \cite{milnor}.  
To prove the classical regularity conditions, (a) of Whitney, (w) of Verdier, or Thom's conditions (a$_f$) or 
(w$_f$), we use a wing lemma type argument originated by Whitney in \cite{whitneyannals}, 
see Proposition \ref{winglemma}.   {\Aaa } trivializations naturally provide such wings.  
For instance, in Whitney's notation, if $q(s)$ is a real analytic arc in $N_0$ then $\phi (p,q(s))$  
constitutes such an {\aaa } wing.  Moreover, {\aaa } trivializations preserve the multiplicities and 
the singular loci of the sets they trivialize, see Propositions \ref{regularityequimultplicity} and  \ref{singularities} for precise statements.

Thus this paper,  in order to get local  {\aaa} trivializations, we redefine  many classical notions and reprove many classical results of stratification theory on analytic and algebraic sets.  
  Our approach is based on the classical Puiseux with parameter theorem and the  algebro-geometric equisingularity of Zariski (called also Zariski's equisingularity).  
  Our main tool in the construction of   {\aaa } trivializations is Theorem \ref{theorem}, which says that  the Zariski equisingularity implies {\aaa } triviality.   To show it, we use Whitney's interpolation   adapted to arc-analytic geometry.  This is explained in  Appendix \ref{Sec:interpolation}. 

Besides the proofs of the Puiseux with parameter theorem and the curve selection lemma this paper is self-contained.  Our method is based on the Zariski equisingularity, hence is constructive; it involves the computation of the discriminants of subsequent linear projections.


\subsection{Zariski Equisingularity}

Let $V$ be a real or  complex analytic variety.  Then there exists a stratification $\mathcal S$ of $V$ such that $V$ is equisingular  along each stratum  $S$.   There are several different notions of equisingularity, the basic  one 
is the topological one, with many possible refinements, such as stratified 
topological triviality.  
Whitney introduced in \cite{whitney}, \cite{whitneyannals}, the regularity conditions (a) and (b) that guarantee, by the Thom-Mather first isotopy theorem, the topological equisingularity along each stratum.  He showed in \cite{whitneyannals} that any complex analytic variety admits  (a) and (b) regular stratifications.  
The real analytic case was established in \cite{lojasiewicz} and the subanalytic case in \cite{Hironaka73}.

Topological equisingularity can also  be obtained by means of  the Zariski equisingularity, as shown by Varchenko in 
\cite{varchenko1972, varchenko1973, varchenkoICM}.   
Zariski's definition, see \cite{zariski}, is recursive and is based on the geometry of discriminants.   
Let $V\subset \K^N$ be a  hypersurface.  We say that $V$ is Zariski equisingular along stratum 
$S$ at $p\in S$ if, after a change of a local system of coordinates, the discriminant of a linear projection $\pi:\K^N \to \K^{N-1}$ restricted to $V$ is equisingular 
along $\pi(S)$ at $\pi(p)$.  The kernel of $\pi$ should be transverse to $S$ and $\pi$ restricted to $V$ 
should be finite at $p$.  Stronger notions of Zariski's equisingularity are obtained  if one assumes that the kernel of $\pi$ is not contained in the tangent cone to $V$ at $p$ (transverse Zariski equisingularity) or that $\pi$ is generic (generic Zariski equisingularity).  

The special case, when  $S$ is of codimension one in $V$, was studied by Zariski in \cite{zariskiI}.  
Note that in this case $V$ can be considered as a family of plane curves parameterized by $S$.  
As Zariski shows, in this case the Zariski equisingularity is equivalent to Whitney's conditions (a) and (b) on the pair of strata 
$V\setminus S,S$.  Such equisingular families of plane curves  admit a uniform Puiseux representation parameterized by $S$, this 
result is  known  in literature as  the parametrized Puiseux or the Puiseux with parameter theorem.  We recall it in Subsection \ref{puiseux}.  

In this paper we show that the Zariski equisingularity implies {\aaa } triviality.   In the case of the
Zariski transverse equisingularity we obtain an  {\aaa } triviality that is also regular.  

\begin{thm*}[see Theorems \ref{theorem} and \ref{theoremtransverse}]
If a hypersurface $V\subset \K^N$ is Zariski equisingular along stratum $S$ at $p\in S$, then 
there is a local {\aaa} trivialization of $ \K^N$ along $S$ at $p$ that preserves $V$.  
\end{thm*}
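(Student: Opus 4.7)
The plan is to proceed by induction on the ambient dimension $N$, exploiting the recursive nature of Zariski equisingularity. For the base case, when $N = \dim S + 1$, the hypersurface $V$ is a one-parameter family of plane curves over $S$ (in the sense of Zariski \cite{zariskiI}), and the Puiseux with parameter theorem recalled in Subsection \ref{puiseux} directly furnishes local analytic parametrizations of the branches of $V$ over $S$. These parametrizations are straightforwardly assembled into an {\aaa} trivialization preserving $V$.

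For the inductive step, Zariski equisingularity of $V$ along $S$ at $p$ supplies, after a linear change of coordinates, a projection $\pi : \K^N \to \K^{N-1}$ whose kernel is transverse to $S$ and whose restriction to $V$ is finite at $p$, such that the discriminant $D = \mathrm{Disc}(\pi|_V)$ is Zariski equisingular along $\pi(S)$ at $\pi(p)$. Applying the inductive hypothesis to $D \subset \K^{N-1}$ along $\pi(S)$ yields an {\aaa} trivialization $\psi$ of a neighborhood of $\pi(p)$ in $\K^{N-1}$ along $\pi(S)$ which preserves $D$. The task then reduces to lifting $\psi$ to an {\aaa} trivialization $\ph$ of a neighborhood of $p$ in $\K^N$ along $S$ which covers $\psi$ and preserves $V$.

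The lift is constructed in two parts. Away from $\pi^{-1}(D)$, the projection $\pi|_V$ is an unramified finite analytic cover, so the analytic implicit function theorem lifts $\psi$ locally along each sheet; over the rest of $\K^N \setminus \pi^{-1}(D)$, one lifts via the horizontal (fiber) coordinate. Over $D$, Puiseux with parameter gives fractional-power expansions of the sheets of $V$ above each point of $\pi(S)$, varying analytically in the parameter and merging along $D$. These lifted sheet values are then glued off $V$ by the Whitney interpolation procedure developed in Appendix I, adapted to the arc-analytic setting. The resulting map $\ph$ is continuous, analytic on each sheet, and reduces to $\psi$ after projection.

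The principal obstacle is showing that $\ph$ is genuinely {\aaa} — that is, that pulling back by an arbitrary real analytic arc $\gamma(s)$ in the source yields an analytic function of $s$, rather than merely a continuous or even $C^\infty$ one. The difficulty is that $\gamma$ may cross $\pi^{-1}(D)$, the locus where the sheets of $V$ coalesce; the Puiseux-type fractional exponents then threaten to destroy analyticity of the interpolated values in $s$. This is precisely the issue the arc-analytic refinement of Whitney interpolation in Appendix I is engineered to handle: reparameterizing $s$ if necessary to absorb the fractional powers and verifying that the interpolation weights depend {\aab} on $s$. Once this is in place, composition with $\gamma$ produces an analytic function in $s$, completing the inductive step and thus the proof.
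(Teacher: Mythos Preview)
Your overall architecture---induction on dimension, lifting an {\aaa} trivialization of the discriminant via Whitney interpolation, with Puiseux with parameter as the engine---matches the paper's strategy. But two points in your execution are not correct as stated, and the second is a genuine gap.

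First, the lift is not built in two pieces (away from $\pi^{-1}(D)$ by the implicit function theorem, over $D$ by Puiseux, then glued). It is given by a single global formula: writing $a(\Ph_{n-1}(t,x'))$ for the vector of roots of $F_n$ over the point $\Ph_{n-1}(t,x')$, one sets $\Ps_n(t,x',x_n)=\ps(x_n,a(0,x'),a(\Ph_{n-1}(t,x')))$ with $\ps$ the Whitney interpolation of Appendix~I. For this formula even to make sense one must know that for each fixed $x'$ the roots are $\K$-analytic in $t$ with \emph{constant multiplicities along the leaf} (so that $a_i=a_j$ at $t=0$ forces $a_i\equiv a_j$). This is Lemma~\ref{preservationlem}, and it requires the discriminant of $F_{n,red}$ to be \emph{regular} for $\Ph_{n-1}$ in the sense of Definition~\ref{regularfunction}. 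That regularity is exactly condition~(Z5) carried through the induction; you never state or use it.

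Second, your arc-analyticity argument is not right. There is no reparameterization of $s$ ``to absorb fractional powers''; doing so would destroy the claim that $\Ph$ itself is arc-analytic. The actual mechanism is this: given a real analytic arc $x(s)=(x'(s),x_n(s))$, the inductive hypothesis makes $\Ph_{n-1}(t,x'(s))$ analytic in $(t,s)$, so $f(t,s,z):=F_n(\Ph_{n-1}(t,x'(s)),z)$ is a monic polynomial in $z$ with coefficients analytic in $(t,s)$. The point is that its reduced discriminant equals $s^M$ times a unit---this is forced by the regularity (Z5) of $F_{n-1}$ (hence of the discriminant of $F_{n,red}$) for $\Ph_{n-1}$. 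One is therefore back in the Puiseux-with-parameter situation of Theorem~\ref{psisaa}, and the arc-analyticity of $\Ph$ along $x(s)$ follows from Lemma~\ref{analytic2}. Without (Z5) in your inductive package you cannot conclude that the discriminant along the arc has this monomial-times-unit form, and the reduction to Theorem~\ref{psisaa} fails.
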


Our proof is different from that of Varchenko and is based on Whitney's interpolation that gives 
a precise algebraic formula for such a trivialization.   The main idea is  the
following.  Suppose $V$ is Zariski equisingular along $S$ and $\pi:\K^N \to \K^{N-1}$ is 
the projection giving this equisingularity.  By the inductive assumption, there is an  {\aaa } trivialization of $\pi (V)$  along $\pi(S)$.  This trivialization is then lifted to a trivialization of $V$ along $S$, 
and  extended to a trivialization of the ambient space $\K^n$ along $S$  by  our version of Whitney's interpolation.   Therefore the lift  is continuous, subanalytic, and, by  the Puiseux with parameter theorem,  {\aaa }.  
 This latter conclusion is obtained thanks to the {\abb } in  the inductive assumption, 
 see Remark \ref{recursiveargument}.  
 
For an analytic function germ $F$  we denote by $F_{red}$ its reduced (i.e. square free) form.  Let $(Y,y)$ be a germ of a 
$\K$-analytic space.  For a monic polynomial $F\in \mathcal O_Y [z]$  in $z$ we often consider the discriminant of $F_{red}$.  
If  $Y$ has arbitrary singularities then this discriminant should be replaced by an appropriate 
generalized discriminant that is a polynomial in the coefficients of $F$, see Appendix \ref{Part:discriminants}. 

 Finally, we note that the Zariski equisingularity can be used to trivialize not only hypersurfaces but 
 also analytic spaces of arbitrary embedding codimension.  This follows from the fact that if a hypersurface $V$ is Zariski equisingular along $S$ and  $V=\cup V_i$ is the decomposition of $V$ into irreducible components,  then the {\aaa } trivialization preserves 
 each $V_i$ and hence any set-theoretic combination of the $V_i$'s.  
 
\subsection{Proofs of the main theorems are constructive.}  
The main theorems, Theorem \ref{theoremconjecture} and Theorem 
\ref{algebraictheoremconjecture} can be shown in a virtually algorithmic way.  
For this we proceed as follows.  Given an ideal $\mathcal I$ of $\K[x_1, . . . ,x_n]$ or 
$\K\{x_1, . . . ,x_n\}$ we choose a finite set of generators of $\mathcal I$ and consider their product 
$f(x_1, . . . ,x_n)$.  Then we complete $f$ to a system of  (pseudo)polynomials $F_i(x_1, . . . ,x_i)$, 
$i=1,. . . ,n$, see Definition \ref{systempseudopolynomials}.  
This process, explained in detail in subsections \ref {proof} and \ref{construction}, 
involves a generic linear change of coordinates.  That is the only point not entirely algorithmic.  
It follows from Theorem \ref{theorem}, see Proposition 
\ref{canonicalstratification},  that the canonical stratification associated to a system of (pseudo)polynomials admits locally {\aaa} trivializations.   
To get a regular arc-wise analytic stratification, and hence a Whitney stratification,  we need to refine this construction and  consider not only $f$ but also its partial derivatives with respect to $x_n$.  This way we get a system of  (pseudo)polynomials $F_i$ 
that we call derivation complete, as explained in  Example \ref{derivationcomplete}.

\subsection{Zariski Equisingularity and regularity conditions on stratifications}

In general,  Whitney's conditions   and 
 Zariski's  equisingularity, do not imply one another.  We recall several classical examples in Section \ref{examples}.    By Zariski \cite{zariskiI},  they coincide for a hypersurface $V$ along a nonsingular subvariety of codimension $1$ in $V$.  

It was shown by Speder  \cite{speder} that in the complex case Zariski's equisingularity obtained 
by taking generic projections implies the regularity conditions (a) and (b) of Whitney.  
As it follows from our Theorem \ref{theoremtransverse} the assumption that the projections are transverse, 
 in both 
complex and real cases,  is sufficient.  
We also show in Proposition \ref{equimultiplicity} that the Zariski equisingularity (arbitrary projections) implies equimultiplicity.   

Whitney's stratification approach is independent of the choice of local analytic coordinates and simple to define.  
But the trivializations obtained by this method are not explicit and difficult to handle.  These trivializations are obtained by 
integration of "controlled" vector fields whose existence can be theoretically established.  Stronger regularity conditions, 
such as (w) of Verdier \cite{verdier}, or Lipschitz of Mostowski \cite{mostowski}, \cite{parusinskilipschitzreview}, 
lead to easier constructions of such vector fields, but in general, even if these vector fields can be chosen subanalytic,
 not much can be said about their flows. 

Zariski's  equisingularity method is more explicit and in a way constructive.  It uses the actual equations 
and local coordinate systems.  This can be considered either as a drawback or as an advantage.   
Zariski's equisingularity was used, for instance,  by Mostowski \cite{mostowski2}, see also 
\cite{BPR}, to show that analytic set germs are always homeomorphic to algebraic ones.  

In this paper we apply the Zariski equisingularity to construct stratifications via  corank one linear projections.  
This method was developed by Hardt and Hardt \& Sullivan 
\cite{hardt1975,hardt1980,hardt1983, hardtsullivan}.  

In  \cite{zariski1979} Zariski has proposed a general theory of equisingularity for hypersurfaces  by introducing the notion of dimensionality type of their points.  The dimensionality type is defined through an inductive process, using discriminants of generic (not necessarily linear) projections.  Besides the codimension one case  \cite{zariskiI},  this notion has been studied in the codimension two for families of isolated 
surface singularities in \cite{brianconhenry} and \cite {teissier1982}. 


\subsection{Applications to real algebraic geometry} 
Semialgebraic arc-analytic maps are often used in real algebraic geometry.  
The arc-analytic maps were introduced by Kurdyka in \cite{kurdyka1}.  It was shown by Bierstone and Milman in \cite{B-M} (see also  \cite{parusinski1994}) that  semialgebraic arc-analytic maps 
are blow-analytic.  Semialgebraic arc-analytic maps and 
semialgebraic arc-symmetric sets were used in \cite{kurdyka1999}, \cite{parusinski2004},  to show that injective self-morphisms of real algebraic varieties are surjective.    
For more on this development we refer the reader to \cite{kurdykaparusinski}.   
Let us also note that recently studied \cite{kollarnowak}, \cite{kucharz2009}, \cite{kucharz2014}, \cite{FHMM}  continuous rational maps are, in particular, arc-analytic and 
semialgebraic.

The weight filtration on real algebraic varieties, recently introduced  \cite{mccroryparusinski1,mccroryparusinski2}, is stable 
under 
semialgebraic arc-analytic homeomorphisms.  By Theorem \ref{theoremequisingularity3}
any algebraic family of algebraic sets is generically semialgebraically {\aaa } trivial, 
and therefore we have the following result.

\begin{thm*}[see Corollary \ref{corollaryweight}] 
Let $T$ be a real algebraic variety and let $X$ be  an algebraic subset of  
$T \times \proj^{n-1}_\K$.  Then there exists a 
 finite stratification  $\mathcal S$ of $T$ such that for every stratum $S$ and for 
every $t_0, t_1 \in S$  the fibers  $X_{t_0}$ and $X_{t_1}$ have isomorphic  the weight filtration on homology.   
\end{thm*}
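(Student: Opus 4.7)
The plan is to combine Theorem \ref{theoremequisingularity3} with the known stability of the weight filtration under semialgebraic arc-analytic homeomorphisms. First, I would apply Theorem \ref{theoremequisingularity3} to the single-member family $\mathcal X = \{X\}$, obtaining an algebraic stratification $\mathcal S_0$ of $T$ such that over a neighborhood $U$ of any point $t_0$ of a stratum $S_0 \in \mathcal S_0$ there is a semialgebraic {\aaa } trivialization
\[
\Phi : U \times \proj^{n-1}_\R \longrightarrow \pi^{-1}(U), \qquad \Phi(t_0,x)=(t_0,x),
\]
preserving $X$. Next, since each algebraic stratum has finitely many connected components, I would refine $\mathcal S_0$ by replacing each stratum by its connected components; the result is still a finite stratification $\mathcal S$, and now every stratum $S$ is connected.

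Fix a stratum $S$ and a point $t_0 \in S$, and let $U \subset S$ be a neighborhood on which a trivialization $\Phi$ as above exists. For each $t \in U$, restriction yields a map
\[
\phi_t : \proj^{n-1}_\R \longrightarrow \proj^{n-1}_\R, \qquad \phi_t(x) = \mathrm{pr}_2 \circ \Phi(t,x).
\]
This $\phi_t$ is semialgebraic as a restriction of a semialgebraic map. For any real analytic arc $x(s)$ in $\proj^{n-1}_\R$, the {\abb } of $\Phi$ (Definition \ref{maindefinition2}) forces $(t,s) \mapsto \Phi(t,x(s))$ to be analytic, so in particular $s \mapsto \phi_t(x(s))$ is analytic; the same argument applied to $\Phi^{-1}$ shows $\phi_t^{-1}$ is arc-analytic as well. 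Because $\Phi$ preserves $X$ and acts as the identity on the $T$-coordinate, $\phi_t$ is a semialgebraic arc-analytic homeomorphism of $\proj^{n-1}_\R$ carrying $X_{t_0}$ onto $X_t$, so it restricts to a semialgebraic arc-analytic homeomorphism $X_{t_0} \cong X_t$.

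Now define, for each stratum $S$ and each $t_0 \in S$, the set
\[
E(t_0) = \{\, t \in S : X_t \text{ is semialgebraically arc-analytically homeomorphic to } X_{t_0}\,\}.
\]
The local trivializations show that $E(t_0)$ is open in $S$, and by symmetry and transitivity of this equivalence relation the complement $S \setminus E(t_0)$ is a union of open sets and hence open. Since $S$ is connected, $E(t_0) = S$, so $X_{t_0}$ and $X_{t_1}$ are semialgebraically arc-analytically homeomorphic for all $t_0, t_1 \in S$.

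Finally, by the stability of the weight filtration and weight spectral sequence under semialgebraic arc-analytic homeomorphisms established in \cite{mccroryparusinski1, mccroryparusinski2} and recalled in the paragraph preceding the corollary, the weight spectral sequences and weight filtrations of $X_{t_0}$ and $X_{t_1}$ are isomorphic. The main step, and really the only nontrivial one, is the appeal to Theorem \ref{theoremequisingularity3}; once the trivialization $\Phi$ is in hand, the restriction to fibres and the spreading argument on the connected stratum are formal, and the invariance of the weight filtration is quoted from the previously developed theory.
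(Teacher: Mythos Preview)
Your proposal is correct and follows essentially the same approach as the paper: apply Theorem \ref{theoremequisingularity3}, restrict the trivialization to fibres to obtain semialgebraic arc-analytic (hence $\mathcal{AS}$) homeomorphisms $X_{t_0}\cong X_t$, and invoke the stability of the weight spectral sequence under such maps from \cite{mccroryparusinski1,mccroryparusinski2}. Your explicit connectedness argument to pass from local triviality over $U$ to all of $S$ is a welcome detail that the paper leaves implicit (note that by the paper's definition of stratification in Section \ref{stratifications} the strata are already connected components, so your refinement step is harmless but unnecessary).
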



\subsection{Resolution of singularities and blow-analytic equivalence} 

The resolution of singularities can also be used to show topological equisingularity,  though the results are partial and many  questions are still open.  This method works for the families of isolated singularities, cf.  Kuo \cite{kuo1985}, and gives local arc-analytic trivializations.  
But little is known if the singularities are not isolated, see e.g. \cite{koike}.   
Let us explain the encountered problem on a simple example.  
 Suppose that  $Y\subset V$ is nonsingular and let $\sigma:\tilde V\to V$ be a resolution of singularities such that $\sigma ^{-1} (Y)$ is a union of the components of exceptional divisors.   Fix a 
 local projection  $\pi:V\to Y$.   The exceptional divisor of $\sigma$ as a divisor with normal crossings is naturally stratified by the intersections of its components.  Let $Z\subset Y$ be the closure of the union of all critical values of $\pi \circ \sigma$ restricted to the strata.  By Sard's theorem $\dim Z < \dim Y$.  
 We say that $V$ is \emph{equiresoluble along} $Y$ if $Y\cap Z= \emptyset$.  Thus $V$ is equiresoluble along $Y'=Y\setminus Z$ and   $\pi \circ \sigma$ is locally topologically (and even real analytically) trivial over $Y'$.  If $\sigma$ is an isomorphism over $V\setminus Y$ (family of isolated singularities case) 
 then this trivialization blows down to a topological trivialization of a neighborhood of $Y$ in $V$.  But  
 in the non-isolated singularity case there is no clear reason why  a trivialization  of $\pi \circ \sigma$  comes from a topological trivialization of a neighborhood of $Y$ in $V$.   
  Thus, in general, we do not know whether equiresolubility implies topological equisingularity.  
  
As before, one may ask how the equiresolution method is related to the other methods of establishing topological 
equisingularity.   A  non-trivial result of Villamayor \cite{villamayor}, says  that the generic Zariski equisingularity of a hypersurface implies a weak version of equiresolution, see loc. cit. for details, but  
the main problem remains, it does not 
show the existence of a topological trivialization that lifts to the resolution space.  

\subsection*{Notation and terminology.} 
We denote by $\K$ either  $\R$ or $\C$.  Thus, by $\K$-analytic we mean either real analytic or holomorphic (complex analytic).

By an analytic space we mean one in the sense of \cite{narasimhan}. As we work only locally in the analytic case, it suffices to consider only analytic set germs.  For an analytic space $X$ by $Sing (X)$ we denote the set of singular points of $X$ and by $Reg(X)$ its complement, the 
set of regular points of $X$.   For an analytic function germ $F$  we denote by  
$V(F)$ its zero set and by $F_{red}$ its reduced (i.e. square free) form.  By a real analytic arc we mean  a real analytic map $\gamma : I\to X$, where $I=(-1,1)$ and $X$ is a real or a complex analytic space.

\subsection*{Acknowledgements.} 
We would like to thank Goulwen Fichou, Tzee-Char Kuo, and David Trotman for encouragement and  several remarks and suggestions concerning the paper.

  

{\Large \part {{\Aaa } trivializations.}\label{part1}}
\medskip


\section{Definition and basic properties} \label{Aaa}


Let $Z, Y$ be  $\K$-analytic spaces.  A map $f : Z\to Y$ is called {\it arc-analytic} if $f\circ \delta$  is analytic for every real analytic arc $\delta : I \to Z$,  where $I=(-1,1)\subset \R$.  The arc-analytic maps were introduced by 
Kurdyka in \cite{kurdyka1} and have been subsequently  used intensively in real analytic and algebraic geometry, 
 see  \cite{kurdykaparusinski}.   It was shown by Bierstone and Milman in \cite{B-M}
  (see also \cite{parusinski1994} for a different proof) that the   arc-analytic maps with subanalytic graphs are continuous and that the arc-analytic maps with semi-algebraic graphs are blow-analytic, i.e. can be made real analytic after composing with blowings-up.    Therefore the arc-analytic maps are closely related  to the blow-analytic trivialization in the sense of Kuo \cite{kuo1985}.  
  
In this paper we consider arc-analytic trivializations satisfying some additional properties.  
 Below we  define  the notion of  arc-wise analytic trivialization, that is not only arc-analytic with  arc-analytic inverse, but it is also $\K$-analytic with respect to the parameter $t\in T$.  
For simplicity we assume that the parameter space $T$ is nonsingular.

\begin{defi}\label{maindefinition1}
Let $T, Y, Z$ be  $\K$-analytic spaces, $T$ nonsingular.  We say that a map 
$f (t,z) : T \times Z\to Y$ is 
\emph{{\aaa } in} $t$ 
if it is $\K$-analytic in $t$ and arc-analytic in $z$, that is if for every real analytic arc  $z (s) : I\to Z$,  the map 
$ f(t, z(s))$ is real analytic, and moreover, if $\K=\C$, complex analytic with respect to $t$.   
\end{defi}


All {\aaa } maps considered in this paper are  subanalytic and hence continuous.

We stress that even for complex analytic spaces we define the notion of arc-analyticity using only 
real analytic arcs. (A map of complex analytic spaces $f : Z\to Y$, with $Z$ nonsingular, that is complex analytic on complex analytic arcs is, by Hartogs Theorem, complex analytic.)

\begin{defi}\label{maindefinition2}
Let $ Y, Z$ be  $\K$-analytic spaces and let $T$ be a nonsingular $\K$-analytic space.  
Let $\pi : Y\to T$ be a $\K$-analytic   map.  
 We say  
 $$\Ph (t,z) : T \times Z\to Y$$
  is an  \emph{{\aaa} trivialization of $\pi$} if it satisfies the following properties 
  \begin{enumerate}
  \item 
  $\Ph$  is a subanalytic homeomorphism,   
   \item 
  $\Ph$ is {\aaa} in $t$ (in particular it is $\K$-analytic with respect to $t$), 
  \item
   $\pi \circ \Ph (t,z)=t$  for every  $(t,z) \in T\times Z$, 
     \item
   the inverse of $\Ph$ is arc-analytic, 
    \item 
  there exist $\K$-analytic stratifications $\{Z_i\}$ of $Z$ and $\{Y_i\}$ of $Y,$ such that for each $i$,   
  $Y_i = \Ph (T\times Z_i)$  and $\Ph_{|T\times Z_i} : T\times Z_i \to Y_i$ 
  is a real analytic diffeomorphism.  
   \end{enumerate}
  Sometimes we say for short that such $\Ph$ is an {\aaa } trivialization if it is obvious from the context 
what the projection $\pi$ is.  
\end{defi}

In the algebraic case we require $\Ph$ to be semialgebraic and that the stratifications  
are algebraic in the sense  explained in Section \ref{stratifications}.

If $\Ph(t,z) : T \times Z\to Y$ is an  \emph{{\aaa} trivialization} then, for each $z\in Z$, the map 
$T\ni t \to \Ph(t,z)\in Y$ is a $\K$-analytic embedding.  We denote by   $L_z$ its image and we call 
it \emph{a leaf or a fiber of $\Ph$.}  We say that $\Ph$ \emph{preserves $X\subset Y$} if $X$ is a union of 
leaves.  We denote by 
$T_y= T_{y} L_z$, $y= \Ph (t,z)$,  the tangent space to the leaf through $y$.  


\subsection{Computation in local coordinates}\label{coordinates}

Let $(t_0,z_0) \in T\times Z$, $y_0=\Ph (t_0,z_0)$.   Choosing local coordinates, we may always  
assume that $(T,t_0)=(\K^m,0)$, $(Z,z_0)$ is an analytic subspace of $(\K^n,0)$, and $(Y,y_0)$ is an analytic subspace of $(T\times \K^n,0)$ with  $\pi(t,x) =t$.  Thus we may write 
\begin{align}\label{form}
\Ph (t,z) = (t, \Ps(t,z)).
\end{align} 
We also suppose that $L_0 = \Ph(T\times \{0\}) = T\times \{0\}$ as germs at the origin. 

Using local coordinates, we  identify $T_y$ with an $m$-dimensional vector subspace of $\K ^m \times \K^n$ and consider $T_y$ as a point in the Grassmannian $G(m,m+n)$.  
These tangent spaces are spanned by the vector fields $v_i$ on $Y$ defined by 
\begin{align}\label{vectorfields}
v_i (\Ph(t,z)) := (\partial /\partial t_i, \partial \Ps /\partial t_i) \qquad i=1,...,m.
\end{align}

\begin{prop}\label{continuitytangent}
 Let  $\Ph (t,z) : T \times Z\to Y$ be an  {{\aaa} trivialization}.  Then the vector fields $v_i$ and the 
 tangent space map $y\to T_y$ are subanalytic, arc-analytic,  and continuous.  
\end{prop}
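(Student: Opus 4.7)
The plan is to work in the local coordinates of Subsection \ref{coordinates}, where \eqref{form} gives $\Phi(t,z) = (t, \Psi(t,z))$ and \eqref{vectorfields} yields $v_i(y) = (e_i, \partial \Psi/\partial t_i(\Phi^{-1}(y)))$, with $e_i$ the $i$-th standard basis vector in $\K^m$. Since $\Phi^{-1}$ is a subanalytic arc-analytic homeomorphism by Definition \ref{maindefinition2}(1),(4), it suffices to establish that $\partial \Psi/\partial t_i$ is subanalytic, arc-analytic, and continuous on $T \times Z$ and then pull these properties back through $\Phi^{-1}$.

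Arc-analyticity is the most direct step. Given a real analytic arc $\eta(s) \subset Y$, Definition \ref{maindefinition2}(4) makes $(\tau(s),\zeta(s)) := \Phi^{-1}(\eta(s))$ real analytic in $s$. By the {\aaa } property of $\Psi$ in $t$, the map $(t,s) \mapsto \Psi(t,\zeta(s))$ is jointly real analytic (and complex analytic in $t$ when $\K = \C$); differentiation with respect to $t_i$ preserves this regularity, and substituting the real analytic function $t = \tau(s)$ yields $s \mapsto \partial \Psi/\partial t_i(\tau(s),\zeta(s))$ real analytic. For subanalyticity, consider the subanalytic set of difference quotients
$$G = \bigl\{(t, z, h, w) \in T \times Z \times \K \times \K^n : h \neq 0,\ wh = \Psi(t + h e_i, z) - \Psi(t, z)\bigr\}.$$
The $\K$-analyticity of $\Psi$ in $t$, combined with Cauchy-type estimates applied to the (continuous) $\Psi$, bounds the second-order Taylor remainder uniformly on compacts; hence $(\Psi(t + he_i, z) - \Psi(t, z))/h \to \partial \Psi/\partial t_i(t, z)$ uniformly as $h \to 0$, and $\overline{G} \cap \{h = 0\}$ equals the graph of $\partial \Psi/\partial t_i$. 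Since subanalytic closures are subanalytic, so is this graph, and continuity of $\partial \Psi/\partial t_i$ follows from the Bierstone--Milman theorem cited at the start of Section \ref{Aaa}.

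For the tangent space map $y \mapsto T_y$, Definition \ref{maindefinition2}(5) makes $\Phi$ restricted to each stratum $T \times Z_i$ a real analytic diffeomorphism onto $Y_i$, so $v_1(y), \ldots, v_m(y)$ are linearly independent and span $T_y$ (the complex differential coinciding with the real one when $\K = \C$). The Pl\"ucker embedding $T_y \mapsto [v_1(y) \wedge \cdots \wedge v_m(y)] \in \proj(\Lambda^m \K^{m+n})$ is polynomial in the components of the $v_i$, so subanalyticity, arc-analyticity, and continuity transfer from the $v_i$ to $y \mapsto T_y$. The main technical obstacle is the subanalyticity of $\partial \Psi/\partial t_i$ --- derivatives of generic subanalytic functions need not be subanalytic --- and the key ingredient making the closure argument succeed is precisely the $\K$-analyticity of $\Psi$ in $t$, which supplies the uniform Taylor estimates that identify the closure of the graph of difference quotients with the graph of the derivative.
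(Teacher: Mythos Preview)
Your approach is essentially that of the paper: arc-analyticity by pulling an arc in $Y$ back through $\Phi^{-1}$ and using the joint analyticity of $(t,s)\mapsto\Psi(t,\zeta(s))$; subanalyticity via the difference-quotient closure (this is exactly the ``classical argument'' the paper abbreviates by citing Kurdyka); and continuity from subanalytic plus arc-analytic via Bierstone--Milman.  The Pl\"ucker-embedding argument for $y\mapsto T_y$ is a clean way to finish that the paper leaves implicit.

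One step needs tightening.  Your claim that ``Cauchy-type estimates applied to the (continuous) $\Psi$'' bound the second-order Taylor remainder \emph{uniformly on compacts} is immediate when $\K=\C$: Cauchy's integral formula on a disc of fixed radius in $t_i$, with the continuous $\Psi$ as integrand, does the job.  For $\K=\R$, however, real analyticity of $\Psi(\cdot,z)$ in $t$ for each fixed $z$ does not by itself give a complexification of \emph{uniform} radius in $z$, so you have no a~priori uniform bound on $\partial^2\Psi/\partial t_i^2$, and hence no guarantee that $\overline{G}\cap\{h=0\}$ is exactly the graph of $\partial\Psi/\partial t_i$ rather than something larger.  The fix is the curve selection lemma: any point $(t_0,z_0,0,w_0)\in\overline{G}\cap\{h=0\}$ is the endpoint of a real analytic arc $(t(s),z(s),h(s),w(s))$ in $G$; along that arc the map $(t,s)\mapsto\Psi(t,z(s))$ is genuinely jointly real analytic by the arc-wise analytic hypothesis, so the Taylor remainder \emph{is} $O(h(s)^2)$ there and $w_0=\partial\Psi/\partial t_i(t_0,z_0)$ follows.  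This uses precisely the arc-wise analyticity that drives the whole paper, and is implicitly what lies behind the paper's one-line citation.
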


\begin{proof}
The subanalyticity follows from the classical argument of subanalyticity of the derivative of a subanalytic map, see \cite{kurdykasmooth} Th\'eor\`eme 2.4.   Let $(t(s),z(s)): (I,0) \to (T\times Z,(t_0,z_0))$ be a 
real analytic arc germ. Consider the map $\Ps : T\times I \to \K^n$
\begin{align}\label{development}
\Ps(t, z(s)) = \sum _{k\ge k_0} D_k(t) s^k . 
\end{align}
The arc-analyticity of $v_i$ on $(t(s),z(s))$ follows  from the analyticity of 
$(t,s) \to \partial \Ps(t, z(s))  /\partial t_i$.  
Finally, subanalytic and arc-analytic maps are continuous, cf. \cite{B-M} Lemma 6.8.
\end{proof}

\begin{rem}\label{uniquesolution}
For  $y = \Ph (t,z)$ fixed,   $\tau \to \Ph (t+\tau e_i, z) $ is an integral curve of $v_i$ through $y$. 
 Moreover, such an integral curve  is unique as follows  from (5) of Definition \ref {maindefinition2}.  
\end{rem}


\subsection{{\Aaa}  trivializations regular along a fiber}\label{regular}

We now define regular {\aaa}  trivializations along a fiber that will be important for applications 
in stratification theory including our proof of Whitney's fibering conjecture, c.f. section \ref{stratifications}.  
Regular  {\aaa}  trivializations  preserve the size of the distance to a fixed fiber.  

\begin{defi}\label{maindefinition3}
We say that an {\aaa} trivialization  $\Ph(t,z) : T \times Z\to Y$  is \emph{regular at} $(t_0,z_0) \in  T \times Z$ if  there are a neighborhood $U$ of  $(t_0,z_0)$ and a constant $C>0$  such that   for all $(t,z) \in U$  
(in local coordinates at $(t_0,z_0)$ and $y_0 = \Ph(t_0,z_0)$) 
 \begin{align}\label{regularitybound}
  C^{-1} \|\Ps (t_0,z)\| \le \|\Ps (t,z))\| \le  C \|\Ps (t_0,z)\|  ,
\end{align}
where as in \eqref{form},  $\Ph (t,z) = (t, \Ps(t,z))$, $\Ps(t,z_0) \equiv 0$.  
We say that $\Ph$ is \emph{regular along} $L_{z_0}$ if it is regular at every $(t,z_0)$, $t\in T$.  
\end{defi}

 We have the following criterion of regularity which follows from the more general  Proposition \ref{criterionregularfunction} that we prove in the next subsection.  

\begin{prop}\label{criterionregular}
The {\aaa } trivialization $\Ph (t,z)$ is regular at $(0,0)$ if and only if for every real analytic arc germ 
$z(s): (I,0) \to (Z,0)$,  
the leading coefficient of  \eqref{development}  does not vanish at $t=0$:  $D_{k_0}(0) \ne 0$.  

Moreover, if $\Ph(t,z)$ is regular at $(0,0),$ then in a neighborhood of $(0,0) \in T\times Z$ 
\begin{align}\label{boundedder}
 \| \frac {\partial \Ps} {\partial t} (t,z))\| \le  C \|\Ps (t,z)\| . 
\end{align}
\end{prop}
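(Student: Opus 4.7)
The plan is to establish all three assertions by applying the curve selection lemma to naturally associated subanalytic ratios, combined with the Puiseux-type expansion \eqref{development}. The key analytic input is that $\Ps$ is subanalytic and so is $\partial \Ps/\partial t$ (by Kurdyka's theorem \cite{kurdykasmooth} Th\'eor\`eme 2.4, as already used in Proposition \ref{continuitytangent}), and that for any real analytic arc $z(\sigma) : (I,0) \to (Z,0)$ the joint analyticity of $(t,\sigma) \mapsto \Ps(t, z(\sigma))$ yields a convergent expansion $\Ps(t, z(\sigma)) = \sum_{k\ge k_0} D_k(t)\,\sigma^k$ with coefficients $D_k$ that are $\K$-analytic in $t$.

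For the implication \emph{regularity $\Rightarrow D_{k_0}(0)\ne 0$ for every arc}, I would argue by contradiction: suppose $D_{k_0}(0)=0$ for some arc $z(\sigma)$. Injectivity of $\Ph$ together with $\Ps(t,0) \equiv 0$ prevents all $D_k(t)$ from vanishing identically, so by $\K$-analyticity there exist $t_n \to 0$ with $D_{k_0}(t_n)\ne 0$. Fixing such a $t_n$ and letting $\sigma \to 0$, the expansion gives $\|\Ps(t_n, z(\sigma))\| \sim \|D_{k_0}(t_n)\|\,|\sigma|^{k_0}$, whereas $\|\Ps(0, z(\sigma))\|$ vanishes to order strictly greater than $k_0$. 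Hence the ratio diverges, violating the upper bound of \eqref{regularitybound}. For the converse, assume $D_{k_0}(0)\ne 0$ along every arc yet one of the inequalities in \eqref{regularitybound} fails on every neighborhood of $(0,0)$. The curve selection lemma applied to the subanalytic subset of $U\setminus L_0$ where $\|\Ps(t,z)\|/\|\Ps(0,z)\|$ exceeds any given bound (respectively stays below any given small positive number) produces a real analytic arc $\gamma(\sigma)=(t(\sigma),z(\sigma))$ through $(0,0)$ with $z(\sigma)\ne 0$ for $\sigma\ne 0$, along which this ratio tends to $\infty$ (respectively to $0$). Expanding along the arc $z(\sigma)$ and using the hypothesis $D_{k_0}(0)\ne 0$, both $\|\Ps(t(\sigma),z(\sigma))\|$ and $\|\Ps(0,z(\sigma))\|$ are asymptotic to $\|D_{k_0}(0)\|\,|\sigma|^{k_0}$ as $\sigma\to 0$ (since $D_{k_0}(t(\sigma))\to D_{k_0}(0)$), so the ratio tends to $1$ — contradiction.

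For the bound \eqref{boundedder}, I would argue analogously. If it fails, curve selection applied to the subanalytic function $\|\partial\Ps/\partial t\|/\|\Ps\|$ on $U\setminus L_0$ (where $\Ps\ne 0$) produces an analytic arc $\gamma(\sigma)=(t(\sigma),z(\sigma))$ at the origin with $z(\sigma)\ne 0$ for $\sigma\ne 0$ along which the ratio is unbounded. Differentiating the expansion term by term, $\partial_t\Ps(t,z(\sigma))=\sum_{k\ge k_0}\partial_t D_k(t)\,\sigma^k=O(|\sigma|^{k_0})$, while by the first part (applied to the arc $z(\sigma)$) $D_{k_0}(0)\ne 0$, so $\|\Ps(t(\sigma),z(\sigma))\|$ is bounded below by a positive constant times $|\sigma|^{k_0}$. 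Thus the ratio stays bounded, contradicting unboundedness.

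The main delicate point throughout is ensuring that the arc produced by curve selection has $z(\sigma)\not\equiv 0$, so that the expansion in $\sigma$ is genuine; this is secured because all the ratios in question are defined only on $\{\Ps\ne 0\}\subset\{z\ne 0\}$, the latter inclusion following from $\Ps(t,0)\equiv 0$. A subsidiary point is the subanalyticity of $\partial\Ps/\partial t$, but this is exactly what Proposition \ref{continuitytangent} has already furnished.
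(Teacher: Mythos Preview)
Your argument is correct and is essentially the paper's own proof. The paper deduces this proposition from the more general Proposition~\ref{criterionregularfunction} (applied to the projection $f(t,x)=x$, so that $f\circ\Ph=\Ps$), whose proof is exactly the curve selection argument you give: (ii)$\Rightarrow$(i) by choosing an arc along which the ratio blows up and contradicting the leading-coefficient condition, (i)$\Rightarrow$(ii) directly from the expansion, and (ii)$\Rightarrow$(iii) by term-by-term differentiation of \eqref{development}.
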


\subsection{Functions and maps regular along a fiber}\label{regularfunctions}  
In this section we generalize the notion of regularity for {\aaa} trivializations to $\K$-analytic function germs  
$f: (Y,y_0) \to (\K, 0)$, see Definition \ref{regularfunction}.  First we show the following criterion that we state for $f$ of a slightly more general form.

\begin{prop}\label{criterionregularfunction}
Let $\Ph(t,z) : T \times Z\to Y$ be  an {\aaa} trivialization and let   $f: (Y,y_0) \to (\R^ k, 0)$, $y_0=\Ph(t_0,z_0)$, be a real analytic map germ.  Then the following conditions are equivalent:
\begin{enumerate}
\item[(i)]
there is $C>0$ such that  for all $(t,z)$ sufficiently close to $(t_0,z_0)$ 
\begin{align}\label{regularfunctioncondition}
  C^{-1} \|f(\Ph (t_0,z))\| \le \|f(\Ph (t,z))\| \le  C \|f(\Ph (t_0,z)) \|   .
\end{align}
\item[(ii)]
for every real analytic arc germ $z(s): (I,0) \to (Z,z_0)$  
the leading coefficient $D_{k_0}$ of  
\begin{align}\label{developmentfunction}
f(\Phi (t, z(s))) = \sum _{k\ge k_0} D_k(t) s^k 
\end{align}
satisfies $D_{k_0}(t_0) \ne 0$.  
\item[(iii)]
there is $C>0$ such that  for all $(t,z)$ sufficiently close to $(t_0,z_0)$
\begin{align}\label{boundderfunction}
 \| \frac {\partial (f\circ \Phi)} {\partial t} (t,z))\| \le  C \| f\circ \Phi  (t,z)\| .  \qquad \qquad 
\end{align}  
\end{enumerate}
\end{prop}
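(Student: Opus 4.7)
The plan is to establish the cycle of implications $(i) \Rightarrow (ii) \Rightarrow (iii) \Rightarrow (i)$, working throughout with $g := f \circ \Phi$. Since $f$ is real analytic and, by Definition~\ref{maindefinition1}, $\Phi(t, z(s))$ is jointly real analytic in $(t,s)$ for every real analytic arc $z(s) : (I,0) \to (Z, z_0)$, the composition $g(t, z(s))$ is also jointly real analytic in $(t,s)$. This joint analyticity is the key technical ingredient throughout.

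For $(i) \Rightarrow (ii)$, fix a real analytic arc $z(s)$ and expand $g(t, z(s)) = \sum_{k \ge k_0} D_k(t) s^k$ with each $D_k$ real analytic in $t$ and $D_{k_0} \not\equiv 0$ by definition of $k_0$. Arguing by contradiction, suppose $D_{k_0}(t_0) = 0$. By real analyticity of $D_{k_0}$ one finds $t$ arbitrarily close to $t_0$ with $D_{k_0}(t) \ne 0$, and for such $t$ one has $\|g(t, z(s))\| \sim |D_{k_0}(t)|\, |s|^{k_0}$ while $\|g(t_0, z(s))\| = O(|s|^{k_0+1})$ as $s \to 0^+$. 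Hence the upper bound in \eqref{regularfunctioncondition} blows up, a contradiction. (The degenerate case $g(t_0, z(s)) \equiv 0$ forces $g(t, z(s)) \equiv 0$ by \eqref{regularfunctioncondition}, which also contradicts $D_{k_0} \not\equiv 0$.)

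For $(ii) \Rightarrow (iii)$, argue by contradiction combined with the curve selection lemma. The classical argument of \cite{kurdykasmooth} (Th\'eor\`eme 2.4) shows that $\partial_t g$ is subanalytic since $g$ is subanalytic and real analytic in $t$, so $R(t,z) := \|\partial_t g(t,z)\|/\|g(t,z)\|$ is subanalytic on the open locus $\{g \ne 0\}$. If \eqref{boundderfunction} fails in every neighborhood of $(t_0, z_0)$, curve selection provides a real analytic arc $(t(s), z(s)) \to (t_0, z_0)$ with $g \ne 0$ along the arc and $R \to \infty$. Now apply $(ii)$ to the arc $z(s)$: writing $g(t, z(s)) = \sum_{k \ge k_0} D_k(t) s^k$ with $D_{k_0}(t_0) \ne 0$, substitute $t = t(s)$ (analytic in $s$ with $t(0) = t_0$) and Taylor-expand each $D_k$ at $t_0$ to obtain $\|g(t(s), z(s))\| = |D_{k_0}(t_0)|\, |s|^{k_0}(1 + o(1))$ and $\|\partial_t g(t(s), z(s))\| = O(|s|^{k_0})$. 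Hence $R$ remains bounded along the arc, contradiction.

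For $(iii) \Rightarrow (i)$, use a Gronwall estimate along the straight segments $\tau \mapsto (t_0 + \tau(t - t_0), z)$, $\tau \in [0,1]$. Real analyticity of $g$ in $t$ gives $\frac{d}{d\tau}\|g\|^2 = 2\langle g, \partial_t g \cdot (t - t_0)\rangle$, and \eqref{boundderfunction} yields $\bigl|\frac{d}{d\tau}\|g\|^2\bigr| \le 2C\|t - t_0\|\,\|g\|^2$. Integration over $\tau \in [0,1]$ produces
\begin{equation*}
e^{-2C\|t - t_0\|}\|g(t_0, z)\|^2 \le \|g(t, z)\|^2 \le e^{2C\|t - t_0\|}\|g(t_0, z)\|^2,
\end{equation*}
which on a small enough neighborhood gives \eqref{regularfunctioncondition}. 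The main obstacle is the implication $(ii) \Rightarrow (iii)$: beyond verifying subanalyticity of $\partial_t g$ so that curve selection applies, one must see that along an arbitrary arc $(t(s), z(s))$ with $t(s)$ non-constant the leading $s$-behavior is still governed by $D_{k_0}(t_0) \ne 0$, which is precisely where the joint real analyticity of $g(t, z(s))$ in $(t,s)$ is essential.
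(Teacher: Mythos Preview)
Your argument is correct and follows essentially the same route as the paper: curve selection reduces (iii) (and, in the paper, also (i)) to a statement on real analytic arcs, where the joint analyticity of $g(t,z(s))$ in $(t,s)$ and the hypothesis $D_{k_0}(t_0)\ne 0$ immediately give the required bounds; and (iii) $\Rightarrow$ (i) is the Gronwall-type integration you carry out explicitly (the paper just says ``(i) follows from (iii)''). Your write-up is in fact more careful than the paper's on two points: you spell out why $\partial_t g$ is subanalytic so that curve selection applies, and you give the Gronwall estimate in full.
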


\begin{proof}
To show that (ii) implies (i) we use  the curve selection lemma.  If (i) fails then there is a real analytic arc germ $(t(s),z(s)): (I,0) \to (T\times Z,(t_0,z_0))$ along which one of the inequalities of (i) fails, that is, for instance, 
$\frac {\|f(\Ph (t(s),z(s)))\|} { \|f(\Ph (t_0,z(s))) \|} \to \infty$ as $s\to 0$.  But this contradicts (ii).  To complete this argument we note that $f(\Ph (t_0,z(s))) \equiv 0$ iff $f(\Ph (t,z(s)))\equiv 0$, that is what (ii) means in this case.  
Clearly (ii) follows from (i).  

Similarly, it is sufficient  to show (iii) on every real analytic arc and this follows immediately from 
(ii).  Finally (i) follows from (iii).
\end{proof}


\begin{defi}\label{regularfunction}
Let $\Ph(t,z) : T \times Z\to Y$ be  an {\aaa} trivialization in $t$.  We say that an analytic function germ 
$f: (Y,y_0) \to (\K, 0)$, $y_0=\Ph(t_0,z_0)$,   is  $\Ph$-\emph{regular} (regular for short),  if  it satisfies one of the equivalent conditions of Proposition \ref{criterionregularfunction}.  

We say that $f$ is $\Ph$-\emph{regular along} $L_{z_0}$ (regular for short) if it is regular at every $(t,z_0)$, $t\in T$.  
\end{defi}




\begin{prop}\label{regularproducts}
Let $\Ph(t,z) : T \times Z\to Y$ be  an {\aaa} trivialization and let $f,g: (Y,y_0) \to (\K, 0)$ be two analytic function germs 
not vanishing identically on each component of $(Y,y_0)$.   Then $f$ and $g$ are regular if and only if  so is  $fg$.  
\end{prop}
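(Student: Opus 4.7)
The plan is to apply the criterion (ii) of Proposition \ref{criterionregularfunction}, which characterizes regularity arc by arc via the non-vanishing at $t_0$ of a leading coefficient. Fix a real analytic arc germ $z(s):(I,0)\to (Z,z_0)$ and expand the compositions along it as convergent power series in $s$ with analytic coefficients in $t$:
\begin{align*}
 A(t,s) := f(\Phi(t,z(s))) = \sum_k A_k(t)s^k, \qquad B(t,s) := g(\Phi(t,z(s))) = \sum_k B_k(t)s^k,
\end{align*}
so that $(fg)(\Phi(t,z(s))) = A(t,s)B(t,s) = \sum_k C_k(t)s^k$. Let $a$, $b$, $c$ denote the respective generic orders in $s$, i.e.\ the smallest indices with $A_k, B_k, C_k\not\equiv 0$ in $t$, with the convention $+\infty$ when the entire series vanishes identically. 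The key algebraic observation is that $\mathcal O_{T,t_0}$ is an integral domain (since $T$ is nonsingular), and therefore so is $\mathcal O_{T,t_0}\{s\}$. This forces $c = a+b$ and, when both are finite, the multiplicativity of the leading coefficients: $C_{a+b}(t) = A_a(t)\,B_b(t)$.

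For the forward implication, assume $f$ and $g$ are both regular. On any arc where $a,b<\infty$, criterion (ii) applied to $f$ and to $g$ gives $A_a(t_0)\neq 0$ and $B_b(t_0)\neq 0$, so $C_{a+b}(t_0) = A_a(t_0)B_b(t_0) \neq 0$, which is (ii) for $fg$. On arcs where $a=\infty$ or $b=\infty$ we have $C\equiv 0$ and (ii) for $fg$ is vacuous. Hence $fg$ is regular.

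For the converse I would argue contrapositively. Suppose $fg$ is regular and $f$ fails (ii): there is an arc $z(s)$ with $A\not\equiv 0$ but $A_a(t_0)=0$. If moreover $B\not\equiv 0$ along this arc, then $c=a+b<\infty$ and $C_c(t_0)=A_a(t_0)B_b(t_0)=0$, contradicting the regularity of $fg$, so $f$ is forced to be regular (and $g$ follows by symmetry). The main obstacle is the degenerate subcase $B\equiv 0$, i.e.\ $g$ vanishing identically on the wing $\Phi(T\times z(I))$: here (ii) for $fg$ is itself vacuous and places no constraint on $A$. To overcome this I would perturb the arc: the set of real analytic arcs at $z_0$ along which $g\circ\Phi$ is identically zero is subanalytically thin (as $g\not\equiv 0$ and $\Phi$ is a subanalytic homeomorphism), so an arbitrarily small analytic perturbation $\tilde z$ of $z$ can be found with $g\circ \Phi(\cdot,\tilde z(\cdot))\not\equiv 0$; upper-semicontinuity of the generic order and of the vanishing of the leading coefficient of $A$ along such perturbations ensures that $f$ still fails (ii) along $\tilde z$, reducing to the nondegenerate case. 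This perturbation-plus-semicontinuity step is the most delicate part of the argument.
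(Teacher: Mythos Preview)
Your approach via criterion (ii) of Proposition~\ref{criterionregularfunction} is exactly the paper's: the published proof is the single sentence ``It follows from (ii) of Proposition~\ref{criterionregularfunction}.'' Your forward direction and the nondegenerate case of the converse, using that leading coefficients multiply in the integral domain $\mathcal O_{T,t_0}\{s\}$, are correct and are all the authors spell out.

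You have, however, put your finger on a genuine subtlety that the one-line proof does not address: on an arc along which $g\circ\Phi$ vanishes identically while $f\circ\Phi$ does not, regularity of $fg$ is vacuous and imposes nothing on $A_a(t_0)$. Your perturbation idea is the natural remedy, but the justification you sketch is incomplete. The difficulty is that $\Phi$ is only arc-wise analytic, not analytic in $z$, so you cannot compare $f\circ\Phi$ along two nearby arcs by a Taylor expansion in $z$; the ``upper-semicontinuity of the generic order and of the vanishing of the leading coefficient'' you invoke therefore needs an argument. One way to make it rigorous is to use property~(5) of Definition~\ref{maindefinition2}: pass to a stratum $Z_i$ on which $\Phi$ is genuinely real analytic, and perturb $z(s)$ within that stratum by a term of $s$-order strictly greater than $a$, chosen so that $g\circ\Phi$ is not identically zero along the new arc (possible since $g\not\equiv 0$). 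On the stratum the expansion is legitimate, so the first $a$ coefficients of $A$ are unchanged and you land in the nondegenerate case. Carrying this out carefully (ensuring the perturbed arc stays where it should) is routine but is work that neither your write-up nor the paper provides.
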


\begin{proof}
It  follows from (ii) of Proposition \ref{criterionregularfunction}.  
\end{proof}

In the complex case the regularity is a geometric notion as the following proposition shows.  

\begin{prop}\label{complexregularity}
Suppose $\K=\C$.  
Let $\Ph(t,z) : T \times Z\to Y$ be  an {\aaa} trivialization and let $f: Y\to \C$ be a 
complex analytic function.  Suppose that $\Ph$ preserves $V(f)$.   Then 
$f$ is $\Ph$-regular  at every point of $V(f)$. 
\end{prop}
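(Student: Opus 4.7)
The approach is to verify condition (ii) of Proposition \ref{criterionregularfunction}. Fix $y_0 = \Phi(t_0, z_0) \in V(f)$ and a real analytic arc germ $z(s):(I,0) \to (Z,z_0)$, and set $g(t,s) := f(\Phi(t,z(s)))$. Because $\Phi$ is arc-wise analytic in $t$ and $f$ is holomorphic, $g$ is real analytic in $(t,s)$ and holomorphic in $t$, so the Taylor coefficients $D_k(t)$ of $g(t,s) = \sum_{k \geq k_0} D_k(t)\, s^k$ are holomorphic in $t$, and the series extends to a function $\tilde g(t,s)$ holomorphic on a complex neighborhood of $(t_0,0)$. The task is to show $D_{k_0}(t_0) \neq 0$, i.e.\ that the generic order $k_0 = \min\{k : D_k \not\equiv 0\}$ in $s$ coincides with the order $\ell := \operatorname{ord}_s g(t_0,s)$.

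The preservation hypothesis gives the decisive structural constraint: each leaf $L_{z(s)}$ either lies entirely in $V(f)$ or is disjoint from it, so for every fixed $s \in I$ the function $t \mapsto g(t,s)$ is identically zero or nowhere zero. Hence $S_0 := \{s \in I : g(t_0,s) = 0\} = \{s : L_{z(s)} \subset V(f)\}$; this set is either all of $I$ (in which case $g \equiv 0$, all $D_k$ vanish, and (ii) holds vacuously) or is discrete, so that $S_0 \cap (-\varepsilon,\varepsilon) = \{0\}$ for small $\varepsilon$. In the non-vacuous case $\ell$ is finite and $k_0 \leq \ell$; the goal reduces to ruling out the strict inequality $k_0 < \ell$.

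Suppose for contradiction that $k_0 < \ell$. Factor $\tilde g(t,s) = s^{k_0}\,\tilde h(t,s)$, with $\tilde h$ holomorphic, $\tilde h(t,0) = D_{k_0}(t)$ (a nonzero holomorphic function vanishing at $t_0$), and $\tilde h(t_0,s) = D_\ell(t_0)\, s^{\ell - k_0} + O(s^{\ell - k_0 + 1})$ nonzero of order $\ell - k_0 \geq 1$ in $s$. The germ of the hypersurface $V(\tilde h)$ at $(t_0,0)$ has complex dimension $m = \dim T$; no irreducible component is contained in $\{s = 0\}$ (else $D_{k_0} \equiv 0$), so the projection to the $s$-coordinate is non-constant on each component. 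The open mapping theorem (applied after desingularizing each component) then shows that the image of this projection contains an open neighborhood of $0$ in $\C$, and in particular some real $s_* \neq 0$ of arbitrarily small modulus. For a corresponding $t_*$ with $\tilde h(t_*,s_*) = 0$ we have $g(t_*,s_*) = 0$; the preservation observation then forces $g(t_0,s_*) = 0$, placing $s_* \in S_0 \cap (-\varepsilon,\varepsilon) = \{0\}$, contradicting $s_* \neq 0$.

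The main obstacle is justifying the openness of the $s$-projection of $V(\tilde h)$ near $0$. This rests on the holomorphy of the coefficients $D_k$ in $t$, itself a direct consequence of $\Phi$ being complex analytic in $t$ and $f$ being holomorphic. The step fails in the real-analytic category, which is exactly why the proposition is stated only in the complex case; in the real setting one must impose the regularity of $f$ by an independent mechanism.
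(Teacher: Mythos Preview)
Your proof is correct and follows the same overall strategy as the paper: assume $D_{k_0}(t_0)=0$ with $D_{k_0}\not\equiv 0$, produce a point $(t_*,s_*)$ with $s_*\in\R\setminus\{0\}$ small and $f(\Phi(t_*,z(s_*)))=0$, and contradict the preservation hypothesis (which forces each leaf $L_{z(s_*)}$ to be either contained in or disjoint from $V(f)$).

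The only difference is in how the zero $(t_*,s_*)$ is found. The paper first restricts $t$ to a complex line through $t_0$ on which $D_{k_0}$ does not vanish identically, writes $f\circ\Phi(t,z(s))=s^{k_0}h(t,s)$, and applies Rouch\'e's theorem: since $t_0$ is an isolated zero of $h(\cdot,0)=D_{k_0}$, for every sufficiently small real $s\neq 0$ the function $h(\cdot,s)$ has a root $t(s)$ near $t_0$. You instead complexify $s$, look at the hypersurface $V(\tilde h)\subset\C^{m+1}$, and use the open mapping theorem (via normalization) to show its projection to the $s$-axis is open near $0$, hence hits arbitrarily small real $s_*$. Both arguments are valid; the paper's Rouch\'e route is a bit more elementary, since it avoids complexifying $s$ and any appeal to the structure of analytic hypersurfaces, while your approach has the advantage of not needing the reduction to a one-parameter $t$.
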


\begin{proof}
Suppose that this is not the case.  Then there exists a real analytic arc $z(s): (I,0) \to (Z,z_0)$, 
such that in \eqref{developmentfunction},  $D_{k_0}\not \equiv 0$ and $D_{k_0} (t_0)=0$.  
Clearly  $f\circ \Phi(t_0, z(s)) \ne 0$ for $s\ne 0$.  
We show that for $s\ne 0$ there is $t(s)$, $t(s) \to t_0$ as $s\to 0$, such that $f\circ \Phi(t(s), z(s)) =0$.  This 
would contradict  the assumption on $\Ph$.  For this, by restricting to a $\K$-analytic arc  through $t_0$, 
we may suppose that $t$ is a single variable $t\in (\C,0)$.  Let us then write 
$f\circ \Phi (t, z(s)) =s^{k _0} h(t,s)$, where 
\begin{align}\notag
h(t,s) = D_{k_0} (t) +  \sum _{k> k_0} D_k(t) s^{k-k_0}  .
\end{align} Since $0$ is an isolated root of $h(t,0)=0$,  Rouch\'e's Theorem implies that $h(t,s)=0$ 
has roots also for $s\ne 0$. 
\end{proof}

\begin{defi} 
We say that an ideal $\mathcal I$ of $\mathcal O_{Y, y_0}$ is \emph{$\Ph$-regular} (regular for short) if, 
for one or equivalently for every finite system of generators $f_1, ... , f_k$ of $\mathcal I$, 
$f=(f_1, \ldots, f_k)$ satisfies the  equivalent conditions of Proposition  \ref{criterionregularfunction}.   
\end{defi}

This definition generalizes both the notion of regularity of a function and of  a fiber.  It follows that $\Ph$-regularity of an ideal implies that its zero set $V(\mathcal I)$ is preserved by the trivialization,.  But except the complex function case, Proposition \ref{complexregularity}, this is a strictly stronger condition.  We will need the following lemma for the proof  of Proposition 
\ref {winglemma}.   

\begin{lem}\label{complexification}
Suppose $\K=\C$.  Let $\mathcal I$   be an ideal of $\mathcal O_{Y,y_0}$ and let $\Ph(t,z) : T \times Z\to Y$ be  an {\aaa} trivialization.  Let $z(s): (I,0) \to (Z,z_0)$ be real analytic and denote by 
$\varphi (t,s):(T\times \C, t_0\times 0) \to (Y,y_0) $ the complexification of $\Ph (t,x(s))$.  
Suppose that $\varphi (t,s) \notin V(\mathcal I)$ for $s\in \R, s>0$.   Then 
$\varphi (t,s) \notin V(\mathcal I)$ for $s\in \C\setminus \{0\}$.  
\end{lem}

\begin{proof}
As in the proof of Proposition  \ref{complexregularity}, we may assume $T$ one dimensional.  Consider the 
ideal $\varphi^* (\mathcal I)$ in $\C\{t,s\}$.  The only interesting case is if  $V(\mathcal I)$ is one-dimensional, that is a complex curve germ.  Write its defining function in the form $s^{k _0} h(t,s)$ with  $h$ not vanishing on 
the $t$-axis.  (We do not claim here that  $\varphi^* (\mathcal I)$ is principal.  It may have embedded components at the origin.)  If  $h(0,0)\ne 0$ we are done.  Otherwise, 
 $0$ is an isolated root of $h(t,0)=0$ and then, by  Rouch\'e's Theorem,  $h(t,s)=0$ 
has roots for all $s\ne 0$, that contradicts the assumption that there are no such roots for 
$s\in \R, s>0$.  
\end{proof}

\subsection{Preservation of multiplicity and singular locus} \label{preservationsing} 

In this subsection we suppose that $T$, $Z$, and $Y$ are open subsets of $\K^m$,  $\K^n$, and $\K^{n+m}$, respectively,
 and that $\Ph:T\times Z\to Y$ is an {\aaa } trivialization of the standard projection $\pi : \K^{n+m} \to \K^{m}$. 

 Under these assumptions we first show the preservation of multiplicities of $\Ph$-regular functions.  
 Let us denote $Y_t = Y\cap \pi^{-1}(t)$ and for a function $f: Y\to \K$,  by $f_t$, the restriction of 
$f$ to $Y_t$.   In the following lemma we compare the multiplicities of $f$ at $(t,z)\in Y$ and the multiplicities of the restrictions $f_t$ at $z\in Y_t$.

\begin{prop}\label{regularityequimultplicity}
  If $f: (Y,y_0) \to (\K, 0)$, $y_0=\Ph(t_0,z_0)$,   is  $\Ph$-regular then for $t$ close to $t_0$  the following multiplicities are equal   
\begin{align}\label{genequimultiplicity}
\mult _ {y_0} f  = \mult _ {\Ph (t,z_0)} f= \mult_{y_0} f_{t_0} = \mult_ {\Ph (t,z_0)} f_{t} .
\end{align}
\end{prop}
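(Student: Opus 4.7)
The plan is to characterize each of the four multiplicities as the minimum, over germs of real analytic arcs through the base point, of the order of vanishing of the relevant function along the arc, and then to use condition (ii) of Proposition \ref{criterionregularfunction} to show that all four minima coincide.

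First I would fix local coordinates in which $\Ph(t,z) = (t, \Ps(t,z))$, so that $Y$ and each fibre $Y_t$ are smooth. The inequality $\mult_{y_0} f \leq \mult_{y_0} f_{t_0}$ is immediate, since arcs contained in $Y_{t_0}$ are in particular arcs in $Y$ through $y_0$ along which $f$ and $f_{t_0}$ agree. For the reverse inequality, let $\gamma : (I, 0) \to (Y, y_0)$ be any real analytic arc; by property (4) of Definition \ref{maindefinition2} one can write $\gamma(s) = \Ph(t(s), z(s))$ with $t(0) = t_0$, $z(0) = z_0$, and $t(\cdot), z(\cdot)$ real analytic in $s$. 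In the expansion
\begin{align*}
f(\Ph(t, z(s))) = \sum_{k \geq k_0} D_k(t)\, s^k,
\end{align*}
one has $D_k \equiv 0$ for $k < k_0$ and $D_{k_0} \not\equiv 0$ by the choice of $k_0$, while regularity of $f$ at $y_0$ gives $D_{k_0}(t_0) \neq 0$. Substituting $t = t(s)$, the coefficient of $s^{k_0}$ in $f(\gamma(s))$ equals $D_{k_0}(t_0) \neq 0$, so $\mathrm{ord}_0\, f(\gamma(s)) = k_0 = \mathrm{ord}_0\, f_{t_0}(\Ph(t_0, z(s)))$. Taking the infimum over all $z(\cdot)$ then yields $\mult_{y_0} f \geq \mult_{y_0} f_{t_0}$.

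For the two equalities at $\Ph(t, z_0)$ I would exploit the openness of regularity in $t$. Since $D_k \equiv 0$ for $k < k_0$ and $D_{k_0}$ is analytic with $D_{k_0}(t_0) \neq 0$, the non-vanishing $D_{k_0}(t) \neq 0$ persists for $t$ in a neighborhood of $t_0$; hence $f$ is regular for $\Ph$ at $\Ph(t, z_0)$, and the argument above applied at $(t, z_0)$ gives $\mult_{\Ph(t, z_0)} f = \mult_{\Ph(t, z_0)} f_t$. Moreover the integer $k_0 = k_0(z(\cdot))$ attached to each arc through $z_0$ is the same whether computed at base $t_0$ or at base $t$, so the minimum $\min_{z(\cdot)} k_0(z(\cdot))$ is locally constant in $t$; combined with the real analytic diffeomorphisms $\Ph_t, \Ph_{t_0} : Z \to Y_t, Y_{t_0}$ provided by property (5) of Definition \ref{maindefinition2}, which biject arcs through $z_0$ with arcs through the corresponding base points, this yields $\mult_{\Ph(t, z_0)} f_t = \mult_{y_0} f_{t_0}$.

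The main point deserving care is the identification of the multiplicity of an analytic function at a smooth point with the minimum order of vanishing along real analytic arcs, together with the preservation of this minimum under arc-analytic homeomorphisms with arc-analytic inverse. Both are classical for power series on $\K^n$, and since $Y$ and the $Y_t$ are open in affine space the argument applies directly, reducing the proposition to manipulation of the displayed expansion and the openness of the non-vanishing of its leading coefficient.
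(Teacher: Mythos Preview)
Your proof is correct and follows essentially the same approach as the paper: both rely on Fukui's characterization of multiplicity at a smooth point as the minimum of $\operatorname{ord}_0 f(y(s))$ over real analytic arcs $y(s)$, and both use the arc-analyticity of $\Ph$ and $\Ph^{-1}$ together with condition (ii) of Proposition~\ref{criterionregularfunction} to show these orders are preserved. One small remark: in your last step the bijection between arcs in $Z$ through $z_0$ and arcs in $Y_t$ through $\Ph(t,z_0)$ follows already from properties (2) and (4) of Definition~\ref{maindefinition2} (arc-wise analyticity of $\Ph$ and arc-analyticity of $\Ph^{-1}$); property (5) is not needed here and does not in general assert that $\Ph_t:Z\to Y_t$ is a global real analytic diffeomorphism.
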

 
 \begin{proof}
We use the argument of Fukui's proof  of invariance of multiplicity by blow-analytic homeomorphisms, cf. \cite{fukui}.  It is based on the observation that, on a smooth space,  $ \mult_{y_0} f = \min_{y(s)}  \operatorname{ord}_0 f (y(s))$,  where the minimum is taken over all real analytic arcs $y(s) : (I,0) \to (Y,y_0)$.  Thus since $\Ph$ and  
 $\Ph^{-1}$ are arc-analytic 
  $$
\mult_ {\Ph (t,z_0)} f_{t} = \min_{z(s)\to z_0}  \operatorname{ord}_0 f (\Ph (t,z(s))) . 
 $$
For $\Ph$-regular $f$  such orders are preserved by $\Ph$ and this shows the last equality in \eqref{genequimultiplicity}.  The other ones follow from (ii) of Proposition \ref{criterionregularfunction}.  
 \end{proof}

Consider  an ideal  $\mathcal I = (f_1, ...,f_k)$  of $\mathcal O_Y$ and denote by $X=V(\mathcal I)$ its zero set and by  $X_t$ the set $X \cap \pi^{-1}(t)$.  Recall that for $y\in X \subset \K^{n+m}$ the Zariski tangent space $T_{y} X$ is the kernel of the differential $D_{y}(f_1, ...,f_k)$.  

\begin{prop}\label{singularities}
Let $f_i\in \mathcal O_{Y,y_0},  i=1, ... , k$, be $\Ph$-regular and let $X=V(\mathcal I )$.  Then, for every $y$ close to $y_0$, $y= \Ph(t,z)$,  
$T_{y} X_t = \pi^{-1}(t) \cap T_{y} X$ and $\dim_{\K} T_{ \Ph(t,z)} X_t$ is independent of $t$.  In particular,   $Sing X_t = \pi^{-1}(t) \cap Sing X$ and $\Ph$ preserves $Sing X$. 
\end{prop}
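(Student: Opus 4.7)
The plan is to split the argument into three parts: first the identity $T_y X_t = \pi^{-1}(t) \cap T_y X$, then the dimension constancy, then the two ``in particular'' statements. The first identity is a direct unpacking of the definitions. The tangent space to $\pi^{-1}(t)$ at $y$ is $\{0\}\times\K^n$, and $T_y X$ is by hypothesis the kernel of the $k\times(n+m)$ Jacobian $D_y F$ with $F=(f_1,\dots,f_k)$, so $T_y X\cap(\{0\}\times\K^n)$ is the kernel of $D_y F$ restricted to $\{0\}\times\K^n$, which is $\ker D_y^x F = T_y X_t$ by the same Jacobian definition applied to the restrictions $f_i|_{Y_t}$.

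For the constancy of $\dim T_{\Phi(t,z)} X_t$ in $t$ (with $z$ fixed), I would set $y(t)=\Phi(t,z)$ and $M(t)=D_{y(t)}^x F$. This is a $\K$-analytic $k\times n$ matrix in $t$, so by analyticity its rank is lower semicontinuous and $\operatorname{rank} M(t)\ge\operatorname{rank} M(t_0)$ near $t_0$. The reverse inequality is the crux. Since each $f_i$ is regular for $\Phi$ and $f_i(y_0)=0$, criterion (i) of Proposition \ref{criterionregularfunction} forces $f_i(y(t))\equiv 0$, so the leaf-tangent vectors $v_\ell(t)=(e_\ell,\partial_{t_\ell}\Psi(t,z))$ satisfy $D_{y(t)}F\cdot v_\ell(t)=0$; this expresses the first $m$ columns of the full Jacobian $D_{y(t)}F$ as linear combinations of the last $n$, hence $\operatorname{rank} D_{y(t)}F=\operatorname{rank} M(t)$. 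To obtain a matching upper bound on $\operatorname{rank} M(t)$, I would pass to analytic arcs $z(s)$ through $z$ in $Z$ and use the expansion $f_i(\Phi(t,z(s)))=\sum_k D_k^{(i)}(t)s^k$ from Proposition \ref{criterionregularfunction}(ii): the leading order $k_0^{(i)}$ in $s$ is independent of $t$ and the leading coefficient $D^{(i)}_{k_0^{(i)}}(t)$ is nowhere vanishing near $t_0$, so the arc-wise first-order data attached to $M(t_0)$ and $M(t)$ agree. The main obstacle is assembling this arc-wise information into a genuine rank bound on $M(t)$; my expectation is that this step is carried out by invoking Proposition \ref{regularityequimultplicity} applied to elements of the Jacobian ideal together with the arc-analyticity of $\Phi^{-1}$, which transfers linear dependencies among the rows of $M(t_0)$ to dependencies among the rows of $M(t)$.

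The two ``in particular'' statements then follow easily. The leaf-tangent vectors $v_\ell(t)$, $\ell=1,\dots,m$, lie in $T_{y(t)}X$ and project to a basis of $T_t T$ under $\pi_\ast$, so $\pi_\ast|_{T_{y(t)} X}$ is surjective and $\dim T_{y(t)}X=m+\dim T_{y(t)}X_t$. Constancy of the right-hand side along each leaf thus gives constancy of $\dim T_{y(t)}X$, and since $\operatorname{Sing} X$ is exactly the locus where $\dim T_y X>\dim X$, it follows that $\Phi$ preserves $\operatorname{Sing} X$. Finally, the trivialization $\Phi$ makes $\dim X_t=\dim X-m$ independent of $t$, so the regular-singular dichotomy on fibres matches that on $X$ fibrewise via the first identity, yielding $\operatorname{Sing} X_t=\pi^{-1}(t)\cap\operatorname{Sing} X$.
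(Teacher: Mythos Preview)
Your treatment of the first identity $T_yX_t=\pi^{-1}(t)\cap T_yX$ and of the two ``in particular'' statements is fine; the final paragraph is essentially what the paper does as well. The genuine gap is in the middle part, the constancy of $\dim T_{\Phi(t,z)}X_t$.

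You correctly get the lower semicontinuity of $\operatorname{rank} M(t)$ from analyticity, and your observation that the leaf vectors $v_\ell(t)$ lie in $\ker D_{y(t)}F$ (hence $\operatorname{rank} D_{y(t)}F=\operatorname{rank} M(t)$) is useful. But the matching upper bound is not established. Your proposed fix---``invoke Proposition~\ref{regularityequimultplicity} applied to elements of the Jacobian ideal''---does not work as stated: regularity of the $f_i$ for $\Phi$ gives you control over the orders of the $f_i$ along arcs, but says nothing about the partial derivatives $\partial f_i/\partial x_j$ or the minors of the Jacobian. There is no reason those functions should be regular for $\Phi$, so Proposition~\ref{regularityequimultplicity} cannot be applied to them. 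Likewise, a linear relation $\sum c_i\,d^x f_i=0$ at $y(t_0)$ is a pointwise condition on an analytic matrix and does not propagate to $y(t)$ without a further argument.

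The paper bypasses this difficulty with a single observation: linear independence of the differentials is an \emph{arc condition}. Namely, $df_{i_1},\dots,df_{i_l}$ are linearly independent at $y$ if and only if for each $j$ in this set there is a real analytic arc $y^{(j)}(s)$ through $y$ with $\operatorname{ord}_0 f_j(y^{(j)}(s))=1$ and $\operatorname{ord}_0 f_{j'}(y^{(j)}(s))>1$ for the remaining indices $j'$. Now regularity of each $f_i$ (criterion~(ii) of Proposition~\ref{criterionregularfunction}) says precisely that for any arc $z(s)$ in $Z$, the order in $s$ of $f_i(\Phi(t,z(s)))$ is independent of $t$. Since $\Phi$ and $\Phi^{-1}$ are arc-analytic, an arc through $\Phi(t_0,z)$ yields an arc $z(s)$ in $Z$, and $\Phi(t_1,z(s))$ is then an arc through $\Phi(t_1,z)$ realizing the same orders for every $f_i$. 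Hence the arc characterization transfers verbatim from $t_0$ to $t_1$, giving the full rank equality (both inequalities at once) rather than just one direction. This is the missing idea in your argument.
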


\begin{proof}
The equality  $T_{y} X_t = \pi^{-1}(t) \cap T_{y} X$ follows from the fact that the tangent space  
to the leaf through $y$ satisfies $T_yL_z \subset T_{y} X$ and is transverse to the fibers of $\pi$.  

The differential of $f$ at $y$ vanishes if and only if  
 $$
 \min_{y(s)}  \operatorname{ord}_0 f (y(s)) >1
 $$
 where the minimum is taken over all real analytic arc germs $y(s) : (I,0) \to (Y,y)$.  
Similarly, the differentials of $f_1, ... , f_l$ at $y$ are linearly independent  if and only if for every
$i=1, ..., l$ there is a real analytic arc $y(s) : I \to(Y,y)$ such that 
$$
 \operatorname{ord}_0 f_i (y(s)) = 1 \text { and }  \operatorname{ord}_0 f_j (y(s)) >1 \text { for all } j=1, ... ,\hat {\imath}, ... , l. 
 $$
This condition is preserved by $\Ph$.  
\end{proof}



\section{Construction of {\aaa }  trivializations} \label{Puiseux}

In this section we use the Whitney Interpolation and the Puiseux with parameter theorem
 to construct {\aaa } trivializations of equisingular (in the sense of Zariski)
  families of plane curve singularities.  In Part \ref{Part1:Zariski} we will extend this construction  to the Zariski equisingular families of hypersurface singularities in an arbitrary number of variables.  
 
Let 
\begin{align}\label{polynomial}
F(t,x,z) = z^N+ \sum_{i =1}^Nc_i(t,x) z^{N-i}
\end{align}
be a unitary polynomial in $z\in \K$ with $\K$-analytic coefficients $c_i(t,x)$ defined on $U_{\varepsilon,r} = U_{\varepsilon} 
\times U_r $, where $ U_\varepsilon = \{t\in \K^m;  \|t\|< \varepsilon\}$, $U_r= \{x\in \K; | x|<r\}$ .  
Here $t$ is considered as a parameter.   Suppose that  the discriminant  $\Delta_{F_{red}} (t,x)$ of $F_{red}$ 
is of the form 
\begin{align}\label{discriminant}
\Delta_{_{red}F_{red}} (t,x) = x^M u(t,x),  \quad  u\ne 0 \text { on } U_{\varepsilon,r}.
\end{align}

For $F$ reduced, if $M=0$ then by the Implicit Function Theorem the complex roots of $F$, denoted later by 
$a_1 (t,x),  ... , a_N(t,x)$,  are distinct $\K$-analytic functions of $(t,x)$.   In general, by the Puiseux with parameter theorem  they become analytic in $(t,y)$ after a ramification $x=y^d$. 
By Corollary \ref{Puiseuxcor1}, for $x$ fixed, an ordering of the roots at $(0,x)$, $a_1 (0,x),  ..., a_N(0,x)$, gives by continuity an ordering of the roots at $(t,x)$, $a_1 (t,x),  \ldots, a_N(t,x)$.    
Denote by $a(t,x) = (a_1 (t,x),  ... , a_N(t,x))$ the vector of these roots and consider the self-map $\Phi : U_{\varepsilon,r} \times \C 
\to U_{\varepsilon,r} \times \C$ 
\begin{align}\label{phipuiseux}
\Ph (t,x,z ) = (t,x, \ps (z,a(0,x),a(t,x))) , 
\end{align}
where $\psi (z,a,b)$ is the Whitney interpolation map given by  \eqref{ourpsi}.

\begin{thm}\label{psisaa}
For $\varepsilon > 0$ sufficiently small, the  map $\Phi$ defined in \eqref{phipuiseux} is an {\aaa} trivialization of the projection 
$ U_{\varepsilon, r} \times \C \to  U_{\varepsilon}$.  
It preserves the zero set $V(F)$ of $F$ and, moreover, $F$ is $\Ph$-regular along $U_{\varepsilon} \times \{(0,0)\}$.  

If $\K=\R$ then $\Phi$ is conjugation invariant in $z$.  
\end{thm}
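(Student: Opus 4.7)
The plan is to verify, one by one, the five properties of Definition \ref{maindefinition2}, then check preservation of $V(F)$, regularity along the specified leaf, and (in the real case) conjugation invariance. Condition (3), $\pi \circ \Phi = \pi$, is immediate from the formula \eqref{phipuiseux}. Preservation of $V(F)$ reduces to the Lagrange-type identity $\psi(a_i(0,x), a(0,x), a(t,x)) = a_i(t,x)$ characterizing the Whitney interpolation of Appendix I; this says that each branch of $V(F_{\text{red}})$ over $t=0$ is carried to the corresponding branch over $t$. For conditions (1) and (4) I would exhibit the inverse explicitly as $\Phi^{-1}(t,x,w) = (t,x,\psi(w, a(t,x), a(0,x)))$, justified by the involutive identity $\psi(\psi(z,a,b),b,a) = z$ of Whitney interpolation; subanalyticity of both $\Phi$ and $\Phi^{-1}$ follows because the roots $a_i$ are subanalytic (being algebraic over the analytic coefficients $c_i$) and $\psi$ is a subanalytic expression in its arguments, while continuity is then automatic by \cite{B-M}. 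For the stratification required in (5), I would take $\{V(F_{\text{red}}),\,(U_{\varepsilon,r}\times\C)\setminus V(F_{\text{red}})\}$ refined by the coincidence patterns of the roots at $(0,x)$; on each such stratum $\psi$ becomes analytic in all its arguments, so $\Phi$ restricts to a real analytic diffeomorphism there.

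The heart of the theorem is condition (2), arc-wise analyticity in $t$. Analyticity in $t$ at fixed $(x,z)$ is clear: for fixed $x$, each $a_i(t,x)$ is $\K$-analytic in $t$, and $\psi$ is $\K$-analytic in its third argument. The delicate issue is arc-analyticity in the fibre coordinates: given a real analytic arc $s \mapsto (x(s),z(s))$, the roots $a_i(t, x(s))$ need not be analytic in $s$ since the $a_i$ branch in $x$. To handle this I would invoke Puiseux with parameter (Subsection \ref{puiseux}) to write $a_i(t,x) = \tilde a_i(t,y)$ with $y^d = x$, where $\tilde a_i$ is jointly analytic in $(t,y)$; then $y(s) = x(s)^{1/d}$ is real analytic after an auxiliary reparametrization $s \mapsto s^{d'}$, by ordinary Puiseux. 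The crucial property is that the Whitney interpolation $\psi$ is built in Appendix I so as to be invariant under the action of the symmetric group simultaneously permuting its $a$- and $b$-arguments, so the ramification in $y$ is removable and the composite $\psi(z(s), \tilde a(0,y(s)), \tilde a(t,y(s)))$ descends to a real analytic function of $(t,s)$ in the unramified variables.

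Regularity of $F$ along $U_\varepsilon \times \{(0,0)\}$ I would verify using criterion (ii) of Proposition \ref{criterionregularfunction}: on any real analytic arc approaching $(0,0)$, the leading Puiseux coefficient of $F \circ \Phi$ in $s$ equals, up to nonvanishing factors, the evaluation at $s = 0$ of $\prod_i (z(s) - \tilde a_i(t,y(s)))$, whose non-degeneracy in $t$ follows from the discriminant hypothesis $\Delta_{F_{\text{red}}}(t,x) = x^M u(t,x)$ with $u \neq 0$. Conjugation invariance in the real case is inherited from the corresponding invariance of $\psi$, a consequence of the fact that the ordered roots at nearby real $x$ come in complex conjugate pairs with matching positions. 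The hardest step, in my view, is the arc-analyticity argument of the middle paragraph: the entire purpose of Whitney interpolation here is to transform Puiseux-analytic ingredients into an arc-analytic combination, and any rigorous proof must trace the relevant permutation invariance carefully through the explicit formula \eqref{ourpsi} developed in Appendix I.
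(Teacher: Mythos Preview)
Your proposal has a genuine error in the treatment of conditions (1) and (4): the ``involutive identity'' $\psi(\psi(z,a,b),b,a)=z$ that you invoke to write down an explicit inverse is \emph{false} for the Whitney interpolation of Appendix~I. A quick check with $N=2$, $a=(0,1)$, $b=(0,2)$ and the original Whitney weights $f_i(\xi)=|\xi_i|$ gives $\psi(2,a,b)=8/3$ but $\psi(8/3,b,a)=28/15\neq 2$. The map $\psi_{a,b}$ does send each $a_i$ to $b_i$, and $\psi_{b,a}$ sends each $b_i$ to $a_i$, but away from these finitely many points there is no algebraic reason for the composition to be the identity. Consequently your argument for bijectivity and for arc-analyticity of $\Phi^{-1}$ collapses.

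The paper handles this quite differently. Bijectivity and the homeomorphism property come from Proposition~\ref{lipschitz}: for fixed $(t,x)$ the map $z\mapsto\psi(z,a(0,x),a(t,x))$ is bi-Lipschitz once the quantity $\gamma(t,x)$ of \eqref{bound1} is small, and Corollary~\ref{Puiseuxcor3} (a direct consequence of Puiseux with parameter) shows that $\gamma(t,x)\to 0$ as $t\to 0$, so shrinking $\varepsilon$ suffices. Arc-analyticity of $\Phi^{-1}$ is then Lemma~\ref{psiinvsaa}: given a real analytic arc $(t(s),x(s),z(s))$, one knows only that $\tilde z(s)$ is subanalytic, hence a Puiseux series $\sum v_{k/q}s^{k/q}$; if a genuinely fractional exponent $p/q$ appeared, the bi-Lipschitz bound would force $|\psi(\tilde z_{\mathrm{an}}(s),\cdot,\cdot)-\psi(\tilde z(s),\cdot,\cdot)|\sim s^{p/q}$, contradicting the real analyticity of both images. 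This Lipschitz mechanism is entirely absent from your sketch.

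The same bi-Lipschitz property also drives the paper's proof of regularity (Lemma~\ref{regularity}): from $|\psi(z,a(0,x),a(t,x))-a_i(t,x)|\sim|z-a_i(0,x)|$ one multiplies over $i$ to get $|F(\Phi(t,x,z))|\sim|F(0,x,z)|$ directly, which is criterion~(i) of Proposition~\ref{criterionregularfunction}. Your route via criterion~(ii) is not obviously wrong, but the sentence about the leading coefficient is too vague to be a proof. Your discussion of arc-wise analyticity in $t$ (condition~(2)) is in the right spirit; the paper's version (Lemmas~\ref{analytic1}, \ref{analytic2}, \ref{psisaa1}) is more delicate than you indicate, because the denominator of \eqref{ourpsi3} depends only on $a(0,x(s))$ and is analytic in $s$ alone, so one must argue separately that the bounded quotient of the form $s^{-k}\cdot(\text{power series in }t,s)$ is in fact analytic.
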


Theorem \ref{psisaa} is shown in Subsection \ref{proofpsiaaa}.    
 

\subsection{Puiseux with parameter} \label{puiseux} 
We recall the classical Puiseux  with parameter theorem, see \cite{zariskiI} Thm. 7 and \cite{zariskiII} 
Thm. 4.4, also \cite{pawlucki}.  The Puiseux with parameter theorem is a special case of the Abhyankar-Jung Theorem, 
see \cite{Ab}, \cite{parusinskirond}.  


\begin{thm} \label{PuiseuxTheorem}{\rm (Puiseux with parameter)}\\
Let $F(t,x,z)\in \C \{t,x\}[z]$ be as in \eqref{polynomial}.  Suppose that the discriminant of $F$ reduced   
is of the form  $\Delta_{F_{red}} (t,x) = x^M u(t,x)$ with $u(0,0)\ne 0$.  Then there is a positive integer $d$ 
and $\tilde a_i(t,y)\in \C \{t,y\}$ such that 
$$F(t,y^d,z) = \prod _{i=1}^N(z- \tilde a_i (t,y)) .  
$$  
Let $\theta $ be a $d$th root of unity.  Then for each $i$ there is $j$ such 
that $\tilde  a_i ( t,\theta y) =\tilde a_j (t,y)$.  

If $F(t,x,z)\in \R \{t,x\}[z]$ then the family $\tilde a_i(t,y)$ is conjugation invariant.  
\end{thm}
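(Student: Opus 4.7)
The plan is to treat the reduced case first via a monodromy argument on the unramified covering away from $x = 0$, extend by Riemann's removable singularity theorem, and finally handle the non-reduced case by factoring $F_{red}$ into coprime pieces. The $d$-th root symmetry and the real case will follow by tracking symmetries through the construction.

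For $F$ reduced: since $\Delta_F = x^M u$ with $u$ a unit, the discriminant is nonvanishing on the open set $V = U_\varepsilon \times (U_r \setminus \{0\})$. By the Implicit Function Theorem, $\{F = 0\} \to V$ is an $N$-sheeted unramified analytic covering. Since $U_\varepsilon$ is contractible, $V$ deformation retracts onto the circle $\{0\} \times \partial U_{r/2}$, so the monodromy of this covering is determined by a single permutation $\sigma \in S_N$, namely the one obtained by looping once around $x = 0$ at any fixed $t$. Setting $d = \mathrm{ord}(\sigma)$, the pullback covering under $\rho(t,y) = (t, y^d)$ has trivial monodromy over $\rho^{-1}(V) = U_\varepsilon \times (U_{r'} \setminus \{0\})$, where $r' = r^{1/d}$. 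Its sections give $N$ single-valued analytic functions $\tilde a_i$ on this punctured polydisk satisfying $F(t, y^d, z) = \prod_i (z - \tilde a_i(t,y))$. Monicity of $F$ together with continuity of the coefficients $c_i$ on a compact neighborhood of the origin forces the $\tilde a_i$ to be bounded, so Riemann's removable singularity theorem applied across the codimension-one set $\{y = 0\}$ extends each $\tilde a_i$ to an analytic function on the full polydisk $U_\varepsilon \times U_{r'}$.

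For $F$ not reduced, I would factor $F_{red} = F_1 \cdots F_s$ into pairwise coprime irreducible factors. The product formula $\Delta_{F_{red}} = \prod_k \Delta_{F_k} \cdot \prod_{k<l} \mathrm{Res}(F_k, F_l)^2$, together with the fact that $\K\{t,x\}$ is a UFD, forces each $\Delta_{F_k}$ to be of the form $x^\bullet \cdot \mathrm{unit}$. Applying the reduced case to each $F_k$ with a common $d$ (the lcm of the individual $d_k$'s) factors each $F_k(t, y^d, z)$ into linear factors; writing $F = \prod_k F_k^{m_k}$ and listing roots with multiplicities yields the desired factorization of $F$. The $d$-th root symmetry is then automatic: for a $d$-th root $\theta$ of unity, $y \mapsto \theta y$ preserves $F(t, y^d, z)$, so it permutes the root functions, giving the claimed $\tilde a_i(t, \theta y) = \tilde a_j(t, y)$. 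In the real case, complex conjugation $(t, x, z) \mapsto (\bar t, \bar x, \bar z)$ commutes with every step of the construction (the covering, the monodromy permutation, the ramification, and the Riemann extension), so the family $\{\tilde a_i\}$ is closed under $\tilde a_i(t, y) \mapsto \overline{\tilde a_i(\bar t, \bar y)}$.

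The main obstacle is the monodromy step, namely verifying that a single permutation $\sigma$ controls the covering uniformly in $t$. This rests on the simple connectedness of $U_\varepsilon$ (so that $\pi_1(V) \to \pi_1(U_r \setminus \{0\}) = \Z$ is an isomorphism) and on the fact that the discriminant locus inside $U_{\varepsilon,r}$ is exactly $\{x = 0\}$, which ensures no extra ramification in the $t$-directions. Once this is in hand the remaining ingredients — the Implicit Function Theorem, the Riemann extension theorem across a smooth divisor, and the discriminant-resultant identity — are standard.
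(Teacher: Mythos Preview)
The paper does not give its own proof of this theorem: it is stated as a classical result with references to Zariski \cite{zariskiI,zariskiII}, Paw\l ucki \cite{pawlucki}, and the Abhyankar--Jung theorem \cite{Ab,parusinskirond}. Indeed, the introduction explicitly says that ``besides the proofs of Puiseux with parameter theorem and the curve selection lemma this paper is virtually self-contained.'' So there is nothing to compare against.

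Your argument is correct and is essentially the standard monodromy proof. A few small comments. In the non-reduced case your detour through irreducible factors and the discriminant--resultant identity is valid but unnecessary: once you have factored $F_{red}(t,y^d,z)$ into distinct linear factors over the UFD $\C\{t,y\}$, the monic polynomial $F(t,y^d,z)$, having the same radical, must be a product of powers of those same linear factors, and the exponents are automatically constant integers by unique factorization. For the real statement, you should be slightly careful about what ``conjugation invariant'' means here: your formulation $\tilde a_i(t,y)\mapsto \overline{\tilde a_i(\bar t,\bar y)}$ is the right one, and the argument that conjugation permutes the set of root functions is exactly as you say. Finally, the boundedness of the $\tilde a_i$ near $y=0$ (needed for the Riemann extension) follows from the elementary bound $|\tilde a_i|\le 1+\max_j|c_j|$ for roots of a monic polynomial, which you implicitly invoke; this is worth making explicit.
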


\begin{cor}\label{Puiseuxcor1}
For $x$ fixed, the  roots of $F$,  $a_1 (t,x),  \ldots, a_N(t,x)$, can be chosen complex analytic  in $t$. 
Moreover,  if $a_i(0,x)=a_j(0,x)$   then $a_i(t,x)\equiv a_j(t,x)$.   Thus the multiplicity of each $a_i(t,x)$ 
as a root of $F$ is independent of $t$. 
\end{cor}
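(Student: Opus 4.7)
The plan is to deduce both assertions from Puiseux with parameter (Theorem \ref{PuiseuxTheorem}) by tracking how the analytic roots $\tilde{a}_i(t,y)$ behave when $x=x_0$ is held fixed. Invoking Theorem \ref{PuiseuxTheorem}, I obtain an integer $d\ge 1$ and analytic functions $\tilde{a}_1,\ldots,\tilde{a}_N \in \C\{t,y\}$ such that $F(t,y^d,z) = \prod_{i=1}^N (z - \tilde{a}_i(t,y))$. For any $x_0$ close to $0$, pick some $y_0$ with $y_0^d = x_0$; the roots of $F(t,x_0,\cdot)$ counted with multiplicity are then precisely $a_i(t,x_0) := \tilde{a}_i(t,y_0)$, which are complex analytic in $t$. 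This settles the first assertion.

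For the second, I would group the indices by declaring $i\sim j$ iff $\tilde{a}_i \equiv \tilde{a}_j$ as functions of $(t,y)$, and denote by $\tilde{a}'_1,\ldots,\tilde{a}'_{N'}$ the distinct representatives. These are exactly the roots of $F_{red}(t,y^d,z) = \prod_k (z - \tilde{a}'_k(t,y))$, so
\[
\Delta_{F_{red}}(t,y^d) \;=\; \pm \prod_{k<l} \bigl(\tilde{a}'_k(t,y) - \tilde{a}'_l(t,y)\bigr)^2 .
\]
By hypothesis the left-hand side equals $y^{dM}u(t,y^d)$ with $u(0,0)\ne 0$, so in a neighborhood of the origin its zero set is precisely the irreducible germ $\{y=0\}$. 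Each factor $\tilde{a}'_k - \tilde{a}'_l$ ($k\ne l$) is therefore a nontrivial holomorphic function whose zero set is pure of codimension one and contained in the irreducible hypersurface $\{y=0\}$; hence that zero set is either empty or all of $\{y=0\}$.

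Now suppose $a_i(0,x_0) = a_j(0,x_0)$, i.e., $\tilde{a}_i(0,y_0) = \tilde{a}_j(0,y_0)$. If $\tilde{a}_i\equiv \tilde{a}_j$ as functions then $a_i(t,x_0) \equiv a_j(t,x_0)$ is automatic; otherwise they represent distinct classes $\tilde{a}'_k \ne \tilde{a}'_l$, and the dichotomy above forces the zero set of $\tilde{a}'_k - \tilde{a}'_l$ to contain $(0,y_0)$ and thus coincide with $\{y=0\}$. This in turn forces $y_0 = 0$ and $\tilde{a}'_k(t,0) \equiv \tilde{a}'_l(t,0)$ for small $t$, giving $a_i(t,x_0) \equiv a_j(t,x_0)$ as required. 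Finally, the multiplicity of $a_i(t,x_0)$ as a root of $F(t,x_0,\cdot)$ equals $\#\{j : \tilde{a}_j(t,y_0) = \tilde{a}_i(t,y_0)\}$, and by what has just been shown this set is independent of $t$ near $0$.

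The main bookkeeping issue I anticipate is cleanly separating the polynomial $F$ (which may have repeated factors) from its reduced form $F_{red}$: the Puiseux factorization lists the $\tilde{a}_i$ with multiplicity, whereas the discriminant hypothesis controls only $F_{red}$. Passing to the distinct representatives $\tilde{a}'_k$ before invoking the discriminant identity is what keeps the argument transparent.
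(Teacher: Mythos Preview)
Your proof is correct and follows essentially the same route as the paper's. Both arguments hinge on the observation that for distinct Puiseux roots $\tilde a_i \not\equiv \tilde a_j$ of $F_{red}$, the difference $\tilde a_i - \tilde a_j$ divides $\Delta_{F_{red}}(t,y^d) = y^{dM}u(t,y^d)$ and hence is a power of $y$ times a unit; you phrase this geometrically (zero set contained in the irreducible germ $\{y=0\}$), while the paper phrases it algebraically and first reduces to $F=F_{red}$, then splits into the case $x\ne 0$ (handled by the IFT, since there the roots are all distinct) and $x=0$. Your uniform treatment via a choice of $y_0$ with $y_0^d=x_0$, together with the explicit passage to the distinct representatives $\tilde a'_k$, is a bit more detailed but not substantively different.
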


\begin{proof}
It suffices to show it for $F$ reduced.  Then for $x\ne 0$ it follows from the IFT.  Let us show it for $x=0$.  
The family $a_1 (t,0),  \ldots, a_N(t,0)$ coincides with $\tilde a_1 (t,0),  \ldots, \tilde a_N(t,0)$.  
If $\tilde a_i(0,0)= \tilde a_j(0,0)$ then $\tilde a_i(t,y)- \tilde a_j(t,y)$ divides $y^{dM}$ and hence equals 
a power of $y$ times a unit.  
\end{proof}

The following corollary is well-known.  

\begin{cor}\label{Puiseuxcor2}
The Puiseux pairs of $a_i(t,x)$ and the contact exponents between different branches of $V(F)$ are independent of $t$.  
\end{cor}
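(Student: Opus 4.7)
The plan is to pass to the ramified level provided by Theorem \ref{PuiseuxTheorem}, where the problem becomes a statement about the honest analytic functions $\tilde a_i(t,y) \in \C\{t,y\}$, and then to read off both invariants from $y$-orders of differences of these functions---orders that are forced to be constant in $t$ by the discriminant identity.

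First I would reduce to the case $F = F_{red}$, since the branches of $V(F)$ and their Puiseux expansions depend only on the reduction. Then, since $F_{red}(t, y^d, z) = \prod_i (z - \tilde a_i(t,y))$ with the $\tilde a_i$ distinct elements of $\C\{t,y\}$, the discriminant identity reads
\[
\prod_{i<j} \bigl(\tilde a_i(t,y) - \tilde a_j(t,y)\bigr)^2 \;=\; \Delta_{F_{red}}(t, y^d) \;=\; y^{dM}\, u(t, y^d),
\]
with $u(0,0) \neq 0$. Each factor on the left is a nonzero element of $\C\{t,y\}$ whose square divides $y^{dM}$ times a unit; therefore $\tilde a_i(t,y) - \tilde a_j(t,y) = y^{k_{ij}}\, u_{ij}(t,y)$ with $u_{ij}(0,0) \neq 0$, and the $y$-order $k_{ij}$ is independent of $t$. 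Translating back through $x = y^d$ yields $\operatorname{ord}_x(a_i(t,x) - a_j(t,x)) = k_{ij}/d$, constant in $t$.

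Now the contact exponent between two branches $B_\alpha, B_\beta$ of $V(F)$ equals $\max\{k_{ij}/d : a_i \in B_\alpha,\ a_j \in B_\beta\}$ and is thus constant. For the Puiseux pairs of a single branch, I would invoke the classical description by which the characteristic exponents of the branch corresponding to $a_i(t,x)$ are read off from the orders $\operatorname{ord}_x(a_i - a_j)$ as $a_j$ runs over the Galois conjugates of $a_i$; equivalently, from the orders $\operatorname{ord}_y\bigl(\tilde a_i(t,y) - \tilde a_i(t,\theta y)\bigr)$ as $\theta$ ranges over the $d$-th roots of unity not in the stabilizer of $\tilde a_i$. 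By Theorem \ref{PuiseuxTheorem} the substitution $y \mapsto \theta y$ permutes the indices by a permutation independent of $t$, so each such order coincides with one of the $k_{ij}$ of the previous paragraph and is constant in $t$.

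The only non-routine ingredient is the classical characterization of Puiseux pairs via contact with Galois conjugates (equivalently, via the semigroup of the branch), which I would cite rather than reprove; granting it, the constancy of both invariants reduces entirely to the discriminant-based observation of the second paragraph.
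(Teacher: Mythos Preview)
Your argument is correct. The paper itself gives no proof of this corollary, stating only that it is ``well-known''; your write-up supplies the classical argument. The essential step you use---that each difference $\tilde a_i(t,y)-\tilde a_j(t,y)$ divides $y^{dM}$ times a unit and hence equals $y^{k_{ij}}u_{ij}(t,y)$ with $u_{ij}(0,0)\ne 0$---is exactly the observation already made in the paper's proof of Corollary~\ref{Puiseuxcor1}, so your approach is entirely in the spirit of the surrounding text. The one point worth making explicit (you gesture at it) is that the permutation induced by $y\mapsto\theta y$ on the $\tilde a_i$ is indeed independent of $t$: this holds because the $\tilde a_i$ are pairwise distinct elements of the domain $\C\{t,y\}$, so $\tilde a_i(t,\theta y)$, being a root of $F(t,y^d,z)$, must coincide with a single $\tilde a_j$ identically.
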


The next corollary is essential for the proof of Theorem \ref{psisaa}.  It allows us to use the bi-Lipschitz property given by Proposition \ref{lipschitz}.  
Define 
 \begin{align}  \label{bound1}
 \gamma (t,x) &  = \max_{a_i(0,x) \ne a_j(0,x) } \frac {|(a_i(t,x)- a_i (0,x)) -  ( a_j(t,x) -a_j (0,x))|  } 
 {| a_i (0,x)-a_j (0,x) |} \\ \notag 
 & = \max_{a_i(0,x) \ne a_j(0,x) } | \frac {a_i(t,x) -   a_j(t,x)   } 
 { a_i (0,x)-a_j (0,x) }  - 1|. 
\end{align}

\begin{cor}\label{Puiseuxcor3}
There are a positive integer $r$ and positive real constants  $\varepsilon, \delta, C$ such that 
for all $|x|\le \delta$ and $\|t\| \le \varepsilon$ 
 \begin{align*}
\gamma (t,x) \le C\|t\|^r . 
  \end{align*}
\end{cor}

\begin{proof}
We may replace $x$ by $y^d$ and the family $a_i(t,x)$ by complex analytic functions $\tilde a_i(t,y)$.  Suppose that 
$\tilde a_i(t,y)- \tilde a_j(t,y)$ is not  identically equal to zero.  Then, since $\tilde a_i(t,y)- \tilde a_j(t,y)$ divides the discriminant of $F_{red}$, $\tilde a_i(t,y)- \tilde a_j(t,y) = y^{m_{ij}} u_{ij}(t,y)$ with $u_{i,j} (0,0)\ne 0$.  
Therefore $u_{ij}(t,y) -  u_i(0,y)$ belongs to the ideal $(t_1, \ldots , t_m) \C\{t,y\}$.  Consequently there are a positive integer $r_{ij}$ and a constant $C_{ij}$ such that   
 \begin{align*}  
 \frac {|(\tilde a_i(t,y)- \tilde a_j(t,y)) -  (\tilde a_i(0,y)- \tilde a_j(0,y))|  } 
 {|(\tilde a_i(0,y)- \tilde a_j(0,y)) |}  =  \frac {|u_{ij}(t,y) -  u_{ij(}0,y)|  } 
 {|u_{ij}(0,y) |} 
  \le C_{ij} \|t\|^{r_{ij}} 
\end{align*}
in a neighborhood of the origin.  It suffices to take $C=\max C_{ij}$ and $r=\min r_{ij}$.  
\end{proof}


\subsection{Proof of Theorem \ref{psisaa}} \label{proofpsiaaa} 
$\Phi$ is continuous by Proposition \ref{continuity} and Remark \ref{symmetric}.  By Proposition 
\ref{lipschitz} and Corollary \ref{Puiseuxcor3}, if $\varepsilon$ is sufficiently small, then  for 
$t$ and $x$ fixed, $\psi_{a(0,x),a(t,x)} : \C\to \C $ is bi-Lipschitz.  Therefore $\Phi$ is bijective and the continuity of $\Phi^{-1}$ follows from the invariance of domain.   

\begin{lem}\label{regularity} 
For any $r' <r$ there is $C>0$ such that  the restriction 
$\Phi : U_{\varepsilon,r'} \times \C \to U_{\varepsilon,r'} \times \C$ satisfies 
\begin{align}\label{boundF1}
   \quad C^{-1} |F (0,x,z)| \le |F(\Phi(t,x,z))| \le C | F(0,x,z)|.  
\end{align}
\end{lem}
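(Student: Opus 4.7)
The plan is to factorize $F$ via its roots, apply Whitney interpolation termwise, and multiply the resulting estimates. By Corollary \ref{Puiseuxcor1} I can write
\[
F(t,x,z) = \prod_{i=1}^{N}(z - a_i(t,x)),
\]
with the $a_i$ listed with multiplicity and indexed consistently with the vector $a(t,x)$ used in the definition of $\Phi$. The defining property of Whitney interpolation is that $\psi(a_i(0,x), a(0,x), a(t,x)) = a_i(t,x)$ for every $i$ (consistent in the case $a_i(0,x)=a_j(0,x)$ because then $a_i(t,x)\equiv a_j(t,x)$ by Corollary \ref{Puiseuxcor1}).

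Setting $w = \psi(z, a(0,x), a(t,x))$, each factor appearing in $F(\Phi(t,x,z))$ can then be rewritten as
\[
w - a_i(t,x) \;=\; \psi(z, a(0,x), a(t,x)) - \psi(a_i(0,x), a(0,x), a(t,x)),
\]
which is exactly the quantity controlled by the bi-Lipschitz estimate for $\psi_{a(0,x),a(t,x)}$ provided by Proposition \ref{lipschitz}. By Corollary \ref{Puiseuxcor3}, the quantity $\gamma(t,x)$ of \eqref{bound1} is bounded by a fixed power of $\|t\|$, uniformly for $x$ in the compact closure of $U_{r'}$; this is where the restriction $r'<r$ enters, since it prevents the bound on $\gamma$ from degenerating near the boundary of $U_r$. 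Shrinking $\varepsilon$ if necessary, $\gamma(t,x)$ is therefore below the threshold of Proposition \ref{lipschitz} on all of $U_{\varepsilon,r'}$, and I obtain a single constant $C_0>0$ such that
\[
C_0^{-1}|z - a_i(0,x)| \;\le\; |w - a_i(t,x)| \;\le\; C_0|z - a_i(0,x)|
\]
for every $i$, every $z\in\C$, and every $(t,x)\in U_{\varepsilon,r'}$.

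Taking the product of these bounds over $i=1,\dots,N$ multiplies the left- and right-hand sides to give
\[
C_0^{-N}\prod_i|z - a_i(0,x)| \;\le\; \prod_i|w - a_i(t,x)| \;\le\; C_0^{N}\prod_i|z - a_i(0,x)|,
\]
which is precisely \eqref{boundF1} with $C=C_0^N$. The main obstacle I anticipate is the uniformity of the Whitney interpolation constant: one must ensure that Proposition \ref{lipschitz} applies with a single $C_0$ across all $(t,x)\in U_{\varepsilon,r'}$, which depends simultaneously on the uniform smallness of $\gamma$ from Corollary \ref{Puiseuxcor3} and on the continuous variation, without extra collisions, of the tuple $a(0,x)$ over the compact set $\overline{U}_{r'}$. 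Once this uniformity is secured, the proof is a pure product computation.
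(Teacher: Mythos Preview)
Your proof is correct and follows essentially the same route as the paper: factor $F$ through its roots, use that $\psi_{a(0,x),a(t,x)}(a_i(0,x))=a_i(t,x)$, apply the uniform bi-Lipschitz bound from Proposition \ref{lipschitz} (via Corollary \ref{Puiseuxcor3}) to each factor, and take the product to get $C=C_0^N$. The paper is slightly terser about the uniformity of the Lipschitz constant, having already invoked Proposition \ref{lipschitz} and Corollary \ref{Puiseuxcor3} just before the lemma, but the argument is the same.
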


\begin{proof}
 The Lipschitz constants of  $\psi_{a(0,x),a(t,x)} : \C\to \C $ and of its inverse can be chosen  independent of $(t,x)\in U_{\varepsilon,r'}$. Let $L$ be a common upper bound for  these constants.  Then, because 
$\psi_{a(0,x),a(t,x)} (a_i(0,x)) = a_i(t,x)$, 
\begin{align}\label{usingLipschitz}
  L^{-1} |z- a_i(0,x)| \le  | \psi_{a(0,x),a(t,x)} (z) - a_i(t,x)|  \le L|z- a_i(0,x)| 
\end{align} 
Because $F(\Phi(t,x,z)) = \prod_i ( \psi_{a(0,x),a(t,x)} (z) - a_i(t,x))$, we obtain \eqref{boundF1} with $C= L^N$ by taking the product of \eqref{usingLipschitz} over $i$.  
\end{proof}

\changes{
Let us write the formula for $\ps (z,a,b)$ of \eqref{ourpsi},  as   
\begin{align}\label{ourpsi3} 
\ps (z,a,b)  =  z + \frac { Q(z,a) \overline {Q(z,a)} \bigl ( \sum_{j} \sum _k  
  \ Q_{k,j} (z,a,b)  \overline  {Q_k (z, a)}   ( b_j - a_j) \bigr) } 
{N!  Q(z,a) \overline {Q(z,a)}  (  \sum _k  Q_k(z,a)  \overline  {Q_k (z,a ))} } ,
\end{align}
where 
\begin{align*}
& Q_k(z,a) = P_k ((z-a_1)^{-1} , ... , (z-a_N)^{-1}) , \\
& Q_{k,j}(z,a) =
(z-a_j)^{-1} \frac {\partial  P_k}{\partial \xi_j} ((z-a_1)^{-1} , ... , (z-a_N)^{-1})  \\
 & Q(z,a) =  \prod_{s=1}^N (z-a_{s})^{N!} \,  ,
\end{align*}
and the polynomials $P_k$ are defined in Example \ref{exampleinterpol2}.}

  \changes{
The numerator of the fraction in \eqref{ourpsi3}  is a polynomial in the real and imaginary parts of $z,a$, 
and a polynomial  in $b$.  The denominator of this fraction is the a's non-negative real valued polynomial in 
$\re z, \im z, \re a, \im a$.  By Proposition \ref{continuity} this quotient is  continuous on the set $\Xi=\{(z,a,b); \text { if } a_i=a_j \text { then } b_i=b_j\}$.  We show that  $\ps$ is real analytic on the strata of a natural stratification of $\Xi$.  }

The space $\C^N\ni a$ can be stratified by the type of $a$, that is by the number of distinct $a_i$ and by the multiplicities $m_i$ 
they appear in the vector $a$. We encode such a type by the multiplicity vector   
  $\mathbf m=(m_1, . .. , m_d)$, $\sum _{s=1}^dm_i=N$.  
 We denote by  $S_{\mathbf m}\subset \C^N$ the  set of the vectors $a$ with the multiplicity vector $\mathbf m$.  Each stratum, that is each  connected component  of such $S_{\mathbf m}$, is given by 
$S_W=\{a\in S_{\mathbf m} ; a_i=a_j \text{ if } \exists s, \text{ s.t. }  i,j\in W_s\}$, where $W=\{W_s\}$ is a partition  $\{1, . . . ,N\}= \sqcup_s W_s$ with $|W_s| = m_s$.  We denote by $S_W$  the stratum given by partition $W$.

\begin{lem}\label{analytic1}
The restriction of  $\ps (z,a,b)$ of \eqref{ourpsi} to each $\C\times S_W\times S_W$ is real analytic.  
\end{lem}

\begin{proof}
Choose the representatives $i_1, . . . ,i_d$ so that $i_s\in W_s$.  
If we replace in \eqref{ourpsi3}, $Q$ by 
$ Q_W(z,a) = \prod_{s=1}^d   (z-a_{i_s})^{N!}$ then the denominator of the fraction in \eqref{ourpsi3}   does not vanish.  Indeed, first note that for all $k$, $Q_W(z,a)  Q_k(z,a) $ is a polynomial on $\C\times S$.   By property (5) 
of Appendix I, it may vanish only for $z$ equal to one the $a_i$,  say $a_{i_1}$ for instance. Note that 
\begin{align*}
 Q_W(z,a)  Q_{m_1}(z,a)  = \prod_{s=2}^d (z-a_{i_s})^{N!} + (z-a_{i_1}) R(z,a) ,
\end{align*}
where $R$ is a polynomial.  Therefore $ Q_W(a_{i_1},a)  Q_{m_1}(a_{i_1},a) \ne 0$ which suffices to show the claim.  
\end{proof}

For an integer $d$, $1\le d\le N$, we consider 
$$
D_{d}(a) =  \sum_{r_1 <\cdots < r_{d}} \,  \prod_{k< l;\, k,l \in \{r_1, \ldots ,r_{d}\}}  (a_k- a_l)^2.  
$$ 

 \begin{lem}\label{analytic2}
   Let  the germ $a(t,s): (\K^m\times \R ,(0,0)) \to \C^N$ be such that  for every  symmetric polynomial 
   $G$  in $b$,  $G (a(t,s))$ is analytic  in $(t,s)$   (it equals to a power series in $(t,s)\in \K^m\times \R$).  
   We also assume that  for  $s\ne 0$, $a(t,s)$ has exactly $d$ distinct components and that $D_d (a(t,s))$ equals $s^M u(t,s)$ with $u(0,0)\ne 0$.  
Let  $z(s): (\R,0) \to \C$  be  a real analytic germ and  set $a(s) =   a(0,s)$.  Then $\ps (z(s),a(s),a(t,s))$, where 
 $\ps$ is given by \eqref{ourpsi},  is analytic in $(t,s)$.   
 \end{lem}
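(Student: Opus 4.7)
The plan is to combine Lemma~\ref{analytic1} for analyticity off the locus $\sigma=0$ with a Puiseux-type descent argument at $\sigma=0$. First, I would use the symmetric-polynomial hypothesis to observe that $F(t,s,w):=\prod_{i=1}^N (w-b_i(t,s))$ is a unitary polynomial in $w$ with coefficients analytic in $(t,s)$, whose reduced discriminant equals $s^M$ times a unit (up to factors coming from the multiplicities $m_k$). Puiseux with parameter (Theorem~\ref{PuiseuxTheorem}) then produces an integer $D$ and analytic $\tilde b_i(t,\sigma)$ on a neighborhood of the origin such that $F(t,\sigma^D,w)=\prod_i(w-\tilde b_i(t,\sigma))$, with the $\tilde b_i$'s permuted by some $\pi_\theta$ when $\sigma\mapsto\theta\sigma$ for any $D$-th root of unity $\theta$. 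The partition $W=(W_1,\ldots,W_d)$ of $\{1,\ldots,N\}$ with block sizes $m_1,\ldots,m_d$ describing the equality pattern of the $\tilde b_i(t,\sigma)$ for generic $\sigma$ is then well-defined, and $\tilde a_i(\sigma):=\tilde b_i(0,\sigma)$ shares this pattern.

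For $\sigma\neq 0$, both $\tilde a(\sigma)$ and $\tilde b(t,\sigma)$ lie in the stratum $S_W$, so Lemma~\ref{analytic1} applies: the representation \eqref{ourpsi3} with $Q$ replaced by the reduced $\tilde Q$ built from representatives $i_1,\ldots,i_d$ of the blocks has non-vanishing denominator, and hence $\ps(z(\sigma^D),\tilde a(\sigma),\tilde b(t,\sigma))$ is a rational function of analytic functions of $(t,\sigma)$ that is itself analytic. Writing this value as $z(\sigma^D)+N(t,\sigma)/D(t,\sigma)$ with $N,D$ analytic and $D$ non-vanishing for $\sigma\neq 0$ gives $D=\sigma^k v(t,\sigma)$ for some $k\ge 0$ with $v$ analytic and not divisible by $\sigma$.

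The main obstacle is extending analyticity across $\sigma=0$. By Proposition~\ref{continuity}, the quotient $N/D$ admits a continuous extension to $\sigma=0$ and is therefore bounded on a neighborhood of $(0,0)$. On the dense open set of $t$-values where $v(t,0)\neq 0$, boundedness of $N(t,\sigma)/(\sigma^k v(t,\sigma))$ as $\sigma\to 0$ forces the Taylor coefficients of $N(t,\sigma)$ in $\sigma$ of orders $0,\ldots,k-1$ to vanish at such $t$; by analyticity in $t$ these coefficients vanish identically, so $\sigma^k$ divides $N$ as an analytic germ. Consequently $N/D$ is analytic in $(t,\sigma)$.

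Finally, I descend from $\sigma$ to $s=\sigma^D$ using the symmetry of Whitney interpolation: $\ps(z,a,b)$ is invariant under any simultaneous permutation of the entries of $a$ and $b$, so substituting $\sigma\mapsto\theta\sigma$ (which acts by $\pi_\theta$ on both $\tilde a(\sigma)$ and $\tilde b(t,\sigma)$, while leaving $z(\sigma^D)$ unchanged) preserves the value $\ps(z(\sigma^D),\tilde a(\sigma),\tilde b(t,\sigma))$. An analytic function of $(t,\sigma)$ invariant under this $\mu_D$-action is automatically a function of $(t,\sigma^D)=(t,s)$ and analytic in those variables, completing the proof.
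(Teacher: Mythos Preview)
Your overall architecture---apply Puiseux with parameter to the polynomial whose roots are the $b_i$, work on the ramified parameter $\sigma$ with $s=\sigma^D$, use Lemma~\ref{analytic1} to get analyticity of a numerator/denominator presentation off $\sigma=0$, extend across $\sigma=0$ by boundedness, then descend---is close in spirit to the paper's proof. The boundedness/division step is fine (note that your denominator actually depends on $\sigma$ alone, not on $t$, which simplifies that step). The real issue is the last paragraph.

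The descent via $\mu_D$-invariance has a gap. What you have proved is that the \emph{value} $\psi\bigl(z(\sigma^D),\tilde a(\sigma),\tilde b(t,\sigma)\bigr)$ is unchanged under $\sigma\mapsto\theta\sigma$, because $\psi$ is invariant under simultaneous permutations of $(a,b)$. But the Whitney interpolation formula \eqref{ourpsi} contains $\overline{P_k(\xi)}$, i.e.\ complex conjugates of the $a_i$; hence, as a function of complex $\sigma$, this value is only real-analytic, not holomorphic. The map you want to descend is the \emph{holomorphic complexification} $\Phi_{\mathbb C}(t,\sigma)=\sum_j\phi_j(t)\sigma^j$ of the real-analytic germ you produced, and for the conclusion ``only powers $\sigma^{Dj}$ occur'' you need $\mu_D$-invariance of $\Phi_{\mathbb C}$, not of the formula. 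These two extensions to complex $\sigma$ differ: in $\Phi_{\mathbb C}$ the conjugates are replaced by $\tilde a_i^{\,*}(\sigma):=\overline{\tilde a_i(\bar\sigma)}$, and under $\sigma\mapsto\theta\sigma$ one has $\tilde a^{\,*}(\theta\sigma)=\pi_{\bar\theta}\,\tilde a^{\,*}(\sigma)$, a \emph{different} permutation than $\pi_\theta$. So ``simultaneous permutation invariance of $\psi$'' does not directly give $\mu_D$-invariance of $\Phi_{\mathbb C}$. (Equivalently: knowing that a continuous $g(t,s)$ becomes real-analytic after substituting $s=\sigma^D$ does not force $g$ to be analytic in $s$; take $g(s)=s^{1/3}$ with $D=3$.)

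The fix is exactly the structural observation the paper exploits: in \eqref{ourpsi3} every factor involving conjugates, namely $\overline{Q(z,a)}$ and $\overline{Q_k(z,a)}$, is \emph{symmetric in $a$ alone}, while the remaining factor $\sum_j Q_{k,j}(z,a)(b_j-a_j)$ is invariant under simultaneous permutations of $(a,b)$. Thus the numerator and the denominator, properly complexified in $\sigma$, are each holomorphic and $\mu_D$-invariant (the $a^{\,*}$-factors are invariant under $\pi_{\bar\theta}$ by symmetry in $a$, the rest under $\pi_\theta$), so they descend to analytic functions of $s$, after which your boundedness argument finishes. The paper carries this out directly in the variable $s$ (no ramification): it uses that the denominator is a function of the coefficients $c_i(0,s)$, and handles the $(a,b)$-symmetric pieces of the numerator via a Riemann-extension lemma (Lemma~\ref{psisaa1}). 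Your route would work once you insert this decomposition; as written, the final sentence asserts more than the permutation invariance of $\psi$ provides.
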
 

 \begin{proof}
 \changes{
 By subtracting $z(s)$ from every component of $a(t,s)$  we may assume that $z(s) \equiv 0$.  
 We consider $a_i (t,s)$ as the roots of a polynomial 
 \begin{align}
 G(z,t,s) = z^N + \sum_{i=1}^N c_i(t,s)z^{N-i}
 \end{align}
 with coefficients analytic in $t,s$.  By Lemma \ref{twodiscr} the discriminant of $G_{red}$ equals a non-zero constant times $D_d (a(t,s))$.  We may  consider $c_i(t,s)$ as complex analytic germs of $(t,s)\in (\C^m\times \C, (0,0))$ and 
 apply to $G_{red}$ the Puiseux with parameter theorem, Theorem \ref{PuiseuxTheorem}.  In particular, for a  fixed $s$,   an ordering of the roots $a_1(s), . . . ,a_N(s)$ of $G(z,0,s)$ gives, by continuity, 
 an ordering of the roots 
 $a_1(t,s), . . . ,a_N(t,s)$ of $G$.  Fix such an ordering and define 
 $$
\varphi (t,s) = \ps (0, a(s), a(t,s)),
$$
where $\ps$ is given by \ref{ourpsi3}.   
Thus defined $\varphi$ is independent of the choice of an ordering (since passing from one ordering to another 
 is given by the action of the same permutation on $a$ and $b$).  
Since $P_k (a) $ is symmetric in $a$, $Q(a)$ and the product $Q(a) Q_k(a)$ are polynomials in the coefficients  $c_i (0,s)$ of $G$.  Hence  $Q(a(s))$ and  $Q(a(s)) Q_k(a(s))$ are complex analytic in $s\in \C$.    

As follows from the next lemma, for a fixed $k$,  $ Q(a(s))(\sum_{j=1}^N  Q_{k,j} (a(s)) (a_j(t,s)-a_j(s)))\in \C\{{t,s}\} $.  
\begin{lem}\label{psisaa1}
Let $P(a,b)\in \C[a,b]$  be a polynomial invariant under the action of the permutation group: 
$P  (\sigma (a),\sigma (b)) = P (a,b) $ for all $\sigma \in S_N$.    Then $P(a(s),a(t,s))\in \C\{t,s\}$.    
\end{lem}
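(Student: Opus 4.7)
The plan is to lift the problem to the ramified variable $y$ in which the roots are analytic, and then descend via the $d$-th root of unity symmetry provided by Puiseux with parameter (Theorem~\ref{PuiseuxTheorem}). Diagonal $S_N$-invariance of $P$ is exactly what is needed to make the two descents (for $a$ and for $b$) compatible with one another.

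First, apply Theorem~\ref{PuiseuxTheorem} to $G_{red}$ (the reduced polynomial whose roots are the $b_i(t,s)$), exactly as in the proof of Lemma~\ref{analytic2}. This yields an integer $d \ge 1$ and convergent power series $\tilde b_i(t,y) \in \C\{t,y\}$ such that, after the ramification $s = y^d$ and with the ordering fixed by continuity via Corollary~\ref{Puiseuxcor1}, one has $b_i(t, y^d) = \tilde b_i(t,y)$ for every $i$. Set $\tilde a_i(y) := \tilde b_i(0,y)$, so that $a_i(y^d) = \tilde a_i(y)$. Form $Q(t,y) := P(\tilde a(y), \tilde b(t,y))$; because $P$ is a polynomial and each $\tilde a_i, \tilde b_i$ lies in $\C\{t,y\}$, we have $Q \in \C\{t,y\}$.

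Next, verify that $Q$ is invariant under each deck transformation $y \mapsto \theta y$ with $\theta^d = 1$. By Theorem~\ref{PuiseuxTheorem} there exists, for each such $\theta$, a permutation $\sigma_\theta \in S_N$ with $\tilde b_i(t, \theta y) = \tilde b_{\sigma_\theta(i)}(t,y)$. Specialising at $t = 0$ gives $\tilde a_i(\theta y) = \tilde a_{\sigma_\theta(i)}(y)$ with the \emph{same} permutation $\sigma_\theta$; this is the decisive compatibility, and it holds precisely because $\tilde a$ was defined as the restriction of $\tilde b$ to $t=0$. Applying the hypothesis that $P$ is invariant under the diagonal $S_N$-action then yields
\[
Q(t, \theta y) = P(\sigma_\theta \cdot \tilde a(y),\, \sigma_\theta \cdot \tilde b(t,y)) = P(\tilde a(y), \tilde b(t,y)) = Q(t,y).
\]

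Finally, any $Q \in \C\{t,y\}$ invariant under $y \mapsto \theta y$ for every $d$-th root of unity $\theta$ is a convergent power series in $(t, y^d)$, hence descends to $R(t,s) \in \C\{t,s\}$ with $R(t, y^d) = Q(t,y)$. Evaluating at any $y$ with $s = y^d$ gives $P(a(s), b(t,s)) = R(t,s) \in \C\{t,s\}$, as required. The only non-routine step is the middle one: matching the two permutation actions on $\tilde a$ and $\tilde b$. If $P$ were invariant only under independent permutations of $a$ and $b$ (a much stronger hypothesis) the argument would be trivial; and if $\tilde a$ were not tied to $\tilde b|_{t=0}$ one could not hope to align the two $\sigma_\theta$'s. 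The statement is calibrated so that these two facts exactly match up, and the diagonal symmetry of $P$ converts this into descent from $\C\{t,y\}$ to $\C\{t,s\}$.
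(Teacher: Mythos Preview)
Your proof is correct. Both your argument and the paper's hinge on the same observation: since the ordering of $b(t,s)$ is inherited by continuity from $a(s)=b(0,s)$, the monodromy around $s=0$ acts by the \emph{same} permutation on $a$ and on $b$, so diagonal $S_N$-invariance of $P$ makes $P(a(s),b(t,s))$ single-valued.

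The difference is in how each argument concludes. You lift to the ramified variable $y$ with $s=y^d$, check $\mu_d$-invariance of $Q(t,y)=P(\tilde a(y),\tilde b(t,y))\in\C\{t,y\}$, and descend to $\C\{t,y^d\}=\C\{t,s\}$. The paper instead stays in the $s$-variable (complexified): the same single-valuedness shows $P(a(s),b(t,s))$ is a well-defined complex analytic function on $B\times(D\setminus\{0\})$, it is bounded since the roots of a monic polynomial with bounded coefficients are bounded, and Riemann's removable singularity theorem then gives analyticity on $B\times D$. Your Galois-descent version is more explicit and stays within formal/convergent power series manipulations; the paper's version is shorter and avoids writing down the Puiseux lift, at the cost of invoking Riemann extension. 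Either way the substantive step is identifying the common $\sigma_\theta$, which you isolate clearly.
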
  
\begin{proof} 
We may assume that  $P(a(s),a(t,s))$ is well-defined for $(t,s) \in B\times D$, 
 where $B$ is a neighborhood of the origin in $\C^m$   and $D$ is a small disc centered at the origin 
 in  $\C$.  By the assumption $P(a(s),a(t,s))$ is bounded and complex analytic on $B\times (D\setminus \{0\})$.  
 Therefore it is complex analytic on $B\times D$.  
\end{proof}

In particular, by Lemma \ref{psisaa1}, the numerator of the fraction in  \eqref{ourpsi3}, evaluated on $a(t,s),a(s)$ is analytic in 
$(t,s) \in \K^m\times \R$.  
As we have shown before its denominator is analytic in (one variable) $s\in \R$.  Therefore,  $\varphi(t,s)$ is of the form 
$s^{-k}$ times a power series in $(t,s)$.  Since, moreover,  $\varphi(t,s)$ is bounded  in a neighborhood of the origin it 
 has to be analytic.  }
\end{proof}

 It follows from Lemma \ref{analytic1} that 
$\ps (z,a(0,x),a(t,x))$ of \eqref{phipuiseux} is real analytic on $x\ne 0$ and on $x=0$ and from Lemma  
\ref{analytic2} that it is arc-wise analytic.   
The next lemma shows that the inverse of $\Ph$ is arc-analytic  and completes the proof of 
Theorem \ref{psisaa}.  

\begin{lem}\label{psiinvsaa}
If $(t(s), x(s), z(s))$ is a real analytic arc,  then  there is a real analytic $\tilde z(s)$ such that 
$(t(s), x(s), z(s)) = \Phi (t(s), x(s), \tilde z(s))$.    
\end{lem}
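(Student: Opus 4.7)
Existence and continuity of $\tilde z(s)$ follow immediately from the homeomorphism property of $\Phi$ established earlier in the proof of Theorem \ref{psisaa}: $\tilde z(s)$ is uniquely characterized by the equation $\psi(\tilde z(s), a(0, x(s)), a(t(s), x(s))) = z(s)$ and depends continuously on $s$. It remains to upgrade this continuity to real analyticity at $s = 0$.

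The heart of the argument is to establish that
\[
\Psi(w, s) := \psi(w, a(0, x(s)), a(t(s), x(s)))
\]
is jointly real analytic in $(w, s)$ on a neighborhood of $(\tilde z(0), 0)$. I would do this by adapting the proof of Lemma \ref{analytic2} with $w$ as an additional analytic parameter. The explicit formula \eqref{ourpsi3} is translation equivariant, $\psi(z + c, a + c, b + c) = \psi(z, a, b) + c$, so one may write $\Psi(w, s) = w + \psi(0, a(0, x(s)) - w, a(t(s), x(s)) - w)$. Crucially, \eqref{ourpsi3} is also invariant under the simultaneous permutation of the components of the pair $(a, b)$: the symmetry of the $P_k$ yields $\partial_j P_k(\sigma \xi) = \partial_{\sigma(j)} P_k(\xi)$, which makes the expression $\sum_j \partial_j P_k(f)(b_j - a_j) f_j$ invariant under $(a, b) \mapsto (\sigma a, \sigma b)$. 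Hence the numerator and denominator of $\Psi$, after clearing poles, are polynomials in the paired variables that satisfy the hypothesis of Lemma \ref{psisaa1}. Combining this with Puiseux with parameter (Theorem \ref{PuiseuxTheorem}) yields joint real analyticity of $\Psi$ in $(w, s)$.

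Once joint analyticity of $\Psi$ is in place, the real analytic implicit function theorem, applied to the equation $\Psi(w, s) = z(s)$ at $(w, s) = (\tilde z(0), 0)$, produces the desired real analytic local solution. The Jacobian of $w \mapsto \Psi(w, s)$ in the real sense $w = u + iv$ has determinant bounded below by $L^{-2}$, where $L$ is the bi-Lipschitz constant from Lemma \ref{lipschitz}, so the invertibility hypothesis of the theorem is met. Uniqueness of the continuous inverse of $\Phi$ identifies this analytic solution with $\tilde z(s)$, concluding the proof.

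The main obstacle is precisely the joint analyticity of $\Psi$ at $(\tilde z(0), 0)$ when $\tilde z(0)$ is a multiple root of $F(0, x(0), \cdot)$: individual roots $a_i(0, x(s))$ are then only Puiseux in $s$, and individual factors in \eqref{ourpsi3} are singular at $w = a_i(s)$. Both defects are cancelled in the full expression by the simultaneous permutation invariance of the paired numerator and denominator, and Lemma \ref{psisaa1} promotes this cancellation from analyticity in the Puiseux parameter to analyticity in $s$ itself.
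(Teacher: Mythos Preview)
Your approach has a genuine gap: the joint real analyticity of $\Psi(w,s)$ at $(\tilde z(0),0)$ \emph{fails} precisely in the case you flag as the main obstacle, and the permutation invariance does not rescue it.

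It is true that the numerator and the denominator of \eqref{ourpsi3}, after clearing poles with $Q\bar Q$, are polynomials in $w,\bar w$ whose coefficients are simultaneously $S_N$--invariant in $(a,\bar a,b)$; substituting $a=a(0,x(s))$, $b=a(t(s),x(s))$ therefore yields two functions that are separately real analytic in $(w,s)$ (this is the content of Lemma~\ref{psisaa1} and the remark following it). But the denominator now depends on \emph{both} $w$ and $s$, and its zero locus in $(w,s)$--space is no longer of the form $\{s^k=0\}$. The division argument at the end of the proof of Lemma~\ref{analytic2} (``$\varphi$ is $s^{-k}$ times an analytic function, and bounded, hence analytic'') used crucially that after restricting to a fixed arc $z(s)$ the denominator is a function of $s$ alone; with $w$ free this is lost, and a bounded quotient of two real analytic functions of several variables need not be real analytic.

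Here is a concrete counterexample. Take $F(t,x,z)=z^2-x(1+t)$, the arc $x(s)=s$, $t(s)=t_0\neq 0$, and set $\lambda=\sqrt{1+t_0}$. A direct computation from \eqref{ourpsi3} (with $N=2$) gives
\[
\Psi(w,s)-w \;=\; \frac{16(\lambda-1)\,s\,w\,\bar w^{\,2}}{16|w|^4+|w^2-s|^2}\,.
\]
This function is continuous and satisfies the quasi--homogeneity $\Psi(tw,t^2 s)-tw=t\bigl(\Psi(w,s)-w\bigr)$. If it were real analytic at the origin its Taylor expansion would be forced to be $\alpha w+\beta\bar w$, but $\Psi(w,0)\equiv w$ gives $\alpha=\beta=0$, contradicting $\Psi(\sqrt s,s)=\lambda\sqrt s\neq\sqrt s$. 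In fact $\partial_s\Psi(w,0)$ blows up like $w^{-1}$ as $w\to 0$, so $\Psi$ is not even $C^1$ at $(0,0)$, and the real analytic implicit function theorem is unavailable. Your Jacobian estimate from bi-Lipschitzness presupposes differentiability, which is exactly what fails.

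The paper avoids this difficulty altogether. It first uses subanalyticity of $\Phi^{-1}$ to write $\tilde z(s)$, for $s\ge 0$, as a convergent Puiseux series. If a non-integer exponent $p/q$ occurred, truncating at the last integer exponent gives a genuinely real analytic arc $\tilde z_{an}(s)$; then both $\psi(\tilde z_{an}(s),\dots)$ (by the already--established forward arc-wise analyticity) and $\psi(\tilde z(s),\dots)=z(s)$ are real analytic, yet their difference is $\sim s^{p/q}$ by the bi-Lipschitz property of $\psi$. This contradiction forces all exponents to be integers. The argument never needs joint regularity of $\Psi$ in $(w,s)$; it only uses arc-wise analyticity along individual real analytic arcs, together with bi-Lipschitzness in the fibre variable.
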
  

\begin{proof} 
Since $\Ph^{-1}$ is subanalytic such $\tilde z(s)$ exists continuous and subanalytic.  
Thus there is  a positive integer $q$ such that for $s\ge 0$,  $\tilde z(s)$ is a convergent power series in $s^{1/q}$.  We show that  all exponents of $\tilde z(s)$, $s\ge 0$,   are integers.    Suppose that this is not the case.  Then 
$$
\tilde z(s) =  \sum_{i=1}^n v_i s^i + v_{p/q} s^{p/q} + \sum_{k> p}  v_{k/q} s^{k/q} ,
$$
with $p/q>n$ and $p/q\not \in \N$.  Denote  $\tilde z_{an} (s) = \sum_{i=1}^n v_i s^i $.  
Then $\psi (\tilde z_{an} (s), a(0,x(s)), 
a(t(s),x(s)))$ is real analytic and by the bi-Lipschitz property, Proposition \ref{lipschitz}, 
$$
|\psi (\tilde z_{an} (s), a(0,x(s), a(t(s),x(s))) - \psi (\tilde z (s), a(0,x(s)), a(t(s),x(s)))| \sim 
|  \tilde z_{an} (s) - \tilde z(s)| \sim s^{p/q},
$$
that is impossible since $\psi (z_{an} (s), a(0,x(s)), a(t(s),x(s)))$ and $\psi (z (s), a(0,x(s)), a(t(s),x(s)))$ are real analytic in $s$.   

This shows that $(t(s), x(s), z(s))$, $ \Phi (t(s), x(s), \tilde z(s))$ are two real analytic arcs that coincide for 
$s\ge 0$ and therefore also for $s\le 0$.   
\end{proof}

\subsection{Preservation of multiplicities  of roots} \label{preservationsec} 

Corollary \ref{Puiseuxcor1}  admits a multidimensional generalization, see Zariski \cite{zariski1975}.  
 In the sequel  we will need   the following result that is a consequence of \cite{zariski1975}  and Proposition 
\ref{regularityequimultplicity}.  We include its proof for the reader's convenience.

 \begin{lem}[Preservation of multiplicities of roots] \label{preservationlem}
Let $\Ph:T\times Z\to Y$ be an {\aaa } trivialization, $y_0=\Ph(t_0,z_0)$, and let $A_i$, $i=1,...,N$,  
be $\K$-analytic functions defined in a neighborhood of $y_0$.   Let 
  \begin{align*}
f(y,w) =  w^{N}+ \sum_i A_i(y) w^{N-i}
\end{align*}
and suppose that the discriminant $\Delta (f_{red})$  is $\Ph$-regular.  Then, for $t$ in a neighborhood of $t_0$,
 the roots of $f$ at $\Ph (t,z_0)$,  
 $$a_1 ( \Ph(t,z_0)),  \ldots, a_N (\Ph(t,z_0)),$$  can be chosen complex analytic in $t$.  
 (Moreover, any continuous choice is complex analytic.) For such a choice, if $a_i ( \Ph(t_0,z_0)) = 
 a_j ( \Ph(t_0,z_0))$   then $a_i ( \Ph(t,z_0)) =
 a_j ( \Ph(t,z_0))$ for all $t$.   In particular,  the multiplicity of each $a_i (\Ph(t,z_0))$ 
 as a root of $f$ is independent of $t$.   
\end{lem}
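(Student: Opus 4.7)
The plan is to reduce the multi-parameter situation to the one-variable Puiseux-with-parameter statement (Corollary \ref{Puiseuxcor1}) by pulling $f$ back along a real analytic arc through $z_0$. Pick any non-constant real analytic arc $z(s)\colon (I,0)\to (Z,z_0)$ and set
\[
F(t,s,w) \; = \; f(\Ph(t,z(s)),w) \; = \; w^N + \sum_{i=1}^{N} A_i(\Ph(t,z(s)))\, w^{N-i}.
\]
Since $\Ph$ is $\K$-analytic in $t$ and arc-analytic in $z$, each coefficient is $\K$-analytic in $t$ and jointly real analytic in $(t,s)$; after complexifying $t$ and $s$ near $(t_0,0)$ we regard $F$ as a polynomial in $w$ with coefficients in $\C\{t,s\}$. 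Observe that $F(t,0,w) = f(\Ph(t,z_0),w)$, whose roots are exactly the $a_i(\Ph(t,z_0))$ we wish to analyze.

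The key step is to establish the normal form
\[
\Delta_{F_{red}}(t,s) \; = \; s^{M}\, u(t,s), \qquad u(t_0,0) \ne 0.
\]
Using the generalized discriminant of Appendix II, $\Delta_{f_{red}}$ is given by a universal polynomial in the coefficients $A_1,\ldots,A_N$ of $f$, so pullback commutes with taking the discriminant and hence $\Delta_{F_{red}}(t,s) = \Delta_{f_{red}}(\Ph(t,z(s)))$. The regularity hypothesis on $\Delta_{f_{red}}$ for $\Ph$, combined with Proposition \ref{criterionregularfunction}(ii), delivers exactly this normal form. Corollary \ref{Puiseuxcor1} (after translating $t_0$ to the origin) then gives, for fixed $s=0$, a continuous ordering of the roots of $F(t,0,w)$ as complex analytic functions $a_1(t),\ldots,a_N(t)$ of $t$ on a neighborhood of $t_0$; moreover, if $a_i(t_0) = a_j(t_0)$ then $a_i \equiv a_j$, whence the multiplicity of each $a_i$ as a root of $F(t,0,\cdot) = f(\Ph(t,z_0),\cdot)$ is independent of $t$. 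This is precisely the assertion of the lemma.

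The one subtle point is the identification $\Delta_{F_{red}} = \Delta_{f_{red}}\circ\Ph$ invoked above: $F_{red}$ is computed in $\C\{t,s\}[w]$ whereas $f_{red}$ lives in $\mathcal{O}_{y_0}[w]$, and a priori specialization to the image of $(t,s)\mapsto \Ph(t,z(s))$ could introduce extra coincidences among roots. The Appendix II formulation of the discriminant makes the identification automatic; alternatively, Proposition \ref{regularityequimultplicity} applied to the irreducible factors of $f$ rules out any such spurious coincidences along the leaf $L_{z_0}$. This is the step in which Proposition \ref{regularityequimultplicity} enters, in accordance with the paper's remark that the lemma follows from \cite{zariski1975} together with Proposition \ref{regularityequimultplicity}.
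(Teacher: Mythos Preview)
Your approach is essentially the paper's: pull back along a real analytic arc through $z_0$, show the discriminant has the form $s^M u(t,s)$ with $u(t_0,0)\ne 0$, and invoke Corollary~\ref{Puiseuxcor1}. The paper does one thing differently that matters: it \emph{chooses} the arc so that $\Delta(f_{red})$ is not identically zero on $\Ph(t,z(s))$, and it pulls back $f_{red}$ rather than $f$, setting $F(t,s,w)=f_{red}(\Ph(t,z(s)),w)$. Then the standard discriminant $\Delta_F$ is literally $\Delta(f_{red})\circ\Ph$ (a polynomial in the coefficients), and the regularity hypothesis in the form~\eqref{regularfunctioncondition} forces $\Delta_F(t,s)=s^k\cdot\text{unit}$.

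Your version picks ``any non-constant'' arc and then has to argue that $\Delta_{F_{red}}=\Delta_{f_{red}}\circ\Ph$. This identification is \emph{not} automatic from Appendix~II: the generalized discriminant $D_d$ does pull back as a polynomial in the $A_i$, but $D_d$ computes $\Delta_{F_{red}}$ only when $F$ still has exactly $d$ distinct roots over $\C\{t,s\}$, i.e.\ precisely when $\Delta(f_{red})\circ\Ph\not\equiv 0$ on the arc. If the arc lies inside $V(\Delta(f_{red}))$, the number of distinct roots drops, $D_{d,F}\equiv 0$, and your identification fails. Proposition~\ref{regularityequimultplicity} (which concerns multiplicities of function germs at a point) does not patch this either. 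The fix is exactly what the paper does: since $\Delta(f_{red})\not\equiv 0$ on $Y$ and, by regularity, its zero set is $\Ph$-saturated, one can always choose $z(s)$ with $\Delta(f_{red})(\Ph(t_0,z(s)))\not\equiv 0$; with that choice your argument goes through and coincides with the paper's.
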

 
 \begin{proof}
Choose a real analytic arc germ $z(s) :  I \to Z$, $z(0)=z_0$, so that  $\Delta (f_{red})$ is 
not identically zero on $\Ph (t,z(s))$.  By Corollary \ref{Puiseuxcor1} it suffices to show that 
$F(t,s,w)=  f_{red} (\Ph (t,z(s)), w  )$ satisfies the assumptions of the Puiseux with parameter theorem.  
To show it we first note that the  discriminant of $F$ equals to $\Delta (f_{red}) (\Ph (t,z(s)))$.  Secondly, we 
observe that, by regularity of $\Delta (f_{red})$ on $z(s)$  in the form \eqref{regularfunctioncondition}, 
$\Delta (f_{red}) (\Ph (t,z(s)))$ equals $s^k$ times an analytic unit. 
 \end{proof}


{\Large \part {Zariski Equisingularity.}\label{Part1:Zariski}
}
\medskip
\section{Zariski Equisingularity implies {\aaa} triviality.}

In this section we generalize Theorem \ref{psisaa} to an arbitrary number of variables
 hypersurface case.  

\begin{defi}\label{system}
 By a \emph{local system  of pseudopolynomials in $x=(x_1,... ,x_n)\in \K^n$ at $(0,0)\in \K^m\times \K^n$, with parameter $t\in U \subset \K^m$}, we mean a family  of analytic  functions 
  \begin{align}\label{polynomials}
 F_{i} (t, x_1, \ldots, x_i  )= x_i^{d_i}+ \sum_{j=1}^{d_i} A_{i,j} (t,x_1, \ldots, x_{i-1}) x_i^{d_i-j}, \quad i=0, \ldots,n, 
\end{align} 
defined on $U\times U_i$, where $U$ is a neighborhood of the origin in $\K^m$,  $U_i $ 
is a neighborhood of the origin in $\K^i$, with the coefficients $A_{i,j}$ vanishing identically on $T=U\times \{0\}$.  
Thus $F_0$ is an analytic function depending only on $t$.  We also assume that, for each $i=1,\ldots,n$,   
the discriminant of $F_{i,red}$ divides  $F_{i-1}$.  
For  $d_i = 0$,  by \eqref{polynomials} we mean that $F_i\equiv 1$, and in this case by convention we define all $F_j, j<i$, as identically equal to $1$.   
\end{defi}

We call this system   \emph{Zariski equisingular} if $F_{0} (0) \ne 0$.   
As Varchenko showed in \cite{varchenko1972}, answering a question posed by Zariski in \cite{zariski},  
 for a Zariski equisingular system, the family of analytic set germs 
$X_t=\{ F_n(t, x) =0\}\subset (\K^n,0)$ is topologically equisingular for $t$ close to the origin.   
In this section we show that this equisingularity can be obtained by an {\aaa } trivialization. 

 \begin{rem}
The above definition is slightly more general than that of  \cite{zariski} or \cite{varchenko1972} where it is  
 assumed that $F_{i-1}$ is the Weierstrass polynomial associated to  the discriminant of $F_{i,red}$.   
Our less restrictive assumption is sufficient for the proof of  Theorem \ref{theorem}.  In fact, in 
 the inductive step  we only need that  the discriminant of $F_{i,red}$ is  $\Ph_{i-1}$-regular for the trivialization $\Ph_{i-1}$.  
 By Proposition \ref{criterionregularfunction}  this is the case if this discriminant divides $F_{i-1}$ and $F_{i-1}$ is 
$ \Ph_{i-1}$-regular.  
 \end{rem}
 

\begin{thm}\label{theorem}
If $F_i(t,x) $, $i=0, \ldots, n$, is a Zariski equisingular local system of pseudopolynomials,      
then there exist $\varepsilon >0$ and a  
homeomorphism 
\begin{align}
\Ph  : B_{\varepsilon} \times \Omega_0 \to \Omega, 
\end{align}
where $B_{\varepsilon} = \{t\in \K^m; \|t\|<\varepsilon\}$, $\Omega_0$ and $\Omega$ are neighborhoods of 
the origin in $\K^n$ and $\K^{m+n}$ resp.,  such that
 \begin{enumerate}
\item  [\rm (Z1)]
$\Ph (t, 0) =  (t,0) $,  $\Ph (0, x_1, \ldots, x_n) =  (0, x_1, \ldots, x_n) $;
\item [\rm (Z2)]
$\Ph$ has a triangular form 
\begin{align*}
\Ph (t, x_1, \ldots , x_n) = (t, \Ps_1(t, x_1), \ldots , \Ps_{n-1} (t,x_1, \ldots , x_{n-1}), \Ps_{n} (t, x_1, \ldots , x_{n}) );
\end{align*}
\item [\rm (Z3)]
For $(t, x_1, \ldots , x_{i-1})$ fixed, 
$\Ps_i (t, x_1, \ldots , x_{i-1}, \cdot ): \K \to \K$ is bi-Lipschitz  
and the Lipschitz constants of 
$\Ps_i$ and $\Ps_i ^{-1}$ can be chosen independent of $(t, x_1, \ldots , x_{i-1})$;  
\item [\rm (Z4)]
$\Ph$ is an {\aaa} trivialization of the standard projection $\Omega \to B_\varepsilon$;  
\item [\rm (Z5)]
$F_n$ is regular along $B_{\varepsilon} \times \{0\}$.  
\end{enumerate}
\end{thm}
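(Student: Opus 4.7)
The plan is to prove the theorem by induction on $n$, extending the one-variable construction of Theorem \ref{psisaa} one coordinate at a time. The base case $n=0$ is trivial ($\Phi$ is the identity on $B_\varepsilon$ since $F_0(0)\ne 0$), and $n=1$ reduces to Theorem \ref{psisaa}. For the inductive step, assume $\Phi_{n-1}(t,x_1,\ldots,x_{n-1})=(t,\Psi_1,\ldots,\Psi_{n-1})$ has been built satisfying (Z1)--(Z5) for the subsystem $F_0,\ldots,F_{n-1}$. The aim is to define $\Phi_n=(\Phi_{n-1},\Psi_n)$ where $\Psi_n$ comes from Whitney interpolation in the $x_n$-direction applied to the roots of $F_n$ transported by $\Phi_{n-1}$.

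Concretely, for $(t,x_1,\ldots,x_{n-1})$ near the origin let $a(t,x_1,\ldots,x_{n-1})$ be the vector of roots in $x_n$ of the polynomial $F_n\bigl(\Phi_{n-1}(t,x_1,\ldots,x_{n-1}),x_n\bigr)$, ordered so as to depend continuously on $t$ (possible by Corollary \ref{Puiseuxcor1} applied along $\K$-analytic arcs in $t$). Set
$$
\Psi_n(t,x_1,\ldots,x_n):=\psi\bigl(x_n,\,a(0,x_1,\ldots,x_{n-1}),\,a(t,x_1,\ldots,x_{n-1})\bigr),
$$
with $\psi$ the Whitney interpolation of \eqref{ourpsi}, and define $\Phi_n(t,x)=(\Phi_{n-1}(t,x_1,\ldots,x_{n-1}),\Psi_n(t,x))$. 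Properties (Z1) and (Z2) hold by construction; (Z3) follows from the bi-Lipschitz estimate in Proposition \ref{lipschitz} together with Corollary \ref{Puiseuxcor3} applied to the families $a(\cdot,x_1,\ldots,x_{n-1})$, and then $\Phi_n$ is a homeomorphism by invariance of domain. The regularity statement (Z5) follows by repeating the argument of Lemma \ref{regularity}: the bi-Lipschitz bounds on $\psi$ yield $C^{-1}|F_n(0,x)|\le|F_n(\Phi_n(t,x))|\le C|F_n(0,x)|$ in a neighborhood of the origin.

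The decisive step is verifying (Z4), and in particular the arc-wise analyticity of $\Psi_n$. The key input is that the discriminant $\Delta(F_{n,red})$ is regular for $\Phi_{n-1}$: since by hypothesis it divides $F_{n-1}$, which is regular for $\Phi_{n-1}$ by (Z5) of the previous stage, Proposition \ref{regularproducts} gives this at once. Now for any real analytic arc germ $z(s)=(x_1(s),\ldots,x_{n-1}(s))$, the polynomial $G(t,s,x_n):=F_n\bigl(\Phi_{n-1}(t,z(s)),x_n\bigr)$ has coefficients analytic in $(t,s)$ by the arc-wise analyticity of $\Phi_{n-1}$, and by the established regularity its reduced discriminant is of the form $s^k u(t,s)$ with $u(0,0)\ne 0$. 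Consequently Lemma \ref{analytic2} applies with $b(t,s)=a(t,z(s))$, which shows that $\Psi_n(t,z(s),x_n(s))$ is analytic in $(t,s)$ for every real analytic $x_n(s)$; this gives (2) of Definition \ref{maindefinition2}. Arc-analyticity of $\Phi_n^{-1}$, i.e.\ condition (4), is then deduced from the bi-Lipschitz property by the Puiseux-exponent argument of Lemma \ref{psiinvsaa}, and the stratifying condition (5) is built into the Whitney interpolation, which preserves the discriminant stratification of the root vector.

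The main obstacle is precisely this arc-wise analyticity check: the roots $a(t,x_1,\ldots,x_{n-1})$ are only arc-wise analytic in the previous parameters, not jointly analytic, so Lemma \ref{analytic2} must be invoked delicately along every real analytic arc, and its hypotheses depend on the exact form of the discriminant being transported by $\Phi_{n-1}$. This is the precise reason the divisibility chain between consecutive discriminants is built into the definition of a Zariski equisingular system: it is exactly what propagates the regularity of discriminants from stage to stage and keeps the inductive construction closed.
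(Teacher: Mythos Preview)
Your approach follows the paper's proof closely: both argue by induction on $n$, define $\Psi_n$ via Whitney interpolation on the roots of $F_n$ transported by $\Phi_{n-1}$, and verify (Z1)--(Z5) with the regularity of $\Delta(F_{n,red})$ for $\Phi_{n-1}$ as the inductive hinge. There are, however, two genuine gaps.

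For (Z3) you invoke Corollary \ref{Puiseuxcor3}, but that corollary is a statement along a single arc, while Proposition \ref{lipschitz} requires the quantity $\gamma(t,x')$ to be small \emph{uniformly} over $x'$ in a neighborhood of the origin. The paper establishes this uniform bound (the Lemma containing \eqref{Lipschitzonroots}) by first applying Corollary \ref{Puiseuxcor3} along arcs not contained in $V(F_{n-1})$, then passing to all of $\Omega'_0$ via the curve selection lemma and the lower semi-continuity of $\gamma$ (Remark \ref{gammalimit}). Without this step you do not yet know that $\Psi_n(t,x',\cdot)$ is bi-Lipschitz with constants independent of $(t,x')$.

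More seriously, in (Z4) you assert that ``by the established regularity its reduced discriminant is of the form $s^k u(t,s)$ with $u(0,0)\ne 0$.'' This is correct only when the arc $z(s)$ is \emph{not} contained in $V(\Delta(F_{n,red}))$: then $\Delta(F_{n,red})(\Phi_{n-1}(t,z(s)))$ coincides with the discriminant of $G_{red}$ and its regularity gives the claimed form. But if the arc lies inside $V(F_{n-1})$, then $\Delta(F_{n,red})\circ\Phi_{n-1}$ vanishes identically along it, the number $d$ of distinct roots of $G$ drops, and the object you must control is the \emph{generalized} discriminant $D_d$ of the root vector --- a quantity not governed by the regularity of $\Delta(F_{n,red})$ (which only says $0\equiv 0$ here). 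The paper treats this case separately: it uses the root-ratio bound \eqref{Lipschitzonroots}, established beforehand for \emph{all} $x'$ including those in $V(F_{n-1})$, to conclude that the discriminant of $(F_n(\Phi_{n-1}(t,z(s)),x_n))_{red}$ is $s^M$ times a unit, after which Theorem \ref{psisaa} applies. Your sketch omits this case split, and the missing ingredient is exactly what your own final paragraph flags as delicate.

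Finally, condition (5) of Definition \ref{maindefinition2} is not ``built into'' the interpolation; the paper defers it to Section \ref{associatedstratification}, where the canonical stratification associated to the system is introduced and $\Phi$ is shown to restrict to real-analytic diffeomorphisms on its strata.
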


Recall after  Proposition \ref{regularproducts} that (Z5) implies that for any analytic $G$ dividing a power of $F_n$, there is $C>0$ such that 
\begin{align}\label{boundforG}
&  C^{-1} |G (0,x)| \le |G (\Ph (t,x))| \le  C |G (0,x)|.  
\end{align}
  In particular $\Phi$ preserves the zero level of $G$.  

\begin{rem}\label{recursiveargument} Strategy of proof.  \\
The functions $\Ps_i$ will be constructed inductively so that  every  
\begin{align}\label{Phiinduction} 
\Ph_i (t,x_1, \ldots , x_i) = (t, \Ps_1(t, x_1), \ldots , \Ps_{i} (t,x_1, \ldots , x_{i}) )
\end{align}
 satisfies the above  properties (Z1)-(Z4) and (Z5) for $F_i$.   Given $\Ph_{n-1}  : B_{\varepsilon'} \times \Omega'_0 \to \Omega'$.  
 We first lift it (by continuity) to all complex  roots of $F_n$, then we extend it  to $ B_{\varepsilon'} \times \Omega'_0 \times \C$ 
 by the Whitney Interpolation Formula. The fact that the  trivialization $\Ph(t,x)$ obtained in this way is 
{\aaa } is proven by a reduction to the Puiseux with parameter case as follows. Let  
$x(s) = (x'(s), x_n(s)) $ be a real analytic arc.  By the inductive assumption  $\Ph_{n-1} (t,x'(s))$ 
is analytic in $t,s$.    We show that  $f(t,s,z) = (F_{n}(\Ph_{n-1} (t, x' (s)), z))_{red}$  satisfies the assumptions of 
the Puiseux with parameter theorem, and then we conclude by Theorem \ref{psisaa}.  
We first consider $x'(s)$ sufficiently generic, so that the discriminant of $F_{n,red} (\Ph_{n-1} (t,x'(s)),z)$ 
does not vanish identically, and use this case to show that the number and the multiplicities of the roots of 
$F_n$ are constant over each leaf of $\Ph_{n-1}$. This will imply the case of arbitrary arcs $x(s)$.  

The fact that $\Ph$ satisfies the property  (5) of Definition \ref{maindefinition2} will be shown later in Section \ref{associatedstratification} where the appropriate stratification is introduced.  
In the argument below we do not use this property in the inductive step.
\end{rem}

\begin{proof}
The proof is by induction on $n$.
Thus suppose that  $\Ps_1, \ldots, \Ps_{n-1}$ are already constructed    and that  for $i<n$  the homeomorphisms \eqref{Phiinduction}   satisfy the properties (Z1)-(Z5).    
 To simplify the notation we write $(x_1,\ldots,x_{n}) = (x',x_n) $.  
 By the inductive assumption $F_{n-1}$, and hence by Proposition \ref{regularproducts} the 
 discriminant of $F_{n,red}$, is regular for  $\Ph_{n-1}  : B_{\varepsilon'} \times \Omega'_0 \to \Omega'$.    
Therefore, by the preservation of multiplicities of roots principle, Lemma \ref{preservationlem}, for any $x'\in \Omega'_0 $, the complex roots of   $F_n $  
 $$a_1(\Ph_{n-1}(t,x')) , \ldots , a_{d_n}(\Ph_{n-1} (t,x'))$$ 
 can be chosen  $\K$-analytic in $t$.  Moreover,  $a_i(0,x')= a_j(0,x')$ if and only if $a_i(\Ph_{n-1}(t,x'))= a_j(\Ph_{n-1}(t,x'))$ for all $t\in B_{\varepsilon'}$.   Denote by  $a(\Ph_{n-1}(t,x')) =(a_1(\Ph_{n-1}(t,x')) , \ldots , a_{d_n}(\Ph_{n-1} (t,x'))) $ 
 the vector of such roots and  set  
\begin{align}\label{definitionPsi}
\Ps_{n} (t,x) : & =  \ps ( x_n, a(0,x'), a(\Ph_{n-1}(t,x'))) \\
& = x_n + \frac {\sum_{j=1}^N \mu_j (x_n,a(0,x')) (a_j(\Ph_{n-1}(t,x'))- a_j(0,x'))} {\sum_{j=1}^N \mu_j (x_n,a(0,x'))}   ,  \notag
\end{align}
where $\ps$ is given by \eqref{ourpsi}, and then define $\Phi$ by (Z2).  

Thus constructed $\Phi$ satisfies (Z1) and (Z2) by its definition.  We show that $\Ph$ is 
a homeomorphism that  satisfies
 (Z3)-(Z5).  
This we check on every real analytic arc applying  the Puiseux with parameter theorem.
 
 \begin{lem}
 Let $ K\Subset \Omega'_0$.  Then 
   \begin{align}\label{Lipschitzonroots}
\sup_{x'\in  K}  \max_{a_i (0,x')\ne a_j (0,x')} \frac { |a_i(\Ph_{n-1}(t,x') ) -a_j(\Ph_{n-1}(t,x') ) |}{|a_i (0,x')-a_j (0,x')|} \to 1 \quad \text {as} \quad t \to 0
\end{align}
 \end{lem}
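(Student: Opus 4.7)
The plan is to argue by contradiction via the curve selection lemma, reducing the desired uniform estimate to a one-parameter Puiseux situation along a real analytic arc in $K$, and then to invoke the asymptotics underlying Corollary \ref{Puiseuxcor3}.

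First, I would suppose that the supremum does not tend to $1$ and use pigeonhole on the finitely many index pairs to produce a bad sequence $(t_k,x'_k)\in B_{\varepsilon'}\times K$ with a fixed pair $(i,j)$ for which $a_i(0,x'_k)\ne a_j(0,x'_k)$ and the ratio stays $\varepsilon_0$ away from $1$. By compactness of $K$ the sequence accumulates at some $(0,x_*')$, and by the curve selection lemma applied to the subanalytic set supporting this bad inequality (together with the semianalytic condition $a_i(0,x')\ne a_j(0,x')$, locally well-defined near $x_*'$), I can replace the sequence by a real analytic arc $(t(s),x'(s))\colon(I,0)\to B_{\varepsilon'}\times K$ with $(t(0),x'(0))=(0,x_*')$ such that for $s\in(0,\delta)$ one still has $a_i(0,x'(s))\ne a_j(0,x'(s))$ and the inequality persists.

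Second, I would use the inductive hypothesis (Z4) so that $\Ph_{n-1}(t,x'(s))$ is analytic in $(t,s)$, and set $f(t,s,z):=F_{n,red}(\Ph_{n-1}(t,x'(s)),z)$. The discriminant $\Delta_{F_{n,red}}$ divides $F_{n-1}$, and $F_{n-1}$ is regular for $\Ph_{n-1}$ by the inductive (Z5), so Proposition \ref{regularproducts} gives $|\Delta_{F_{n,red}}(\Ph_{n-1}(t,x'(s)))|\sim|\Delta_{F_{n,red}}(0,x'(s))|$. Since $a_i\ne a_j$ at $t=0$ for $s\ne 0$, the factor of the discriminant corresponding to $(i,j)$ is not identically zero along the arc; isolating the irreducible factor of $f_{red}$ containing both roots lets me write its discriminant in the monomial form $s^M v(t,s)$ with $v(0,0)\ne 0$. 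Applying Puiseux with parameter (Theorem \ref{PuiseuxTheorem}) to this factor yields analytic branches $\tilde a_\ell(t,\sigma)$ with $s=\sigma^d$; the analysis of Corollary \ref{Puiseuxcor3} then gives $\tilde a_i(t,\sigma)-\tilde a_j(t,\sigma)=\sigma^{k_{ij}}w_{ij}(t,\sigma)$ with $w_{ij}(0,0)\ne 0$ (maximality of $k_{ij}$ together with the discriminant being a power of $\sigma$ times a unit rules out further zeros of $w_{ij}$), so $w_{ij}(t,\sigma)/w_{ij}(0,\sigma)\to 1$ uniformly in $\sigma$ on a compact neighborhood of $0$ as $t\to 0$. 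Reverting $s=\sigma^d$ contradicts the choice of arc.

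The principal obstacle I expect is the second step, namely confirming that after the substitution $y=\Ph_{n-1}(t,x'(s))$ the polynomial $f$, or rather the factor pertinent to the pair $(i,j)$, satisfies the hypothesis of Puiseux with parameter. The subtlety is that the substitution may render $F_{n,red}$ non-reduced along the arc, and one must combine the regularity inequality (Proposition \ref{regularproducts}), the preservation-of-multiplicities principle (Lemma \ref{preservationlem}), and the fact that the pair $(i,j)$ from step one has $a_i\ne a_j$ for $s\ne 0$ to single out a factor whose discriminant has the required monomial form $s^M v(t,s)$ with $v(0,0)\ne 0$. Once this is in place the remaining estimates merely reproduce the proof of Corollary \ref{Puiseuxcor3} in a $t$-parameter family.
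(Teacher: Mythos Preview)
Your overall architecture---contradiction, curve selection, reduction to Puiseux with parameter along the arc, then Corollary \ref{Puiseuxcor3}---matches the paper's, and your first step is fine (the pigeonhole on index pairs is slightly shaky because the root labeling is not globally consistent in $x'$, but one can simply work with the symmetric quantity $\gamma(t,x')$ instead, as the paper does).

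The genuine gap is your ``principal obstacle,'' and the fix you propose does not close it. If the arc $x'(s)$ satisfies $\Delta_{F_{n,red}}(0,x'(s))\equiv 0$ (equivalently $F_{n-1}(0,x'(s))\equiv 0$, since one divides the other), then regularity of $\Delta_{F_{n,red}}$ for $\Ph_{n-1}$ only tells you $\Delta_{F_{n,red}}(\Ph_{n-1}(t,x'(s)))\equiv 0$, which is vacuous. Passing to $f_{red}$ or to an irreducible factor does not help: the relevant discriminant is then (up to a constant) the generalized discriminant $D_d(F_{n,red})$ evaluated along $\Ph_{n-1}(t,x'(s))$, and nothing in the inductive package (regularity of $F_{n-1}$, preservation of multiplicities) forces $D_d(F_{n,red})$ to be regular for $\Ph_{n-1}$; it is not a factor of $F_{n-1}$ in general. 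So you cannot conclude that this discriminant is $s^M$ times a unit with nonvanishing constant term at $t=0$, and Puiseux with parameter does not apply. In fact, the paper later \emph{uses} the present lemma to get exactly this conclusion on arcs inside $V(F_{n-1})$ (see \eqref{redonacurve}), so your proposed route is circular.

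The paper sidesteps the whole difficulty: it first proves the estimate only on $\{F_{n-1}\ne 0\}$ via curve selection plus Corollary~\ref{Puiseuxcor3} (exactly your argument when the arc avoids $V(F_{n-1})$), and then extends to $V(F_{n-1})$ by the lower semi-continuity of $\gamma$ (Remark~\ref{gammalimit}). Since $V(F_{n-1}(0,\cdot))$ is nowhere dense, for any $x'_0$ in it one has $\gamma(t,x'_0)\le \liminf_{x'\to x'_0}\gamma(t,x')\le \sup_{x'\in K'\setminus V(F_{n-1})}\gamma(t,x')\to 0$. That one-line density/lsc argument is the missing ingredient in your proposal.
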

 
 \begin{proof}
Denote 
 \begin{align*}
 \gamma (t, x')  = \max_{a_i (0,x')\ne a_j (0,x')} \frac { |(a_i(\Ph_{n-1}(t,x') )  -a_j(\Ph_{n-1}(t,x') ) )- (a_i(0,x') - a_j (0,x'))|}
 {|a_i (0,x')-a_j (0,x')|}.  
\end{align*}
We show that $\gamma$ is  bounded  on $B_{\varepsilon'} \times  K$, after replacing $\varepsilon'$ by a smaller 
positive number if necessary, and converges to $0$ as $t$ 
goes to $0$.    Let $x'(s)$ be a real analytic arc such that $(0,x'(s))$ is not entirely included in 
 the zero set of $F_{n-1}$.   By Corollary \ref{Puiseuxcor3}, $\gamma$ is bounded on $(t, x'(s))$ 
 and converges to $0$ as $t$ goes to $0$.  Thus, by the curve selection lemma, the claim holds on 
  $\{(t,x'); F_{n-1} (t,x') \ne 0\}$.  We extend it on the zero set of $F_n$ by the lower 
  semi-continuity of $\gamma$, Remark \ref{gammalimit}.   
\end{proof}

Thus, taking $\varepsilon'$ smaller if necessary, we see by Proposition \ref{lipschitz} that  $\Ps_n$ of \eqref{definitionPsi} is well-defined, continuous by Proposition \ref{continuity}, 
and satisfies  (Z3).    

Choose a neighborhood $\tilde \Omega'_0$ of the origin  in $\K^{n-1}$, $\varepsilon \le \varepsilon'$  and 
$r>0$ so that $ \tilde \Omega'_0 \Subset \Omega'_0$ and  $F_n$ does not vanish on $B_{\varepsilon} \times  \tilde \Omega'_0 \times \partial D$, where $\partial D= \{x_n \in \K; \|x_n\|=r\}$.  
Then we set $\Omega_0=  \tilde \Omega'_0 \times D$, where  $D=\{x_n \in \K; \|x_n\|<r\}$,  and $\Omega = \Ph (B_\varepsilon \times \Omega_0)$.

Now we show (Z4) 
(except the property  (5) of Definition \ref{maindefinition2} that will be shown in Section \ref{associatedstratification}). 
 Let $x(s): I \to  \Omega_0$ be a real analytic arc.  We show that 
$ \Ph (t, x(s) )$ is analytic in $t$ and $s$.    If $(0,x'(s))$ is not entirely 
included in the zero set of $F_{n-1}$ then it follows from  Theorem
 \ref{psisaa} (we argue as in the proof of  Lemma \ref{preservationlem}).   Thus, suppose  $F_{n-1}(0, x'(s))\equiv 0$.   Consider 
\begin{align}\label{redonacurve}
f(t,s,z) = (F_n(\Ph_{n-1} (t, x' (s)), z))_{red} .
\end{align}
By \eqref{Lipschitzonroots} the size of the discriminant $\Delta_f(t,s)$ of $f$ is 
independent of $t$, that is there are  constants $C, c>0$ such that 
$$c|\Delta_f(0,s)\le |\Delta_f (t,s)|\le  C|\Delta_f(0,s).$$
Write $\Delta_f$ in the form $s^M h(t,s)$, where $h$ does not vanish identically on $s=0$.  By the 
above inequality  we conclude that $h(0,0)\ne 0.$  Hence $f(t,s,z)$ satisfies the assumption of Theorem \ref{psisaa} 
that implies that $ \Ph (t, x(s) )$ is analytic in $t$ and $s$.

To show that  the inverse of $\Ph$ is arc-analytic we use the inductive assumption, i.e. the assumption that the 
inverse of $\Phi_{n-1}$, is arc-analytic.   Then, for a real analytic arc $x'(s)$ fixed,   
 over its flow $(t,s) \to \Phi_{n-1} (t ,x'(s))$, we use Lemma \ref{psiinvsaa}.  

The proof of (Z5) is similar to that of (Z4).  First, by Proposition \ref{criterionregularfunction}, 
  it suffices to show it over the flow of any real analytic arc  
$x'(s)$, that is for  $(t,s) \to \Phi_{n-1} (t ,x'(s))$.    If $(0,x'(s))$ is not entirely 
included in the zero set of $F_{n-1}$, then it follows directly from the proof of  
Lemma \ref{preservationlem}  and Theorem
 \ref{psisaa}.   If $F_{n-1}(0, x'(s))\equiv 0$ then we consider \eqref{redonacurve} and conclude 
 again by Theorem \ref{psisaa}.
 \end{proof}

\subsection{Geometric properties} In this subsection we summarize some geometric properties of 
the {\aaa } trivialization $\Ph$ constructed in the proof of Theorem \ref{theorem}.  
Firstly, $\Ph$ preserves the multiplicities and the singular loci of the $\Ph$-regular functions.  

The preservation of multiplicity follows by induction from Zariski \cite{zariski1975}, or, independently from Proposition \ref{regularityequimultplicity}. 

\begin{prop}[Zariski equisingularity implies equimultiplicity]\label{equimultiplicity}
Let $F_i$, $i=0, \ldots, n$, be a Zariski equisingular local system of pseudopolynomials 
at the origin in $\K^m\times \K^n$.  Then for any $\K$-analytic  function $G$ dividing $F_n$, the multiplicities 
\begin{align}
 \mult _ { (t,0)} G= \mult_{0} G_{t} ,
\end{align} 
where $G_t (x) = G(t,x)$, are independent of $t$.  \qed
 \end{prop}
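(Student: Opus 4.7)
The plan is to invoke Theorem \ref{theorem} and Proposition \ref{regularityequimultplicity} in sequence, using Proposition \ref{regularproducts} as the connecting link. First, I would apply Theorem \ref{theorem} to the Zariski equisingular system $F_0, \ldots, F_n$ to produce an arc-wise analytic trivialization $\Ph : B_\varepsilon \times \Omega_0 \to \Omega$ satisfying (Z1)--(Z5). By (Z1) the leaf through the origin is exactly $L_0 = B_\varepsilon \times \{0\}$, and by (Z5) the function $F_n$ is regular for $\Ph$ along this leaf.

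Next, I would promote this regularity from $F_n$ to an arbitrary analytic factor $G$ of $F_n$. Writing $F_n = G \cdot H$ in $\mathcal{O}_{(t_0,0)}$ at each point $(t_0,0) \in L_0$, there are two cases. If $H(t_0,0) \ne 0$, then $H$ is a unit in a neighborhood, so $|G|$ and $|F_n|$ differ by a bounded nonvanishing factor and the regularity inequality \eqref{regularfunctioncondition} for $F_n$ transfers directly to $G$. If instead $H(t_0,0) = 0$, then both $G$ and $H$ are germs $(Y,(t_0,0)) \to (\K,0)$ and Proposition \ref{regularproducts} yields the regularity of $G$. Either way $G$ is regular for $\Ph$ along the entire leaf $L_0$.

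With regularity of $G$ in hand, Proposition \ref{regularityequimultplicity} applied at each point $(t,0)$ of $L_0$ gives the chain
\[
\mult_{(0,0)} G \;=\; \mult_{(t,0)} G \;=\; \mult_{(0,0)} G_0 \;=\; \mult_{(t,0)} G_t,
\]
which simultaneously yields the pointwise equality $\mult_{(t,0)} G = \mult_{0} G_t$ and the joint independence of these multiplicities from $t$.

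The only delicate point I anticipate is the descent-of-regularity step: a factor of a regular function need not a priori be regular when the complementary factor is a nonvanishing unit, since Proposition \ref{regularproducts} is phrased for germs that both vanish at the base point. This is handled by the trivial unit comparison described above, so no deeper analysis is required; the substantial content is entirely packaged inside Theorem \ref{theorem} and Proposition \ref{regularityequimultplicity}.
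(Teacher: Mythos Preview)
Your proposal is correct and matches the paper's approach exactly: the paper simply remarks that the result follows from Proposition~\ref{regularityequimultplicity} (or alternatively by induction from Zariski \cite{zariski1975}), having already noted after Theorem~\ref{theorem} that (Z5) together with Proposition~\ref{regularproducts} makes any analytic factor $G$ of $F_n$ regular for $\Ph$. Your handling of the unit-factor case is a harmless elaboration of what the paper leaves implicit.
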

 
Note that, by construction  $\Ph(t,x) = (t,\Ps (t,x))$ is real analytic in the complement   of   $B_\varepsilon \times Z$,  where $Z$ is a nowhere dense $\K$-analytic  subset of $ \Omega_0 $.  Let us, for  $t$ fixed, denote 
$x\to \Ps(t,x)$ by $\Ps_t$.  
It follows from (Z2) and (Z3) that the jacobian determinant of $\Ps_t$, that is well-defined in the complement 
of   $B_\varepsilon \times Z$,    is bounded from zero 
and infinity in a neighborhood of the origin, that is there exists $C,c>0$ such that 
 \begin{align}\label{jacdet} 
c\le | jac\det (\Ps_t) (t,x) |\le C . 
\end{align} 

Consider an analytic set $X= \{f_1(t,x)= ... =f_k(t,z)=0\}\subset \Omega$ defined by 
$\K$-analytic $\Ph$-regular functions $f_1(t,x), ... , f_k(t,z)$.  Denote $X_t= X \cap \pi^{-1}(t)$.  
Then, as follows from Proposition \ref{singularities}, $Sing X_t = \pi^{-1}(t) \cap Sing X$ and 
$\Ph$ preserves $Sing X$ and $Reg X$.  


\subsection{Generalizations} \label{additional}
 
 The following generalization can be used to show the topological equisingularity of analytic function 
 germs, see \cite{BPR} and Subsection \ref{trivialityfunctions} below.  
 
 \begin{prop}\label{addendum}
  Theorem \ref{theorem} holds if in the definition of a local system of pseudopolynomials  the assumption 
\begin{enumerate}
\item[(i)]
The discriminant of $F_{i,red}$ divides  $F_{i-1}$.  
 \end{enumerate}
 is replaced by 
 \begin{enumerate}
 \item[(ii)]
There are $q_i\in \N$ such that $F_{i}=x_1^{q_i} \tilde F_i $, where $\tilde F_i (x_1,...,x_i)$ is a monic  Weierstrass polynomial in $x_i$, and for $i=1, ... , n$,  the discriminant of $\tilde F_{i,red}$ divides $F_{i-1}$. 
 \end{enumerate}
Moreover, in the conclusion we may require that $\Ps_1 (t,x_1)\equiv x_1$.  
\end{prop}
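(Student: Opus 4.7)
The plan is to re-run the inductive construction from the proof of Theorem \ref{theorem} with two modifications forced by hypothesis (ii): set $\Psi_1(t,x_1)\equiv x_1$ in the base case, and for each $i\geq 2$ apply the Whitney interpolation formula \eqref{definitionPsi} to the roots of $\tilde F_i$ rather than $F_i$. The key structural observation that makes the modified induction go through is that since $\Psi_1$ is the identity, every analytic function of $x_1$ alone---in particular each $x_1^{q_i}$---is trivially regular for every $\Phi_j$ in the sense of Proposition \ref{criterionregularfunction}, so by Proposition \ref{regularproducts} regularity of $F_i = x_1^{q_i}\tilde F_i$ reduces to regularity of $\tilde F_i$.

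For the inductive step at $i\geq 2$, I assume $\Phi_{i-1}$ has been constructed satisfying (Z1)-(Z4) and with $\tilde F_{i-1}$ regular for $\Phi_{i-1}$. Combined with the trivial regularity of $x_1^{q_{i-1}}$, this yields $F_{i-1}$ regular for $\Phi_{i-1}$. Since hypothesis (ii) says the discriminant of $\tilde F_{i,red}$ divides $F_{i-1}$, that discriminant is also regular for $\Phi_{i-1}$ (again by Proposition \ref{regularproducts}), and Lemma \ref{preservationlem} delivers the roots of $\tilde F_i$ as $\K$-analytic in $t$ along $\Phi_{i-1}(t,x')$. Whitney interpolation on these roots defines $\Psi_i$; the verification of (Z1)-(Z4) and of $\tilde F_i$ regular for $\Phi_i$ then proceeds verbatim as in the proof of Theorem \ref{theorem} with $\tilde F_i$ replacing $F_i$, and the desired (Z5) follows from the factorization $F_n = x_1^{q_n}\tilde F_n$ together with Proposition \ref{regularproducts}.

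The main subtlety I expect to confront is the transition from $i=1$ to $i=2$, since $\tilde F_1$ need not be regular for $\Phi_1 = \text{identity}$. The decisive structural fact rescuing the induction is that hypothesis (ii) applied at $i=1$ forces the discriminant of $\tilde F_{1,red}$ to divide the unit $F_0$, hence to be a unit itself; since $\tilde F_{1,red}$ is Weierstrass in $x_1$ and its zero-fibre over $t=0$ is $x_1^{d'}$, the non-vanishing of its discriminant at $t=0$ forces $d'\leq 1$, so $\tilde F_1$ is either $1$ or $(x_1-b(t))^{d_1}$ with $b(0)=0$. In the first case the induction starts trivially; in the second, one argues arc-by-arc as in the proof of Theorem \ref{theorem}, invoking the curve selection lemma to reduce the discriminant of $\tilde F_{2,red}$ restricted along each real analytic arc $(t(s),x_1(s))$ to the standard Puiseux form $s^M\cdot(\text{unit})$ in $(t,s)$, which is enough to supply the analytic parametrization of roots of $\tilde F_2$ required for Whitney interpolation.
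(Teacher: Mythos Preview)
Your first two paragraphs are correct and match the paper's argument exactly: set $\Psi_1\equiv x_1$, observe that $x_1$ (hence each $x_1^{q_i}$) is then regular for every $\Phi_j$, and invoke Proposition~\ref{regularproducts} to pass between regularity of $F_i$ and of $\tilde F_i$, after which the induction of Theorem~\ref{theorem} runs verbatim with $\tilde F_i$ in place of $F_i$.

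Your third paragraph addresses a non-issue. Proposition~\ref{addendum} replaces only the divisibility condition in Definition~\ref{system}; the requirement that the coefficients $A_{i,j}$ vanish identically on $T=U\times\{0\}$ remains in force. For $i=1$ the coefficients $A_{1,j}$ depend on $t$ alone, so this forces $A_{1,j}\equiv 0$, i.e.\ $F_1(t,x_1)=x_1^{d_1}$ and $\tilde F_1=x_1^{d_1-q_1}$, which is trivially regular for $\Phi_1=(t,x_1)$. This is what the paper means by ``as it follows from the construction'': Whitney interpolation on the (identically zero) roots of $F_1$ already gives $\Psi_1=x_1$. Incidentally, the arc-by-arc patch you sketch for the hypothetical case $\tilde F_1=(x_1-b(t))^{d_1}$ with $b\not\equiv 0$ would not work: along the constant arc $x_1(s)\equiv 0$ one gets $\tilde F_1(\Phi_1(t,0))=(-b(t))^{d_1}$, whose expansion in $s$ has leading coefficient $(-b(t))^{d_1}$ vanishing at $t=0$ but not identically, so regularity fails and the Puiseux form $s^M\cdot(\text{unit})$ is unavailable. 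Fortunately that case never arises.
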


\begin{proof}
We can always require $\Ps_1 (t,x_1)\equiv x_1$ in the first step of construction.  Then, in the 
inductive step, we assume that  $x_1$ and $\tilde F_{n-1}$ are $\Ph_{n-1}$-regular.  Hence, by
Proposition \ref{regularproducts}, so is the discriminant of $\tilde F_{i,red}$.  This allows us to proceed with the 
construction of $\Ph$.  Since $x_1$ is constant on the fibers of $\Ph$, it is $\Ph$-regular and $\tilde F_{n}$ 
is $\Ph$-regular by the proof of Theorem \ref{theorem}.
\end{proof}



\section{Zariski Equisingularity with transverse projections. } \label{transverseprojections}

\begin{defi}\label{transversedefinition}
We say that a local system of pseudopolynomials $F_i(t,x)$, $i=1, \ldots, n$, is  \emph{transverse}
at the origin in $\K^m\times \K^n$, if for every  $i=2, \ldots, n$, the multiplicity $\mult_0 F_i (0,x)$ of $F_i(0,x)$ at $0\in \K^i$ is equal to  $d_i$.  
\end{defi}

We always have the upper semi-continuity condition.  If we denote $F_t(x) = F(t,x)$, then  $\mult_{0} F_t \le \mult_0 F_0$
 for $t$ close to $0$.  Since $\mult_{0} F_t \le d_i$,  the  transversality is a closed condition (in the Euclidean or analytic Zariski topology) in parameter $t$. 

 If the system $\{F_i\}$  is Zariski equisingular then, by Proposition \ref{equimultiplicity}, 
the transversality is also an open condition.  Thus in this case the system is transverse at any $(t,0)\in U$, 
keeping the notation from Definition \ref{system}.  Therefore, writing $F$ instead of $F_n$, 
 we have $d_n= \mult_{0} F_t = \mult_0 F_0$ and also $d_n= \mult_{(0,0)} F = \mult_{(t,0)} F$.  

Denote $X=F^{-1}(0)$, $X_t= X\cap \{t\} \times \K^n$.  Geometrically the assumption $\mult_0 F (0,x)= d_n$ means that the kernel of the standard projection 
$\K^n \to \K^{n-1}$ is transverse to the tangent cone of $X_0$ at the origin, i.e. the vertical line $\{0\}\times 
\K \subset \K^{n-1} \times \K$ is not entirely included in  this tangent cone.  
If this is the case then, in the Zariski equisingular case, by Proposition \ref{equimultiplicity}, the kernel of the standard projection 
$\pi :\K ^m \times \K^n \to \K ^m \times  \K^{n-1}$ is  transverse to the tangent cone of $X$ at the origin.

\changes{
\begin{defi}\label{partiallytransversedefinition}
We say that a local system of pseudopolynomials $F_i(t,x)$, $i=1, \ldots, n$, is  \emph{partially transverse}
if each $F_i$ with $d_i>0$  has a factor $G_i$ of degree $d'_i>0$ in $x_i$ such that  
 $\mult_0 G_i (0,x)=d' _i$.  
\end{defi}}

It is clear from the definitions  that a transverse system is partially transverse.

\begin{thm}\label{theoremtransverse}
Let $F_i(t,x)$, $i=0, \ldots, n$, be a Zariski equisingular local system of pseudopolynomials partially 
transverse  at the origin in $\K^m\times \K^n$.     
 Let $\Ph (t,x) = (t, \Ps (t,x))  : B_{\varepsilon} \times \Omega_0 \to \Omega$ 
be the homeomorphism  constructed in the proof of Theorem \ref{theorem}.  Then    
 \begin{enumerate} 
\item [\rm (Z6)] $\Ph$ is an {\aaa} trivialization regular along  $B_\varepsilon \times \{0\}$.    
\end{enumerate}
\end{thm}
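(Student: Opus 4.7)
The plan is to prove (Z6) by induction on the variable index $i$, establishing that each $\Phi_i$ constructed in Theorem \ref{theorem} is regular along $B_\varepsilon \times \{0\}$. The base case $i = 1$ is immediate: since $A_{1,j}$ depends only on $t$ and must vanish on $T$, we have $F_1(t,x_1)=x_1^{d_1}$ and $\Psi_1(t,x_1)\equiv x_1$, trivially regular. For the inductive step I use Proposition \ref{criterionregular}: regularity at $(0,0)$ is equivalent to showing, for every real analytic arc germ $x(s)=(x'(s),x_i(s))$ with $\ell_0=\operatorname{ord}_s x(s)$, that $\Psi_i(t,x(s))=O(|s|^{\ell_0})$ uniformly for $t$ near $0$; the non-vanishing of the leading coefficient at $t=0$ is automatic, since $\Psi_i(0,x(s))=x_i(s)$ has $s$-order $\geq \ell_0$.

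Using the explicit formula $\Psi_i(t,x',x_i)=\psi(x_i,a(0,x'),a(\Phi_{i-1}(t,x')))$ and the identity $\psi(z,a,a)=z$, the weighted-average structure of $\psi$ gives
\begin{equation*}
|\Psi_i(t,x',x_i) - x_i| \leq C \max_j |a_j(\Phi_{i-1}(t,x')) - a_j(0,x')|,
\end{equation*}
so the task reduces to bounding the root displacement by $O(\|x'\|)$. Partial transversality supplies a factor $G_i$ of $F_i$ with $\operatorname{mult}_0 G_i(0,x)=\deg_{x_i}G_i$; combined with Proposition \ref{equimultiplicity} applied to the Zariski equisingular system, one obtains uniform equimultiplicity of $G_i$ at every $(t,0)$, and a Newton-polygon estimate then yields $|a_j^{G_i}(y)|\leq C\|y\|$ with constants independent of the parameter. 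Together with the inductive regularity $\|\Phi_{i-1}(t,x')\|\sim \|x'\|$, this controls the displacement of $G_i$'s roots by $O(\|x'\|)$.

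The main obstacle is the complementary factor $H_i=F_i/G_i$, whose roots may exhibit only fractional growth $|a_j^{H_i}(y)|\leq C\|y\|^{1/d}$. Here one exploits the structure of the Whitney weights $\mu_j(x_i,a)\sim |x_i - a_j|^{-N!}$: roots with $|a_j^{H_i}|\gg |x_i|$ are strongly damped, so their net contribution to $\Psi_i - x_i$ remains $O(|x_i|)$, while roots with $|a_j|\lesssim |x_i|$ contribute displacements trivially at most $2|x_i|$. Combining the two regimes through the bi-Lipschitz property (Z3) and Proposition \ref{lipschitz}, one obtains $|\Psi_i(t,x',x_i)|=O(\|x\|)$ uniformly in $t$. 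To ensure the analytic structure along arcs needed to apply Proposition \ref{criterionregular}, one invokes Puiseux-with-parameter (Theorem \ref{PuiseuxTheorem}) as in the proof of Theorem \ref{theorem}, applied to $f(t,s,z)=(F_i(\Phi_{i-1}(t,x'(s)),z))_{red}$, whose discriminant divides the pull-back of $F_{i-1}$ and hence by the inductive regularity is of the form $s^M\cdot\text{unit}$. This completes the induction and yields (Z6).
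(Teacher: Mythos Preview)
Your inductive framework is correct, but the core of your argument---the treatment of the ``bad'' roots of $H_i=F_i/G_i$---is both unnecessary and, as written, not a proof. The displacement bound $|\Psi_i - x_i|\le C\max_j|a_j(\Phi_{i-1}(t,x'))-a_j(0,x')|$ is indeed useless for the $H_i$-roots, and your attempted rescue via ``damping'' is only a heuristic: you assert that far roots contribute $O(|x_i|)$ to $\Psi_i-x_i$ without actually estimating $\mu_j/\mu$ against the displacement, and your claim that roots with $|a_j|\lesssim |x_i|$ have ``displacement at most $2|x_i|$'' conflates $|a_j|$ with $|b_j-a_j|$; nothing you have written bounds the latter. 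Making this case analysis rigorous would require a careful comparison of all root scales against $|x_i|$ and $\|x'\|$ simultaneously, which you have not done.

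The paper avoids all of this with a single observation that makes the $H_i$-roots irrelevant. One proves directly the two-sided inequality $C^{-1}\|x\|\le\|\Psi(t,x)\|\le C\|x\|$ (no need to pass through arcs and Proposition~\ref{criterionregular}). Pick \emph{one} root $a_i$ of the transverse factor $G_n$. By partial transversality its coefficients satisfy $|A'_j(t,x')|\le C\|x'\|^{\,j}$, hence $|a_i(t,x')|\le C\|x'\|$, and by the inductive regularity $|a_i(\Phi_{n-1}(t,x'))|\le C\|x'\|$. Now the bi-Lipschitz property of $\psi$ (Proposition~\ref{lipschitz}) applied to this single root---using that $\psi$ sends $a_i(0,x')$ to $a_i(\Phi_{n-1}(t,x'))$---gives
\[
C_5^{-1}\,|x_n-a_i(0,x')|\ \le\ |\Psi_n(t,x)-a_i(\Phi_{n-1}(t,x'))|\ \le\ C_5\,|x_n-a_i(0,x')|,
\]
and the triangle inequality finishes: $|\Psi_n(t,x)|\le C_5(|x_n|+|a_i(0,x')|)+|a_i(\Phi_{n-1}(t,x'))|\le C'\|x\|$, and similarly for the lower bound. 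The roots of $H_i$ never enter, and no arc-wise or weight-decay analysis is needed.
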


\begin{proof}
We have to show, see Subsection \ref{regular},  that, after shrinking the neighborhood $\Omega$ if necessary,  there is a constant $C>1$ such that   for all $(t,x) \in B_{\varepsilon} \times \Omega_0$,   
\begin{align}\label{distancebound}
  C^{-1} \|x\| \le \|\Ps (t,x))\| \le  C \|x\|  .
\end{align}
This will be shown by induction on $n$.    Let us write for short  $x = (x', x_n)$ and 
$\Ph (t,x) = (t, \Ps (t,x)) = (t, \Ps'(t,x') , \Ps_n(t,x))$.  
 By the inductive assumption  
\begin{align}\label{developmentinduction}
  C_1^{-1} \|x'\| \le \|\Ps' (t,x'))\| \le  C_1 \|x'\|  .
\end{align}

Let $a_1(t,x') , \ldots , a_{d'_n} (t,x'))$ 
denote  the complex  roots of $G_n = x_n^{d'_n} + \sum A'_{j} (t,x') x_n^{d'_n-j} $, where $G_n$ is given by Definition \ref{partiallytransversedefinition}. 
By the assumption on $G_n$,  $|A'_{j} (t,x')|\le C_2\|x'\|^j$, 
for all $j=0, ..., d'_n-1$,   and hence these roots satisfy
$|a_{i} (t,x')|\le C_3\|x'\|$.  The latter bound, by the inductive assumption, is equivalent to 
\begin{align}\label{roots}
|a_{i} (t,\Ps'(t,x'))|\le C_4\|x'\|.
 \end{align} 

By  formula \eqref{definitionPsi},
$\Ps_{n} (t,x) :  =  \ps ( x_n, a(0,x'), a(\Ph_{n-1}(t,x'))) $ and $ \ps ( a_i(0,x'), a(0,x'), a(\Ph_{n-1}(t,x')))
=a_i(t,\Ps' (t,x'))$ and therefore by
 the Lipschitz property of Whitney Interpolation, Proposition 
\ref{lipschitz},  we get
 \begin{align}\label{distancetoroots}
C^{-1}_5  | x_n  - a_i (0,x')| \le | \Psi_n (t,x) - a_i(t,\Ps' (t,x'))| \le C_5  | x_n  - a_i (0,x')| .
\end{align}

By \eqref{roots} and \eqref{distancetoroots}
 \begin{align*}
| \Psi_n (t,x)| \le C_6 ( | x_n  - a_i (0,x')|  + | a_i(t,\Ps' (t,x'))| ) \le  C_7 \|(x',x_n)\| 
\end{align*}
that shows the second inequality in \eqref{distancebound}.  The proof of the first one is similar.  
 
  This ends the proof of  Theorem \ref{theoremtransverse}.  
\end{proof}

\begin{example}\label{derivationcomplete}
\changes{ Let $\mathcal G_n =\{G_{n,i} (t,x)\}$ be a finite family of monic pseudopolynomials in $x_n$.  We say that 
$\mathcal G$ \emph{is stable by derivation}  if  for every $G\in \mathcal G_n$,  either 
$\partial G /\partial x_n\equiv 0$ or $\partial G /\partial x_n\in \mathcal G_n$ (after multiplication by a non-zero constant).  We say that a pseudopolynomial $F_n$ is 
\emph{derivation complete} if it is the product of a stable by derivation family.   We call a system of pseudopolynomials $\{F_i\}$   \emph{derivation complete}  if so is every  $\{F_i\}$.}

\changes{
Suppose now that  the system $\{F_i\}$  is derivation complete and let $F_i$ be the product of a stable by derivation family  $\mathcal G_i =\{G_{i,j} \}$.  If $F_i(0,0)=0$ then there is $G\in \mathcal G_{i}$, 
such that $G(0,0)=0$, $ \partial G/\partial x_i(0,0)\ne 0$.  The Weierstrass polynomial associated to $G$ is of degree $1$ in $x_i$ and divides $F_i$.
  Hence the family $\{F_i\}$ is partially transverse.  }
\end{example}


\section{Canonical stratification associated to a system of pseudopolynomials.}
\label{associatedstratification}

In this section we extend the results of the last two sections to a more global situation.

\begin{defi}\label{systempseudopolynomials}
By a \emph{system of pseudopolynomials in $x=(x_1,... ,x_n)\in \K^n$} we mean a family 
 \begin{align}\label{polynomials2}
F_{i} (x_1, \ldots, x_i  )= x_i^{d_i}+ \sum_{j=1}^{d_i} A_{i,j} (x_1, \ldots, x_{i-1}) x_i^{d_i-j}, \quad i=1, \ldots,n, 
\end{align}
with $\K$-analytic coefficients $A_{i,j}$, satisfying 
\begin{enumerate} 
\item 
there are   $\varepsilon _j\in (0,\infty]$, $j=1, \ldots, n$,  such that every $F_i$ are defined on 
$U_i = \prod_{j=1}^i D_j$, 
where $D_j = \{  |x_j|< \varepsilon _j  \} $.
\item 
if $\varepsilon_i<\infty$ then $F_{i} $ does not vanish on  $U _{i-1} \times \partial D_i$, where   
$\partial D_i = \{  |x_i|= \varepsilon _i  \} $.  
\item
for every $i$, the discriminant of $F_{i,red}$ divides   $F_{i-1}$.   
\end{enumerate}
It may happen that $d_i=0$.  Then $F_i\equiv 1$ and we set by convention $F_j\equiv 1$ for $j<i$.  

We say that $\{F_i\}$ is \emph{a system of polynomials} if every $F_i$ is a polynomial.  
\end{defi}

 For $i<k$ we denote by $\pi_{k,i} : U_k \to U_i$ the standard projection.   For each $i$ we define a  filtration
\begin{align}\label{assstrat}
U_i = X^i_i \supset X^i_{i-1} \supset \cdots \supset X^i_0, 
\end{align}
where
\begin{enumerate} 
\item 
$X^1_0 = V(F_1)$.  It may be empty.  
\item 
$X^i_j =  (\pi_{i,i-1}^{-1} (X^{i-1}_j ) \cap V(F_i) )\cup \pi_{i,i-1}^{-1} (X^{i-1}_{j-1})$ for $1\le j<i$.    
\end{enumerate}

As we show below  every connected component $S$ of $X^i_j \setminus X^i_{j-1}$ is a locally closed  
$j$-dimensional $\K$-analytic submanifold of $U_i$ and hence \eqref{assstrat} defines an analytic stratification $\mathcal S_i$ of $U_i$, {see Section \ref{stratifications} for the definition.  We call $\mathcal S = \mathcal S_n$ \emph{the canonical stratification associated to a system of pseudopolynomials}.

\begin{prop}\label{canonicalstratification}
For all  $j\le i\le n$ every connected component $S$ of $X^i_j \setminus X^i_{j-1}$ is a locally closed  $j$-dimensional $\K$-analytic submanifold of $U_i$ of one of the following two types: 
\begin{enumerate} 
\item [\rm (I)]
$S\subset V(F_i)$ and there is a connected component $S'$ of $X^{i-1}_j \setminus X^{i-1}_{j-1}$ such that 
$\pi_{i,i-1}$ induces a finite  $\K$-analytic covering $S\to S'$.  
\item [\rm (II)]
There is  a connected component $S''$ of $X^{i-1}_{j-1} \setminus X^{i-1}_{j-2}$ such that   $S$ is a connected component of $\pi_{i,i-1}^{-1}  (S'') \setminus V(F_i)$.  
\end{enumerate}
Moreover, for every $p\in S$ there are a local system of coordinates  at $p$ in which $(S, p)=(\K^j,0)$,  neighborhoods $B$, $ \Omega_0$ and $ \Omega$ of $p$ in $\K^j$, $\K^{i-j}$, and $\K^i$ resp., 
and an {\aaa } trivialization 
$$\Ph :B\times  \Omega_0\to   \Omega$$
preserving   the strata of stratification $\mathcal S_{i}$ and such that $F_{i}$ is $\Ph$-regular.  
If the system $\{F_i\}$  is derivation complete in the sense of Example \ref{derivationcomplete} 
 then the trivialization $\Ph$ can be chosen 
 regular  along 
$B\times \{p\}$.
\end{prop}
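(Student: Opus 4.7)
The proof proceeds by induction on $i$. The base case $i=1$ is immediate: $F_1$ is a univariate pseudopolynomial whose roots form the $0$-dimensional Type I strata (over the single point of $U_0$), while the connected components of $D_1\setminus V(F_1)$ give the $1$-dimensional Type II strata; the required trivializations are trivial in both cases.

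For the inductive step at level $i$, I first carry out the combinatorial classification. Take a connected component $S$ of $X^i_j\setminus X^i_{j-1}$ with $p=(x',x_i)\in S$, and split on whether $F_i(p)=0$. If $F_i(p)\neq 0$, the recursive formula for $X^i_j$ forces $x'\in X^{i-1}_{j-1}\setminus X^{i-1}_{j-2}$; by the inductive hypothesis $x'$ lies on a $(j-1)$-dimensional stratum $S''$ of $\mathcal S_{i-1}$, and $S$ is the connected component of $\pi_{i,i-1}^{-1}(S'')\setminus V(F_i)$ through $p$, giving a Type II stratum. If $F_i(p)=0$, then $x'\in X^{i-1}_j\setminus X^{i-1}_{j-1}$ lies on a $j$-dimensional stratum $S'$; by the inductive hypothesis there is a local {\aaa} trivialization $\Ph'$ of $U_{i-1}$ at $x'$ with $F_{i-1}$ regular for $\Ph'$. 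Since $\Delta(F_{i,red})$ divides $F_{i-1}$, Proposition \ref{regularproducts} makes $\Delta(F_{i,red})$ regular for $\Ph'$, and Lemma \ref{preservationlem} then gives an analytic ordering of the roots $a_1(t),\ldots,a_{d_i}(t)$ of $F_i$ along $\Ph'(B,0)$ with coinciding roots at one point coinciding identically. Therefore $V(F_i)\cap\pi_{i,i-1}^{-1}(S')$ decomposes near $p$ into disjoint smooth $j$-dimensional analytic graphs over $S'$, each a Type I stratum, and $S\to S'$ is the restriction of $\pi_{i,i-1}$.

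The local {\aaa} trivialization is built by mirroring the recursive construction used in the proof of Theorem \ref{theorem}. In Type I, let $a_k$ be the root with $a_k(\Ph'(0,0))=x_i$ and set $\alpha(t)=a_k(\Ph'(t,0))$, analytic in $t$. After a local analytic change of coordinates at $p$ bringing the submanifold $S$ to $\K^j\times\{0\}$, define the last component $\Psi_i(t,y,w)$ by the Whitney interpolation formula \eqref{ourpsi} evaluated on the root configurations $a(\Ph'(0,y))$ and $a(\Ph'(t,y))$ of $F_i$, exactly as in formula \eqref{definitionPsi}. The map $\Ph(t,y,w)=(\Ph'(t,y),\Psi_i(t,y,w))$ is then an {\aaa} trivialization of the projection onto $B\subset\K^j\cong S$ by the same verification that establishes (Z4) in the proof of Theorem \ref{theorem}; it preserves each $V(F_k)$ for $k<i$ via $\Ph'$, preserves $V(F_i)$ by the Whitney interpolation construction, and has $F_i$ regular by (Z5). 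In Type II, let $\Ph''$ come from the inductive hypothesis applied at $x'\in S''$; after shrinking so that $F_i$ is nowhere zero in the neighborhood, set $\Ph((t,u),y)=(\Ph''(t,y),x_i^0+u)$ with parameter $(t,u)\in B''\times(-\varepsilon,\varepsilon)\cong B\subset\K^j$. This preserves $V(F_i)$ trivially and each $V(F_k)$, $k<i$, through $\Ph''$, hence preserves all strata of $\mathcal S_i$ in the neighborhood. When the system $\{F_k\}$ is derivation complete, Example \ref{derivationcomplete} makes it partially transverse, so Theorem \ref{theoremtransverse} replaces Theorem \ref{theorem} in each inductive step and delivers the regularity of $\Ph$ along $B\times\{p\}$.

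The main difficulty lies in the Type I case: one must verify that the $\Psi_i$ built from Whitney interpolation is arc-wise analytic in the fiber variable $y$ even though $\Ph'$ is only arc-analytic, not analytic, in $y$, and that it genuinely preserves $V(F_i)$. This is handled by passing to a real analytic arc $y=y(s)$ and invoking Puiseux with parameter via Theorem \ref{psisaa}: the regularity of $F_{i-1}$ for $\Ph'$ forces the discriminant of $F_i(\Ph'(t,y(s)),z)$ to be $s^M$ times an analytic unit, after which Theorem \ref{psisaa} yields the required arc-wise analyticity and the preservation of the root structure, just as in the proof of (Z4) in Theorem \ref{theorem}. A secondary technical point is that, although $a_k$ need not extend analytically in $x'$ transverse to $S'$, the leaf $S$ is the image of the analytic map $t\mapsto(\Ph'(t,0),\alpha(t))$ and is therefore a locally closed analytic submanifold, so the straightening coordinate change used above is legitimate.
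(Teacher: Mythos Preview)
Your proof is correct and follows essentially the same approach as the paper: induction on $i$, the Type I/Type II classification via Lemma \ref{preservationlem} and the divisibility of $\Delta(F_{i,red})$ by $F_{i-1}$, the lift by Whitney interpolation (as in the proof of Theorem \ref{theorem}) for Type I, the product structure $\mathcal S_{i-1}\times\K$ for Type II, and the appeal to Example \ref{derivationcomplete} plus Theorem \ref{theoremtransverse} for the regular case. Your presentation is in places more explicit than the paper's (e.g.\ the arc-wise analyticity check via Puiseux with parameter, and the analytic-submanifold verification for the Type I leaf), but the underlying argument is the same; one cosmetic slip is writing $(-\varepsilon,\varepsilon)$ for the extra parameter direction in Type II, which should be a disk in $\K$ when $\K=\C$.
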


\begin{proof}
Induction on $n$.   Let $S'$ be a  stratum of $\mathcal S_{n-1}$ of dimension $j$ and let $p'\in S'$.  
  By the inductive assumption there are a local system of coordinates $y_1, ..., y_{n-1}$
 at $p'$ in which $(S', p')=(\K^j,0)$,  neighborhoods $B'$, 
$ \Omega'_0$ and $ \Omega'$ of $p'$ in $\K^j$, $\K^{n-1-j}$, and $\K^{n-1}$ resp.,  
and an {\aaa } trivialization 
$$\Ph' :B'\times  \Omega'_0\to   \Omega'$$
preserving   $\mathcal S_{n-1}$,  such that $F_{n-1}$ is  $\Ph'$-regular.  
 Since   the discriminant of $F_{n,red}$ divides $F_{n-1}$ it is also $\Ph'$-regular.    
Therefore, by Lemma \ref{preservationlem}, the restriction of projection $\pi_{n,n-1}$
$$\pi_{n,n-1} ^{-1} (B') \cap V(F_n) \to B'$$
 is a finite 
analytic covering.  This shows that the connected components of  
$ \pi_{n,n-1}^{-1} (S') \cap V(F_n)$ and of $ \pi_{n,n-1}^{-1} (S') \setminus V(F_n)$ 
are locally closed submanifolds of $\Omega_n$ of type (I) or (II).  

Let $S$ be a connected component of  
$ \pi_{n,n-1}^{-1} (S') \setminus V(F_n)$ and  let $p\in S'$ be such that $p' = \pi_{n,n-1} (p)$.   
Then $\mathcal S_n$ near $p$ is the product of $\mathcal S_{n-1} \times \K$.  
Therefore the conclusion follows from the inductive assumption and the fact that $F_n(p)\ne 0$.  

If  $S$ is a connected component of $ \pi_{n,n-1}^{-1} (S') \cap V(F_n)$ 
we show that $\Ph'$ can be lifted to an {\aaa } trivialization  
$$\Ph :B\times \Omega'_0\times \K \to \Omega'\times \K,$$
 so that $\Ph$ preserves $\mathcal S_n$ and $F_n$ is $\Ph$-regular.  
 This can be done exactly as in the proof of Theorem \ref{theorem} as follows.  
 Denote by $a(y) = (a_1(y), ... , a_{d_n}(y))$ the vector of complex roots of $F_n$ and set 
 $$
 \Ph(y,x_n) = (\Ph' (y), \ps (x_n, a(0,y_{j+1},...,y_{n-1}), a(\Ph' (y))),
 $$
 where $\ps$ is given by the Whitney interpolation formula \eqref{ourpsi}.   
 The last claim of Proposition follows from Example \ref{derivationcomplete} and Theorem \ref{theoremtransverse}.  
 \end{proof}

\subsection{{$\Ph$ of the proof of Theorem \ref{theorem} satisfies condition (5) of Definition \ref{maindefinition2}.}}

Let ${\mathcal S},   \mathcal S_0$ be the canonical stratifications  associated to  the families 
$\{F_i(t,x)\}$ and $\{F_i(0,x)\}$ respectively.  We show that $\Ph$ induces a real analytic diffeomorphism 
between the strata of $B_\varepsilon \times \mathcal S_0$ and 
$ {\mathcal S}$.  By induction on $n$ we may suppose that the corresponding property holds for 
$\Ph_{n-1}$.  Let $S$ be a stratum of $ {\mathcal S}$ of type (I), that is a covering space over a stratum $S'$.  Denote  $S_0= S\cap \{t=0\}$, $S_0'= S'\cap \{t=0\}$.   By construction  $\Ph$ restricted to 
$B_\varepsilon\times V(F_n(0,x))$ is a lift of $\Ph_{n-1}$.  Therefore,  if $\Ph_{n-1}: B_\varepsilon\times S_0'\to S'$  is an analytic  diffeomorphism, consequently so  is its lift $\Ph: B_\varepsilon\times S_0 \to  S$.  

Now suppose that  $S$ is of type (II).  By assumption, $a_i(t,x')$ of \eqref{definitionPsi} are real analytic on 
$B_\varepsilon\times S_0''$ and hence, by the Whitney interpolation formula \eqref{ourpsi},  so is $\Ps_n$ on $B_\varepsilon\times S'_0$.  This shows the claim.  \qed

\begin{rem}
In general for  an (arc-a) or (arc-w) stratification, we have to substratify to obtain the condition (5) of Definition \ref{maindefinition2}.  For the canonical stratification  associated to a system of pseudopolynomials  the {\aaa } trivializations constructed in the proof of Proposition \ref{canonicalstratification} are real analytic on its strata.    
\end{rem}



{\Large \part{Applications.}\label{part2}}
\medskip


\section{Generic {\aaa } equisingularity} 

We use the Zariski equisingularity  to show that an analytic family of analytic set germs 
$\mathcal X = \{X_t\}$, $t\in T$, is  "generically" equisingular.  That is, locally on the parameter space $T$, this  family is equisingular in the complement of an analytic subset $Z\subset T$, $\dim Z < \dim T$.  
In this section the parameter space $T$ may be singular.  

\begin{defi} 
Let $T$ be a $\K$-analytic space, $U\subset \K^n$ an open neighborhood of the origin, 
$\pi : T\times U \to T$ the standard projection,  and let 
$\mathcal X=\{X_k\}$ be a finite family  of analytic subsets of $T\times U$.  We say that  $\mathcal X$ 
is \emph{{\aab }  equisingular along $T\times \{0\}$ at $t\in Reg(T)$}, if there are neighborhoods  $B $ 
of $t$ in $Reg(T)$ and $\Omega$ of  $(t,0)$ in $T\times \K^n$, and an {\aaa } trivialization 
$\Ph : B \times \Omega_t \to \Omega$, where $\Omega_t = \Omega\cap \pi^{-1} (t)$, such that 
$\Ph (B\times \{0\} ) =  B\times \{0\} $ and for every $k$, $\Ph (T\times X_{k,t}) = X_k, $ 
where $X_{k,t}= X_k \cap \pi^{-1} (t)$. 

We say that  $\mathcal X$ is  \emph{regularly {\aab }  equisingular along 
$T\times \{0\}$ at $t\in T$} if, moreover, $\Ph$ is regular at $(t,0)$.    
\end{defi}

\begin{thm}\label{genericequisingularity}
Let $\mathcal X= \{X_k\}$ be a finite family of analytic subsets of a neighborhood of $T\times U$ and 
let $t_0\in T$.   
Then there exist  an open  neighborhood $T' $ of $t_0$ in $T$ and a proper $\K$-analytic subset $Z\subset T'$, containing $Sing(T')$,  such that for every $t\in T'\setminus Z$,  $\mathcal X$ 
is  regularly {\aab }  equisingular along $T\times \{0\}$ at $t$.  

Moreover, there is an analytic stratification of an open neighborhood of $t_0$ in $T$ such that for every stratum $S$ and every $t\in S$,  $\mathcal X$ 
is  regularly {\aaa }  equisingular along $S\times \{0\}$ at $t$.
\end{thm}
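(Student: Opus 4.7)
The plan is to construct, locally near $t_0$, a derivation-complete system of pseudopolynomials adapted to $\mathcal X$, and then read off the bad locus $Z \subset T'$ from its coefficients. First I would shrink $T$ to a small open neighborhood of $t_0$ in which each $X_k$ is the zero set of finitely many analytic functions $f_{k,1}, \ldots, f_{k,m_k}$. Setting $f = \prod_{k,j} f_{k,j}$, after a generic linear change of coordinates in $x$ fixing the origin I may assume $f(t_0, 0, \ldots, 0, x_n) \not\equiv 0$, so that Weierstrass preparation writes $f = u \cdot F_n$ with $u$ a unit and $F_n$ a monic pseudopolynomial in $x_n$. Iterating the procedure on the discriminant of $F_{n,red}$ and on the non-trivial partial derivatives with respect to $x_n$, as in Example~\ref{derivationcomplete}, followed by further generic linear changes of the remaining coordinates at each descent, produces a derivation-complete system $F_0(t), F_1(t, x_1), \ldots, F_n(t, x)$ in which $F_0$ is an analytic function of $t$ alone that does not vanish identically.

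Next I would set $Z = (V(F_0) \cap T') \cup Sing(T') \cup Z_{tr}$, where $T'$ is a sufficiently small open neighborhood of $t_0$ and $Z_{tr} \subset T'$ is the proper analytic subset, given by the vanishing of appropriate leading coefficients of the $F_i$, where partial transversality of the system fails. This is a closed $\K$-analytic proper subset of $T'$, since all three pieces are. For any $t \in T' \setminus Z$, translating the base point of the system from $t_0$ to $t$ gives a Zariski equisingular (since $F_0(t) \ne 0$) and partially transverse local system in the sense of Definitions~\ref{system} and~\ref{partiallytransversedefinition}. Theorem~\ref{theoremtransverse} then yields a regular {\aaa} trivialization $\Ph : B \times \Omega_t \to \Omega$ along $B \times \{0\}$ at $(t,0)$. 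By property~(Z5) together with Proposition~\ref{regularproducts}, $\Ph$ preserves the zero set of every factor of $F_n$, in particular each $V(f_{k,j})$, so it preserves each $X_k$ via its set-theoretic definition.

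For the stratification assertion I would use Noetherian induction on closed analytic subsets. Set $Z_1 = Z$ as obtained above; then apply the first part, with $t_0$ replaced by each point of the smooth locus of each irreducible component of $Z_1$ and with $\mathcal X$ restricted to $Z_1 \times U$, to produce a proper closed analytic subset $Z_2 \subsetneq Z_1$; iterate. The resulting descending chain $T' \supsetneq Z_1 \supsetneq Z_2 \supsetneq \cdots$ stabilizes in finitely many steps by Noetherianity of the coherent sheaf of ideals in question, and the locally closed differences $Z_i \setminus Z_{i+1}$ are the analytic strata of the neighborhood of $t_0$; every $t \in Z_i \setminus Z_{i+1}$ is then handled by the first part applied with ambient space $Z_i$.

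The main obstacle I expect is ensuring partial transversality uniformly on $T' \setminus Z$, so that Theorem~\ref{theoremtransverse} (not just Theorem~\ref{theorem}) applies and yields the \emph{regular} {\aaa} trivialization. A generic linear change of coordinates at $t_0$ gives partial transversality at $t_0$, and upper semicontinuity of multiplicity makes it persist on an open set; absorbing the complementary locus into $Z$ cleanly, as an explicit analytic set cut out by leading coefficients of the derivation-complete $F_i$, is the bookkeeping that needs care. A secondary point is that the trivialization must preserve each $X_k$ individually rather than only their union, which is handled by making sure each defining function $f_{k,j}$ appears as a factor of $F_n$ at the outset.
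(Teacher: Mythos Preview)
Your strategy via derivation-complete systems is different from the paper's, which instead constructs a \emph{transverse} system through the rescaling trick of Lemma~\ref{theoremequisingularity2}; both routes feed into Theorem~\ref{theoremtransverse}, and yours is closer in spirit to the first proof of Theorem~\ref{theoremconjecture}. Note incidentally that your worry about $Z_{tr}$ is misplaced: once the system is derivation complete, the argument of Example~\ref{derivationcomplete} applies at \emph{every} point $(t,0)$ (descend through the chain of $x_i$-derivatives until one is nonzero at $(t,0)$; the last derivative is a nonzero constant, so this terminates), hence partial transversality holds automatically on all of $T'$ and $Z_{tr}=\emptyset$.

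There is, however, a genuine gap. You perform Weierstrass preparation at the specific point $t_0$, which requires $f(t_0,x)\not\equiv 0$; this already fails for $f(t,x_1)=t x_1$ at $t_0=0$, and the same obstruction recurs at each descent step whenever the intermediate discriminant happens to vanish identically at $t=t_0$. The paper's Lemma~\ref{theoremequisingularity2} is designed precisely for this: one works with the leading form of $G_n$ in $x$ over $\mathcal O_{T,t_0}$, makes a linear change so that the coefficient $A(t)$ of $x_n^{m_0}$ is merely $\not\equiv 0$ (rather than nonzero at $t_0$), then rescales by $A(t)\ast x=(A^2 x_1,\ldots,A^2 x_{n-1},A x_n)$ to force this leading coefficient to $1$; Weierstrass preparation then goes through uniformly, and $A^{-1}(0)$ is absorbed into $Z$. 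Your construction needs some such device to get started at an arbitrary $t_0$.
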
 

\begin{proof}
For each $X_k$ fix a finite system of generators $F_{k,i}\in \mathcal O_{T,t_0}$ of the ideal defining it. 
The first claim follows  from  Lemma \ref{theoremequisingularity2}   
applied to the product of all $F_{k,i}$.  The second claim follows by  induction on $\dim T$.  
  
  \begin{lem}\label{theoremequisingularity2}
Let $T$ be a $\K$-analytic space, $t_0\in T$.  Let $F$ be a $\K$-analytic function defined in 
a neighborhood of  $(t_0,0) \in T\times \K^n$.   Then there exist  a  neighborhood $T' $ of $t_0$ in $T$ 
and a proper $\K$-analytic subset $Z\subset T'$, $\dim Z < \dim T$, $Sing (T) \subset Z$, such that, after a linear change of coordinates in $\K^n$, the following holds.  For every $t\in T'\setminus Z$ there is a  Zariski equisingular  local transverse system of pseudopolynomials  $F_i$, $i=0,...,n$, at $(t,0)$,  with 
$F_n$ being the Weierstrass polynomial associated to $F$ at $(t,0)$. 
\end{lem}

\begin{proof}
We may suppose that $T$ is a subspace of $\K^m$, $t_0=0$, and $(T,0)$ is irreducible.  

We construct a new system of coordinates $x_1,... , x_n$ on $\K^n$, analytic subspaces $(Z_i,0) \subset (T,0)$ and analytic function germs $G_i (t,x_1,... ,x_i)$,  $i=n,n-1, ... ,0$, such that 
for every $t\in T\setminus Z$, $Z= Sing (T) \cup \bigcup Z_i$,   the following condition is satisfied.  Let $F_i $ be the  Weierstrass polynomial in $x_i$  associated to the germ of $G_i$ at $(t,0)$.  Then 
the discriminant of $F_{i,red}$ divides $F_{i-1}$.

The $G_i$ are constructed by descending induction.  First we set $G_n =F$.  Then we construct $G_{n-1} $ in three steps.  

\medskip
\noindent \emph{Step 1.}    Write 
$$
G_n (t,x) = \sum_{|\alpha|\ge m_0} A_\alpha (t) x^\alpha,
$$
where $m_0$ is the minimal integer $|\alpha|$ for which $A_\alpha \not \equiv 0$.  
We may assume $m_0>0$ otherwise we simply take  $Z= Sing (T)$. 
After a linear change of  $x$-coordinates, we may assume $A_{(0, \ldots, 0,m_0)}(t) \not \equiv 0$. Denote $A(t) =  A_{(0, \ldots, 0,m_0)}$. 
\medskip

\noindent \emph{Step 2.}   We define 
$A(t) \ast x := (A(t)^2 x_1 , .... , A(t)^2 x_{n-1}, A(t) x_n)$  and set
$$
\tilde G_n (t,x) = (A(t))^{- (m_0+1)}  G_n (t,A(t) \ast x) = \sum_{|\alpha|\ge m_0} \tilde A_\alpha (t) x^\alpha.  
$$
Then $\tilde A_{(0, \ldots, 0,m_0)}\equiv 1$ and 
 $\tilde G_n$ is regular in $x_n$.  
\medskip

\noindent \emph{Step 3.}
Denote by $H_n$ the Weierstrass  polynomial in $x_{n}$ 
associated to $\tilde G_n$.  It is  of degree $m_0$ in $x_n$.  
Let $\mathcal K$ be the field of fractions of  $\mathcal O_{T\times \K^{n-1} ,0}$ and consider $H_n$ as a polynomial of $\mathcal K[x_n]$.    Let $d$ be the degree of  $H_{n,red}$.   We define $G_{n-1}$ as  
the $d$th generalized discriminant of $H_n$, see Appendix \ref{Part:discriminants},  and set   $Z_n = A^{-1} (0)$. 

Then we repeat these steps for $G_{n-1}$ and so on.

To see that the sequence $G_i$ satisfies the required properties we note that if 
 $F_n$ denotes the Weierstrass polynomial at $(t,0)\in T\setminus (Z_n\cup Sing(T))$  
associated to $G_n$,  then, as a germ at $(t,0)$,  the discriminant of $F_{n,red}$ divides $G_{n-1}$.  
   
This ends the proof of Lemma \ref{theoremequisingularity2} and Theorem \ref{genericequisingularity}.  
\end{proof}
\renewcommand{\qedsymbol}{}
\end{proof}



\section{Stratifications and Whitney Fibering Conjecture.}\label{stratifications}

Let $X$ be a $\K$-analytic space of dimension $n$.  By \emph{an analytic stratification of $X$} we mean a filtration of $X$ by analytic subspaces 
$$X=X_n\supset X_{n-1} \supset \cdots \supset X_0$$
such that each $X_j\setminus X_{j-1}$ is a nonsingular (locally closed) analytic subspace of pure dimension $j$, or is empty.  This filtration induces a 
 decomposition $X=\sqcup S_i$, where the $S_i$ are connected components of all $X_j\setminus 
 X_{j-1}$.  The analytic locally closed submanifolds $S_i$ of $X$ are called \emph{strata} and their 
 collection $\mathcal S= \{S_i\}$ is usually called a stratification of $X$.  In what follows we simply say 
 that $\mathcal S= \{S_i\}$ is an analytic stratification of $X$, meaning that it comes from an analytic filtration.  Similarly we define an algebraic stratification of an algebraic variety.  

Stratifications are often considered with extra regularity conditions such as Whitney's conditions (a) and (b)  or  the (w) condition  of Verdier.  For more details and insight 
we refer the reader to \cite{whitneyannals}, \cite{whitney}, \cite{wall}, \cite{GWPL}, \cite{verdier}, \cite{GMac}, \cite{Gore} and the references therein.  
Recall that for a real analytic stratification the (w) condition implies the
 conditions (a) and (b), see  \cite{verdier}.  For a complex analytic stratification the conditions (w) and (b) are equivalent  \cite{teissier1982}.  
 
 We say that a stratification $\mathcal S= \{S_i\}$ is \emph{compatible with $Y\subset X$} if $Y$ is a union of strata.

\subsection{(Arc-a) and (arc-w) stratifications.}\label{arcstratifications}
Let $X$ be a $\K$-analytic space and let   $\mathcal S$ be an analytic  stratification of  $X$.  
Let $p$ be a point of a stratum $S\in \mathcal S$.  We say that  $\mathcal S$ is  \emph {{\aab } trivial   
at $p$},  or   \emph{satisfies the condition (arc-a) at $p$},  if  the following holds.  There are a neighborhood $\Omega$ of $p$, $\K$-analytic coordinates on $\Omega$ such  that 
 $ B=S\cap \Omega$ is a neighborhood of the origin in $ \K^m \times \{0\}$, and an  {\aaa } trivialization of  the projection $\pi$ 
  on the first $m$ coordinates 
\begin{align}\label{trivialization}
\Ph (t,x) : B \times \Omega_0 \to \Omega, 
\end{align}
where $\Omega_0 = \Omega\cap \pi^{-1} (0)$, such that $\Ph (B\times \{0\}) =B$ and $\Ph$ preserves the stratification.  By the last condition we mean that each stratum of $\mathcal S$ is the union of leaves of $\Ph$, see Section \ref{Aaa}.  We say, moreover, that $\mathcal S$ is  \emph {regularly  {\aab } trivial   
at $p$}, or   \emph{satisfies the condition (arc-w) at $p$}, if $\Ph$ of \eqref{trivialization} is regular along $B\times \{0\}$ in the sense of Definition \ref{maindefinition3}.  

We say that $\mathcal S$ is  \emph {{\aab } trivial}, or \emph{satisfies the condition (arc-a)}, if it 
does it  at every point of $X$. 
Similarly we define  \emph {regularly {\aab } trivial stratifications}.  
If $\mathcal S$ is regularly {\aab } trivial 
then, for short, we say that $\mathcal S$ \emph{satisfies the  condition (arc-w)}.  

We say that \emph{the condition  (arc-a), resp. (arc-w), is satisfied along a stratum $S$} if it is satisfied 
at every $p\in S$.  Similarly we say that \emph{the condition (a) or (w) is satisfied along a stratum $S$} 
if for every other stratum $S'$, $S\subset \overline S'$, the pair $S',S$ satisfies the respective  condition.

\begin{thm}\label{a}
If a stratification $\mathcal S$ satisfies the condition (arc-a), resp. the condition (arc-w), along a stratum $S$ 
then it satisfies  the condition  (a) of Whitney, resp. the condition  (w)  of 
Verdier along $S$.  
\end{thm}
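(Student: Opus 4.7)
The plan is to exploit the two consequences of the (arc-a) hypothesis already at our disposal: that $\Ph$ preserves the stratification (so each leaf of $\Ph$ lies entirely in a single stratum) and that the tangent spaces to the leaves vary continuously, Proposition \ref{continuitytangent}. With these in hand, both conditions (a) and (w) will reduce to elementary Grassmannian estimates involving the vector fields $v_i$ of \eqref{vectorfields}.

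First I would fix local coordinates at $p \in S$ as in Definition \ref{maindefinition2}, writing $S \cap \Omega = B \times \{0\}$ and $\Ph(t,z) = (t, \Ps(t,z))$ as in \eqref{form}. For any stratum $S'$ with $S \subset \overline{S'}$ and any $y = \Ph(t,z) \in S'$ with $z \ne 0$, preservation of strata forces the whole leaf $L_z$ to be contained in $S'$, so $T_y L_z \subset T_y S'$. To verify Whitney's (a), I would take a sequence $y_n = \Ph(t_n, z_n) \in S'$ with $y_n \to p$ and $T_{y_n} S' \to \tau$ in the Grassmannian; arc-continuity of $\Ph^{-1}$ gives $z_n \to 0$, and Proposition \ref{continuitytangent} then yields $T_{y_n} L_{z_n} \to T_p L_0 = T_p S$. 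Since the set of pairs $(V,W)$ of linear subspaces with $V \subset W$ is closed in the product of Grassmannians, passing to the limit in $T_{y_n} L_{z_n} \subset T_{y_n} S'$ produces $T_p S \subset \tau$, which is condition (a).

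For Verdier's (w) under the (arc-w) hypothesis, I would bound $d(T_x S, T_y S')$ for $x \in S \cap \Omega$ and $y = (t, \Ps(t,z)) \in S' \cap \Omega$. In the coordinates fixed above, $T_x S = \K^m \times \{0\}$ is independent of $x$, while $T_y L_z$ is spanned by the vectors $v_i(y) = (e_i, \partial \Ps/\partial t_i(t,z))$ of \eqref{vectorfields}, so a direct linear-algebra estimate gives $d(T_x S, T_y L_z) \le C_1 \|\partial \Ps/\partial t(t,z)\|$. The bound \eqref{boundedder} from Proposition \ref{criterionregular}, which is precisely the content of the (arc-w) hypothesis along $B \times \{0\}$, upgrades this to $C_2 \|\Ps(t,z)\|$; and since $\|\Ps(t,z)\|$ equals the distance from $y$ to the base point $(t,0) \in S$, it is bounded by $\|y - x\|$ for every $x \in S$. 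Combined with the inclusion $T_y L_z \subset T_y S'$, this produces $d(T_x S, T_y S') \le C \|y - x\|$, which is Verdier's (w).

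I expect no serious obstacle, since the analytic groundwork has already been laid by Propositions \ref{continuitytangent} and \ref{criterionregular}. The only point requiring care is identifying the notion of distance between linear subspaces used in the Verdier condition with the elementary bound coming from the $v_i$; this is straightforward because the reference subspace $T_x S$ is simply the horizontal coordinate plane and each $v_i$ is a small vertical perturbation of the corresponding horizontal basis vector. The argument is uniform in the real and complex settings, since Proposition \ref{continuitytangent} and the criterion of Proposition \ref{criterionregular} hold for $\K = \R$ or $\C$.
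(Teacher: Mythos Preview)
Your argument for Whitney's condition (a) is essentially the same as the paper's: both rest on Proposition \ref{continuitytangent} and the inclusion $T_yL_z\subset T_yS'$ coming from the preservation of strata.

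For Verdier's condition (w) your route is correct but genuinely different from the paper's. The paper argues by contradiction: if (w) fails for the pair $S_1,S_0$, the curve selection lemma produces a real analytic arc $p(s)$ along which it fails, and one builds the wing $M=\{\Ph(t,x(s));\, t\in B,\ s\ge 0\}$; Proposition \ref{criterionregular} then shows that $M\setminus\partial M,\partial M$ satisfies (w), and an external result (\cite{denkowskawachta}) transfers this to $S_1,S_0$ along the arc, giving the contradiction. You instead observe directly that $T_yL_z\subset T_yS'$ implies $d(T_xS,T_yS')\le d(T_xS,T_yL_z)$, and bound the right-hand side by $\|\partial\Ps/\partial t(t,z)\|\le C\|\Ps(t,z)\|\le C\|y-x\|$ using \eqref{boundedder} and the fact that $\|\Ps(t,z)\|=\operatorname{dist}(y,(t,0))\le\|y-x\|$. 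This is shorter, avoids the curve selection lemma and the citation of \cite{denkowskawachta}, and makes no detour through wings. What the paper's approach buys in exchange is the Wing Lemma itself (Proposition \ref{winglemma}), which is reused later for the Thom conditions in Section \ref{functions}; your direct estimate does not produce that tool, but for the statement at hand it is the cleaner proof.
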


\begin{proof}The first claim follows from  the continuity of the tangent spaces to the 
leaves of an {\aaa } trivialization, see Proposition \ref{continuitytangent}. 

We show the second claim.  Fix two strata $S_0 \subset \overline S_1$.  
If the  condition  (w) 
 fails for the pair $S_1, S_0$ at $p_0\in S_0$, then, by the curve selection lemma, it fails along a real analytic arc 
 $ p(s): [0,\varepsilon ) \to \overline S_1$  with $p_0=p(0)\in S_0$ and $p(s)\in S_1$ for $s>0$.  
We show that there is a $C^1$ submanifold $(M,\partial M) \subset (\overline S_1, S_0)$,  
 $\partial M=S_0$ near $p_0$, $p(s) \in M$, such that  $M\setminus \partial M, \partial M$ satisfies the condition  (w).  It then follows, see for instance \cite{denkowskawachta}, that the condition (w) is satisfied 
 along $p(s)$ for the pair $S_1, S_0$,  which contradicts the choice of $p(s)$ and hence completes the proof. 

We define $M$ using the trivialization $\Ph(t,x) = (t, \Ps (x))$ of \eqref{trivialization}.  
By the arc-analyticity of $\Ph^{-1}$ there is a real analytic arc $(t(s),x(s))$ such that 
$p(s) = \Ph (t(s),x(s))$.  Then we set 
\begin{align}\label{wing}
M = \{\Ph (t,x(s)); t\in B, s\ge 0\}.
\end{align} 
It follows from Proposition \ref{criterionregular} that $M$ is a $C^1$-manifold with boundary and that 
  $M\setminus \partial M, \partial M$ satisfies the condition  (w). 
 \end{proof}
 
 \begin{cor}[\cite{speder}] 
If a complex analytic hypersurface $X$ is generic Zariski equisingular along a nonsingular subspace
 $Y\subset Sing (X)$  then the pair 
 $Reg(X), Y$ satisfies Whitney's conditions (a) and (b).  
 \end{cor}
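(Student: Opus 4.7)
The plan is to chain together Theorems \ref{theorem}, \ref{theoremtransverse} and \ref{a}. First I would verify that the ``generic'' projections yield a transverse Zariski equisingular system in the sense of Definition \ref{transversedefinition}. At each level of the recursive construction defining Zariski equisingularity, a generic linear projection $\K^N \to \K^{N-1}$ has kernel not contained in the tangent cone of the current hypersurface at the base point $p\in Y$; this translates exactly into the condition $\mult_0 F_i(0,\cdot) = d_i$ for each Weierstrass polynomial $F_i$ appearing in the recursion. Hence the associated local system of pseudopolynomials is transverse, and in particular partially transverse in the sense of Definition \ref{partiallytransversedefinition}.

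Next, by Theorem \ref{theoremtransverse} there exists an {\aaa} trivialization $\Ph : B_\varepsilon \times \Omega_0 \to \Omega$ of a neighborhood of $p$ in $\K^N$ that is regular along $B_\varepsilon \times \{0\} \simeq Y$ (locally at $p$). By construction $\Ph$ preserves $X = V(F_n)$, and since the generators of the defining ideal of $X$ are regular for $\Ph$, Proposition \ref{singularities} ensures that $\Ph$ preserves $Sing(X)$, hence also $Reg(X)$. Thus $\Ph$ provides a regular {\aaa} trivialization compatible with the stratification $\{Reg(X),\, Y\}$ near $p$. In the terminology of Subsection \ref{arcstratifications} this is exactly the statement that the pair $(Reg(X), Y)$ satisfies condition (arc-w) at $p$; since $p\in Y$ is arbitrary, (arc-w) holds along all of $Y$.

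Finally, I would apply Theorem \ref{a}: condition (arc-w) along $Y$ implies Verdier's condition (w) along $Y$ for the pair $(Reg(X), Y)$. In the complex analytic setting, (w) is equivalent to Whitney's condition (b) by the result of Teissier recalled in Section \ref{stratifications}, and (w) also implies Whitney's (a) by Verdier's own theorem quoted there. Both (a) and (b) therefore hold for the pair $(Reg(X), Y)$, which is the statement of the corollary.

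The main technical content has already been packaged into Theorem \ref{theoremtransverse} and the preservation-of-singular-locus Proposition \ref{singularities}; the only place the ``generic'' hypothesis — as opposed to merely ``some'' Zariski equisingularity — enters my argument is the first step, where it amounts to the essentially tautological fact that a generic linear projection kernel avoids the projectivized tangent cone at each level of the recursion. If any real obstacle were to arise, it would be in the bookkeeping of transversality being maintained throughout the inductive construction of the system $\{F_i\}$, but this is already implicit in the proof of Theorem \ref{theoremtransverse}.
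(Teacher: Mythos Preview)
Your argument is correct and follows exactly the route the paper intends: the corollary is stated without its own proof precisely because it is obtained by chaining Theorem \ref{theoremtransverse} (generic $\Rightarrow$ transverse $\Rightarrow$ regular {\aaa} trivialization) with Theorem \ref{a} ((arc-w) $\Rightarrow$ (w)), and then invoking that in the complex case (w) $\Leftrightarrow$ (b) and (w) $\Rightarrow$ (a). Your use of Proposition \ref{singularities} to ensure the trivialization preserves $Reg(X)$ is the only detail the paper leaves implicit, and you have filled it in correctly.
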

 
In the above proof of Theorem \ref{a} we use the Wing Lemma argument, the manifold $M$ being the wing.  
This method was introduced by Whitney in \cite{whitneyannals} and then  was used by many authors to show the existence of stratifications satisfying various regularity conditions, see for instance \cite{wall}, 
\cite{denkowskawachta}, \cite{BCR}. 
For a wing $(M,\partial M)$ the condition of being a $C^1$ submanifold is not sufficient to guarantee 
the condition (w)  for the pair $M\setminus \partial M, \partial M$, see  \cite{brodersentrotman} for 
examples.  Thus it is essential that the wing $M$ admit a parameterization \eqref{wing} satisfying 
\eqref{boundedder} of Proposition \ref{criterionregular}.  
Moreover, we show below the existence of a wing that admits a 
$\K$-analytic  parameterization and contains a given real analytic arc. 

\begin{prop}[Wing Lemma] \label{winglemma}
Let $\mathcal S$ be an (arc-a) stratification of a $\K$-analytic space $X$.  Let 
$ p(s): [0,\varepsilon ) \to X$  be a real analytic arc such that $p_0=p(0)\in S_0$ and $p(s)\in S_1$ for $s>0$ and a pair of strata $S_0, S_1$.  Then, there are  a local system of coordinates at $p_0$, 
$(X,p_0)\subset (\K^N,0)$,  an open neighborhood $\Omega$ of $p_0$ in $\K^N$ such that 
 $ B=S_0\cap \Omega$ is a neighborhood of $p_0$ in $ \K^m \times \{0\}$, a neighborhood $D$ of $0$ in 
 $\K$, and  $\K$-analytic maps 
 $$t(s): D \to B, \quad \varphi : B\times D \to X, \quad \varphi(t,s)=(t,\psi (t,s)),$$ such that  $\varphi (t,0)=(t,0) \in S_0$, 
 $p(s) = \varphi(t(s),s)$  for $s>0$,  and
 \begin{itemize}
 \item
 $\varphi (t,s) \in S_1$ for $s> 0$  if $\K=\R$ 
  \item
$\varphi (t,s) \in S_1$  for $s\ne 0$ if $\K=\C$. 
 \end{itemize}
Moreover if $\mathcal S$ is an (arc-w) stratification and if we write $\psi (t,s) = \sum _{k\ge k_0} D_k(t) s^k,$ then we may require that $D_{k_0} (0) \ne 0$.  
\end{prop}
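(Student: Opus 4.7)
The plan is to build the wing directly from the (arc-a) trivialization at $p_0$: pull the arc $p(s)$ back through $\Phi$ and then propagate it over $B$ by applying $\Phi$ fiber-wise. Working in a local chart where the (arc-a) trivialization takes the form $\Phi(t,x)=(t,\Psi(t,x))\colon B\times\Omega_0\to\Omega$ with $\Phi(t,0)=(t,0)$ and $B\subset\K^m\times\{0\}$ identifying $S_0$, the arc-analyticity of $\Phi^{-1}$ guaranteed by Definition \ref{maindefinition2}(4) lets me write $\Phi^{-1}(p(s))=(t(s),x(s))$ as a real analytic arc with $t(0)=0$ and $x(0)=0$. Since $\Phi$ preserves strata and $p(s)\in S_1$ for $s>0$, the arc $x(s)$ is not identically zero.

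Next I would set $\varphi(t,s):=\Phi(t,x(s))=(t,\Psi(t,x(s)))$, so that $\psi(t,s)=\Psi(t,x(s))$. The identities $\varphi(t,0)=(t,0)\in S_0$ and $\varphi(t(s),s)=\Phi(t(s),x(s))=p(s)$ are then automatic. Arc-wise analyticity in $t$ (Definition \ref{maindefinition1}) gives that $\varphi(t,s)$ is real analytic in $(t,s)$ and, when $\K=\C$, also $\C$-analytic in $t$; in the complex case I would extend $x(s)$ from the real segment to a holomorphic function on a complex disc $D$ and use the Cauchy--Riemann equations in $t$ together with the joint real-analyticity to upgrade $\varphi$ to a $\C$-analytic map on $B\times D$, and likewise for $t(s)$.

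For stratum preservation, since $\Phi$ maps each $B\times\{z\}$ into a single stratum and $\Phi(t_0,x(s))=p(s)\in S_1$ for $s>0$, the whole leaf $\Phi(B\times\{x(s)\})$ lies in $S_1$, giving $\varphi(t,s)\in S_1$ for all $t\in B$ and real $s>0$. In the complex case I would then argue by the identity principle: the locus where $x(s)$ meets the analytic subset consisting of the union of all strata other than $S_1$ inside $\Omega_0$ is a proper analytic subset of $D$ (since $x(s)\notin$ that subset for real $s>0$ and $x(s)\not\equiv0$), so $x(s)\in S_1$ for every $s\in D\setminus\{0\}$, and hence $\varphi(t,s)\in S_1$ there.

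Finally, for the (arc-w) refinement, I would invoke Proposition \ref{criterionregular} directly: regularity of $\Phi$ at $(0,0)$ means that for every nontrivial real analytic arc $z(s)$ in $\Omega_0$ the leading coefficient of $\Psi(t,z(s))=\sum_{k\ge k_0}D_k(t)s^k$ satisfies $D_{k_0}(0)\ne 0$. Applying this to $z(s)=x(s)$, which is nontrivial by the first step, yields $D_{k_0}(0)\ne 0$ for the expansion of $\psi(t,s)$. The main obstacle I anticipate is the complex case of stratum preservation: making rigorous the analytic-continuation step showing that $x(s)$ avoids the lower-dimensional strata for all complex $s\ne 0$ rather than merely for $s$ real and positive, which requires a careful check that the exceptional locus is an honest analytic subset of $D$ and that $x(s)$ is not trapped inside it.
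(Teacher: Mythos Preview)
Your approach matches the paper's: pull back $p(s)$ through $\Phi^{-1}$ to obtain real analytic $(t(s),x(s))$, set $\varphi(t,s)=\Phi(t,x(s))$ for real $s$, and in the complex case complexify $s$ via the power series expansion; the real case and the (arc-w) refinement via Proposition~\ref{criterionregular} are handled exactly as in the paper.

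There is, however, a genuine gap in your complex stratum-preservation step. Once you complexify $s$, the map $\varphi(t,s)$ is defined as the analytic continuation of the power series $\sum_k D_k(t)s^k$, and in general $\varphi(t,s)\ne\Phi(t,x(s))$ for non-real $s$; the paper says this explicitly. So showing that the complexified arc $x(s)$ avoids the lower strata of $\Omega_0$ does not by itself determine where $\varphi(t,s)$ lands, because for complex $s$ the link between $x(s)$ and $\varphi(t,s)$ through $\Phi$ is broken. The paper sidesteps this by arguing directly with $\varphi$: it is $\C$-analytic on $B\times D$, one has $\varphi(t,s)\in S_1\subset\overline{S_1}$ for real $s>0$, and $\overline{S_1}$ is a complex analytic set, so the identity principle forces $\varphi(B\times D)\subset\overline{S_1}$; the refinement to $S_1$ for $s\ne0$ then follows by the same reasoning applied to the complex analytic set $\overline{S_1}\setminus S_1$ (after shrinking $D$). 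Your instinct that this step is the crux is right, but the correct fix is to work with the image of $\varphi$ and the closure $\overline{S_1}$ rather than with $x(s)$ and the strata of $\Omega_0$.
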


\begin{proof}
The real case follows from the definition of an {\aaa } trivialization, and in the regular case from 
Proposition \ref{criterionregular}.  In the complex case we may construct  a complex wing 
as follows.  Let $\Ph(t,x) $ be the {\aaa } trivialization given in  \eqref{trivialization}   
 and let $p(s) = \Ph (t(s),x(s)): (I,0) \to (\overline S_1,p_0)$.  
 Then $ \Ph (t,x(s))$   as a power series defines a complex analytic map 
 $\varphi : (T\times \C, 0)   \to (\C^N, 0)$.  Thus $\varphi (t,s) = \Ph (t,x(s))$ for $s$ real, but not necessarily for  $s\in \C\setminus \R$.  Because $\Ph$ preserves the strata,  the stratum $S_1$  
  contains the image of $\varphi $ for $s> 0$. Since $\overline S_1$ is a complex analytic set, 
it contains the entire image of $\varphi $.   By assumption $\varphi (t,s) \notin S_0$ for $s\in \R, s>0$.  
Therefore, by Lemma \ref{complexification},  $\varphi (t,s) \notin S_0$ for $s\in \C\setminus \{0\}$ as claimed.
This ends the proof. \end{proof}


\subsection{Local Isotopy Lemma.}
Let $X$ be a Whitney  stratified  space, $p\in X$, and let $S$ be the stratum containing $p$.  
Then, as follows from Thom's first isotopy lemma, \cite{mather}, \cite{thom1969}, \cite{GWPL}, 
any local submersion onto $S$, restricted to $X$, can be trivialized over a neighborhood of $p$ in $S$.  As it follows from Proposition \ref{change} below an analogous property holds for (arc-a) and (arc-w) stratifications.  

Suppose that  $X$ is a $\K$-analytic subspace of a neighborhood of the origin in $\K^N$,  
 $\Omega$ a neighborhood of the origin in $X$,  and let $ B= \Omega \cap (\K^m \times \{0\})$.  
 Let $f$ be a $\K$-analytic function on $X$ and let $\mathcal X=\{X_k\}$ be a finite family of analytic subsets  
 of $X$.  Let $\pi: \Omega \to B$ denote the standard projection onto the first $m$ coordinates.

\begin{prop}\label{change}
Let $\Ph$  be   an  {\aaa } trivialization of  $\pi$ that preserves $B$ and a family of analytic subsets $\mathcal X=\{X_k\}$ of $X$ and let  $f$  be a  $\Ph$-regular function germ.  
Let $\tilde \pi$ be another analytic submersion $\Omega \to B$.  Then, after restricting 
to a smaller neighborhood of the origin,  there is an  {\aaa } trivialization $\tilde \Ph$ of  $\tilde \pi$, 
preserving $B$ and the  family $\mathcal X$, and such that $f$ is $\tilde \Ph$-regular.  
Moreover, if $\Ph$ is regular along $B$  then $\tilde \Ph$ can be chosen regular along $B$.
\end{prop}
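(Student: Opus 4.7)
The natural approach is to \emph{reparametrise} $\Ph$ along its base variable so that the first coordinate becomes $\tilde\pi$ rather than $\pi$. Since $\tilde\pi$ is an analytic submersion onto $B$ and $B$ has the same dimension as the target, the restriction $\tilde\pi|_B\colon B\to B$ is a local $\K$-analytic diffeomorphism; by pre-composing with $(\tilde\pi|_B)^{-1}$ I may assume $\tilde\pi|_B=\mathrm{id}_B$. In local coordinates where $B=\K^m\times\{0\}$ and $\Ph(t,z)=(t,\Ps(t,z))$, I would define $\tau_z(t)\in B$ implicitly by the equation
$$\tilde\pi\bigl(\Ph(\tau_z(t),z)\bigr)=t,$$
which is solvable by the implicit function theorem because at $z=0$ the map $\tau\mapsto\tilde\pi(\Ph(\tau,0))=\tilde\pi(\tau,0)=\tau$ has invertible derivative.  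I then set
$$\tilde\Ph(t,z):=\Ph(\tau_z(t),z).$$

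The first task is to check the five conditions of Definition \ref{maindefinition2}.  The essential point is arc-analyticity in $z$: for any real analytic arc $z(s)$ the map $(\tau,s)\mapsto\tilde\pi(\Ph(\tau,z(s)))$ is jointly real analytic (and holomorphic in $\tau$ when $\K=\C$), because $\Ph$ is {\aaa } in $t$ and $\tilde\pi$ is analytic; the implicit function theorem with analytic parameter then produces $\tau_{z(s)}(t)$ jointly real analytic in $(t,s)$, and a further composition shows $\tilde\Ph(t,z(s))$ jointly real analytic in $(t,s)$.  The inverse is given by
$$\tilde\Ph^{-1}(y)=\bigl(\tilde\pi(y),\;\mathrm{pr}_{\Omega_0}(\Ph^{-1}(y))\bigr),$$
a composition of the analytic $\tilde\pi$ with the arc-analytic $\Ph^{-1}$, hence arc-analytic.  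Subanalyticity is automatic and bijectivity on a small enough neighbourhood follows from invariance of domain.

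The preservation properties come for free.  Since $\tau_z\colon B\to B$ is a diffeomorphism for each fixed $z$, the leaves of $\tilde\Ph$ coincide with those of $\Ph$:
$$\tilde\Ph(B\times\{z\})=\Ph(\tau_z(B)\times\{z\})=\Ph(B\times\{z\})=L_z.$$
Hence $\tilde\Ph$ preserves $B=L_0$ and the family $\mathcal{X}$, is compatible with the same stratifications as $\Ph$, and $f$ remains regular for $\tilde\Ph$ by the arc-characterisation of Proposition \ref{criterionregularfunction}, which depends only on how $f$ behaves on the leaves.

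The delicate part, and the main obstacle, is propagation of regularity along $B$.  Writing $\tilde\Ph(t,z)=(\tau_z(t),\tilde\Ps(t,z))$ with $\tilde\Ps(t,z)=\Ps(\tau_z(t),z)$, I would apply Proposition \ref{criterionregular}: for a real analytic arc $z(s)$ with $z(0)=0$, expand
$$\Ps(t,z(s))=\sum_{k\ge k_0}D_k(t)\,s^k,\qquad \tau_{z(s)}(t)=\tau_0(t)+O(s),$$
and substitute.  Using the analyticity in $t$ of each $D_k$ together with $\Ps(\cdot,0)\equiv 0$ (because $\Ph$ preserves $B$), one obtains $\tilde\Ps(t,z(s))=D_{k_0}(\tau_0(t))\,s^{k_0}+O(s^{k_0+1})$.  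The new leading coefficient at $t=t_0$ is thus $D_{k_0}(\tau_0(t_0))$, which is non-zero by the hypothesis that $\Ph$ is regular along $B$ at the point $\tau_0(t_0)$; this is exactly the arc-criterion for $\tilde\Ph$ to be regular at $(t_0,0)$, and taking this over $t_0\in B$ completes the argument.
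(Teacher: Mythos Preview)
Your approach is exactly the paper's: defining $H(t,x)=(\tilde\pi(\Ph(t,x)),x)$ and setting $\tilde\Ph=\Ph\circ H^{-1}$, which unwinds to your implicit formula $\tilde\Ph(t,z)=\Ph(\tau_z(t),z)$; the paper is terser and leaves the preservation and regularity checks to the reader, whereas you spell them out. One small notational slip: when you write $\tilde\Ph(t,z)=(\tau_z(t),\tilde\Ps(t,z))$ and invoke Proposition~\ref{criterionregular}, that proposition requires the form $(t,\cdot)$ in coordinates adapted to $\tilde\pi$, not $(\tau_z(t),\cdot)$ in the old $\pi$-coordinates; the fix is immediate (pass to coordinates $(\tilde\pi(y),h(y))$ with $h$ an analytic projection vanishing on $B$, and observe that the leading coefficient $D_{k_0}(t)$ gets multiplied by the invertible $D_xh(t,0)$), but it is worth stating so the application of the arc criterion is literally correct.
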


\begin{proof}
Let $H: {B \times \Omega_0} \to {B \times \Omega_0}$ be given by 
$$
H(t,x) =(h(t,x),x) =  (\tilde \pi(\Ph (t,x)),x). 
$$
We show that $H$ is a local homeomorphism, {\aaa} in $t$, such that $H^{-1}$ is also {\aaa} in $t$.  
Then $\tilde \Ph= \Ph \circ H^{-1}$ satisfies the claim.  

Firstly $H$ is a local homomorphism by the implicit function theorem, Theorem 2.5 Ch. I  of \cite{hartman}.  
Let $\gamma:= x(s)$ be a real analytic arc.  Consider 
$$
H_\gamma(t,s) =(h(t,x(s)),s) :(B\times I,0) \to (B\times I,0). 
$$
$H_\gamma$ is  clearly $\K$-analytic in $t$ and real analytic in $s$.  
Since $h(t,x(s)) = t + \varphi (t,s)$ with $\varphi (t,s) \in \mathfrak{m}_{t,s}$,  $H_\gamma$ is 
a local analytic diffeomorphism and its inverse is $\K$-analytic in $t$ and real analytic in $s$.  
\end{proof}

Let $\mathcal S$ be an analytic  stratification of  $X$ satisfying Whitney's condition (a).  One says 
after  D\'efinition 4.1.1 of  \cite {BMM}, that $\mathcal S$ satisfies \emph{the stratified local triviality condition}, 
the  (TLS) condition for short,  if any local submersion onto a stratum is locally topologically trivial by a 
strata  preserving  trivialization.  Thus Lemma \ref{change} gives the following result.

 \begin{cor}\label{TLS}
 A stratification satisfying the condition (arc-a) also satisfies  the condition (TLS) of \cite {BMM}.  
 \end{cor}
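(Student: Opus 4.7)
The plan is to reduce the claim directly to Proposition \ref{change}, adapted to a merely $C^1$ submersion as indicated in the parenthetical following that proposition. Let $\mathcal S$ be an (arc-a) stratification of $X$, fix a point $p$ in a stratum $S$, and let $\tilde \pi : \Omega \to S$ be a $C^1$ submersion defined in a neighborhood of $p$. We want to produce a strata-preserving topological trivialization of $\tilde \pi$ near $p$.

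First I would invoke the (arc-a) condition at $p$ to obtain local analytic coordinates in which $S$ becomes $\K^m\times\{0\}$, the standard projection $\pi$ onto the first $m$ coordinates restricts to the identity on $S$, and an {\aaa} trivialization $\Ph$ of $\pi$ exists which preserves every stratum of $\mathcal S$ meeting the chart. (One may shrink the neighborhood so that the family $\{X_k\}$ of Proposition \ref{change} consists of the traces of the finitely many strata touching $p$.) This sets up the data needed for Proposition \ref{change}, with $f$ omitted.

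Next I would mimic the construction in the proof of Proposition \ref{change}: put $H(t,x) = (\tilde\pi(\Ph(t,x)),\,x)$ and define $\tilde \Ph = \Ph \circ H^{-1}$. Since $\Ph$ preserves the strata and $H$ moves only the parameter $t$ while fixing $x\in \Omega_0$, the composition $\tilde \Ph$ will automatically be strata-preserving and will project under $\tilde\pi$ to the identity on $B$.

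The only genuinely non-bookkeeping step, and hence the main obstacle, is verifying that $H$ is a local homeomorphism when $\tilde \pi$ is merely $C^1$ rather than analytic, since one cannot appeal to the classical analytic implicit function theorem as in the original proof of Proposition \ref{change}. For this I would use Hartman's implicit function theorem (Theorem 2.5 of \cite{hartman}): the map $h(t,x) = \tilde\pi(\Ph(t,x))$ is continuous and $C^1$ in $t$, and because $\Ph(t,0) = (t,0)$ lies in $S$ and $\tilde\pi$ is a submersion onto $S$, the partial derivative $\partial h/\partial t$ at $(0,0)$ is (after a harmless linear adjustment) the identity. Hartman's theorem then provides a continuous local inverse for $H$, whence $\tilde\Ph = \Ph\circ H^{-1}$ is a continuous strata-preserving local trivialization of $\tilde\pi$, which is precisely the (TLS) condition of \cite{BMM}.
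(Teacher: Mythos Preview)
Your proposal is correct and follows essentially the same route as the paper: the corollary is deduced from the construction in the proof of Proposition \ref{change}, with the sole modification that for a $C^1$ submersion $\tilde\pi$ one invokes Hartman's implicit function theorem (Theorem 2.5 of \cite{hartman}) to conclude that $H$ is a local homeomorphism.
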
  
 
 \begin{proof}
 If we assume in Proposition \ref{change} the  $\tilde \pi$ is only a $C^1$-submersion then,  by 
Theorem 2.5 Ch. I  of \cite{hartman}, $H$ constructed  in the proof is a homeomorphism and so is $\tilde \Ph$. 
 \end{proof}


\subsection{Proof of Whitney fibering conjecture.}\label{proof}

We show below that every $\K$-analytic space admits locally an (arc-w) stratification.  In the algebraic case such stratification exists globally.  Since an (arc-w) stratification satisfies all the properties 
required by Whitney, it shows Whitney's fibering conjecture in the algebraic and local analytic cases.  

\begin{thm}\label{theoremconjecture}  
Let $\mathcal V = \{V_i\}$ be a finite family of analytic subsets of an open $U\subset \K^N$.  Let 
$p_0\in U$.  Then there exist an open neighborhood $U'$ of $p_0$ and an analytic stratification 
 of $U'$  compatible with each $U'\cap V_i$ and satisfying the condition (arc-w).   
\end{thm}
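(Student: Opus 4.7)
The plan is to derive Theorem \ref{theoremconjecture} directly from Proposition \ref{canonicalstratification} by building, in a neighborhood of $p_0$, a derivation complete system of pseudopolynomials whose canonical stratification refines the family $\{V_i\}$. We may assume $p_0 = 0$. For each $i$ fix a finite set of analytic generators $f_{i,1}, \ldots, f_{i,k_i}$ of the ideal defining $V_i$ in $\mathcal O_{\K^N,0}$, and form the product $f = \prod_{i,j} f_{i,j}$.

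First I would run the Weierstrass-preparation algorithm used in the proof of Lemma \ref{theoremequisingularity2} starting from $f$: after a generic linear change of the coordinates $(x_1,\ldots,x_N)$, produce an $F_N$ divisible by $f$, then set $F_{N-1}$ equal to a suitable generalized discriminant of $F_{N,\mathrm{red}}$, and iterate downward. This yields functions $F_N, F_{N-1},\ldots,F_0$ defined on some polydisc $U' = \prod D_j$; choosing the $D_j$ so that each $F_i$ has no zeros on $U_{i-1}\times\partial D_i$ makes $\{F_i\}$ a system of pseudopolynomials in the sense of Definition \ref{systempseudopolynomials}, and the divisibility condition (3) of that definition is built into the construction by our use of generalized discriminants.

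To upgrade arc-analyticity to the regular condition (arc-w), I would enlarge the system at every level so that it becomes derivation complete in the sense of Example \ref{derivationcomplete}: at each step include $\partial F_i/\partial x_i$ as an additional factor at level $i$, and then re-propagate its discriminant down through the subsequent generalized discriminants at levels $i-1, i-2, \ldots$. By Example \ref{derivationcomplete} the enlarged system is partially transverse at every point, so Proposition \ref{canonicalstratification} furnishes, at every point of every stratum of the associated canonical stratification $\mathcal S$, an {\aaa} trivialization regular along that stratum; this is precisely the condition (arc-w). Finally, because $F_N$ is regular for these trivializations and every $f_{i,j}$ divides $F_N$, Proposition \ref{regularproducts} forces each $f_{i,j}$ to be regular as well, so the trivializations preserve each $V(f_{i,j})$, and hence each $V_i = \bigcap_j V(f_{i,j})$ is a union of strata of $\mathcal S$.

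The main difficulty I expect is bookkeeping in the derivation-complete enlargement: each newly introduced factor at level $i$ must have its $(\mathrm{red})$-discriminant divide the previous $F_{i-1}$, so that divisibility has to be recovered by recomputing the generalized discriminant and possibly re-enlarging at levels $i-1, i-2,\ldots$. Organizing this top-down enlargement as a finite procedure, and checking that a \emph{single} generic linear change of coordinates of $\K^N$ can be chosen once and for all so that every Weierstrass preparation and every generalized-discriminant step at every level of the enlarged family succeeds on a common neighborhood $U'$, is the real content of the argument; once that is in place, everything else is a formal combination of Proposition \ref{canonicalstratification}, Proposition \ref{regularproducts}, and Theorem \ref{theoremtransverse}.
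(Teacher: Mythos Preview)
Your proposal is correct and is essentially the paper's first proof: form the product of generators of the $I(V_i)$, build a derivation-complete system of pseudopolynomials, and invoke Proposition~\ref{canonicalstratification}; compatibility with each $V_i$ then follows exactly as you say, via Proposition~\ref{regularproducts} applied to the factors $f_{i,j}$ of $F_N$.

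The only place where the paper is tidier than your sketch is the bookkeeping you flag at the end.  Rather than first building a system and then enlarging it (with the attendant risk of having to re-propagate discriminants), the paper proceeds strictly top-down: at level $i$ one \emph{first} closes the Weierstrass polynomial $f_i$ under $\partial/\partial x_i$ to obtain the derivation-complete $F_i$, and \emph{then} takes the (generalized) discriminant of the reduced product at level $i$ to define $f_{i-1}$, after which a fresh linear change of the remaining variables $x_1,\ldots,x_{i-1}$ is made before passing to level $i-1$.  This eliminates any need to go back up and re-enlarge, and also replaces your single generic change of all of $\K^N$ by a sequence of level-by-level linear changes, which is easier to justify.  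The paper also supplies a second, genuinely different proof via Lemma~\ref{lemequisingularity}, which builds the filtration $X_n\supset X_{n-1}\supset\cdots$ by descending induction and yields locally \emph{transverse} (not merely partially transverse) Zariski equisingular systems along each stratum.
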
 

\begin{proof}[First proof of Theorem \ref{theoremconjecture}] 
\changes{We construct a system of pseudopolynomials $F_i(x_1, \ldots ,x_n)$, see Section \ref{associatedstratification}, in a system of local coordinates at $p_0$, 
so that the canonical stratification associated to $\{F_i\}$ is compatible with $\mathcal V$.    
Since, by construction, this system will be derivation complete, the theorem follows from 
Proposition \ref{canonicalstratification}. }

\changes{
First for every analytic space $V_i$ choose a finite system of  generators of its ideal $I(V_i) = (g_{i,j})_{j=1,...n_i}$ 
in the local ring $\mathcal O_{p_0}$, and let $f_n=\prod_{i,j} g_{i,j}$.  After changing linearly the system of coordinates, if necessary,  we may assume that $f_n$ is regular in $x_n$ and then we replace it by the associated Weierstrass polynomial.  Let $F_n$ be the product of all (non-zero) partial derivatives $\partial/\partial x_n$ 
of $f_n$.  After a multiplication by a non-zero constant, we may assume that $F_n$ is monic in $x_n$.   }

\changes{
  Then define $f_{n-1}(x_1, \ldots ,x_{n-1})$ as the discriminant  of $f_{n,red}$ (or an appropriate higher 
  order discriminant, see Appendix \ref{Part:discriminants}).  After a linear change of coordinates $x_1,. . . ,x_{n-1}$ we may assume 
that   $f_{n-1}$ is regular in $x_{n-1}$ and then replace it by the associated Weierstrass polynomials.   
Let $F_{n-1}$ be the product of all (non-zero) partial derivatives $\partial/\partial x_{n-1}$ 
of $f_{n-1}$.  We continue this construction and thus define the system $F_i$.  If  it happens that $f_i$ is a non-zero constant  we define $F_i$ and all $F_j, j<i$, as identically equal to one.   Then, if $\varepsilon_i$ are chosen so that 
$0<\varepsilon_1 \ll  \cdots \ll \varepsilon_n\ll 1$,  this system satisfies the requirements 
of Definition \ref{systempseudopolynomials}.  }
\end{proof}

We now give a second proof of Theorem \ref{theoremconjecture}.  It is 
 less algorithmic but provides  a stratification with local transverse Zariski equisingularitiy.  This proof is based on the following lemma. 

\begin{lem}\label{lemequisingularity}
 Let  $F$ be  a $\K$-analytic function defined in a neighborhood of  $0\in  \K^N$ and let  $Y$  
 be a $\K$-analytic subset  of  a  neighborhood of  $0\in  \K^N$, $\dim Y =m$.   
Then there exist  a  neighborhood $U$ of  $0\in  \K^N$ and a  $\K$-analytic 
subset $Z\subset  Y\cap U$,  $\dim Z < m$,  $Sing(Y) \subset Z$, such that for every $p\in Y\cap U\setminus Z$,  
there are a local system of coordinates at $p$ in which $(Y,p)= (\K^m\times \{0\},0)$ and 
 a Zariski equisingular transverse system pseudopolynomials $F_i$, $i=0,...,n=N-m$, at $p$, such that  $F_n$
 is  the Weierstrass polynomial associated to $F$.  
\end{lem}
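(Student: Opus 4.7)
The plan is to reduce the claim to Lemma \ref{theoremequisingularity2} by straightening $Y$ at each regular point, with a suitable uniform choice of coordinates on $\K^N$ to guarantee a single global bad set $Z$. I put $Sing(Y)$ into $Z$ at the outset and focus on $p \in Y \setminus Sing(Y)$ near $0$.

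First I would choose a generic linear system of coordinates $(x_1,\ldots,x_N)$ on $\K^N$, writing $\K^N = \K^m_x \times \K^n_y$ with $n = N-m$, so that the projection $\rho \colon \K^N \to \K^m$ onto the first $m$ coordinates restricts to a finite $\K$-analytic map $\rho|_Y$ near $0$ (possible since $\dim Y = m$). Let $B \subset \K^m$ be its branch locus; I include $\rho^{-1}(B) \cap Y$ in $Z$. For $p \in Y \setminus (Sing(Y) \cup \rho^{-1}(B))$, $\rho|_Y$ is a local analytic isomorphism at $p$, so there is an analytic germ $\phi_p \colon (\K^m, \rho(p)) \to \K^n$ whose graph equals $Y$ near $p$. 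The change of coordinates $(x,y) \mapsto (x - \rho(p),\ y - \phi_p(x))$ puts $(Y,p)$ in the form $(\K^m \times \{0\},0)$ and turns $F$ into $\tilde F(x,\eta) = F(x + \rho(p),\ \eta + \phi_p(x + \rho(p)))$.

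I then apply Lemma \ref{theoremequisingularity2} to $\tilde F$ with $T = \K^m$ (smooth) and $t_0 = 0$. After a further linear change of the $\eta$-coordinates, this yields a Zariski equisingular transverse system of pseudopolynomials $F_0, \ldots, F_n$ at the origin whose top member $F_n$ is the Weierstrass polynomial associated to $\tilde F$ (equivalently, to $F$ in the straightened coordinates), and which is valid outside a proper analytic subset of $\K^m$.

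The main obstacle is to show that the bad sets from these pointwise applications assemble into a single analytic subset $Z \subset Y$ with $\dim Z < m$. To handle this I would run the construction of Lemma \ref{theoremequisingularity2} globally rather than pointwise. The local straightenings $\phi_p$ glue, as $p$ varies, into an analytic trivialization of a tubular neighborhood of $Y \setminus (Sing(Y) \cup \rho^{-1}(B))$ in $\K^N$ near $0$; pulling $F$ back through this trivialization gives a function $\tilde F(t,\eta)$ on an open subset of $(Y \setminus (Sing(Y) \cup \rho^{-1}(B))) \times \K^n$. Iterating Steps 1--3 in the proof of Lemma \ref{theoremequisingularity2} applied to $\tilde F$ produces, at each step, a proper analytic subset of $Y$ cut out by the vanishing of a distinguished coefficient; a generic choice of the linear $\eta$-coordinates ensures each such coefficient is not identically zero on $Y$, so its zero locus is proper. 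Taking $Z$ to be the union of $Sing(Y)$, $\rho^{-1}(B) \cap Y$, and these finitely many zero loci, I obtain a proper analytic subset of $Y$ of dimension $<m$, and outside $Z$ the constructed pseudopolynomials form the required Zariski equisingular transverse system with $F_n$ the Weierstrass polynomial associated to $F$.
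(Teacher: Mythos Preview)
Your approach is essentially the paper's, but you take an unnecessarily roundabout route and leave a small gap at the end.

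The paper's proof is two lines: choose coordinates on $\K^N$ so that the projection onto the first $m$ coordinates is finite on $Y$, set $\varphi\colon Y\times\K^n\to\K^N$, $\varphi(y,x)=y+(0,x)$, and apply Lemma~\ref{theoremequisingularity2} directly with $T=Y$ and with $F\circ\varphi$. The point you miss is that Lemma~\ref{theoremequisingularity2} already allows the parameter space $T$ to be an arbitrary (possibly singular) analytic space; there is no need to first pass to the smooth locus, straighten pointwise with graphs $\phi_p$, and then glue. Your glued trivialization is exactly the map $\varphi$ above (for $t\in Y$ near $p$, your $\phi_p(\rho(t))$ is just the last $n$ coordinates of $t$), so after all the work you recover the same object. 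Since $\varphi$ is visibly defined on all of $Y\times\K^n$, not only over $Y\setminus(Sing(Y)\cup\rho^{-1}(B))$, one application of Lemma~\ref{theoremequisingularity2} to $T=Y$ produces a single proper analytic $Z\subset Y$ containing $Sing(Y)$.

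This also closes the gap in your last paragraph. You define $\tilde F$ only on the open set $(Y\setminus(Sing(Y)\cup\rho^{-1}(B)))\times\K^n$ and then assert that the vanishing loci of the leading coefficients are ``proper analytic subsets of $Y$''. As written this is not justified: an analytic subset of an open subset of $Y$ need not have analytic closure in $Y$. The fix is precisely to observe that the formula $\varphi(y,x)=y+(0,x)$ extends to all of $Y$, so the coefficients $A_\alpha(t)$ arising in Steps~1--3 of Lemma~\ref{theoremequisingularity2} are analytic on $Y$ near $0$, and their zero sets are honest analytic subsets of $Y$. Once you make this observation, your argument collapses to the paper's.
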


\begin{proof}
Choose a local system of coordinates at $p$ such that   the 
projection on the first $m$ coordinates restricted to $Y$ is finite.  Let 
$$
\varphi: Y\times \K^n \to \K^N, \qquad \varphi(y,x) = y + (0,x).  
$$
Then apply Lemma \ref{theoremequisingularity2} to $T=Y$ and $F(\varphi(t,x))$.   
\end{proof}

\begin{proof}[Second proof of Theorem \ref{theoremconjecture}] 
We construct a sequence of analytic set germs at $p_0$ $$U= X_n\supset X_{n-1} \supset \cdots \supset X_0 $$
whose representatives in a sufficiently small neighborhood $U'$ of $p_0$ define a stratification  satisfying 
the statement.  For simplicity of notation we assume $p_0$ to be the origin.  

First for each analytic space $V_i$ choose a finite system of  generators of its ideal $I(V_i) = (g_{i,j})_{j=1,...n_i}$ 
in the local ring $\mathcal O_{0}$, and let $f_{n}$ be the product of all of them: $f_{n}=\prod_{i,j} g_{i,j}$.  
In the first step we apply Lemma \ref{lemequisingularity} to $F=f_{n}$ and $Y=U$ and we set $X_{n-1} = Z$.   
If $\dim (X_{n-1},0)<n-1$ then we set $X_{n-2} = X_{n-1}$.  Otherwise we again apply Lemma \ref{lemequisingularity} to $F=f_{n}$ and $Y=X_{n-1}$ and we set $X_{n-2}$ equal to the obtained $ Z$.   

If $\dim (X_{n-2},0)<n-2$ then we set $X_{n-3} = X_{n-2}$.  Otherwise choose  a finite system of  generators $I(X_{n-1}) = (h_{n-1,j})$ and let $f_{n-1}= f_{n} \prod_j h_{n-1,j}$.  Next apply Lemma \ref{lemequisingularity} to $F=f_{n-1}$ and $Y=X_{n-2}$ and we set $X_{n-3}$ equal to the obtained $ Z$. 

The inductive step is then the following.  Given $U= X_n\supset X_{n-1} \supset \cdots \supset X_{i} $ and a function $f_{i+1}$ that is the product of $f_{i+2}$ and a finite set of generators of $I(X_{i+1})$ in $\mathcal O_0$.  
If $\dim (X_{i},0)<i$ then we set $X_{i-1} = X_{i}$.   Otherwise we apply Lemma \ref{lemequisingularity} to $F=f_{i+1}$ and $Y=X_{i}$ and we set $X_{i-1}$ equal to the obtained $ Z$. 

Let $p\in X_k\setminus X_{k-1}$.  Then by construction 
there is a local system of coordinates at $p$ in which $(X_k,p) = (\K^k,0)$ and an {\aaa } trivialization $\Ph$ 
 of the coordinate projection on $\K^k$, preserving $X_k$ and such that $f_{k+1}$ is $\Ph$-regular.  Therefore, 
 $\Ph$ preserves the zero set of every factor of $f_{k+1}$ and hence every $V_i$ and every $X_j$ for $j>k$.  
 This ends the proof.  
\end{proof}



\subsection{Remark on Whitney fibering conjecture in the complex case} \label{remark}

Let  $U$ be a neighborhood of $0\in \C^m\times \C^n$.  Set $M= U\cap (\C^m\times \{0\})$ and 
$N= \{0\} \times \C ^n$.  Suppose, following Whitney,  that there exists a  homeomorphism  
$$\phi (p,q) : M\times N \to U,$$ 
complex analytic in $p$, such that $\phi (p,0)= p$ and $\phi (0,q) =q$, and that 
for each $q\in N$ fixed, $\phi (\cdot ,q) : M\times \{q\}\to U$ is a complex analytic 
 embedding onto an analytic 
submanifold $L(q)$.  Now we make an additional assumption:

\bigskip
(A) \emph{ for all $q\in N$, $L(q)$ is transverse to $N$.} 

\bigskip
By continuity of  $\phi (p,q)$ we may assume that  the projection of $L(q)$ onto $M$ is proper.  
Therefore, by (A) and the assumption $\phi (0,q) =q$, is has to be of degree $1$.  Therefore  $L(q)$ is the graph of a complex analytic function $f_q: M \to \C^n$.   
If $q\to 0$ then the values of $f_q$ go to $0$ and hence, by Cauchy integral formula, 
 the partial derivatives of $f_q$ go to $0$ on relatively compact subsets of $M$.     This 
 ensures the continuity of the tangent spaces to the leaves $L(q)$ as $q\to 0$.

\subsection{Examples} \label{examples}

There are several classical examples describing the relation between the
Zariski equisingularity and Whitney's conditions that we recall below.  The general set-up 
for these examples is the following.  Consider a complex algebraic hypersurface 
$X\subset \C^4$ defined by a polynomial $F(x,y,z,t)=0$ such that $Sing X =T$, where $T$ 
is the $t$-axis. Let $\pi : \C^4 \to T$ be the standard projection. 
 In all these examples $X_t =\pi ^{-1} (t)$, $t\in T$, is a family of 
  isolated singularities,  topologically trivial along $T$.  These examples relate the following conditions :
 \begin{enumerate}
 \item
 $X$ is Zariski equisingular along $T$, i.e. there is a local system of coordinates in which $F$ can be completed to a Zariski equisingular system of polynomials, see Definition  \ref{system}.  
 \item 
  $X$ is Zariski equisingular along $T$ for a transverse coordinate system, 
   i.e. there is a local system of coordinates in which $F$ can be completed to a Zariski equisingular transverse system of polynomials, Section \ref{transversedefinition}.  
  \item
    $X$ is Zariski equisingular along $T$ for a generic system of coordinates,   i.e. for generic system of  local coordinates, $F$ can be completed to a Zariski equisingular  system of polynomials
    \item
    The pair $(X\setminus T,T)$ satisfies Whitney's conditions (a) and (b). 
  \end{enumerate}
 
Clearly (3)$\Rightarrow$(2)$\Rightarrow$(1).  Speder showed  (3)$\Rightarrow$(4) in \cite{speder}  
and (2)$\Rightarrow$(4) 
for families of complex analytic hypersurfaces with isolated singularities in $\C^3$  in his thesis 
\cite{spederthese} (not published). 
 Theorem \ref{a} gives (2)$\Rightarrow$(4) in the general case.   As the examples below show, all the other 
 implications are false.  
 
\begin{example}[\cite {brianconspeder1975b}]
\begin{align}
F(x,y,z,t)= z^5 + t y^6 z + y^7 x + x^{15} 
\end{align}
This example satisfies (1) for the projections  $(x,y,z) \to (y,z) \to x$ but  (4) fails.  
As follows from  Theorem \ref{a}, (2) fails as well.  
\end{example}

\begin{example}[\cite {brianconspeder1975a}]
\begin{align}
F(x,y,z,t)= z^3 + t x^4 z + y^6 + x^{6} 
\end{align}
In this example (4) is satisfied and (3) fails.  
This example satisfies (1) for the projections  $(x,y,z) \to (x,z) \to x$.   
\end{example}

\begin{example}[\cite {luengo}]
\begin{align}
F(x,y,z,t)= z^{16} + ty z^3 x^7 + y^6z^4+ y^{10} + x^{10} 
\end{align}
In this example (2) is satisfied and (3) fails.  
\end{example}

\begin{example}[\cite {parusinski1985}]
\begin{align}
F(x,y,z,t)= x^9 + y ^{12} + z^{15} + tx^3 y ^4 z^5 
\end{align}
In this example (4) is satisfied and (1) fails.  This shows also that (4) does not imply (2).
\end{example}


\section{Equisingularity of functions}\label{functions}

In this section we show how to use  Zariski's equisingularity  to obtain local topological triviality of 
analytic function germs.   We develop several different approaches.  

Firstly we show that the assumptions of Theorem \ref{theorem} gives not only 
the topological equisingularity of  sets, but also of the function $F_n$ 
and of any analytic function dividing $F_n$.  
To prove it we modify the vector fields defined by the {\aaa } trivialization $\Ph$,  so
that their flows trivialize $F_n$.  Note that this new trivialization is no longer {\aaa}.   

Then, for an analytic function $f$, we introduce new stratifying conditions  (arc-a$_f$) and (arc-w$_f$), 
analogs of  conditions (arc-a) and (arc-w), and show that they imply 
the classical stratifying conditions  (a$_f$) and (w$_f$) respectively.  
 
Finally we show how to adapt the Zariski equisingularity to the graph of a function $f$ 
in order to obtain an  {\aaa } triviality of $f$.    


\subsection{Zariski equisingularity implies topological triviality of the defining function.} \label{Zariskigivesf} 
We show that the assumptions of Theorem \ref{theorem} give  not only the topological triviality of the 
zero set of $F_n$ but also of $F_n$ as a function.  

\begin{thm}\label{a_ftoptivis}
Let  $B$, $\Omega_0$ and $\Omega$ be neighborhoods of 
the origin in $\K^m$, $\K^n$, and $\K^{m+n}$ respectively,  and let $\Ph  : B \times \Omega_0 \to \Omega$ be an arc-wise analytic trivialization satisfying the condition (Z1) of Theorem \ref{theorem}.  
Let $f(t,x)$ be a $\K$-analytic  $\Ph$-regular function.  Then $f$ is topologically trivial along 
$ B \times\{0\}$ at the origin, i.e. 
there are smaller neighborhoods $B'$, $\Omega'_0$ and $\Omega'$  
and a homeomorphism 
$$
h  : B' \times \Omega' _0 \to \Omega' 
$$
such that 
$h(t,0) = (t,0)$, $ h(0,x) = (0,x)$,  
and $f(h(t,x))= f(0,x)$.  
\end{thm}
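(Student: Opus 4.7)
My plan is to write $h=\Ph\circ\theta$ where $\theta(t,x)=(t,\tau(t,x))$ is a fibre-preserving self-homeomorphism on a neighbourhood of the origin in $B\times\Omega_0$, with $\tau(0,x)=x$, $\tau(t,0)=0$, and
\[
g(t,\tau(t,x))=g(0,x),\qquad g(t,x):=f(\Ph(t,x)).
\]
Since $g(0,x)=f(\Ph(0,x))=f(0,x)$, the composition $h:=\Ph\circ\theta$ will satisfy $f\circ h(t,x)=f(0,x)$, $h(t,0)=(t,0)$, and $h(0,x)=(0,x)$, as required. The first step is to record, via Proposition \ref{criterionregularfunction} applied to $f$ regular for $\Ph$, that after shrinking $B$, $\Omega_0$, and $\Omega$ one has
\[
C^{-1}|g(0,x)|\le|g(t,x)|\le C|g(0,x)|\quad\text{and}\quad\left\|\frac{\partial g}{\partial t}(t,x)\right\|\le C|g(t,x)|,
\]
so in particular $\{g(t,\cdot)=0\}=\{g(0,\cdot)=0\}$ is independent of $t$.

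To construct $\theta$ I would integrate modified vector fields on $B\times\Omega_0$. The strategy is to pick a continuous auxiliary vector field $V$ on $\Omega_0$ with $V(0)=0$, $V(t_j)\equiv 0$, and $|V(g)(t,x)|\ge c\,|g(t,x)|$ for some $c>0$ near the origin. The natural candidate is a weighted Euler field $V=\sum_j\alpha_j x_j\,\partial/\partial x_j$ with weights adapted to the orders of vanishing of $f(0,\cdot)$ along the axes, for which a standard {\L}ojasiewicz-type estimate yields $|V(g(0,\cdot))|\ge c'|g(0,\cdot)|$; the inequality should then be propagated to small $t$ using the curve selection lemma together with the comparison $|g(t,\cdot)|\asymp|g(0,\cdot)|$ and the arc-analyticity of $g$ in $x$. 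With such a $V$ in hand I set
\[
\tilde w_i:=\frac{\partial}{\partial t_i}-\frac{\partial g/\partial t_i}{V(g)}\,V
\]
off $\{g=0\}$ and extend by $\tilde w_i=\partial/\partial t_i$ on $\{g=0\}$. The coefficient of $V$ is bounded by $C/c$ by the regularity estimate, and $\tilde w_i(g)\equiv 0$ off $\{g=0\}$ by construction, so the flow of $\tilde w_i$ preserves every level set of $g$. Integrating $\tilde w_1,\ldots,\tilde w_m$ one at a time from the initial slice $\{0\}\times\Omega_0$ produces $\theta(t,x)=(t,\tau(t,x))$ with $g(t,\tau(t,x))=g(0,x)$, and the condition $V(0)=0$ automatically ensures $\tau(t,0)=0$.

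The main obstacle will be the integrability and uniqueness of the flow of $\tilde w_i$, which is a priori only $L^\infty$ and potentially discontinuous across $\{g=0\}$. Uniqueness of trajectories through points of $\{g=0\}$ is automatic from the prescribed extension $\tilde w_i=\partial/\partial t_i$ and the $t$-invariance of that set; uniqueness on each level $\{g=c\}$ with $c\ne 0$ should follow from the Gronwall-type estimate $|d/dt\,\log|g||\le C$, which keeps the trajectories inside $\{g\ne 0\}$ in a bounded multiplicative range and hence in a region where, once $V$ is chosen explicitly as an Euler field, $\tilde w_i$ is Lipschitz on compact pieces. Continuity and invertibility of $\theta$ then follow by invariance of domain together with the symmetric construction applied to $\Ph^{-1}$. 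Note that this procedure generally destroys arc-wise analyticity, since the scalar factor $(\partial g/\partial t_i)/V(g)$ is merely continuous, in accordance with the authors' remark that $h$ need only be a topological trivialization.
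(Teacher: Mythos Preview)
There is a genuine gap: the auxiliary vector field $V$ you postulate does not exist in general. Your claim that a weighted Euler field $V=\sum_j\alpha_jx_j\,\partial/\partial x_j$ satisfies $|V(g(0,\cdot))|\ge c'|g(0,\cdot)|$ is essentially the Euler identity, which holds only for quasi-homogeneous germs. For instance, take $n=2$ and $f(0,x,y)=y^2-x^3-x^4$. The only reasonable weights are $(\alpha,\beta)=(2,3)$, and then $Vf=6f-2x^4$; along the real analytic arc $y^2=x^3+\tfrac43x^4$ one has $f=\tfrac13x^4$ while $Vf=0$, and this happens arbitrarily close to the origin. No other choice of weights repairs this. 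So the lower bound $|V(g)|\ge c|g|$, on which your whole correction term $\dfrac{\partial g/\partial t_i}{V(g)}\,V$ rests, is simply unavailable. There is also a second difficulty you pass over: for $t\ne0$ the function $g(t,\cdot)=f\circ\Ph(t,\cdot)$ is only arc-analytic in $x$, so the directional derivative $V(g)(t,\cdot)$ is not even defined off a nowhere-dense set; the ``propagation by curve selection'' you allude to would have to make sense of this first. Finally, even if the coefficient $\dfrac{\partial g/\partial t_i}{V(g)}$ were bounded, the correction $\dfrac{\partial g/\partial t_i}{V(g)}V$ is of size $O(\|x\|)$ and does not tend to $0$ on $\{g=0\}\setminus\{0\}$, so $\tilde w_i$ is genuinely discontinuous there and continuous dependence of the flow on initial data (hence continuity of $\theta$) is not justified by your uniqueness remarks alone.

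The paper avoids all three problems by working on $\Omega$, where $f$ itself is $\K$-analytic. The vector fields $v_i$ induced by $\Ph$ already satisfy $|v_i(f)|\le C|f|$ by regularity; one smooths them off $V(f)$ and then projects orthogonally onto the level sets of $f$ using $\grad f$, setting $w_i=v_i-\dfrac{v_i(f)}{\|\grad f\|^2}\,\grad f$. The point is that the correction has norm $\dfrac{|v_i(f)|}{\|\grad f\|}\le C\,|f|/\|\grad f\|$, and the \L ojasiewicz gradient inequality $\|\grad f\|\ge C'|f|^\theta$ with $\theta<1$ forces this to vanish on $V(f)$. Thus $w_i$ is \emph{continuous}, its restriction to $V(f)$ is $v_i$ with unique integral curves (Remark~\ref{uniquesolution}), and Hartman's theorem gives a continuous flow. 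Composing the flows of $w_1,\ldots,w_m$ yields $h$. The essential ingredient you are missing is precisely this use of $\grad f$ together with the gradient inequality, which provides a canonical correction direction with the right decay; an Euler-type field cannot substitute for it.
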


\begin{proof}
The trivialization $h$ is obtained by  integrating the vector fields $w_i(t,x)$  defined below.  
Let $v_i$  be the vector fields on $\Omega$  given by   \eqref{vectorfields}.   
The regularity condition \eqref{boundderfunction} gives  
\begin{align}\label{boundforderG2}
|\partial f / \partial v_i|  \le C    |f|.
\end{align} 
Note that locally on $\Omega \setminus V(f)$ we may approximate $v_i$ by a $C^\infty$  vector field 
satisfying \eqref{boundforderG2}.  Using  partition of unity, we may glue such local approximations to 
a smooth vector field that satisfies \eqref{boundforderG2} and extends continuously to $V(f)$ by 
${v_i}|_{V(f)}$.  In what follows we replace $v_i$ by such approximation.  
 
Next we consider on $\Omega \setminus V(f)$ the orthogonal projection of $v_i(t,x) $ on the levels of $f$
$$
w_i(t,x)  = v_i(t,x) - \frac {\partial f / \partial v_i } {\| \grad f \| ^2} \grad f .
$$
(in the complex case  $\grad f:= \overline{(\partial f/\partial z_1, ..., \partial f/ \partial z_{m+n})}$ so that 
$\partial f/\partial v=\langle v, \grad f\rangle$).  Then we extend $w_i$ by  $v_i(t,x) $ onto $\Omega$.  
Clearly $\partial f / \partial w_i=0$ and $w_i$ are continuous by  \eqref{boundforderG2} 
and  {\L}ojasiewicz Gradient Inequality, \cite{lojasiewicz}, that says that there are constants $C>0$,  $\theta <1$, such that 
$$
\| \grad f \|  \ge C |f|^\theta .
$$
in a neighborhood of the origin.  By Remark \ref{uniquesolution}  the integral 
curves of $w_i|_{V(f)}$ are unique and hence they are unique on $\Omega$.  Therefore, by Theorem 2.1 of \cite{hartman}, for each $i$ the flow  $h_i$ 
of $w_i$ is continuous.  Then we trivialize $f$ by composing these flows:   
$$
h(t_1,...,t_m,x) = h_1(t_1,h_2( t_2,h_3(...(t_{m-1},h_m(t_m,x))...))).
$$
\end{proof}



\subsection{Conditions  (arc-a$_f$) and (arc-w$_f$).}\label{arcfstratifications}
Let $f:X\to \K$ be a $\K$-analytic function defined on a $\K$-analytic space $X$.  By 
\emph{a stratification of $f$}
 we mean an analytic  stratification $\mathcal S$ of $X$ such that $V(f)$ is a union of strata.  We also assume that for any stratum $S\subset X\setminus V(f)$,   $f|_S$ has no critical points.  
A stratification  $\mathcal S$ of $f$ is called \emph{a Thom stratification of $f$} if it is a Whitney stratification of $X$ that for each pair of strata satisfies Thom's condition ($a_f$).  
For a definition of condition ($a_f$) we refer the reader to \cite{thom1969}, \cite{mather}, 
 \cite{GWPL}, \cite{leteissier}, \cite{hammle}.  
For the strict Thom condition ($w_f$) see \cite{leteissier}  and \cite{HMS}.  

 We say that  a stratification $\mathcal S$ of $f$   \emph{satisfies the condition (arc-a$_f$) at $p\in V(f)$}  if  there exists a local {\aaa } trivialization 
 \eqref{trivialization} at $p$ preserving the strata of $\mathcal S$ and  such that $f$ is $\Ph$-regular  at $p$.   If, moreover, $\Ph$ is regular at $p$ then we say that $\mathcal S$ 
 \emph{satisfies the condition (arc-w$_f$) at $p$}.  
 
We say that \emph{the condition (arc-a$_f$), resp. (arc-w$_f$), is satisfied along  a stratum $S$} if it is satisfied 
at every $p\in S$.  Similarly we say that \emph{the condition ($a_f$) or  ($w_f$) is satisfied along  $S$} 
if for every other stratum $S'$, $S\subset \overline S'$, the pair $S',S$ satisfies the respective condition.
 
 We note that by the assumption that   $f|_S$ has no critical points on stratum $S\subset X\setminus V(f)$, 
 the levels of $f$ are transverse to $S$.  Therefore, if moreover $\mathcal S$ satisfies Whitney's condition (a), the conditions ($a_f$) and  ($w_f$) are automatically satisfied along such $S$.

\begin{thm}\label{a_f}
If a stratification of $f$ satisfies the condition (arc-$a_f$), resp.  (arc-$w_f$),    along  a stratum 
$S\subset V(f)$ then it satisfies  the Thom condition ($a_f$), resp. ($w_f$),  along $S$. 
\end{thm}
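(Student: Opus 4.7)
My plan is to follow the pattern of Theorem \ref{a}, adapted to track level sets of $f$ rather than just the strata themselves. Fix a stratum $S\subset V(f)$ along which (arc-$a_f$), resp. (arc-$w_f$), holds, an adjacent stratum $S'$ with $S\subset\overline{S'}$, a point $p_0\in S$, and the local {\aaa} trivialization $\Ph:B\times\Omega_0\to\Omega$ given by the hypothesis, in coordinates where $B=S\cap\Omega=\K^m\times\{0\}$. The vector fields $v_i=\partial\Ph/\partial t_i$, $i=1,\ldots,m$, from \eqref{vectorfields} are continuous on $\Omega$ by Proposition \ref{continuitytangent}, tangent to the strata (since $\Ph$ preserves them), and satisfy $v_i(p_0)=e_i$, spanning $T_{p_0}S$. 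The $f$-regularity of $\Ph$ yields, via Proposition \ref{criterionregularfunction}(iii), the bound $|df(v_i)(p)|\le C|f(p)|$ near $p_0$.

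For the ($a_f$) case, I would combine this bound with a \L ojasiewicz gradient inequality $\|\grad(f_{|S'})(p)\|\ge c|f(p)|^{\theta}$ for some $\theta<1$ near $p_0$ on $S'$ (obtainable from a subanalytic \L ojasiewicz argument on $\overline{S'}$, using that $f_{|S'}$ is submersive on $S'$ while $|f|$ vanishes on $\partial S'\supset S$ locally). Together these yield
\begin{align*}
\frac{|df(v_i)(p)|}{\|\grad(f_{|S'})(p)\|}\le \frac{C}{c}|f(p)|^{1-\theta}\to 0
\end{align*}
as $p\to p_0$ along $S'$. Geometrically, the angle inside $T_pS'$ between $v_i(p)$ and the hyperplane $\ker(df_{|S'})(p)=T_p(f_{|S'})^{-1}(f(p))$ tends to zero, so $v_i(p_0)=e_i$ lies in every limit $\tau$ of these hyperplanes; thus $T_{p_0}S\subset\tau$, which is condition ($a_f$).

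For the ($w_f$) case, I would argue by contradiction. If ($w_f$) fails at $p_0$ for $(S',S)$, the curve selection lemma produces a real analytic arc $p(s):[0,\varepsilon)\to\overline{S'}$ with $p(s)\in S'$ for $s>0$ and $p(0)=p_0$ along which ($w_f$) fails. Using the (arc-$w_f$) trivialization, Proposition \ref{winglemma} produces a $C^1$ wing $M=\{\Ph(t,x(s)):t\in B,\ s\ge 0\}$ with $\partial M=S$ locally and with the regular-wing estimate $D_{k_0}(0)\ne 0$ in $\Ps(t,x(s))=\sum_{k\ge k_0}D_k(t)s^k$. As in the proof of Theorem \ref{a}, $(M\setminus\partial M,\partial M)$ satisfies Verdier's (w). The bound $|\partial(f\circ\Ph)/\partial t|\le C|f\circ\Ph|$, combined with the regularity of $\Ph$ giving $\|\Ps(t,x(s))\|\sim\|\Ps(0,x(s))\|\sim s^{k_0}$, means that values of $f$ and leaf-distances are both comparable along leaves of $\Ph$; this upgrades the Verdier estimate to a ($w_f$) estimate for $(M\setminus\partial M,\partial M)$. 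A standard propagation result (in the spirit of \cite{denkowskawachta} for (w)) then forces ($w_f$) to hold for $(S',S)$ along $p(s)$, contradicting the choice of $p(s)$.

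The main obstacle I anticipate is the \L ojasiewicz inequality $\|\grad(f_{|S'})\|\ge c|f|^{\theta}$ on the stratum $S'$: it must be extracted from the subanalytic geometry near $\partial S'$ rather than from a direct application of \L ojasiewicz on a manifold, since $\overline{S'}$ is singular at $p_0$. A related subtlety in the ($w_f$) step is converting the value-based bound $|df(v_i)|\le C|f|$ into a distance-based estimate, which is what Verdier's condition demands; here the regularity of $\Ph$ is the key mediating ingredient, making $|f|$, leaf-distance, and Euclidean distance to $\partial M$ mutually comparable along leaves.
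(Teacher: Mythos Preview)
Your two halves take rather different routes from the paper, and each has a soft spot.

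For (\(a_f\)) you bypass wings entirely and try to conclude from the pointwise bound \(|df(v_i)|\le C|f|\) together with a \L ojasiewicz gradient inequality \(\|\grad(f_{|S'})\|\ge c|f|^{\theta}\) on the stratum \(S'\) near a boundary point \(p_0\notin S'\). You flag this inequality as the obstacle, and rightly so: it is \emph{not} the usual \L ojasiewicz gradient inequality (which lives on a full neighborhood of a critical point of an analytic function), and you give no argument for it. In fact what you actually need is weaker---just that \(|f|/\|\grad(f_{|S'})\|\to 0\) along every real analytic arc in \(S'\) landing at \(p_0\)---and that follows by integrating \(|(f\circ p)'(s)|\le \|\grad(f_{|S'})(p(s))\|\cdot\|p'(s)\|\); but you do not make this argument, so as written the (\(a_f\)) case is incomplete.

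For (\(w_f\)) you do turn to wings, like the paper, but you miss the key manoeuvre. The paper uses the regularity of \(f\) for \(\Ph\) (Proposition~\ref{criterionregularfunction}(ii): the leading coefficient \(D_{k_0}(t_0)\ne 0\) in \(f(\Ph(t,x(s)))=\sum D_k(t)s^k\)) to \emph{reparametrise} the wing, replacing \(s\) by \(\tilde s=\tilde s(t,s)\) so that \(f(\Ph_\gamma(t,\tilde s))=\tilde s^{k_0}\). After this change the level sets of \(f\) on the wing are exactly the leaves \(\{\tilde s=\text{const}\}\), so their tangent spaces are spanned by the \(v_i\); continuity of the \(v_i\) then gives (\(a_f\)) on the wing immediately, and Proposition~\ref{criterionregular} gives (\(w_f\)) on the wing when \(\Ph\) is regular. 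Your route---keep the original \(s\), prove Verdier (w) on the wing, and then ``upgrade'' to (\(w_f\)) by juggling the comparabilities \(|f|\sim\) leaf-distance \(\sim s^{k_0}\)---is not worked out: you never identify the tangent spaces to the \(f\)-levels in the wing, and your appeal to a Denkowska--Wachta-type propagation for (\(w_f\)) is asserted, not proved. The reparametrisation is precisely what makes both steps clean, and it is the idea you are missing.
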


\begin{proof}
Similarly to the proof of theorem \ref{a} it suffices we check the conditions 
 along a real analytic curve by considering a wing containing the curve.  
 
Thus fix two strata $S_0 \subset \overline S_1$, $S_0\subset V(f)$, $S_1\cap V(f) = \emptyset$,  
and a real analytic curve $\gamma : p(s) = \Ph (t(s),x(s)): [0,\varepsilon ) \to \overline S_1$  with $p_0=p(0)\in S_0$ and $p(s)\in S_1$ for $s>0$.  First we consider the case $\K=\R$ and the wing 
$$M = \{\Ph (t,x(s)); t\in B, s\ge 0\}. $$  
By the regularity of $f$ for $\Ph$, Proposition \ref{criterionregularfunction}, we may reparametrize 
$\Ph _ \gamma (t,s) = \Ph (t,x(s))$  by replacing $s=s(t,\tilde s)$ so that $f(\Ph_\gamma (t, \tilde s))= 
\tilde s^{k_0}$.   If we write $\Ph_\gamma (t, \tilde s))= (t, \Ps_\gamma (t, \tilde s)))$ 
then the tangent space to the levels of $f_{|M}$ is generated by 
$D\Ph_\gamma ( \partial/\partial t_i , \partial \Ps_\gamma /\partial t_i)$, that tends to 
$( \partial/\partial t_i , 0)$ as $s\to 0$, $i=1,... ,m$.  This shows ($a_f$).  If moreover 
$\Ph$ is regular then the condition  ($w_f$) follows form Proposition \ref{criterionregular}.  

  If $\K=\C$, then we use the complex wing  of Proposition \ref{winglemma}.  
\end{proof}

\begin{cor}\label{w_ftriv}
Let $f:X\to \K$ be $\K$-analytic and let  $\mathcal S$ be a Whitney stratification of $f$ 
 satisfying the condition (arc-$a_f$).  Then $f$ is  topologically trivial along each stratum $S\subset V(f)$.  
\end{cor}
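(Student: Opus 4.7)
The plan is to apply Theorem \ref{a_ftoptivis} locally at each point of $S$, and then verify that the resulting topological trivialization can be chosen to preserve $X$. Fix $p_0\in S$. By condition (arc-$a_f$) at $p_0$, there exist a local system of coordinates at $p_0$, neighborhoods $B\subset\K^m\times\{0\}$ of the origin (contained in $S$), $\Omega_0\subset\{0\}\times\K^n$, and $\Omega\subset\K^{m+n}$, and an {\aaa} trivialization
\begin{align*}
\Ph(t,x):B\times\Omega_0\to\Omega
\end{align*}
preserving the strata of $\mathcal S$ and such that $f$ is regular for $\Ph$. After composing with $\Ph(0,\cdot)^{-1}$ in the second factor, we may assume $\Ph(0,x)=(0,x)$, so that condition (Z1) of Theorem \ref{theorem} is satisfied. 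Theorem \ref{a_ftoptivis} then produces, after shrinking, a homeomorphism $h:B'\times\Omega'_0\to\Omega'$ with $h(t,0)=(t,0)$, $h(0,x)=(0,x)$, and $f\circ h(t,x)=f(0,x)$, which is exactly the required local topological trivialization of $f$ in the ambient space.

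To make $h$ preserve $X$, I would follow the construction of Theorem \ref{a_ftoptivis} but work stratum-wise. The vector fields $v_i$ of \eqref{vectorfields} are tangent to every leaf of $\Ph$, and since $\Ph$ preserves the stratification, they are tangent to every stratum of $\mathcal S$. On a stratum $S'$ not contained in $V(f)$ I would replace the ambient correction $(\partial f/\partial v_i)\|\grad f\|^{-2}\grad f$ appearing in the definition of $w_i$ by its stratified analogue $(\partial f/\partial v_i)\|\grad_{S'}f\|^{-2}\grad_{S'}f$, where $\grad_{S'}f$ denotes the gradient of $f|_{S'}$ for the induced Riemannian metric. On strata inside $V(f)$ the correction is taken to be zero, as in the proof of Theorem \ref{a_ftoptivis}. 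These stratified vector fields $w_i$ are then tangent to every stratum of $\mathcal S$ and their flows preserve $X$.

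The remaining issues are continuity of $w_i$ across strata and unique integrability. Continuity at the boundary of the strata is precisely Thom's condition ($a_f$): since $\mathcal S$ is Whitney and satisfies (arc-$a_f$), Theorem \ref{a_f} gives ($a_f$) along each $S\subset V(f)$, so the stratified projections $\grad_{S'}f$ extend continuously by the ambient $v_i$ across the boundary strata contained in $V(f)$, just as in the classical proof of the Thom--Mather first isotopy lemma for Thom stratifications. The stratum-wise {\L}ojasiewicz gradient inequality controls the size of the correction term and yields continuity, while Remark \ref{uniquesolution} applied to the leaves of $\Ph$ on each stratum gives uniqueness of integral curves. Integrating $w_1,\ldots,w_m$ successively as in the proof of Theorem \ref{a_ftoptivis} produces the desired trivialization $h$ of $f$ on $X\cap\Omega'$.

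The main obstacle is the patching of the stratified gradient corrections across strata of different dimensions. This is exactly the step where Thom's condition ($a_f$), supplied by Theorem \ref{a_f}, is indispensable: without it the vector fields $w_i$ would fail to extend continuously from open strata onto strata in $V(f)$, and the resulting flow would not be a homeomorphism of $X$.
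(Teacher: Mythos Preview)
The paper does not give an explicit proof of this corollary because none is needed beyond a one-line citation: by Theorem~\ref{a_f} the stratification satisfies Thom's condition~($a_f$) along every stratum $S\subset V(f)$, so $\mathcal S$ is a Thom stratification of $f$, and the classical Thom--Mather theory (second isotopy lemma for functions, see e.g.\ \cite{mather}, \cite{GWPL}) then gives the topological triviality of $f$ along each such stratum. The paper even states this explicitly a few lines later: ``Thom's condition ($a_f$) implies the topological triviality of $f$ along strata of a Whitney stratification.''

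Your proposal goes in a very different direction: you try to construct the trivializing vector fields by hand, imitating the proof of Theorem~\ref{a_ftoptivis} stratum by stratum. This is essentially an attempt to reprove the relevant piece of Thom--Mather theory using the explicit $v_i$ coming from the arc-wise analytic trivialization. The idea is reasonable, but the sketch has real gaps. First, Theorem~\ref{a_ftoptivis} is stated for the ambient smooth space $\K^{m+n}$, so it cannot be ``applied'' to a singular $X$; you immediately have to abandon it and build stratified $w_i$ from scratch. Second, and more seriously, continuity of your stratified $w_i$ across \emph{all} pairs of incident strata --- not just those with the smaller stratum in $V(f)$ --- requires the full control-tube apparatus of Thom--Mather (compatible tubular neighborhoods, distance functions, projections), not merely the condition~($a_f$). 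Your remark that this goes ``just as in the classical proof'' is exactly the point: once you invoke that machinery, you are back to the one-line proof above, and the explicit $v_i$ play no role. Third, the stratified \L ojasiewicz inequality and the uniqueness of integral curves for the modified $w_i$ across strata of different dimensions are not consequences of Remark~\ref{uniquesolution}, which concerns only the unmodified $v_i$ on a single leaf.

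In short: your route can be made to work, but only by importing the Thom--Mather construction wholesale, at which point the simpler argument --- Theorem~\ref{a_f} followed by a citation --- is both shorter and what the paper intends.
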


In the complex analytic case it is shown in  \cite {BMM} that any stratification of $f$ satisfying the conditions (a) and (TLS) also  satisfies the condition (a$_f$).    Similarly, after   \cite {BMM} and 
\cite{parusinski1993} any Whitney stratification of  $f$ satisfies the strong Thom condition ($w_f$).  
Analogous results are false in the real analytic case.  
Thus in the complex case Theorem \ref{a_f}  (for the stratification and not for a single stratum)  follows from  Theorem \ref{a}, Proposition \ref{complexregularity}, and Corollary \ref{TLS}. 

Thom's condition (a$_f$)  implies the  topological triviality of $f$ along the strata of a Whitney stratification.  
But the condition (a$_f$) alone  does not imply Whitney's condition  (b) and therefore it 
may not imply topological triviality of $f$ along  the strata.  
Similarly,  the condition (arc-a$_f$) alone   
may not itself imply topological triviality of $f$ along the strata.  Nevertheless, in some special cases, the topological triviality can be obtained by adapting the proof of Theorem \ref{a_ftoptivis}.  

\begin{cor}\label{a_ftriv}
Let $f:X\to \K$ be $\K$-analytic and let  $\mathcal S$ be a stratification of $f$ 
such that $X\setminus V(f)$ is a stratum of $\mathcal S$.  Suppose that $\mathcal S$ 
satisfies  the condition (arc-$a_f$) along a stratum $S\subset V(f)$.  Then $f$ is  topologically trivial along $S$.  
\end{cor}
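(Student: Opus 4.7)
The strategy is to invoke directly the vector-field construction from the proof of Theorem \ref{a_ftoptivis} and then verify that the stratification is respected. Fixing $p \in S$, the (arc-$a_f$) hypothesis produces a neighborhood $\Omega$ of $p$ and an {\aaa} trivialization $\Ph : B \times \Omega_0 \to \Omega$ preserving every stratum of $\mathcal S$, for which $f$ is regular for $\Ph$ at $p$. By Proposition \ref{criterionregularfunction}, this regularity yields the bound $|\partial(f\circ \Ph)/\partial t_i| \le C|f\circ \Ph|$ on a neighborhood of $(p,0)$, which is exactly the hypothesis used in Theorem \ref{a_ftoptivis}.

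Next I would repeat verbatim the vector-field modification from that proof: smooth the fundamental vector fields $v_i$ (given by \eqref{vectorfields}) on $\Omega\setminus V(f)$ while preserving the bound $|\partial f/\partial v_i| \le C|f|$, then on $\Omega\setminus V(f)$ set
$$w_i = v_i - \frac{\partial f/\partial v_i}{\|\grad f\|^2}\,\grad f,$$
and extend $w_i$ to $V(f)$ by $v_i|_{V(f)}$. Continuity of $w_i$ across $V(f)$ follows from \L ojasiewicz's gradient inequality $\|\grad f\|\ge C|f|^\theta$ combined with the regularity bound, which gives $\|w_i - v_i\| = O(|f|^{1-\theta})$. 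Uniqueness of integral curves of $w_i$ on $V(f)$ is inherited from the leaf structure of $\Ph$ via Remark \ref{uniquesolution}, so Theorem 2.1 of \cite{hartman} produces a continuous flow for each $w_i$; composing the $m$ flows gives a homeomorphism $h:B'\times\Omega'_0\to\Omega'$ with $h(t,0)=(t,0)$ and $f\circ h(t,x)=f(0,x)$.

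The final step is to check that $h$ preserves $\mathcal S$. On $V(f)$ one has $w_i=v_i$ by construction, and $v_i$ is tangent to every stratum of $V(f)$ because $\Ph$ preserves the strata of $\mathcal S$; thus each flow restricted to $V(f)$ coincides with the corresponding flow of $v_i$ and preserves the stratification induced on $V(f)$. On the complement $X\setminus V(f)$, the hypothesis that this is a single stratum eliminates any further tangency condition. Since $h$ automatically carries $V(f)$ to $V(f)$ (because it trivializes $f$), the stratification $\mathcal S$ is globally preserved, and $f$ is topologically trivial along $S$.

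The main obstacle, and the only place the special hypothesis gets used, is precisely at this last step: the correction $w_i - v_i$ is in the direction of $\grad f$ and need not be tangent to any proper substratum of $X\setminus V(f)$, so the argument would break if $X\setminus V(f)$ were further stratified. Everything else is a mechanical transcription of the proof of Theorem \ref{a_ftoptivis}, with Proposition \ref{criterionregularfunction}(iii) supplying the quantitative input.
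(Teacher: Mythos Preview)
Your proposal is correct and matches the paper's intended approach exactly: the paper does not spell out a proof of this corollary but states just before it that ``the topological triviality can be obtained by adapting the proof of Theorem \ref{a_ftoptivis}'', which is precisely what you do. You have also correctly isolated the role of the hypothesis that $X\setminus V(f)$ is a single stratum --- the gradient correction $w_i-v_i$ need not respect a finer stratification there --- which is the only point requiring comment beyond a straight transcription of that earlier proof.
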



\subsection{{\Aaa }  triviality of function germs.} \label{trivialityfunctions}

Consider a family of function germs  $f_t (y)=f(t,y): T\times (\K^{n-1},0) \to \K$, parametrized by an open  
$T \subset \K^m$.   
We say that $f_t$ is \emph{{\aab} trivial along $T$} if there are 
neighborhoods $\Lambda$ of $T\times \{0\}$ in $\K^m \times \K^{n-1}$ and   $\Lambda_0$ of $ \{0\}$ in 
$ \K^{n-1}$, $f_0: \Lambda_0\to \K$, and an {\aaa } analytic trivialization  
\begin{align*}
\sigma:  T \times \Lambda_0 \to \Lambda ,  \text { such that }  f(\sigma (t,y)) = f_0 (y) . 
\end{align*}

Using the method developed in  \cite{BPR} we have the following result. 

\begin{thm}\label{genericfunctionequisingularity}
 Let  $f_t (y)=f(t,y): T\times (\K^{n-1},0) \to \K$ be  a $\K$-analytic family of $\K$-analytic function germs 
 and let $t_0\in T$.    Then, there exist  a  neighborhood $U$ of  $t_0$ in $T$ and a  $\K$-analytic 
subset $Z\subset  U$, $\dim Z<\dim T$, such that $f$ is {\aab } trivial along $U\setminus Z$.  
\end{thm}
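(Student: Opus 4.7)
The plan is to apply the Zariski equisingularity machinery to the \emph{graph} of $f$ in an enlarged ambient space, invoking Proposition~\ref{addendum} to force the resulting trivialization to preserve the value coordinate~$z$. I would set $F(t,y,z) := f(t,y)-z$ and let $\Gamma := V(F)\subset T\times \K^{n-1}\times \K$ be the graph of~$f$. Treating $z$ as the innermost variable $x_1$ in the notation of Proposition~\ref{addendum}, with $x_2,\ldots,x_n := y_1,\ldots,y_{n-1}$, I would build a system of pseudopolynomials $\{F_i\}$ in these variables, starting from a Weierstrass preparation of $F$ in $y_{n-1}$ and proceeding by iterated discriminants and Weierstrass preparations in $y_{n-2},\ldots,y_1$. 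After a $t$-dependent shift of $z$ making $f(t,0)\equiv 0$ (subtract the analytic function $f(t,0)$ from $z$) and a generic linear change of $y$-coordinates, the procedure terminates in a function $F_1(t,z)$.

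The bad locus $Z\subset U$ is built by collecting every parameter value at which some step of this construction degenerates: where Weierstrass regularity in the relevant variable fails, where the multiplicity of an intermediate discriminant jumps, and, crucially, where the final $F_1(t,z)$ fails to be of the form $z^{q_1}$ times an analytic unit. By the upper semicontinuity of multiplicity and the analyticity of the iterated discriminants (as in Lemma~\ref{theoremequisingularity2}), each of these bad loci is a proper analytic subset of a neighborhood of $t_0$, so their union $Z$ satisfies $\dim Z < \dim T$.

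For any $t_1\in U\setminus Z$ the constructed system satisfies hypothesis (ii) of Proposition~\ref{addendum}, which then yields an {\aaa} trivialization $\Ph(t,z,y) = (t,z,\Ps(t,z,y))$ on a neighborhood of $t_1$, preserving $\Gamma$ and preserving $z$ (via the conclusion $\Ps_1\equiv x_1$). Since $\Ph$ preserves both $z$ and $\Gamma=\{z=f(t,y)\}$, setting
\[
\sigma(t,y) := \Ps(t,\, f(t_1,y),\, y)
\]
yields an {\aaa} map with $f(t,\sigma(t,y))=f(t_1,y)$. Taking $f_0 := f_{t_1}$, this is the desired trivialization; the homeomorphism property is inherited from $\Ph$ composed with the analytic embedding $y\mapsto(y, f(t_1,y))$ of $\Lambda_0$ into the graph of $f_{t_1}$.

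The hard part is the second paragraph: arranging that the pseudopolynomial chain for the graph fits the hypothesis of Proposition~\ref{addendum} \emph{generically} in~$t$. The Weierstrass coefficients of $F$ in $y_{n-1}$ depend non-trivially on $z$, so the system does not automatically satisfy the coefficient-vanishing condition of Definition~\ref{system}; the relaxation (ii) of Proposition~\ref{addendum} is designed precisely to accommodate this graph-theoretic setup, and identifying the analytic subset $Z$ where even this relaxation fails is the technical content of the method of~\cite{BPR}.
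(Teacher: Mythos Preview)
Your proposal is correct and follows essentially the same approach as the paper: pass to the graph via $F(t,x_1,y)=x_1-f(t,y)$, apply the Zariski equisingularity machinery (Lemma~\ref{theoremequisingularity2} together with Proposition~\ref{addendum}) to obtain an {\aaa} trivialization preserving the value coordinate $x_1$, and then project back to get $\sigma(t,y)=\pi(\Ph(t,f_0(y),y))$. Your account is in fact more explicit than the paper's terse proof about how the bad locus $Z$ arises and why hypothesis (ii) of Proposition~\ref{addendum} holds generically.
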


\begin{proof}
Set $x= (x_1, ... , x_n)=(x_1, y)$, where $y=(y_1,... ,y_{n-1})$, and  $F(t,x_1,y)= x_1- f(t,y)$.  
By Lemma \ref{theoremequisingularity2} and Theorem \ref{theorem}, together with Proposition \ref{addendum}, there is an {\aaa } trivialization $\Ph$ of the zero set of $F$ that preserves the levels of $x_1$.     Then 
$$
\sigma (t,y) = \pi (\Ph (t, f_0 (y),y) ) ,
$$
where $\pi$ is the projection $\pi (t,x_1,y) = (t, y)$,  gives an {\aaa } trivialization of $f$.  
\end{proof}


\section{Algebraic case}\label{algebraiccase}

\subsection{Construction of  {\aab } trivial stratifications }\label{construction}
Given a polynomial $F\in \K[x_1,...,x_n]$ we may construct a system of polynomials $F_{i} \in \K[x_1, \ldots, x_i]$, $i=1,...,n$, as follows.  First  we set $F_n = F$ that after a linear change of coordinates we may assume monic in $x_n$. Then let $F_{n-1}$ be the discriminant of $F_{n,red}$ or an appropriate higher order discriminant, see Appendix \ref{Part:discriminants}.  We again make a linear change of coordinates $x_1,...,x_{n-1}$ so that we may assume $F_{n-1}$ monic in $x_{n-1}$ and 
we continue until we get $F_j$ a non-zero constant. This construction is algorithmic except taking generic system of coordinates.  For such a system of polynomials $F_{i} \in \K[x_1, \ldots, x_i]$, $i=1,...,n$,  we 
may consider the canonical stratification defined in Section \ref{associatedstratification}.   
Moreover, we may refine this construction to obtain  a derivation complete system of polynomials.  
Then Proposition \ref{canonicalstratification} gives the following.

\begin{thm}\label{canonicalstratification3}
Given $F\in \K[x_1, \ldots ,  x_n]$.  There exists a linear system of coordinates $x_1, . . .  ,x_n$ on $\K^n$ and a 
derivation complete system of polynomials on $\{F_i(x_1, . . .  ,x_i)\}$ such that $F$ divides $F_n$.  In particular the associated canonical stratification to this system satisfies the condition (arc-$w$)  and 
the condition (arc-$w_f$)  for  any factor of $F$.  
\end{thm}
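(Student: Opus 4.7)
The plan is to promote the construction of a system of pseudopolynomials sketched just before the theorem to an algorithmic one by ensuring, at each level $i$, that the top piece $F_i$ is the product of a family of polynomials that is stable under $\partial/\partial x_i$; the conclusion will then follow by direct application of Proposition \ref{canonicalstratification}.

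Starting from $F \in \K[x_1,\ldots,x_n]$, a generic linear change of coordinates in $\K^n$ makes $F$ monic in $x_n$ up to a nonzero scalar; call this monic polynomial $f_n$. Set
\begin{align*}
\mathcal G_n := \Bigl\{\, c_k \, \partial^k f_n / \partial x_n^k \,:\, k\ge 0,\ \partial^k f_n / \partial x_n^k \not\equiv 0 \,\Bigr\},
\end{align*}
the scalars $c_k \in \K^\times$ being chosen so every element is monic in $x_n$, and let $F_n := \prod_{G\in \mathcal G_n} G$. By construction $\mathcal G_n$ is stable under $\partial/\partial x_n$. Define $f_{n-1}\in \K[x_1,\ldots,x_{n-1}]$ to be the (generalized) discriminant of $F_{n,\mathrm{red}}$, or the constant $1$ if $\deg_{x_n} F_{n,\mathrm{red}}\le 1$; after a generic linear change of coordinates in $x_1,\ldots,x_{n-1}$, which leaves $F_n$ monic in $x_n$, the polynomial $f_{n-1}$ becomes, up to scalar, monic in $x_{n-1}$. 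Apply the same derivation-closure procedure to $f_{n-1}$ to obtain $F_{n-1}$, and iterate down to $i=1$. Upon choosing $0<\varepsilon_1\ll\cdots\ll\varepsilon_n\le\infty$ one obtains a family $\{F_i(x_1,\ldots,x_i)\}_{i=1}^n$ satisfying all three conditions of Definition \ref{systempseudopolynomials}: (1) and (2) by the choice of $\varepsilon_i$, and (3) because the discriminant of $F_{i,\mathrm{red}}$ divides $f_{i-1}$, itself a factor of $F_{i-1}$. The resulting system is derivation complete in the sense of Example \ref{derivationcomplete}, and $F$ divides $F_n$ since $f_n$ does.

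The canonical stratification $\mathcal S$ of Section \ref{associatedstratification} associated to $\{F_i\}$ then satisfies (arc-$w$) by the final sentence of Proposition \ref{canonicalstratification}, precisely the clause that invokes derivation completeness, which furnishes at every $p\in\K^n$ a local {\aaa} trivialization preserving the strata and regular along the stratum through $p$. For any factor $G$ of $F$ we have $V(G)\subset V(F_n)$, and $V(G)$ is a union of strata: each codimension-one stratum contained in $V(F_n)$ is a connected analytic submanifold of $V(F_n)$, hence lies inside a single irreducible component of $V(F_n)$ and so entirely inside or outside $V(G)$; a descending induction using type (I) strata of Proposition \ref{canonicalstratification} extends the conclusion to all strata. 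Since $G$ divides $F_n$ and $F_n$ is regular for the trivialization, Proposition \ref{regularproducts} gives that $G$ is regular too, establishing the (arc-$w_f$) condition for $G$. The only non-algorithmic ingredient is the generic choice of linear coordinate changes, and these can be made consistently because at level $i$ only the variables $x_1,\ldots,x_i$ are altered, preserving the monicity in $x_j$ for $j>i$ achieved at earlier stages.
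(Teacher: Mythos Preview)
Your proof is correct and follows essentially the same route as the paper: build a derivation-complete system by taking, at each level, the product of all successive $\partial/\partial x_i$-derivatives of a monic generator, take the discriminant of the reduced top piece to go down one level, and then invoke Proposition~\ref{canonicalstratification}. This is exactly the construction the paper sketches in the paragraph preceding the theorem and in the first proof of Theorem~\ref{theoremconjecture}.

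One small remark on your verification that $V(G)$ is a union of strata: the codimension-one case via irreducible components is clean, but the phrase ``descending induction using type (I) strata'' is vague as stated, since a lower-dimensional stratum can sit in several irreducible components of $V(F_n)$ at once. A more direct argument, available from what you have already established, is that the local {\aaa} trivialization $\Ph$ along any stratum $S$ preserves $V(G)$ (by \eqref{boundforG}, since $G$ divides $F_n$ and $F_n$ is regular for $\Ph$); hence $S\cap V(G)$ is open and closed in $S$, and connectedness of $S$ finishes it. This is implicit in the paper's treatment and avoids the inductive bookkeeping.
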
 

Theorem \ref{canonicalstratification3} gives Whitney's Fibering Conjecture in the affine algebraic case.  
Since the above constructions preserves the family of homogeneous polynomials we obtain as well 
an algorithmic proof of the following projective algebraic version of Whitney's Fibering Conjecture.  

\begin{thm}\label{algebraictheoremconjecture}  
Let $\mathcal V = \{V_i\}$ be a finite family of algebraic subsets of  $\proj _\K^n$.    Then there exists an 
algebraic stratification of $\proj _\K^n$  compatible with each $ V_i$ and satisfying the condition (arc-w).  
  Moreover, 
the local {\aaa } trivializations can be chosen semi-algebraic.  
\end{thm}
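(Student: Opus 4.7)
The plan is to reduce the projective case to the affine algebraic statement of Theorem \ref{canonicalstratification3} via homogenization. First, for each $V_i \subset \proj _\K^n$ I would take the affine cone $\tilde V_i \subset \K^{n+1}$; these are defined by homogeneous polynomials, and I fix homogeneous generators $g_{i,j}$ of their ideals and set $F = \prod_{i,j} g_{i,j}$, which is again homogeneous in $x_0, \ldots, x_n$.

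Next I would apply the construction of Theorem \ref{canonicalstratification3} to $F$, viewed as a polynomial in $\K[x_0, \ldots, x_n]$, with the crucial extra observation that the entire construction preserves homogeneity. Indeed, at each stage the polynomials $F_i$ are built from $F$ by (i) taking partial derivatives with respect to the last free variable, (ii) forming the (generalized) discriminant, and (iii) applying a linear change of coordinates. Each of these three operations sends homogeneous polynomials to homogeneous polynomials: partial derivatives reduce degree by one, the $d$th generalized discriminant of a homogeneous polynomial (see Appendix II) is again homogeneous in the remaining variables, and linear changes are degree-preserving. The only non-algorithmic step, a generic linear change of variables, is chosen in $GL(n+1,\K)$, which preserves the $\K^*$-grading of $\K[x_0,\ldots,x_n]$. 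Hence the resulting derivation-complete system $\{F_i(x_0,\ldots,x_i)\}$ consists of homogeneous polynomials, and in particular $F$ divides a homogeneous $F_{n+1}$.

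The associated canonical stratification $\mathcal S$ of $\K^{n+1}$ then consists of algebraic strata each defined by vanishing/non-vanishing of homogeneous polynomials, and is therefore invariant under the $\K^*$-scaling action. Removing the origin and passing to the quotient yields an algebraic stratification $\bar{\mathcal S}$ of $\proj _\K^n$ compatible with each $V_i$. For the local trivialization around $p \in \proj _\K^n$, I would lift $p$ to $\tilde p \in \K^{n+1}\setminus\{0\}$, invoke Proposition \ref{canonicalstratification} to obtain a local regular {\aaa} trivialization of $\mathcal S$ around $\tilde p$, and then restrict it to a slice transverse to the $\K^*$-orbit through $\tilde p$ (concretely, an affine chart $\{x_k=1\}$ containing $\tilde p$). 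Since the slice is transverse to the scaling orbit and to every stratum containing $\tilde p$ in its closure (by homogeneity of the $F_i$), the induced trivialization on the slice descends via the quotient map to a semialgebraic {\aaa} trivialization in a neighbourhood of $p$, and the regularity along the stratum through $\tilde p$ transfers to regularity along the stratum through $p$.

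The main obstacle is the descent step: making sure that the local trivialization produced by Proposition \ref{canonicalstratification} is compatible with the scaling action, so that it passes to the quotient as a single-valued semialgebraic homeomorphism. Working in the affine chart transverse to the fiber of $\K^{n+1}\setminus\{0\} \to \proj _\K^n$ bypasses this, since that quotient map is a semialgebraic submersion with one-dimensional fibers transverse to the slice; the arc-wise analytic and regular properties, as well as the preservation of the strata of $\bar{\mathcal S}$, then follow immediately from the corresponding properties on the cone. A minor technical point to verify is that when $\tilde p$ lies in a stratum of positive dimension along the orbit direction, the generic linear change used in the construction can still be arranged so that the slice $\{x_k=1\}$ realizes the parameter space of Proposition \ref{canonicalstratification}; this is a transversality condition that holds for generic choice and completes the argument.
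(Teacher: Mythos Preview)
Your overall strategy matches the paper's: take affine cones, build a homogeneous derivation-complete system $\{F_i\}$ on $\K^{n+1}$ (the construction preserves homogeneity since partial derivatives, generalized discriminants, and linear changes of coordinates all do), obtain the $\K^*$-invariant canonical stratification, and pass to the quotient. The paper states the proof in exactly these terms.

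The gap is in your descent of the \emph{trivialization}. Restricting $\Phi: B\times\Omega_0\to\Omega$ to $\bar B = B\cap\{x_k=1\}$ and then composing with the quotient $q$ produces a map whose image $\Omega' = \Phi(\bar B\times\Omega_0)$ is only a subanalytic (not analytic) hypersurface of $\K^{n+1}$: the map $\Phi$ is merely arc-analytic off a thin set, so $\Omega'$ need not be an analytic slice. Consequently $q|_{\Omega'}$ is not an analytic diffeomorphism, and the induced ``projection'' $q(\Omega')\to\bar B$ is not a priori $\K$-analytic, as Definition \ref{maindefinition2} requires. Transversality of the chart to the $\K^*$-orbits gives you a local homeomorphism, but not the analytic structure you need.

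The paper closes this gap differently, via the affine equivariance of Whitney interpolation (Remark \ref{symmetric}): $\psi(\lambda z,\lambda a,\lambda b)=\lambda\,\psi(z,a,b)$. Since each $F_i$ is homogeneous its roots scale linearly under $\K^*$, and by induction every $\Psi_i$ in \eqref{definitionPsi} is $\K^*$-equivariant. Hence the entire trivialization on the cone is $\K^*$-equivariant and descends directly to a semialgebraic {\aaa} trivialization on $\proj_\K^n$; this is precisely the mechanism made explicit in the proof of Theorem \ref{theoremequisingularity3}. If you add this observation (or, alternatively, invoke Proposition \ref{change} after first using equivariance to split off the orbit direction), your argument becomes complete.
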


Different proof of Theorems \ref{canonicalstratification3}, \ref{algebraictheoremconjecture}, that gives local {\aaa} trivialization by 
Zariski equisingular local transverse system of polynomials follows from Lemma \ref{algebraiclem2}.


\subsection{Generic {\aaa } equisingularity in the algebraic case.}

In the algebraic case we have a global version of Theorem \ref{genericequisingularity}.  
Here by a \emph{real algebraic variety} we mean an affine real algebraic variety in the sense of Bochnak-Coste-Roy \cite{BCR}:  a topological space with a sheaf of real-valued functions  isomorphic to a real algebraic set $X\subset \R^N$ with the Zariski topology and the structure sheaf of regular rational functions.  
For instance,  the set of real points of a reduced projective scheme over $\R$,  with the sheaf of regular functions, is a real algebraic variety in this sense.  

\begin{thm}\label{theoremequisingularity3}
Let $T$ be an algebraic variety (over $\K$)  and let $\mathcal X =\{X_k\}$ be  a finite family of 
algebraic subsets $T \times \proj^{n-1}_\K$.  Then there exists 
an algebraic 
stratification  $\mathcal S$ of $T$ such that for every stratum $S$ and for 
every $t_0\in S$  there is a neighborhood $U$ of $t_0$ in $S$ and a semialgebraic {\aaa } 
 trivialization  of $\pi,$ preserving the family $\mathcal X,$
\begin{align}\label{raagtrivial}
\Ph  : U \times \proj^{n-1}_\K \to \pi^{-1} (U),\end{align}
$\Ph(t_0,x)= (t_0,x)$, 
where $\pi:T \times \proj^{n-1}_\K\to T$  denotes the projection.   
\end{thm}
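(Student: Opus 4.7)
The plan is to reduce to the parametric affine setting already established in this paper, and to exploit the $\K^*$-symmetry of affine cones to descend back to projective space. First I would decompose $T$ into irreducible components and treat each one separately, so assume $T$ is irreducible. For each algebraic subset $X_k \subset T \times \proj^{n-1}_\K$, denote by $\tilde X_k \subset T \times \K^n$ its affine cone, i.e.\ the Zariski closure of the preimage of $X_k$ under the tautological quotient $T \times (\K^n \setminus \{0\}) \to T \times \proj^{n-1}_\K$. Each $\tilde X_k$ is invariant under the diagonal $\K^*$-action $\lambda \cdot (t, x) = (t, \lambda x)$, contains the zero section $T \times \{0\}$, and can be defined locally on $T$ by polynomials homogeneous in $x = (x_1, \ldots, x_n)$.

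Next I would apply Theorem \ref{genericequisingularity} together with Lemma \ref{theoremequisingularity2} to the family $\{\tilde X_k\} \cup \{T \times \{0\}\}$ in $T \times \K^n$ along the zero section. Noetherian induction on $\dim T$, as in the proof of Theorem \ref{algebraictheoremconjecture}, promotes the generic statement to an algebraic stratification $\mathcal S$ of $T$ such that for every stratum $S$ and every $t_0 \in S$ there exist a neighborhood $U$ of $t_0$ in $S$ and a semialgebraic {\aaa} trivialization
\begin{align*}
\tilde \Phi(t, x) = (t, \tilde \Psi(t, x)) : U \times \tilde \Omega_0 \to \tilde \Omega \subset T \times \K^n
\end{align*}
preserving each $\tilde X_k$ and with $\tilde \Phi(t_0, x) = (t_0, x)$. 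The crucial additional property I would extract is $\K^*$-equivariance of $\tilde \Phi$: starting from homogeneous generators of the ideals $I(\tilde X_k)$, the Zariski equisingular system of pseudopolynomials $\{F_i(t, x_1, \ldots, x_i)\}$ produced by Lemma \ref{theoremequisingularity2} can be chosen homogeneous in $x_1, \ldots, x_i$, because a generic linear change of the $x$-variables, the rescaling $x \mapsto A(t) \ast x$ of Step 2 (which is $\K^*$-equivariant on $\K^n$), and the (generalized) discriminants of Appendix II all preserve homogeneity in $x$; Whitney's interpolation formula \eqref{ourpsi} is then $\K^*$-equivariant, since the roots $a_j$ of $F_n$ satisfy $a_j(t, \lambda x') = \lambda a_j(t, x')$ and the formula $\ps$ is homogeneous of degree one in $(z, a, b)$.

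Once $\tilde \Phi$ is $\K^*$-equivariant on a conic neighborhood of $U \times \{0\}$, the $\K^*$-action extends it uniquely to all of $U \times \K^n$, and passing to the quotient by $\K^*$ on $U \times (\K^n \setminus \{0\})$ yields the desired semialgebraic {\aaa} trivialization
\begin{align*}
\Phi : U \times \proj^{n-1}_\K \to \pi^{-1}(U)
\end{align*}
preserving $\{X_k\}$ and satisfying $\Phi(t_0, x) = (t_0, x)$. The main obstacle I anticipate is bookkeeping of homogeneity across each inductive step of Lemma \ref{theoremequisingularity2}: one must verify that after making $G_i$ monic in $x_i$ via the rescaling, the Weierstrass-type $F_i$ remains homogeneous (which is automatic, since the relevant operation becomes division by a unit in $t$ only), and that the discriminant-divisibility condition of Definition \ref{system} is compatible with the homogeneity constraint (which follows because the discriminant of a homogeneous polynomial in $x_i$ with homogeneous coefficients is itself a homogeneous polynomial in $x_1, \ldots, x_{i-1}$). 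Algebraicity of the stratification $\mathcal S$ and semialgebraicity of $\Phi$ are inherited directly from the algebraic nature of the discriminant loci appearing in the construction, exactly as in the proof of Theorem \ref{algebraictheoremconjecture}.
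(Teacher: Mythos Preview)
Your proposal is correct and follows essentially the same route as the paper: pass to affine cones, build a Zariski equisingular system of pseudopolynomials that is homogeneous in $x$, apply Theorem \ref{theorem}, observe $\K^*$-equivariance of the resulting trivialization via Remark \ref{symmetric}, and descend to $\proj^{n-1}_\K$; the stratification of $T$ is then produced by induction on $\dim T$. The only notable difference is that the paper bypasses the rescaling Step~2 of Lemma \ref{theoremequisingularity2} by working directly over the function field $\mathcal K=\K(T)$ (so the leading coefficient can simply be normalized to $1$, with $Z\subset T$ taken as the locus where denominators or leading-coefficient numerators vanish), and it explicitly records that the affine trivialization is regular along $U\times\{0\}$, which guarantees $\tilde\Psi(t,x)\ne 0$ for $x\ne 0$ and hence that the quotient map on $\proj^{n-1}_\K$ is well defined.
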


\begin{proof}

We may assume that $T$ is affine irreducible.    
Let $G_{i}(t,x)$, $t\in T$, $x=(x_1, \ldots ,x_n)$,  be a finite family of polynomials, homogeneous in $x$, defining the sets $X_k $ and let  $F_n(t,x)$ be the product of all $G_i$.  We consider $F_n$ as a homogeneous polynomial over $\mathcal K=\K(T)$ and let  
$$
F_n (x) = \sum_{|\alpha|=d_n} A_\alpha  x^\alpha, \quad  A_\alpha \in \mathcal K.
$$
After a linear change of coordinates $x$, we may suppose $A_n = A_{(0,\ldots,0,d_n)} \ne 0$.  
Then we define $F_{n-1}(x_1,... ,x_{n-1})$ as  the discriminant of 
 $F_{n,red}$, and proceed inductively by constructing the system of homogeneous 
 polynomials $F_j\in \mathcal K[x_1, ..., x_i]$ until $F_i \in \mathcal K$ is a non-zero constant.  
 Then we take as $Z\subset T$ the union  
 of  zero sets of the denominators of  the coefficients of all $F_j$ and the numerators of the leading coefficients of  all $F_j$.  
We show below  that the statement of theorem holds for $T\setminus Z$ as an open stratum.  Then the stratification $\mathcal S$ can be constructed by induction on $\dim T$.  
 
 By Theorem \ref{theorem},  $V(F_n)$ is  {\aab } equisingular along $(T\setminus Z)\times \{0\}$.  
 By construction  \eqref{definitionPsi}, the trivialisation $\Ph (t,x) = (t, \Ps (t,x))$  is semi-algebraic and 
  $\K^*$-equivariant in the variable $x,$ as follows from the interpolation formula, see 
 Remark \ref{symmetric}.  Moreover, by construction, it is regular along $U\times\{0\}$, $U$ being a neighborhood of $t_0$ in $T\setminus Z$. Then the  trivialization 
   $U \times \proj^{n-1}_\K  \to \pi^{-1} (U)$ induced  by $\Ph$, is {\aaa }.  
\end{proof}




We have the following  versions of Lemmas \ref{theoremequisingularity2} and \ref{lemequisingularity}. 

  \begin{lem}\label{algebraiclem1}
Let $T$ be a $\K$ algebraic variety and let $F\in \K[T\times \K^n]$, $F\not \equiv 0$.   Then there exists  a   subvariety $Z\subset T$, $\dim Z < \dim T$, such that, after a linear change of coordinates in $\K^n$, 
$F$ can be completed to a system of polynomials $\{F_i\}$, $F_n=F$, such that for every 
$t\in T\setminus Z$ the system $\{F_i\}$  is transverse and   Zariski equisingular   
 at $(t,0)$.  \qed
\end{lem}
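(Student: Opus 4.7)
The plan is to adapt the construction in the proof of Theorem \ref{theoremequisingularity3} (the projective homogeneous case) to the affine setting. I would view $F$ as a polynomial in $x$ over the function field $\K(T)$, inductively take discriminants of reduced forms to produce the lower-level $F_i$, and clear denominators to obtain polynomials in $\K[T\times\K^i]$. The bad set $Z$ will collect the vanishing loci of all leading coefficients and denominators that appear in this construction, together with $\mathrm{Sing}(T)$, so that on $T\setminus Z$ all operations are well defined and non-degenerate.

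After removing $\mathrm{Sing}(T)$ and passing to an irreducible component, I first choose a sufficiently generic $\K$-linear change of coordinates in $\K^n$. Setting $d_n=\deg_x F$, this change can be arranged so that the coefficient of $x_n^{d_n}$ in the top-$x$-degree part of $F$ is a non-zero element $A_n(t)\in\K[T]$. Then $F/A_n\in\K(T)[x_1,\dots,x_{n-1}][x_n]$ is a monic polynomial of degree $d_n$ in $x_n$; I take its reduced form and compute its discriminant $D_{n-1}\in\K(T)[x_1,\dots,x_{n-1}]$ (or a suitable generalized discriminant from Appendix II). Multiplying by the common denominator yields $F_{n-1}\in\K[T\times\K^{n-1}]$, and I iterate the procedure --- using the \emph{same} generic linear change --- to define $F_{n-2},\dots,F_0$.

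I take $Z$ to be the union of $\mathrm{Sing}(T)$, the zero loci of the leading coefficients $A_i(t)$ and of the denominators cleared at each step, and the loci where the top-$x$-degree part of each $F_i$ becomes identically zero. Each summand is a proper subvariety of $T$, so $\dim Z<\dim T$. For $t\in T\setminus Z$ and every $i$, the polynomial $F_i(t,\cdot)$ is then, up to a non-zero constant depending on $t$, monic of degree $d_i$ in $x_i$ with $\mult_0 F_i(t,\cdot)=d_i$, and the discriminant of $F_{i,\mathrm{red}}(t,\cdot)$ agrees with $F_{i-1}(t,\cdot)$ up to a non-zero constant. Hence $\{F_i\}$ is a transverse Zariski equisingular system at $(t,0)$ in the sense of Definition \ref{system}.

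The main obstacle is that a \emph{single} $\K$-linear change of coordinates must provide genericity at every level of the recursion simultaneously, even though each $F_i$ for $i<n$ is determined by the choice made at the previous level. I plan to handle this by observing that at each stage ``the coefficient of $x_i^{d_i}$ in the top-$x$-degree part of $F_i$ is a non-zero element of $\K[T]$'' is a Zariski-open, Zariski-dense condition on the linear change in $GL_n(\K)$, so a finite intersection of such conditions can be met by one sufficiently generic element. A secondary subtlety is the literal equality ``$F_n=F$'': after the change of coordinates $F$ need not be monic in $x_n$, but Zariski equisingularity and transversality at $(t,0)$ depend only on the class of $F$ modulo units of $\K[T\setminus Z]$, which I would use to reconcile this with Definition \ref{system}.
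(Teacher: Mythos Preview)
There is a genuine gap in your argument for transversality. You set $d_n=\deg_x F$ and then assert that, for $t\notin Z$, $\mult_0 F_i(t,\cdot)=d_i$. This identity is what makes the system transverse and, in particular, forces the non-leading coefficients to vanish along $T\times\{0\}$ as Definition \ref{system} requires. But nothing in your construction guarantees it: it amounts to saying each $F_i(t,\cdot)$ has no terms of $x$-degree below $\deg_x F_i$, i.e., is homogeneous in $x$. For a non-homogeneous $F$ this simply fails. For instance with $n=2$ and $F=x_2^2-x_1$ one has $\deg_x F=2$ while $\mult_0 F=1$, so $d_2\neq\mult_0 F$; with $F=x_2^2+x_2$ the coefficient $A_{2,1}\equiv 1$ does not vanish on $T\times\{0\}$, so $F/A_2$ is not even a pseudopolynomial in the sense of Definition \ref{system}. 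A single linear change of coordinates in $\K^n$ cannot cure this: it does not change the multiplicity at the origin, nor can it kill the lower-order part of $F$. Your approach is exactly what works in the homogeneous situation of Theorem \ref{theoremequisingularity3}, but the affine statement needs more.

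The missing idea is the one used in the proof of the analytic analogue, Lemma \ref{theoremequisingularity2}: work with the generic multiplicity $m_0=\mult_0 F(t,\cdot)$ rather than with $\deg_x F$. After a linear change making the coefficient $A(t)$ of $x_n^{m_0}$ in the initial form generically non-zero, apply the weighted rescaling $A(t)\ast x=(A^2x_1,\ldots,A^2x_{n-1},Ax_n)$ and divide by $A^{m_0+1}$. This produces a polynomial $\tilde G_n$ over $\K(T)$ that is genuinely regular of order $m_0$ in $x_n$; its Weierstrass polynomial then has degree $m_0=\mult_0$, which is exactly transversality. One then takes the appropriate generalized discriminant (Appendix II) and repeats. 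The subvariety $Z$ collects $\mathrm{Sing}(T)$ together with the zero loci of the successive $A$'s and of the denominators introduced. Your remaining remarks---clearing denominators to land in $\K[T\times\K^i]$, arranging one sufficiently generic linear change to serve at all levels, and treating ``$F_n=F$'' up to a unit of $\K[T\setminus Z]$---are fine once this correction is made.
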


\begin{lem}\label{algebraiclem2}
 Let  $F\in \K[X_1, ... ,X_N]$, $F\not \equiv 0$,  and  let  $Y\subset  \K^N$ be an algebraic subset.   
Then there exist  an algebraic $Z\subset  Y$,  $\dim Z < \dim Y$, and polynomials $\{F_i\}$, $F_n=F$, such that the following holds.  For every $p\in Y\setminus Z$ there is a local system of coordinates at 
$p$ in which $(Y,p)= (\K^m\times \{0\},0)$, such that the germs of  $\{F_i\}$ at $p$ form a 
 transverse and   Zariski equisingular  system of polynomials.  
  \qed
\end{lem}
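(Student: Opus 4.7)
The plan is to reduce Lemma~\ref{algebraiclem2} to Lemma~\ref{algebraiclem1} by a Noether normalization argument, in parallel with the reduction of the analytic Lemma~\ref{lemequisingularity} to Lemma~\ref{theoremequisingularity2} sketched in the excerpt. First I would decompose $Y$ into its top-dimensional irreducible components, treat each one separately, and absorb the lower-dimensional components into $Z$, so I may assume $Y$ is irreducible of dimension $m$; set $n = N - m$. After a generic $\K$-linear change of coordinates on $\K^N$, write $\K^N = \K^m \times \K^n$ with coordinates $(y, x)$, so that the standard projection $\pi : \K^N \to \K^m$ restricted to $Y$ is a finite morphism, étale on a non-empty Zariski open dense subset of $Y$. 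Let $Z_0 \subset Y$ be the complement of this open set together with $\operatorname{Sing}(Y)$; then $\dim Z_0 < m$.

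Next I would introduce the algebraic morphism
\begin{align*}
\varphi : Y \times \K^n \to \K^N, \qquad \varphi(y, x) = y + (0, x),
\end{align*}
where $(0, x)$ has zeros in the first $m$ positions. At every $(p, 0)$ with $p \in Y \setminus Z_0$, the étaleness of $\pi|_Y$ at $p$ makes $T_p Y$ transverse to $\{0\} \times \K^n$, so $d\varphi_{(p,0)}$ is a linear isomorphism $T_p Y \oplus \K^n \to \K^N$. Hence $\varphi$ is a local analytic isomorphism from $(Y \times \K^n, (p, 0))$ onto $(\K^N, p)$ carrying $Y \times \{0\}$ to $Y$. Picking local analytic coordinates $(y_1, \ldots, y_m)$ on $Y$ at $p$ and combining them with the $x$-coordinates yields a local system on $\K^N$ at $p$ realizing $(Y, p) = (\K^m \times \{0\}, 0)$.

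The core step is to apply Lemma~\ref{algebraiclem1} with $T = Y$ to the pulled-back polynomial $\tilde F(y, x) := F(\varphi(y, x)) = F(y + (0, x)) \in \K[Y \times \K^n]$. This produces a proper closed subvariety $Z_1 \subset Y$ and, after a further linear change of the $x$-coordinates, a completion $\tilde F_n = \tilde F, \tilde F_{n-1}, \ldots, \tilde F_0$ with $\tilde F_i \in \K[Y][x_1, \ldots, x_i]$ whose germs at $(t, 0)$ form a transverse and Zariski equisingular system for every $t \in Y \setminus Z_1$. I would then set $Z = Z_0 \cup Z_1$ and define the required polynomials $F_i \in \K[X_1, \ldots, X_N]$ by lifting each coefficient of $\tilde F_i$ from $\K[Y] = \K[X_1, \ldots, X_N]/I(Y)$ to $\K[X_1, \ldots, X_N]$. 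For $p \in Y \setminus Z$, the local identification via $\varphi$ matches the germ of $\{F_i\}$ at $p$, expressed in the local coordinates above, with the germ of $\{\tilde F_i\}$ at $(p, 0)$; hence it is transverse and Zariski equisingular with $F_n = F$.

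The main obstacle is purely a bookkeeping one: tracking the two successive linear changes of coordinates (the first on $\K^N$ to make $\pi|_Y$ generically étale, the second inside Lemma~\ref{algebraiclem1} to ensure Weierstrass-type regularity in each $x_i$), and verifying that lifting the coefficients of $\tilde F_i$ from $\K[Y]$ to $\K[X_1, \ldots, X_N]$ does not disturb the germs along $Y$ and hence preserves the transverse Zariski equisingular property at points of $Y \setminus Z$. No genuinely new geometric input beyond Lemma~\ref{algebraiclem1} is required, since Zariski equisingularity and transversality are local conditions at $(t, 0)$ that transport through the local isomorphism $\varphi$.
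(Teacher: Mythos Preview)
Your proposal is correct and follows exactly the approach the paper intends: the \qed after Lemma~\ref{algebraiclem2} signals that its proof is the algebraic transcription of the proof of Lemma~\ref{lemequisingularity}, namely choose coordinates making $\pi|_Y$ finite, form $\varphi(y,x)=y+(0,x)$, and apply Lemma~\ref{algebraiclem1} to $T=Y$ and $F\circ\varphi$. Your additional care about irreducible components, the \'etale locus, and lifting coefficients from $\K[Y]$ to $\K[X_1,\ldots,X_N]$ is the bookkeeping the paper leaves implicit.
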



\subsection{Applications to real algebraic geometry}\label{raag}
Let $X$ be a compact (projective or affine) real algebraic variety in the sense of \cite{BCR}.  
A functorial filtration on the semi-algebraic chains $C_*(X;\Z_2)$ was introduced in 
\cite{mccroryparusinski1}.  This filtration, called 
\emph{the Nash filtrtation},  
defines a spectral sequence, \emph{the weight spectral sequence of $X$}, that, in turn, defines 
\emph{the weight filtration} on the homology $H_* (X;\Z_2)$.  
This construction can be extended to non-compact real algebraic varieties and 
the Borel-Moore homology.  For a real algebraic variety $X$ 
its virtual Poincar\'e polynomial $\beta (X) \in \Z[t]$, introduced in \cite{mccroryparusinski}, is a multiplicative and additive 
invariant, an analog of the Hodge-Deligne polynomial.  As shown in \cite{mccroryparusinski1}, the virtual Poincar\'e polynomial can be computed from the weight spectral sequence.  
For the cohomological counterpart of this theory see \cite{limogespriziac}.  

The Nash filtration is functorial not only for regular morphisms but also for the $\AS$-maps that 
can be defined as follows.  Let $X, Y$ be compact real algebraic varieties.   A continuous map 
$f: X\to Y$ is \emph{an $\AS$-map} if its graph  $ \Gamma_f$ is a semialgebraic and arc-symmetric 
subset of $X\times Y$.   For instance a map that is semialgebraic and arc-analytic is 
$\AS$.  For more on $\AS$ maps see \cite{parusinski2004}, \cite{kurdykaparusinski}.  

Let $\Ph$ be a semialgebraic {\aaa } trivialization \eqref{raagtrivial} preserving real algebraic 
$X\subset T \times \proj^{n-1}_\K$ and let $X_t = \pi ^{-1}(t)$.  Then for each 
$t\in U$, $\Ph$ induces a semialgebraic and arc-analytic homeomorphism 
$$
\varphi_{t_0,t}: X_{t_0} \to X_t,
$$
with an arc-analytic inverse.  In particular, each $\varphi_{t_0,t}$ is $\AS$.  Thus Theorem 
\ref{theoremequisingularity3} gives the following. 

\begin{cor}\label{corollaryweight}
Let $T$ be a real algebraic variety and let $X$ be  an algebraic subset of  
$T \times \proj^{n-1}_\K$.  Then there exists an algebraic stratification  $\mathcal S$ of $T$ such that for every stratum $S$ and for every $t_0, t_1 \in S$  the fibres  $X_{t_0}$ and $X_{t_1}$ are $\AS$-homeomorphic and hence have isomorphic weight spectral sequences and weight filtration on the homology with $\Z_2$ coefficients. 
\qed
\end{cor}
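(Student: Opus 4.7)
The plan is to deduce this corollary directly from Theorem \ref{theoremequisingularity3} combined with the $\AS$-functoriality of the Nash filtration established in \cite{mccroryparusinski1}. The corollary is essentially a translation of the {\aaa} trivialization output of Theorem \ref{theoremequisingularity3} through this functoriality.

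First, I would apply Theorem \ref{theoremequisingularity3} to the singleton family $\mathcal X = \{X\}$ to obtain an algebraic stratification $\mathcal S$ of $T$ such that, for every stratum $S$ and every $t_0\in S$, there exist a neighborhood $U$ of $t_0$ in $S$ and a semialgebraic {\aaa} trivialization $\Ph: U\times\proj^{n-1}_\K\to \pi^{-1}(U)$ with $\Ph(t_0,x)=(t_0,x)$, preserving $X$. I would then refine $\mathcal S$ by replacing each stratum with its connected components so that every stratum is connected (this preserves algebraicity of the stratification since connected components of semialgebraic sets are semialgebraic).

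Second, fixing a stratum $S$ and a point $t_0\in S$, for every $t\in U$ the trivialization $\Ph$ restricts to a homeomorphism $\varphi_{t_0,t}: X_{t_0}\to X_t$ whose graph is semialgebraic, and which is arc-analytic with arc-analytic inverse by properties (2) and (4) of Definition \ref{maindefinition2}. By \cite{B-M} such maps have arc-symmetric graphs, so $\varphi_{t_0,t}$ is an $\AS$-homeomorphism. To upgrade this local statement to one valid between any two points of the same (now connected) stratum, I would take any two $t_0, t_1\in S$, join them by a semialgebraic path in $S$ (connected semialgebraic sets are semialgebraically path-connected), cover the image by finitely many trivializing neighborhoods $U_1,\ldots,U_k$ with $U_j\cap U_{j+1}\neq\emptyset$, pick intermediate points $s_j\in U_j\cap U_{j+1}$, and compose the local $\AS$-homeomorphisms to obtain an $\AS$-homeomorphism $X_{t_0}\to X_{t_1}$.

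Finally, I would invoke the $\AS$-functoriality of the Nash filtration on semialgebraic chains (cf.\ \cite{mccroryparusinski1}): any $\AS$-homeomorphism induces an isomorphism of semialgebraic chain complexes preserving the Nash filtration, and hence an isomorphism of the associated weight spectral sequences (and of the weight filtrations on Borel--Moore homology). The bulk of the work is in Theorem \ref{theoremequisingularity3}, which has already been proven; the only mildly delicate point in the present argument is the chaining step in the second paragraph, but it is standard once connectedness of strata is arranged. No single step presents a serious obstacle: the corollary is a clean consequence of the equisingularity theorem together with the already-developed weight filtration machinery.
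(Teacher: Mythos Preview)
Your proposal is correct and follows essentially the same approach as the paper, which marks the corollary with \qed after the preceding paragraph already explains that the trivialization of Theorem \ref{theoremequisingularity3} yields semialgebraic arc-analytic homeomorphisms $\varphi_{t_0,t}:X_{t_0}\to X_t$ with arc-analytic inverse, hence $\AS$-homeomorphisms. You are simply more explicit about the chaining argument along a connected stratum and about invoking the $\AS$-functoriality of the Nash filtration from \cite{mccroryparusinski1}; note also that in the paper's definition of stratification the strata are already connected components, so your refinement step is redundant (though harmless).
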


\begin{cor}\label{corollaryvirtualbetti}
Let $T$ be a real algebraic variety and let $X$ be  an algebraic subset of  
$T \times \proj^{n-1}_\K$.  Then there exists an algebraic stratification  $\mathcal S$ of $T$ such that for every stratum $S$ the virtual Poincar\'e polynomial  $\beta (X_t)$ is independent of $t\in S$.   \qed
\end{cor}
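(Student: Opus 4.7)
The plan is to deduce this statement as an immediate consequence of the previous Corollary \ref{corollaryweight}, together with the fact, recalled in Subsection \ref{raag}, that the virtual Poincar\'e polynomial $\beta(\cdot)$ of a real algebraic variety can be read off from its weight spectral sequence (as established in \cite{mccroryparusinski1}). No new construction is required: the stratification $\mathcal S$ of $T$ to be used is exactly the one produced by Corollary \ref{corollaryweight}, which in turn comes from Theorem \ref{theoremequisingularity3}.

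First, I would apply Corollary \ref{corollaryweight} to $T$ and $X\subset T\times \proj^{n-1}_\K$, obtaining an algebraic stratification $\mathcal S$ of $T$ such that for every stratum $S$ and every $t_0,t_1\in S$ the fibres $X_{t_0}$ and $X_{t_1}$ are $\AS$-homeomorphic and, consequently, have isomorphic weight spectral sequences. Second, I would invoke the fact that $\beta$ is determined by the weight spectral sequence: two real algebraic varieties whose weight spectral sequences are isomorphic have the same virtual Poincar\'e polynomial. Combining these two facts immediately yields $\beta(X_{t_0}) = \beta(X_{t_1})$ for any two points in the same stratum.

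Since there is essentially no obstacle beyond citing the right two results, I would keep the proof short: one line invoking Corollary \ref{corollaryweight}, one line invoking the weight-spectral-sequence formula for $\beta$, and a concluding sentence. The only mild point worth mentioning is that one may, if desired, appeal directly to the $\AS$-homeomorphism invariance of $\beta$ (which also follows from \cite{mccroryparusinski1} since the Nash filtration, and hence $\beta$, is functorial for $\AS$-maps), bypassing the spectral sequence entirely; either route gives the result. If connectedness of strata were an issue, one could refine $\mathcal S$ by taking connected components in the euclidean topology, but this is unnecessary because Corollary \ref{corollaryweight} already asserts the conclusion pointwise on each stratum.
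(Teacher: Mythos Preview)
Your proposal is correct and matches the paper's own (implicit) argument: the corollary is stated with a \qed and follows immediately from Corollary \ref{corollaryweight} together with the fact, recalled just before in Subsection \ref{raag}, that the virtual Poincar\'e polynomial is computed from the weight spectral sequence. Your remark about alternatively invoking the $\AS$-invariance of $\beta$ directly is fine but unnecessary.
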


The latter result was also shown in \cite{fichoushiota} by means of the resolution of singularities.  
\bigskip




\appendix
\renewcommand{\thesection}{\Roman{section}}
\section {Whitney Interpolation.}\label{Sec:interpolation}

We generalize the classical Whitney Interpolation formula \cite{whitney}, \cite{hardtsullivan}.

Fix positive integers $d,N$ and consider a family of functions $\f_i : \C^N \to \C$, $i=1, 2, ..., N$.   
We  assume that, for a constant $C>1$,   this family satisfies  the following properties

\begin{enumerate}
\item 
 $\f_i$ are continuous, differentiable on $({\C^*})^N$, and satisfies $f_i(\lambda \xi)= |\lambda|^d f_i(\xi)$ 
 for all $\lambda \in \C$.  
\item    
for every permutation $\sigma \in S_N$: $\f_i(\xi_{\sigma(1)}, ..., \xi_{\sigma(N)}) = \f_{\sigma (i)} 
(\xi_1,... , \xi_N)$.  
\item 
$|\f_j(\xi_1, \ldots ,\xi_N)| \le C |\xi_j| ( max _i |\xi_i|)^{d-1}$. 
\item  
for all $k,j$,  $| \xi_k^2| ( | \partial \f_j/ \partial \xi_k| + | \partial \f_j/ \partial \bar\xi_k| ) \le C  |\xi_j| ( max _i |\xi_i|)^{d}$.
\item  
 $\f=\sum_i \f_i$ is real valued and satisfies
$C^{-1}( max _i |\xi_i|)^{d} \le \f (\xi_1, \ldots ,\xi_N) \le C  ( max _i |\xi_i|)^{d}$.  
\end{enumerate}   

For examples see Examples \ref{exampleinterpol1} and \ref{exampleinterpol2}.  


 Given two subsets $\{a_1, \ldots, a_N\}\subset \C$, $\{b_1, \ldots, b_N\}\subset \C$,    of cardinality $N$ 
 such  that if $a_i=a_j$ then $b_i=b_j$.  
Define $ D_i = b_i - a_i$    and set 
\begin{align}
\gamma = \max_{a_i\ne a_j} \frac { |D_i - D_j|}{|a_i -a_j|}.  
\end{align}
Then 
\begin{align}\label{Dbound}
|D_i - D_j| \le \gamma |a_i -a_j|.  
\end{align}
Let 
$$\mu_i(z) :=\f _i((z-a_1)^{-1} , \ldots, (z-a_N)^{-1} )  , \quad \mu (z) 
:=\f ((z-a_1)^{-1} , \ldots, (z-a_N)^{-1} )  .$$
Define the  interpolation map $\ps: \C \to \C$ by 
\begin{align}\label{psireal} 
\ps (z) = z + \frac {\sum_{i=1}^N  \mu_i(z)  D_i}
 {\mu(z)} ,
\end{align}
if $z\notin \{a_1, \ldots, a_N\}$ and $\ps (a_i)=b_i$.  Then $\ps$ is continuous as follows from the following 
lemma.

\begin{lem}
\begin{align*}
\lim _{z\to a_j}\ps (z)=b_j.
\end{align*}
\end{lem}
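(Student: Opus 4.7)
The plan is to identify the index set $J=\{i:a_i=a_j\}$, on which the hypothesis $a_i=a_j\Rightarrow b_i=b_j$ forces $D_i=D_j$, then use this to rewrite $\psi$ so that the limit $b_j$ becomes visible and the remainder can be estimated by a direct application of the homogeneity bounds (3) and (5).

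Concretely, first I would use $\mu=\sum_i\mu_i$ together with $D_i=D_j$ for $i\in J$ to split
\begin{align*}
\sum_{i=1}^N \mu_i(z)D_i
&= D_j\sum_{i\in J}\mu_i(z) + \sum_{i\notin J}\mu_i(z)D_i \\
&= D_j\,\mu(z) + \sum_{i\notin J}\mu_i(z)(D_i-D_j),
\end{align*}
which gives
\begin{align*}
\psi(z) = z + D_j + \frac{\sum_{i\notin J}\mu_i(z)(D_i-D_j)}{\mu(z)}.
\end{align*}
Since $z+D_j\to a_j+(b_j-a_j)=b_j$, it suffices to show that the remainder $R(z)$ tends to $0$ as $z\to a_j$.

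Next I would estimate $R(z)$ by the orders of vanishing as $z\to a_j$. Set $\xi_k=(z-a_k)^{-1}$; then $\max_k|\xi_k|\sim |z-a_j|^{-1}$, while $|\xi_i|=|z-a_i|^{-1}$ stays bounded for $i\notin J$. Property (3) therefore yields $|\mu_i(z)|\le C|\xi_i|(\max_k|\xi_k|)^{d-1}=O(|z-a_j|^{-(d-1)})$ for $i\notin J$, whereas property (5) gives the lower bound $\mu(z)\ge C^{-1}(\max_k|\xi_k|)^d\ge c|z-a_j|^{-d}$. The differences $|D_i-D_j|$ for $i\notin J$ are bounded by $\gamma\,|a_i-a_j|$ from \eqref{Dbound}. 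Combining these three ingredients gives $|R(z)|=O(|z-a_j|)\to 0$, which completes the proof.

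The argument is essentially bookkeeping, so no step is genuinely hard; the only subtlety worth flagging is the need to separate the "singular" indices $i\in J$ (where $\mu_i\sim|z-a_j|^{-d}$ blows up at the same rate as $\mu$) from the "regular" ones $i\notin J$ (where $\mu_i$ is one order smaller). The rearrangement above is precisely what isolates the singular part into $D_j\mu(z)$, cancelling it against $\mu(z)$ in the denominator and leaving only the subdominant remainder to control.
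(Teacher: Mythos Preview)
Your proof is correct and follows essentially the same approach as the paper: define the index set $J=I_j=\{i:a_i=a_j\}$, rewrite $\psi(z)=z+D_j+\sum_{i\notin J}\mu_i(z)(D_i-D_j)/\mu(z)$, and then use properties (3) and (5) to show the remainder tends to zero. You give a slightly more quantitative estimate $|R(z)|=O(|z-a_j|)$ than the paper, which simply notes $\mu_i/\mu\to 0$ for $i\notin J$, but the argument is the same.
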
 

\begin{proof}
Let 
$I_j = \{i; a_i=a_j\}$.
We rewrite the interpolation formula \eqref{psireal} as 
\begin{align}\label{psi-j} 
\ps (z) = z + D_j  + \frac {\sum_{i\notin I_j} \mu_i (z) ( D_i- D_j)}   {\mu(z) } . 
\end{align}  
By  the properties (3) and (5),  for $i\notin I_j$, $\frac {\mu _i(z)}  {\mu (z)} \to 0$ as $z\to a_j$.   
\end{proof}


\begin{rem} \label{symmetric} \emph {Symmetries.} \\
The map $\ps$ is also invariant under permutations $\sigma \in S_N$, 
$\sigma (a) = (a_{\sigma (1)}, \ldots ,a_{\sigma (N)})$
\begin{align*}
\ps  (z,\sigma (a),\sigma (b)) = \ps (z,a,b) .  
\end{align*}
Let $\tau: \C \to \C$ be complex affine, $\tau (z) = \alpha z + \beta$.  Then 
\begin{align*}
\ps  (\tau (z) , \tau (a), \tau (b)) = \tau (\ps (z,a,b)) .  
\end{align*}
\end{rem}
 
 

  \begin{prop} \label {lipschitz}
The map $\ps :\C \to \C $ is Lipschitz with Lipschitz constant $4 N^3 C^4 \gamma  +1 $.  
 If $\gamma <  (4 N^3 C^4) ^{-1}$ then $\ps $ is a bi-Lipschitz homeomorphism, with 
 $(1-4 N^3 C^4 \gamma)^{-1}$  a Lipschitz constant of $\ps  ^{-1}$.
 \end{prop}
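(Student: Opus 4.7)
The plan is to establish the Lipschitz estimate by differentiating $\psi$ on the open set $\Omega := \C \setminus \{a_1,\ldots,a_N\}$ and bounding the Wirtinger derivatives $\partial_z R$ and $\partial_{\bar z} R$ of the remainder $R(z) = \psi(z) - z$, then extend to all of $\C$ by continuity. The key algebraic trick is the rewriting in \eqref{psi-j}: for each $z\in\Omega$, choose an index $j=j(z)$ minimizing $|z-a_k|$, and work with
\[
R(z) = D_j + \frac{\sum_{i\notin I_j}\mu_i(z)(D_i-D_j)}{\mu(z)}.
\]
Differentiating this expression, the numerator of $\partial_z R$ (and of $\partial_{\bar z} R$) is a sum of two terms, each containing a factor $D_i - D_j$ with $i\notin I_j$.

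The heart of the argument is then a term-by-term estimate using the hypotheses (1)--(5). Set $m(z) := \max_k |z-a_k|^{-1}$. Hypotheses (3) and (5) yield $|\mu_i(z)| \le C|z-a_i|^{-1} m(z)^{d-1}$ and $\mu(z)\ge C^{-1} m(z)^d$, while hypothesis (4), applied through the chain rule $\partial_z\mu_i = -\sum_k \partial_{\xi_k}f_i\cdot(z-a_k)^{-2}$, gives $|\partial_z\mu_i|, |\partial_{\bar z}\mu_i| \le NC|z-a_i|^{-1}m(z)^d$, hence $|\partial_z\mu|, |\partial_{\bar z}\mu| \le N^2C\,m(z)^{d+1}\cdot$(a combinatorial factor). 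The crucial cancellation is that for $i\notin I_j$, the bound \eqref{Dbound} combined with $|a_i-a_j|\le |z-a_i|+|z-a_j|\le 2|z-a_i|$ yields $|D_i - D_j|\le 2\gamma|z-a_i|$, which exactly cancels the inverse-distance singularities in $|\mu_i|$ and $|\partial_z\mu_i|$. Assembling these bounds and dividing by $\mu^2\ge C^{-2}m(z)^{2d}$, the powers of $m(z)$ cancel and one obtains a bound of the form $|\partial_z R|+|\partial_{\bar z} R|\le 4N^3C^4\gamma$ uniformly on $\Omega$.

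From this pointwise estimate the Lipschitz bound follows: integrating $d\psi$ along the segment from $z$ to $w$ (after a small perturbation to avoid the finite set $\{a_i\}$, which has measure zero), we get $|R(z)-R(w)|\le 4N^3C^4\gamma\,|z-w|$ for $z,w\in \Omega$, hence everywhere by continuity of $\psi$. The triangle inequality then gives the stated Lipschitz constant $1+4N^3C^4\gamma$ for $\psi=\mathrm{id}+R$.

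For the bi-Lipschitz conclusion under the hypothesis $\gamma < (4N^3C^4)^{-1}$, note that $R$ is a strict contraction with constant $\kappa := 4N^3C^4\gamma < 1$. Then $|\psi(z)-\psi(w)|\ge |z-w| - |R(z)-R(w)|\ge (1-\kappa)|z-w|$, so $\psi$ is injective and $\psi^{-1}$ is Lipschitz with constant $(1-\kappa)^{-1}$ on its image. Surjectivity onto $\C$ follows either from the standard fixed-point argument (for each $w\in\C$, the map $z\mapsto w - R(z)$ is a contraction on $\C$ so has a unique fixed point $\psi^{-1}(w)$), or equivalently from the properness of $\psi$ (which is immediate since $R$ is bounded on compact sets but actually $R$ is bounded on all of $\C$ by the formula and properties (3),(5)) combined with invariance of domain. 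The main obstacle is the bookkeeping in the derivative estimate: one must track how the singularities of $\mu_i$, $\partial\mu_i$ and $\partial\mu$ near the $a_k$ combine, and verify that the choice of $j$ minimizing $|z-a_j|$ exactly absorbs every inverse-distance factor through the bound $|D_i-D_j|\le 2\gamma|z-a_i|$; all numerical combinatorial constants $N, N^2, N^3$ arise from summing over $i$ and $k$ in this reduction.
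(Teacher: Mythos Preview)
Your proposal is correct and follows essentially the same route as the paper's proof: both reduce to bounding the derivative of $R=\psi-z$ on $\C\setminus\{a_i\}$ via the rewriting \eqref{psi-j} with $j$ chosen to minimize $|z-a_j|$, exploit the key inequality $|D_i-D_j|\le 2\gamma|z-a_i|$ to cancel the inverse-distance singularities, and deduce bijectivity from the contraction fixed-point argument. The only cosmetic difference is that the paper phrases the derivative bound in terms of arbitrary directional derivatives and the quantity $\mu(z)$, whereas you use Wirtinger derivatives and $m(z)=\max_k|z-a_k|^{-1}$; these are interchangeable bookkeeping choices (note $C^{-1}m(z)^d\le\mu(z)\le Cm(z)^d$ by (5)), and your constant may differ from $4N^3C^4$ by a harmless factor of~$2$ depending on how one reads the norm in hypothesis~(4).
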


 \begin{proof}
 It suffices to show that for $z\notin \{a_1, \ldots, a_N\}$ and  for every unit vector $v\in \C$ 
 \begin{align}\label{prime}
 | (\ps (z) - z)'| \le  4 N^3 C^4 \gamma  ,
\end{align}
where by ``prime'' we denote any directional derivative $\frac {\partial } {\partial v} $, $v\in \C$, $|v|=1$.  
Indeed, if \eqref{prime} holds then clearly $\ps $ is Lipschitz.   Moreover,  if $\gamma < (4 N^3 C^4) ^{-1}$ 
then for any $p\in \C$, $z \to p+z-\ps (z)$ is a contraction and hence admits a unique fixed point $z_p$, that is a unique $z_p$ such that $\ps (z_p)=p$.  Hence $\ps$ is a homeomorphism by the invariance of domain.  By \eqref{prime} for any $p, q\in \C$,   $ | (\ps (p) - p)- (\ps (q) -q) | \le 4 N^3 C^4\gamma|p-q|$, that gives 
 \begin{align}
  | p- q | \le  (1- 4 N^3 C^4 \gamma)^{-1}  |\ps (p) - (\ps (q) |
\end{align}
if $\gamma < (4 N^3 C^4) ^{-1}$.   

To show \eqref{prime} we  use the following  bounds that follow from the conditions  (3)-(5).  
\begin{align}\label{boundonmu}  \notag
& |\mu_i(z)|\le C^{2-\frac 1 d}  |z-a_i|^{-1}  \mu (z) ^{1- \frac {1} d} ,  \\
& |\mu_i '(z)| \le N C^2  |z-a_i|^{-1}  \mu (z),  \\ \notag
&|\mu '(z)| \le N^2C^{2+ \frac 1 d} \mu (z)^{\frac {d+1} d}, 
\end{align}

By (3) we have $|\mu_i(z)|\le C|\xi_i|( max_j |\xi_j|)^{d-1})$, thus  by 5) we have 
$|\mu_i(z)|\le C^{1+\frac{d-1}{d}}|\xi_i| \mu (z) ^{\frac {d-1} {d}}$ i.e. the first inequality.
We present now a detailed proof of the second inequality.  By the chain rule 
$$
|\frac {\partial \mu_i} {\partial z} | + |\frac {\partial \mu_i} {\partial \bar z} | 
\le \sum_k (|\frac {\partial \f_i} { \partial  \xi_k}   \frac {\partial \xi_k} { \partial  z} |
+ |\frac {\partial \f_i} { \partial \bar \xi_k}   \frac {\partial\bar  \xi_k} { \partial \bar z} | ) 
= \sum_k (|\partial \f_i/ \partial \xi_k | + |\partial \f_i/ \partial \bar \xi_k |)  |\xi_k |^2 .
$$
Therefore by (4) and (5) 
$$
|\frac {\partial \mu_i} {\partial z} | + |\frac {\partial \mu_i} {\partial \bar z} | 
\le C \sum |\xi_i| ( max _j |\xi_j|)^{d} \le 
 C^2 N|\xi_i| \mu .
$$
Now for a unit vector $v=a +bi \in \C$, $a^2 + b^2=1$,  
$$
| a \frac {\partial \mu_i} {\partial x}  + i b \frac {\partial \mu_i} {\partial y} |  = 
| (a+ib) \frac {\partial \mu_i} {\partial z}  + (a-i b) \frac {\partial \mu_i} {\partial \bar z} | 
 \le 
 C^2 N|\xi_i| \mu .
$$
as required.  Using this inequality we get
$$|\mu'(z)|\le \sum_i |\mu_i '(z)|\le NC^2 \mu \sum_i |\xi_i|\le N^2C^2 \mu (max_j |\xi_j|)$$
 which by (5) gives
$|\mu'(z)|\le N^2C^{2+\frac{1}{d}}\mu^{1+\frac{1}{d}}$.

Given $z\in \C$, choose $j$ such that $|z-a_j| = \min _i |z-a_i|$.  Then, for all $i$,
\begin{align}\label{boundfordifference}
|a_i-a_j|\le 2 |z-a_i|.
\end{align}
By differentiating \eqref{psi-j}  
 \begin{align}\label{summands}
 | (\ps (z) - z)'| \le  \frac {\sum_{i\notin I_j} |\mu'_i (z) ( D_i- D_j)|} {\mu (z)} + 
  \frac {(\sum_{i\notin I_j} |\mu_i (z) ( D_i- D_j)|)  |\mu' (z)|} {( \mu (z))^2} .  
\end{align}
By \eqref{Dbound} and  \eqref{boundonmu} 
\begin{align*}
 |\mu'_i (z) ( D_i- D_j)| \le 2 N C^2 \gamma \mu (z) 
\end{align*}
and 
 \begin{align*} 
|\mu_i (z) ( D_i- D_j)|  |\mu' (z)| \le 2 N^ 2 C^4 \gamma( \mu (z))^2 .  
\end{align*}
This shows  \eqref{prime} and hence ends the proof of Proposition \ref{lipschitz}.  
\end{proof}

Consider $\ps$ as a function defined for $(z,a,b)\in \C\times \Sigma$, where 
$\Sigma=\{(a,b)\in \C^N \times \C ^N;  \text { such that  if } a_i = a_j \text { then } b_i=b_j\}$.  Thus 
\begin{align}\label{psi2} 
\ps (z,a,b) = \ps_{a,b} (z)  = z + \frac {\sum_{j=1}^N \mu_j (z,a) (b_j-a_j)} { \mu (z,a)} , 
\end{align}
where  $\mu_i(z,a) =f_i((z-a_1)^{-1} , \ldots, (z-a_N)^{-1} )$,  $\mu (z,a)  = \sum_i \mu_i(z,a)$, and 
 $\ps_{a,b} (a_i) = b_i$.  We may also consider  $\ps (z,a,b)$  as a family of functions  
 $\ps_{a,b}:\C\to \C$, parameterized by $a,b$.

\begin{prop}\label{continuity}
Let $a(x): X\to \C^N, b(x): X\to \C^N$ be continuous functions defined on a topological space $X$ such that 
for every $x\in X$ and $i, j$, if $a_i(x) = a_j(x)$ then $b_i(x) = b_j(x)$.  
Then $\ps (z,a(x),b(x))$ is continuous as a function of   $(x,z)$.  
\end{prop}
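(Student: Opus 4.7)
The plan is to verify continuity at each $(x_0,z_0)\in X\times \C$, reducing to the same estimate used in the lemma establishing $\lim_{z\to a_j}\ps(z)=b_j$, but now carried out uniformly in the parameter $x$. The easy case is $z_0\notin\{a_1(x_0),\ldots,a_N(x_0)\}$: then by property (5) the denominator $\mu(z_0,a(x_0))$ is bounded away from zero, the functions $(z-a_i(x))^{-1}$ are continuous in $(x,z)$ on a neighborhood of $(x_0,z_0)$, and continuity of $\ps$ is immediate from \eqref{psi2} together with the continuity of the $f_i$ on $(\C^*)^N$ and of $a,b$ on $X$.

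The delicate case is $z_0=a_j(x_0)$ for some $j$. Set $I:=\{i:a_i(x_0)=z_0\}$ and $D_i(x):=b_i(x)-a_i(x)$; the hypothesis guarantees $D_i(x_0)=D_j(x_0)$ for every $i\in I$. Mimicking the rewriting \eqref{psi-j}, for $z\notin\{a_1(x),\ldots,a_N(x)\}$ one has
\begin{align*}
\ps(z,a(x),b(x))=z+D_j(x)+\frac{\sum_{i=1}^N\mu_i(z,a(x))(D_i(x)-D_j(x))}{\mu(z,a(x))}.
\end{align*}
The first two summands converge to $z_0+D_j(x_0)=b_j(x_0)=\ps(z_0,a(x_0),b(x_0))$ by continuity of $a_j,b_j$, so it suffices to show the fraction tends to zero; this will be done by splitting the sum according to whether $i\in I$ or $i\notin I$. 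The case of sequences $(x_n,z_n)\to(x_0,z_0)$ with $z_n=a_k(x_n)$ for some $k$ is handled directly: such $k$ must lie in $I$ once $(x_n,z_n)$ is close enough to $(x_0,z_0)$, whence $\ps(z_n,a(x_n),b(x_n))=b_k(x_n)\to b_k(x_0)=b_j(x_0)$ by continuity of $b$.

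The key technical input, and the only step requiring genuine work, is the uniform bound
\begin{align*}
\Bigl|\frac{\mu_i(z,a(x))}{\mu(z,a(x))}\Bigr|\;\le\; C^2\,\frac{\min_k|z-a_k(x)|}{|z-a_i(x)|},
\end{align*}
obtained by combining properties (3) and (5) exactly as in the estimates \eqref{boundonmu} used in the proof of Proposition \ref{lipschitz}. For $i\in I$ this ratio is bounded by $C^2$ (since $\min_k|z-a_k(x)|\le|z-a_i(x)|$ trivially), while $D_i(x)-D_j(x)\to 0$ by continuity of $a,b$; for $i\notin I$ the difference $D_i(x)-D_j(x)$ stays bounded, but $|z-a_i(x)|\ge\varepsilon>0$ near $(x_0,z_0)$ whereas $\min_k|z-a_k(x)|\le|z-a_j(x)|\to 0$, so the ratio tends to zero. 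In either case the summand vanishes in the limit, establishing continuity at $(x_0,z_0)$. The main obstacle, namely upgrading the earlier pointwise limit to one uniform in $x$, is thus resolved by this single uniform estimate for $|\mu_i/\mu|$.
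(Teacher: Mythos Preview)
Your proof is correct and follows essentially the same route as the paper's: reduce to the case $z_0=a_j(x_0)$, subtract off a constant so that the interpolation sum has coefficients $D_i-D_j$, split the sum over $I$ and its complement, and control $|\mu_i/\mu|$ via properties (3) and (5). Your explicit inequality $|\mu_i/\mu|\le C^2\min_k|z-a_k|/|z-a_i|$ is exactly a repackaging of the paper's use of the first line of \eqref{boundonmu} together with $\mu\to\infty$; the two arguments are equivalent. The only cosmetic differences are that the paper subtracts the \emph{limit} value $D_j(x_0)$ rather than the running value $D_j(x)$ and does not isolate the case $z_n=a_k(x_n)$ (it is absorbed into the definition $\ps(a_i)=b_i$), but neither change affects the logic.
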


\begin{proof}
Let $(z,a,b) \to (z_0, a_0, b_0)$.    
Clearly $\ps (z,a,b)  \to  \ps (z_0,a_0,b_0)$ if $z_0 \not \in \{a_{01},\ldots, a_{0N}\}$.   Thus suppose $z_0=a_{01}$ and then $\ps (z_0,a_0,b_0)  = b_{01}$.  Let  $J=\{j\in \{1, . . . ,n\}; a_{0j}= a_{01}\}$.    Then 
\begin{align*}
 \ps (z,a,b)  - \ps (z_0,a_0,b_0)  & = (z - z_0)+  
\frac { \sum_{i\in J}  \mu_i (z,a) ((b_i -b_{01}) -(a_i-a_{01}))} { \mu (z,a) } \\
& + \frac { \sum_{i\notin J} \mu_i (z,a) ((b_{i}-b_{01}) -(a_{i}- a_{01}))} { \mu (z,a) }  . 
\end{align*}
We show that the last  two summands converge to $0$ as $(z,a,b) \to (z_0, a_0, b_0)$.  
Note that $b_{01}= b_{0i}, a_{01}= a_{0i}$ if $i\in J$.  Therefore 
\begin{align*}
& \frac { \sum_{i\in J}  \mu_i (z,a) ((b_i -b_{01}) -(a_i-a_{01}))} { \mu (z,a) }  
= \sum_{i\in J} \frac {  \mu_i (z,a) } { \mu (z,a) } ((b_i -b_{0i}) -(a_i-a_{0i})) . 
\end{align*} 
By (3) and (5) we always have that $|\frac{\mu_i}{\mu}|\le C^2,$ and using the fact that 
$((b_i -b_{0i}) -(a_i-a_{0i}))\to 0$ we get the second summand goes to zero.
 So  does the third one  because 
 $$
\frac { \mu_i (z,a) } { \mu (z,a) }  \to 0 
$$
if $i\notin J$.  To show this last property we note that $ \mu (z,a) \to \infty$,  the limit of $z-a_i$ is nonzero if $i\notin J$, and use the first inequality of  \eqref{boundonmu}.  
\end{proof}

\begin{rem}\label{gammalimit}
If $(a,b) \to (a_0,b_0)$ then $\gamma (a_0,b_0) \le \liminf \gamma (a,b),$ thus $\gamma$ is lower semi-continuous, where formally we put $\gamma (a,b) = 0$ if $a_1= \cdots =a_N$, $b_1= \cdots =b_N$. 
\end{rem}


\begin{example} \label{exampleinterpol1}
In the original Whitney interpolation $\f_i(\xi) = |\xi_i|$, cf. \cite{whitney}, see also  \cite{hardtsullivan}.     
   \end{example}
   
    \begin{example} \label{exampleinterpol2}
 In this paper we use the following family. 
For $\xi_1, \ldots ,\xi_N \in \C$  we denote by $\sigma_i = \sigma_i (\xi_1, \ldots ,\xi_N)$ 
the elementary symmetric functions  of $\xi_1, \ldots ,\xi_N$.    
Let  $P_k  =  \sigma^{\alpha_k}_k  $, where $\alpha_k =  (N!)/k$.   
Define 
\begin{align}\label{fi3}
\f_j (\xi) =  \frac 1 {N!}  \sum _k  \xi_j  \frac {\partial  P_k}{\partial \xi_j} \bar P_k.  
\end{align} 
and therefore it follows that 
\begin{align}\label{fi4}
\f (\xi) =\sum f_j(\xi)=  \sum _k  P_k(\xi)  \bar P_k (\xi) .  
\end{align}

Then $\ps$ equals   
\begin{align}\label{ourpsi} 
\ps (z,a,b) =  
z + \frac {\sum _k  \bigl ( \sum_{j=1}^N \ \xi_j  \frac {\partial  P_k}{\partial \xi_j}(\xi)(b_j-a_j)  \bigr ) \bar P_k (\xi) } 
{N!(  \sum _k  P_k \bar P_k (\xi)) } ,
\end{align}
where $\xi = ((z-a_1)^{-1} , \ldots, (z-a_N)^{-1})$.  
  \end{example}


  \bigskip
 \section{Generalized discriminants}\label{Part:discriminants}

We recall below the classical generalized discriminants, see e.g. \cite{whitneybook} Appendix IV.  
Let $\mathcal K$ be a field of characteristic zero and let 
\begin{align}\label{type}
F(Z) = Z^p + \sum_{j=1}^ p A_iZ^{p-i} = \prod_{j=1}^ p (Z-\xi_i)\in \mathcal K[Z], 
\end{align}
with the roots $\xi_i\in \overline {\mathcal K}$.    Then the expressions 
$$
D_{j} =  \sum_{r_1 <\cdots < r_{j}} \,  \prod_{k< l;\, k,l \in \{r_1, \ldots ,r_{j}\}}  (\xi_k-\xi_l)^2
$$
are symmetric in $\xi_1, \ldots , \xi_p$ and hence polynomials in $A_1, \ldots, A_{p}$.   
Thus $D_p$ is the standard discriminant and $F$ has exactly $d$ distinct roots if and only if 
$D_{d+1}=\cdots = D _{p}=0$ and $D_{d}\neq 0$.  The following lemma is obvious. 

\medskip

\begin{lem}\label{twodiscr}
Let $F\in \mathcal K[Z]$ be a monic polynomial of degree $p$ that has exactly $d$ distinct roots in 
$\xi_i\in \overline {\mathcal K}$ of multiplicities $\mathbf m=(m_1, ..., m_d)$.  Then there is a positive constant 
$C= C_{p,\mathbf m}$ such that  the generalized discriminant $D_{d,F}$ of $F$ and 
the standard discriminant $\Delta_{F_{red}}$ of $F_{red}$ are related by 
$$
D_{d,F}= C \Delta_{F_{red}}.
$$
\end{lem}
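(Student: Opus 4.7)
The plan is to unwind the definition of $D_{d,F}$ directly. Writing the multiset of roots as $\xi_1,\ldots,\xi_p$ (with repetitions), partition the index set $\{1,\ldots,p\}$ into $d$ blocks $I_1,\ldots,I_d$ with $|I_s|=m_s$, where the roots are constant on each $I_s$ with common value $\eta_s$; the $\eta_s$ are precisely the $d$ distinct roots of $F$, i.e.\ the roots of $F_{\mathrm{red}}$.

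Now I would examine each summand in
\[
D_{d,F} = \sum_{r_1<\cdots<r_d}\;\prod_{k<l;\,k,l\in\{r_1,\ldots,r_d\}}(\xi_k-\xi_l)^2.
\]
If the index subset $\{r_1,\ldots,r_d\}$ contains two indices lying in the same block $I_s$, then $\xi_{r_i}-\xi_{r_j}=0$ for those two indices and the summand vanishes. Hence only the $d$-subsets that pick exactly one index from each block contribute. There are $m_1 m_2\cdots m_d$ such subsets, and for each of them the factor $\prod(\xi_{r_k}-\xi_{r_l})^2$ equals $\prod_{s<t}(\eta_s-\eta_t)^2 = \Delta_{F_{\mathrm{red}}}$.

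Combining these observations gives
\[
D_{d,F} = (m_1 m_2\cdots m_d)\,\Delta_{F_{\mathrm{red}}},
\]
so the constant $C_{p,\mathbf m}=\prod_s m_s$ works, and it is a positive integer depending only on the multiplicity type $\mathbf m$ (hence on $p=\sum m_s$ as well).

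There is essentially no obstacle here; the only thing worth noting is to be careful that the sum defining $D_{d,F}$ runs over index subsets of $\{1,\ldots,p\}$ (so repeated roots are counted according to multiplicity), not over subsets of the set of distinct roots. Once that is granted the combinatorial identification is immediate, and the identity is purely formal in the $\xi_i$, so no appeal to the field structure of $\mathcal K$ beyond characteristic zero (or indeed any characteristic) is needed.
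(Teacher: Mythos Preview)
Your argument is correct and is exactly the computation one has in mind; the paper itself offers no proof, declaring the lemma ``obvious,'' and your unwinding of the definition of $D_{d,F}$ is the natural way to see it. You even obtain the explicit constant $C_{p,\mathbf m}=m_1\cdots m_d$, which the paper does not bother to record.
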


We often use the following consequence of the Implicit Function Theorem.

\begin{lem}\label{implicit}
Let $F\in \K \{x_1, ... ,x_n\}[Z]$ be a monic polynomial in $Z$ such that the discriminant 
$\Delta_{F_{red}}$ does not vanish at the origin.  Then, on a neighborhood $U$ of $0\in \K^n$, 
the complex roots $\xi_i(x_1, ... ,x_n)$ of $F$ are $\K$-analytic, distinct, and of constant multiplicities.  
\end{lem}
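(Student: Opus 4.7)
The plan is to apply the classical Implicit Function Theorem to $F_{red}$ and then transfer the conclusion back to $F$ via the unique factorization in the UFD $\K\{x_1,\ldots,x_n\}[Z]$.

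\textbf{Step 1 (set-up via $F_{red}$).} The hypothesis $\Delta_{F_{red}}(0) \ne 0$ means that $F_{red}(0,Z) \in \K[Z]$ has only simple complex roots. Write $k = \deg_Z F_{red}$ and list these roots as $\zeta_1, \ldots ,\zeta_k \in \C$. Simpleness gives $(\partial F_{red}/\partial Z)(0,\zeta_i) \ne 0$ for every $i$.

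\textbf{Step 2 (Implicit Function Theorem).} Viewing $F_{red}$ as an element of $\C\{x_1,\ldots,x_n\}[Z]$ (which is harmless when $\K=\R$, since $\R\{x\}\subset \C\{x\}$), the classical holomorphic IFT applied at each $(0,\zeta_i)$ produces a $\K$-analytic function $\xi_i : U \to \C$, defined on a common neighborhood $U$ of $0\in \K^n$, with $\xi_i(0)=\zeta_i$ and $F_{red}(x,\xi_i(x))\equiv 0$. Since the $\zeta_i$ are pairwise distinct, we may shrink $U$ so that $\xi_i(x)\ne \xi_j(x)$ for $i\ne j$ throughout $U$. Because $F_{red}$ is monic of degree $k$ in $Z$ and we have exhibited $k$ distinct roots at each $x\in U$, we obtain the factorization
\begin{equation*}
F_{red}(x,Z) \;=\; \prod_{i=1}^k (Z-\xi_i(x)) \qquad \text{on } U.
\end{equation*}

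\textbf{Step 3 (constant multiplicities for $F$).} Since $\K\{x_1,\ldots,x_n\}$ is a UFD, so is $\K\{x_1,\ldots,x_n\}[Z]$, and we may write uniquely $F = \prod_{j=1}^\ell G_j^{m_j}$ with monic pairwise non-associate irreducibles $G_j\in \K\{x_1,\ldots,x_n\}[Z]$; then $F_{red}=\prod_{j=1}^\ell G_j$. Each $G_j$ is a monic factor of $F_{red}$ in $\C\{x_1,\ldots,x_n\}[Z]$, hence on $U$ it must factor as $G_j(x,Z)=\prod_{i\in I_j}(Z-\xi_i(x))$ for a subset $I_j\subset\{1,\ldots,k\}$, and the $I_j$ partition $\{1,\ldots,k\}$. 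Therefore
\begin{equation*}
F(x,Z) \;=\; \prod_{i=1}^k (Z-\xi_i(x))^{n_i} \qquad \text{on } U,
\end{equation*}
where $n_i=m_j$ whenever $i\in I_j$. The $n_i$ depend only on the algebraic factorization of $F$, not on $x\in U$, which gives constancy of multiplicities.

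No step presents a real obstacle; the only thing to keep an eye on is the case $\K=\R$, where one must observe that the holomorphic IFT applied to the complexified coefficients yields solutions $\xi_i(x)$ that are real-analytic in $x$ (with complex values), which is what the statement asks for. Pairs of non-real roots $\zeta_i, \overline{\zeta_i}$ at $x=0$ extend to conjugate pairs of analytic functions, so the factorization of $F_{red}$ over $U$ is conjugation-invariant, consistent with $F_{red}\in \R\{x\}[Z]$.
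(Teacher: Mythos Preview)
Your proof is correct and follows exactly the route the paper indicates: the lemma is stated there without proof, merely as ``a consequence of the Implicit Function Theorem,'' and your Steps 1--3 supply precisely the standard details (IFT applied to $F_{red}$, then transfer of multiplicities via unique factorization in $\K\{x\}[Z]$).
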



\end{document}